\definecolor{darkgreen}{rgb}{0.4,0.0,0.0}
\newtheorem{thm}{Theorem}[section]
\newtheorem{prop}[thm]{Proposition}
\newtheorem{lem}[thm]{Lemma}
\newtheorem{cor}[thm]{Corollary}
\newcommand{\fA}{\mathfrak A}
     \newcommand{\sF}{\mathcal F}
\newcommand{\fM}{\mathfrak M}     
\newcommand{\fN}{\mathfrak N}     \newcommand{\sN}{\mathcal N}
     \newcommand{\sO}{\mathcal O}
     \newcommand{\sR}{\mathcal R}
     \newcommand{\sT}{\mathcal T}
     \newcommand{\sV}{\mathcal V}
\def\XXint#1#2#3{{\setbox0=\hbox{$#1{#2#3}{\int}$ }
\vcenter{\hbox{$#2#3$ }}\kern-.6\wd0}}
\newcommand{\R}{\mathbb{R}}
\newcommand{\C}{\mathbb{C}}
\newcommand{\N}{\mathbb{N}}
\theoremstyle{definition}
\newtheorem{definition}[thm]{Definition}
\newtheorem{example}[thm]{Example}
\newcommand{\afr}{\mathscr R _{\textrm{aff}}}
\newcommand{\afrprod}{\, \hat \cdot \,}
\newcommand{\afrsum}{\, \hat + \,}
\newcommand{\afrdiff}{\, \hat -\,}
\newcommand\restr[2]{{
  \left.\kern-\nulldelimiterspace 
  #1 
  \vphantom{\big|} 
  \right|_{#2} 
  }}
\theoremstyle{remark}
\newtheorem{remark}[thm]{Remark}
\numberwithin{equation}{section}
\newtheoremstyle{ser}
{8pt}
{8pt}
{\it}
{}
{\sf}
{:}
{6mm}
{}
\newtheoremstyle{serr}
{8pt}
{8pt}
{\normalfont}
{}
{\sf}
{.}
{6mm}
{}
\theoremstyle{ser}
\newtheorem{claim}{Claim}
\theoremstyle{serr}
\newtheorem{claimpff}{Proof of Claim}
\newcommand\Wtilde[1]{\stackrel{\sim}{\smash{\mathscr{#1}}\hspace{0.12in}\rule[0in]{0pt}{1.15ex}}\hspace{-0.12in}}
\title[On Murray-von Neumann Algebras -  I]{On Murray-von Neumann Algebras -  I: Topological, Order-theoretic and Analytical Aspects}
\author{Soumyashant Nayak}
\begin{document}
\address{Indian Statistical Institute\\
 Statistics and Mathematics Unit\\
 8th Mile, Mysore Road\\
 Bengaluru -- 560 059, Karnataka, India.
   ORCiD: 0000-0002-6643-6574}
\email{soumyashant@gmail.com}
\urladdr{https://nsoum.github.io/}

\maketitle

\begin{abstract}
For a countably decomposable finite von Neumann algebra $\mathscr{R}$, we show that any choice of a faithful normal tracial state on $\mathscr{R}$ engenders the same measure topology on $\mathscr{R}$ in the sense of Nelson (J.\ Func.\ Anal., 15 (1974), 103--116). Consequently it is justified to speak of `the' measure topology of $\mathscr{R}$. Having made this observation, we extend the notion of measure topology to general finite von Neumann algebras and denominate it the $\mathfrak{m}$-topology.\ We note that the procedure of $\mathfrak{m}$-completion yields Murray-von Neumann algebras in a functorial manner and provides them with an intrinsic description as unital ordered complex topological $*$-algebras. This enables the study of abstract Murray-von Neumann algebras avoiding reference to a Hilbert space. Furthermore, it makes apparent the appropriate notion of Murray-von Neumann subalgebras, and the intrinsic nature of the spectrum and point spectrum of elements, independent of their ambient Murray-von Neumann algebra. In this context, we show the well-definedness of the Borel function calculus for normal elements and use it along with approximation techniques in the $\mathfrak{m}$-topology to transfer many standard operator inequalities involving bounded self-adjoint operators to the setting of (unbounded) self-adjoint operators in Murray-von Neumann algebras.  On the algebraic side, Murray-von Neumann algebras have been described as the Ore localization of finite von Neumann algebras with respect to their corresponding multiplicative subset of non-zero-divisors. Our discussion reveals that, in addition, there are fundamental topological, order-theoretic and analytical facets to their description.

\bigskip\noindent
{\bf Keywords:}
Measure topology, Murray-von Neumann algebras, affiliated operators, Borel function calculus
\vskip 0.01in \noindent
{\bf MSC2010 subject classification:} 47L60, 46L51, 46L10
\end{abstract}

\section{Introduction}
\label{sec:intro}

Finite von Neumann algebras are von Neumann algebras in which every isometry is a unitary. Finite-dimensional complex matrix algebras, abelian von Neumann algebras, group von Neumann algebras for discrete groups, etc., provide a rich assortment of examples of finite von Neumann algebras, helping realize fundamental connections to various parts of mathematics such as probability (free probability), geometry ($L^2$-invariants), non-commutative analysis, etc. They also play a central role in the classification theory of von Neumann algebras. In this article, we are interested in the algebras of unbounded operators known as Murray-von Neumann algebras, which are intimately connected with finite von Neumann algebras. 

 Let $\mathscr{R}$ be a finite von Neumann algebra acting on the Hilbert space $\mathscr{H}$. We say that a closed densely-defined operator on $\mathscr{H}$ is {\it affiliated} with $\mathscr{R}$ if for every unitary $U$ in the commutant of $\mathscr{R}$, we have $U^*TU = T$, where it is understood that $U$ leaves the domain of $T$ invariant. The set of closed densely-defined operators affiliated with $\mathscr{R}$, which we denote by $\afr$, may be naturally endowed with the structure of a unital $*$-algebra (see \cite[Theorem XV]{rings-mvn}, \cite[\S 6.2]{kadison-liu}) and with this algebraic structure, $\afr$ is called the Murray-von Neumann algebra associated with $\mathscr{R}$. A common theme in the theory of operator algebras is the study of the nexus between the spatial theory (action on Hilbert space), and the abstract algebraic theory. The definition of $\afr$ we just introduced is very much from the spatial side, involving the commutant of $\mathscr{R}$ and affiliated operators. This article may be regarded as a quest to identify the appropriate intrinsic definition of Murray-von Neumann algebras, so as to open up the possibility of studying {\it abstract Murray-von Neumann algebras} similar to how one studies abstract $C^*$-algebras or abstract von Neumann algebras avoiding reference to a Hilbert space. An immediate advantage of an abstract approach is that it would allow us to bypass tricky arguments which involve `domain tracking' for unbounded operators. Although there are some hints in this direction in the literature, a reasonably complete picture taking into account the topological, order-theoretic and analytical aspects does not seem to be available. 
 
 In \cite{roos}, \cite{berberian}, $\afr$ is described as the ``maximal quotient ring" of $\mathscr{R}$. In \cite{handelman}, it is established that $\afr$ is, in fact, the Ore localization of $\mathscr{R}$ with respect to its multiplicative subset of non-zero-divisors. In other words, any non-zero-divisor in $\mathscr{R}$ has an inverse in $\afr$. Thus $\afr$ has several useful intrinsic algebraic properties (such as von Neumann regularity) rendering it a powerful object in many contexts. For example, in the proof of a version of the Atiyah conjecture discussed in \cite[Chapter 10]{luck}, it serves as a home for many algebras associated with the group of interest in the context of the corresponding group von Neumann algebra. In this article, we strive to look beyond the purely algebraic aspects of Murray-von Neumann algebras. 

For the discussion in this paragraph and the next, we assume that $\mathscr{R}$ is countably decomposable and thus possesses a faithful normal tracial state. The $*$-algebraic structure of $\afr$ may also be deduced from Nelson's theory of non-commutative integration by viewing $\afr$ as the completion of $\mathscr{R}$ in the $\tau$-measure topology (see \cite[Theorem(s) 1-4]{nelson}).\footnote{Although the theory is developed in the more general setting of semifinite von Neumann algebras with a faithful normal semifinite trace, we are primarily concerned with the case of finite von Neumann algebras since our interest is in Murray-von Neumann algebras.} With this in mind, the temptation is great to hypothesize the existence of an intrinsic topological structure on Murray-von Neumann algebras with an obvious candidate being some flavor of measure topology. This is in contrast with the perception in some of the literature that $\afr$ has no natural topology. As a case in point, in \cite[Chapter 8, pg. 317]{luck}, the following remark is made about $\afr$, ``It does not come with a natural topology anymore but has nice ring theoretic properties." (Also see \cite[pg. 304]{reich}). Our interest is piqued by these conflicting impressions and in the next paragraph, we indulge in some speculative rationalization of this discordance.

From the remark following \cite[Theorem 4]{nelson}, the completion of $\mathscr{R}$ in the $\tau$-measure topology results in a unital $*$-algebra which is $*$-isomorphic to $\afr$ (with the isomorphism 
extending the identity mapping on $\mathscr{R}$), and hence does not depend on the choice of $\tau$. Even
so, in \cite{nelson} this observation is not put into the context of the results by Murray and von 
Neumann (\cite[Theorem XV]{rings-mvn}). Although this seminal work of Nelson's, inspired by Segal's work 
on non-commutative abstract integration (cf.\ \cite{segal}, \cite{segal-correction}), has since been 
extensively used in the theory of noncommutative $L^p$-spaces, tracial inequalities, etc.\ (in the 
semifinite setting), many fundamental questions about the $\tau$-measure topology do not seem to have 
been explored in \cite{nelson} or in subsequent work in this area. For instance, does the $\tau$-measure 
topology (say, in the finite von Neumann algebra setting) depend on the choice of the faithful normal finite trace $\tau$? Firstly, for a 
non-factorial finite von Neumann algebra that is countably decomposable, note that the choice of a faithful normal tracial state is not unique. Secondly, it need not even be `essentially' unique in the following sense; 
given faithful normal tracial states, $\tau _1$ and $\tau _2$, there need not exist positive real numbers $a, b > 0$ satisfying $a \tau_1(A) \le  \tau_2(A) \le b \tau_1(A)$ for all positive operators $A \in \mathscr{R}$.\footnote{For example, let $\tau _1, \tau _2$  be normal states on $L^{\infty}(\R; \mu)$ ($\mu$ being the Lebesgue measure) corresponding to integration with respect to two distinct Gaussian probability measures.} Thus, {\it a priori}, we may be inclined to believe that the $\tau$-measure topology depends on the choice of $\tau$. 

In Theorem \ref{thm:key_thm}, using a non-commutative analogue of the notion of absolute continuity (see 
Lemma \ref{lem:absolute_continuity}) we show that, on the contrary, any choice of a faithful normal 
tracial state on $\mathscr{R}$ engenders the same measure topology on $\mathscr{R}$.  This motivates us 
to appropriately extend the definition of measure topology to general finite von Neumann algebras (that 
are not necessarily countably decomposable) which we denominate the $\mathfrak{m}$-topology. The reader 
is encouraged to think of $\mathfrak{m}$ as `measure' or `measure-theoretic'. We then view Murray-von 
Neumann algebras as completions of finite von Neumann algebras in their respective $\mathfrak{m}$-topologies (see Theorem \ref{thm:aff_op_complete}).

In the remaining part of this section, we directly quote some results (with the numbering that appears in the article) and discuss their significance to better explain the rationale and overall flow of our work.
 \vskip 0.1in
\noindent {\bf Theorem \ref{thm:functorial}}
\textsl{
Let $\mathscr{R}_1$ and $\mathscr{R}_2$ be finite von Neumann algebras and $\Phi : \mathscr{R}_1 \to \mathscr{R}_2$ be a unital $*$-homomorphism. Then the following are equivalent:
\begin{itemize}
    \item[(i)] $\Phi$ is normal;
    \item[(ii)] $\Phi$ is Cauchy-continuous\footnote{A linear map between topological vector spaces is Cauchy-continuous if and only if it is continuous.} for the $\mathfrak{m}$-topologies on $\mathscr{R}_1$ and $\mathscr{R}_2$.
\end{itemize}
}
\noindent It follows from Theorem \ref{thm:functorial} that completion in the $\mathfrak{m}$-topology, or $\mathfrak{m}$-completion, is a functor from the category of finite von Neumann algebras with morphisms as normal unital $*$-homomorphisms, to the category of unital complex topological $*$-algebras with morphisms as continuous unital $*$-homomorphisms. In \cite{nayak-matrix}, the measure topology (or $\mathfrak{m}$-topology) has been investigated by the present author in the setting of $II_1$ factors where matters are more straightforward owing to the existence of a {\it unique} faithful normal tracial state. Having understood the intrinsic nature of the $\mathfrak{m}$-topology in the context of finite von Neumann algebras, many of the results and their proofs in \cite{nayak-matrix} can be transferred to the general setting almost verbatim. Let $\Wtilde{R}$ denote the $\mathfrak{m}$-completion of $\mathscr{R}$, and $\mathscr{R}^{+}$ denote the cone of positive operators in the space of self-adjoint elements in $\mathscr{R}$. In Proposition \ref{prop:mtilde_pos}, we observe that the $\mathfrak{m}$-closure of $\mathscr{R}^{+}$ in $\Wtilde{R}$ yields a cone for $\Wtilde{R} ^{\textrm{sa}}$, the space of self-adjoint elements in $\Wtilde{R}$. Thus $\Wtilde{R} ^{\!\textrm{sa}}$ naturally has an order structure with respect to the above cone. This intrinsic order structure manifests as the usual order structure obtained from the cone of positive affiliated operators in $\afr$. With this in mind, we think of Murray-von Neumann algebras as unital ordered complex topological $*$-algebras.\footnote{By an ordered complex $*$-algebra, we mean a complex $*$-algebra whose Hermitian elements form an ordered real vector space.} 

By virtue of Theorem \ref{thm:functorial}, the image of the $\mathfrak{m}$-completion functor is a subcategory of the category of unital topological $*$-algebras with objects as Murray-von Neumann algebras and morphisms as the $\mathfrak{m}$-continuous unital $*$-homomorphisms.  Often in operator algebras, the topological structure and the order structure are strongly intertwined. Let $\fM$  be a Murray-von Neumann algebra and let $\fM ^{\textrm{sa}}$ denote the set of self-adjoint elements of $\fM$. In Proposition \ref{prop:lub_mvna}, we show that any monotonically increasing net of self-adjoint elements in $\fM$ that is bounded above (by an element of $\fM ^{\textrm{sa}}$), has a least upper bound in $\fM ^{\textrm{sa}}$. In other words, $\fM^{\textrm{sa}}$ is {\it monotone complete}.

For Murray-von Neumann algebras $\fM _1$ and $\fM _2$, we say that a unital $*$-homomorphism $\Phi : \fM _1 \to \fM _2$ is {\it normal} if for every monotonically increasing net $\{ H_i \}$ of self-adjoint elements in $\fM _1$ which has an upper bound in $\fM _1$, we have $\Phi(\sup H_i) = \sup \Phi(H_i)$ in $\fM _2$.
\vskip 0.1in
\noindent {\bf Theorem \ref{thm:normal_mvna}}
\textsl{
Let $\fM _1$ and $\fM _2$ be Murray-von Neumann algebras and $\Phi : \fM _1 \to \fM _2$ be a unital $*$-homomorphism. Then the following are equivalent:
\begin{itemize}
    \item[(i)] $\Phi$ is normal;
    \item[(ii)] $\Phi$ is $\mathfrak{m}$-continuous.
\end{itemize}
}
\noindent Thus the normal unital $*$-homomorphisms may be equivalently considered to be the morphisms in the category of Murray-von Neumann algebras.

 For a clearer picture, we briefly explain the interplay between the algebraic, topological and order-theoretic aspects of Murray-von Neumann algebras.  Since the positive cone of a Murray-von Neumann algebra can be algebraically described as the set of squares of self-adjoint elements (see Proposition \ref{prop:pos_desc_mvn}), any unital $*$-homomorphism between Murray-von Neumann algebras is automatically order-preserving. Note that the underlying finite von Neumann algebra of a Murray-von Neumann algebra $\fM$ may be extracted with the help of the order structure as follows, $$\mathscr{R} = \{ A \in \fM : \exists \lambda \in \R_{+} \textrm{ with }  A^*A \le \lambda ^2 I \},$$
and the norm of an element $A \in \mathscr{R}$ is given by $$\|A\| := \inf \; \{ \lambda \in \R_{+} : A^*A \le \lambda^2 I \}.$$ Thus a unital $*$-homomorphism between Murray-von Neumann algebras must send norm-bounded elements to norm-bounded elements, inducing a unital $*$-homomorphism between the underlying finite von Neumann algebras. In light of Theorem \ref{thm:normal_mvna} and Theorem \ref{thm:functorial}, we see that every morphism between Murray-von Neumann algebras arises from a morphism between their underlying finite von Neumann algebras.

The functoriality of the $\mathfrak{m}$-completion of finite von Neumann algebras is eminently useful in identifying and clarifying the intrinsic nature of many notions that arise in the context of Murray-von Neumann algebras. For instance, {\it Murray-von Neumann subalgebras} of $\afr$ may be defined as $\mathfrak{m}$-closed, or simply, closed $*$-subalgebras of $\afr$ containing the same identity as $\afr$. Note that every von Neumann subalgebra of a finite von Neumann algebra is finite. 
\vskip 0.1in
\noindent {\bf Proposition \ref{prop:compatibility_subalgebra}}
\textsl{
Let $\mathscr{R}$ be a finite von Neumann algebra and $\mathscr{S}$ be a von Neumann subalgebra of $\mathscr{R}$. Then the $\mathfrak{m}$-topology on $\mathscr{S}$ is identical to the subspace topology induced from the $\mathfrak{m}$-topology on $\mathscr{R}$. 
}
\vskip 0.1in
\noindent From Proposition \ref{prop:compatibility_subalgebra}, we see that if $\Wtilde{S}$ is a Murray-von Neumann subalgebra of $\afr$, then $\mathscr{S} := \;\Wtilde{S} \cap \; \mathscr{R}$ is a von Neumann subalgebra of $\mathscr{R}$ and $\Wtilde{S}\; \cong \mathscr{S}_{\textrm{aff}}$. In other words, Murray-von Neumann subalgebras of $\afr$ naturally arise as the $\mathfrak{m}$-completion of von Neumann subalgebras of $\mathscr{R}$.

In \S \ref{sec:abelian_mvn}, we characterize the abelian Murray-von Neumann algebras. Let $X$ be a 
locally compact Hausdorff space with a positive Radon measure $\mu$. The space $L^0(X;\mu)$ of
$\mu$-measurable functions on $X$ equipped with the topology of local convergence in measure is a 
complete topological $*$-algebra (see Proposition \ref{prop:complete_L0}). The set of essentially positive $\mu$-measurable functions on $X$ is a 
positive cone for $L^0(X; \mu)$. With the above-mentioned topological and order structures, $L^0(X; \mu)$ is an abelian Murray-von Neumann algebra and its underlying finite von Neumann algebra is given by
$L^{\infty}(X; \mu)$, the space of essentially bounded $\mu$-measurable functions on $X$. In fact, all abelian Murray-von Neumann algebras arise in this manner.

In \S \ref{sec:approx_meas}, we algebraically define the notions of {\it spectrum} and {\it point 
spectrum} for an element of an abstract Murray-von Neumann algebra (see Definition \ref{def:spectrum}) and note that they are independent of 
the ambient Murray-von Neumann algebra (see Proposition \ref{prop:spec_inv}). More specifically, for Murray-von Neumann algebras $\fM, \fN$ 
with $\fN \hookrightarrow \fM$ and an operator $A \in \fN$, the spectrum of $A$ 
relative to $\fN$ is identical to the spectrum of $A$ relative to $\fM$. The same goes for the point spectrum. Hence we may use the notation $\mathrm{sp}(A)$ ($\mathrm{sp}_e (A)$, respectively) to denote the spectrum of $A$ (point spectrum of $A$, respectively) without reference to its ambient Murray-von Neumann algebra.\footnote{For operators in $\afr$ (acting on the Hilbert space $\mathscr{H}$), this coincides with the usual 
definition of spectrum and point spectrum.} Furthermore, we note that the spectrum of an element of a 
Murray-von Neumann algebra is a closed subset of $\C$, and for a normal element, the spectrum is 
non-empty. Thus the Borel function calculus of a normal element $A \in \fM$ is well-defined as the unique $\sigma$-normal $*$-homomorphism from $\mathscr{B}_u(\mathrm{sp}(A))$, the space of Borel functions on 
$\mathrm{sp}(A)$, into $\fM$, mapping the constant function $1$ onto $I$ and the identity transformation 
$\iota$ on $\mathrm{sp}(A)$ onto $A$.

For $n \in \N$, note that $M_n(\mathscr{R})$ is a finite von Neumann algebra. From Theorem \ref{thm:matrix-iso}, we have the isomorphism $M_n(\afr) \cong M_n(\mathscr{R})_{\textrm{aff}}$, which provides the foundations for matrix-theoretic arguments in Murray-von Neumann algebras. With the help of the Borel function calculus, approximation techniques in the $\mathfrak{m}$-topology, and matrix-theoretic arguments, we transfer several fundamental operator  inequalities (see Theorem \ref{thm:op-mon}, \ref{thm:op-conv}) involving bounded self-adjoint operators to the setting of (unbounded) self-adjoint operators in a Murray-von Neumann algebra.
\vskip 0.1in
\noindent {\bf Theorem \ref{thm:op-mon}}
\textsl{
Let $f: [0, \infty) \to \R$ be an operator monotone function and $A, B \in \afr$ be positive operators such that $A \le B$. Then $f(A) \le f(B)$.
}
\vskip 0.1in

\noindent {\bf Theorem \ref{thm:op-conv}}
\textsl{
For an operator convex function $f: [0, \infty) \to \R$ with $f(0) = 0$, we have the following:
\begin{itemize}
    \item[(i)] $f(tA \afrsum (1-t)B) \le tf(A) \afrsum (1-t)f(B)$ for all positive operators $A, B$ in $\afr$;\footnote{For $A, B \in \afr$, $A \afrsum B := \overline{A+B}, A \afrprod B := \overline{AB}$.}
    \item[(ii)] $f(V^* \afrprod A \afrprod V) \le V^* \afrprod f(A) \afrprod V$ for every positive contraction $V$ in $\mathscr{R}$ (that is, $\|V\| \le 1$) and every positive operator $A$ in $\afr$;
    \item[(iii)] $f(V^* \afrprod A \afrprod V \afrsum  W^* \afrprod B \afrprod W) \le V^* \afrprod f(A) \afrprod V \afrsum  W^* \afrprod f(B) \afrprod W$ for all $V, W$ in $\mathscr{R}$ with $V^*V + W^*W \le I$ and all positive operators $A, B$ in $\afr$;
    \item[(iv)] $f(E \afrprod A \afrprod E) \le E \afrprod f(A) \afrprod E$ for every projection $E$ in $\mathscr{R}$ and every positive operator $A$ in $\afr$.
\end{itemize} 
}

\subsection{Acknowledgments}
I would like to thank Dmitri Pavlov for his illuminating answers on the website \href{https://mathoverflow.net}{MathOverflow} (in particular, \cite{pavlov}) and sharing his study of the opposite category of commutative von Neumann algebras in \cite{pavlov-gelfand}. I am also grateful to Zhe Liu and Raghavendra Venkatraman for helpful discussions, and to K. V. Shuddhodan for  valuable feedback that helped improve the exposition of the paper.

\section{Preliminaries}

\subsection{Notation and Terminology}
Throughout this article, $\mathscr{H}$ denotes a Hilbert space over the complex numbers (usually infinite-dimensional, though not necessarily separable), $\mathscr{R}$ denotes a finite von Neumann algebra acting on $\mathscr{H}$, and $\fM$ denotes a Murray-von Neumann algebra. For an unbounded operator $T$ acting on $\mathscr{H}$, we denote its domain of definition by $\mathscr{D}(T)$. We use the term `unbounded' to mean `not necessarily bounded' rather than `not bounded'. The closure of a pre-closed operator $T$ is denoted by $\overline{T}$.

A complex $*$-algebra $\mathfrak{A}$ is said to be {\it ordered} if the Hermitian elements in $\mathfrak{A}$ form an ordered real vector space. For an ordered complex $*$-algebra $\mathfrak{A}$ (such as von Neumann algebras, Murray-von Neumann algebras, etc.), we denote the set of self-adjoint elements in $\mathfrak{A}$ by $\mathfrak{A}^{\mathrm{sa}}$, and the positive cone of $\mathfrak{A}^{\mathrm{sa}}$ by $\mathfrak{A}^{+}$. Any remark on the order properties of $\fA$ actually pertain to $\mathfrak{A}^{\mathrm{sa}}$. (For example, when we say $\fA$ is monotone complete, we really mean $\mathfrak{A}^{\mathrm{sa}}$ is monotone complete.)

 The identity operator in $\mathfrak{A}$ is denoted by $I$ and the identity matrix of $M_n(\mathfrak{A})$ is denoted by ${\bf I}_n$. We denote a net in $\fA$ by $\{ T_i \}$ suppressing the indexing set of $i$ (denoted by $\Lambda$) when it is clear from the context. A function (or $\mu$-measurable function for a measure $\mu$) on a topological space is complex-valued unless stated otherwise. For a subset $S$ of $\C$, the set of bounded Borel functions on $S$ is denoted by $\mathscr{B}_b(S)$, and the set of Borel functions on $S$ is denoted by $\mathscr{B}_u(S)$. The general references used are \cite{kadison-ringrose1}, \cite{kadison-ringrose2}, \cite{takesaki1}.
 
A partially ordered vector space $V$ is said to be {\it monotone $\sigma$-complete} if every monotone increasing sequence of elements in $V$ that is bounded above, has a least upper bound in $V$. More generally, if every monotone increasing net in $V$ that is bounded above has a least upper bound in $V$, then $V$ is said to be {\it monotone complete}. 
 
 \begin{definition}
Let $V_1,  V_2$ be monotone-$\sigma$-complete partially ordered vector spaces and $\Phi : V_1 \to V_2$ be an order-preserving linear map. For a monotone increasing sequence $\{ H_n \}_{n \in \N}$ in $V_1$ that is bounded above, clearly $\{ \Phi(H_n)\}_{n \in \N}$ is a monotone increasing sequence in $V_2$ that is bounded above. We say that the map $\Phi$ is {\it $\sigma$-normal} if for any such sequence $\{H_n\}$ in $V_1$, we have $\Phi(\sup H_n) = \sup \Phi(H_n)$.

Similarly, let $V_1,  V_2$ be monotone-complete partially ordered vector spaces and $\Phi : V_1 \to V_2$ be an order-preserving linear map. For a monotone increasing net $\{ H_i \}$ in $V_1$ that is bounded above, $\{ \Phi(H_i)\}$ is a monotone increasing sequence in $V_2$ that is bounded above. We say that the map $\Phi$ is {\it normal} if for any such net $\{H_i\}$ in $V_1$, we have $\Phi(\sup H_i) = \sup \Phi(H_i)$.
\end{definition}

\begin{example}
The self-adjoint part of a von Neumann algebra is monotone complete with respect to the usual operator ordering. The space of real-valued bounded Borel functions on $S \subseteq \C$ is monotone $\sigma$-complete with respect to the ordering corresponding to the cone of positive functions therein. Similarly, the space of real-valued unbounded Borel functions on $S \subseteq \C$ is monotone $\sigma$-complete.
\end{example}

\subsection{Relevant concepts from the theory of von Neumann algebras}
In this subsection, we recall some basic definitions and results from the theory of von Neumann algebras that are relevant to our discussion. We also prove non-commutative analogues of two results from basic measure theory (see Lemma \ref{lem:absolute_continuity}, Lemma \ref{lem:lebesgue-radon-nikodym}) that are crucial to our discussion.

Let $\mathscr{M}$ be a von Neumann algebra acting on the Hilbert space $\mathscr{H}$. The set of projections in $\mathscr{M}$, partially ordered by the usual operator ordering, is a complete lattice. In other words, every family of projections $\{ E_i \}$ in $\mathscr{M}$ has a least upper bound, and a greatest lower bound, in $\mathscr{M}$. We denote the least upper bound by $\vee _i E_i$ and call it the {\it union} of the family $\{ E_i \}$. We denote the greatest lower bound by $\wedge _i E_i$ and call it the {\it intersection} of the family $\{E_i\}$.

Two projections $E$ and $F$ in $\mathscr{M}$ are said to be {\it equivalent} (written $E \sim F$) when $V^*V = E$ and $VV^* = F$ for some $V \in \mathscr{M}$. We say that $E$ is {\it weaker} than $F$ (written $E \lesssim F$) when $E$ is equivalent to a subprojection of $F$. The reader may refer to \cite[Chapter 6]{kadison-ringrose2} for a detailed account of the comparison theory of projections.

\begin{definition}
 For an operator $T$ in $\mathscr{M}$, the projection onto the closure of the range of $T$ in $\mathscr{H}$ is said to be the {\it range projection} of $T$, and denoted by $\mathcal{R}(T)$. The projection onto the null space of $T$ is denoted by $\mathcal{N}(T)$. 

The range projection of $T$ is the smallest projection in $\mathscr{M}$ amongst all projections $E$ in $\mathscr{M}$ satisfying $ET = T$. Similarly, $\sN (T)$ is the largest projection $F$ in $\mathscr{M}$ such that $TF = 0$.
\end{definition}

\begin{prop}[see {\cite[Proposition 2.5.13]{kadison-ringrose1}, \cite[Proposition 6.1.6]{kadison-ringrose2}}]
\label{prop:range_proj}
\textsl{
Let $\mathscr{M}$ be a von Neumann algebra acting on the Hilbert space $\mathscr{H}$. For an operator $T$ in $\mathscr{M}$, we have:
\begin{itemize}
\item[(i)] $\mathcal{R}(T) = I - \mathcal{N}(T^*)$ and $\mathcal{N}(T) = I - \mathcal{R}(T^*)$;
\item[(ii)] $\mathcal{R}(T) = \mathcal{R}(TT^*)$ and $\mathcal{N}(T) = \mathcal{N}(T^*T)$;
\item[(iii)] $\mathcal{R}(T)$ and $\mathcal{N}(T)$ are in $\mathscr{M}$;
\item[(iv)] $\mathcal{R}(T) \sim \mathcal{R}(T^*)$ relative to $\mathscr{M}$.
\end{itemize}
}
\end{prop}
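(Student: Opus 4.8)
The plan is to prove the assertions in the order (iii), (i), (ii), (iv), using only elementary Hilbert space geometry together with the polar decomposition. For (iii), recall that $\mathscr{M}'$ is the linear span of its unitary elements, so it suffices to show that $\mathcal{R}(T)$ and $\mathcal{N}(T)$ commute with every unitary $U' \in \mathscr{M}'$. Since $U'$ commutes with $T$, it maps the range $T\mathscr{H}$ onto itself and $\ker T$ onto itself; being unitary, it preserves their closures as well, hence commutes with the projections onto them. By the double commutant theorem, $\mathcal{R}(T), \mathcal{N}(T) \in (\mathscr{M}')' = \mathscr{M}$, which is precisely what legitimizes the definition; the extremal characterizations recorded there are then immediate, since $ET = T$ forces $E$ to be the identity on $T\mathscr{H}$ and hence on $\overline{T\mathscr{H}}$, giving $E \ge \mathcal{R}(T)$, and dually $TF = 0$ forces $F\mathscr{H} \subseteq \ker T$, giving $F \le \mathcal{N}(T)$.

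For (i), I would use the identity $\ker(T^*) = (T\mathscr{H})^{\perp}$: a vector $x$ is orthogonal to $T\mathscr{H}$ if and only if $\langle T^*x, y\rangle = \langle x, Ty\rangle = 0$ for every $y \in \mathscr{H}$, i.e.\ if and only if $T^*x = 0$. Taking orthogonal complements and passing to the associated projections gives $\mathcal{N}(T^*) = I - \mathcal{R}(T)$, and replacing $T$ by $T^*$ gives $\mathcal{N}(T) = I - \mathcal{R}(T^*)$. For (ii), the crux is the elementary fact $\ker(TT^*) = \ker(T^*)$: the inclusion $\supseteq$ is obvious, and if $TT^*x = 0$ then $\|T^*x\|^2 = \langle TT^*x, x\rangle = 0$, forcing $T^*x = 0$. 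Hence $\mathcal{N}(TT^*) = \mathcal{N}(T^*)$, and since $TT^*$ is self-adjoint, part (i) applied to it yields $\mathcal{R}(TT^*) = I - \mathcal{N}(TT^*) = I - \mathcal{N}(T^*) = \mathcal{R}(T)$, the last step by (i) again. The companion identity $\mathcal{N}(T^*T) = \mathcal{N}(T)$ follows by applying this with $T^*$ in place of $T$.

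Finally, for (iv), I would invoke the polar decomposition $T = V|T|$, where $|T| = (T^*T)^{1/2}$ and $V$ is the partial isometry with $V^*V$ the projection onto $\overline{|T|\mathscr{H}}$ and $VV^*$ the projection onto $\overline{T\mathscr{H}} = \mathcal{R}(T)\mathscr{H}$. Since $\ker|T| = \ker(T^*T) = \ker T$ by the positivity argument used in (ii), and $|T|$ is self-adjoint, part (i) gives $\overline{|T|\mathscr{H}} = (\ker|T|)^{\perp} = (\ker T)^{\perp} = \overline{T^*\mathscr{H}}$, so $V^*V = \mathcal{R}(T^*)$; thus $V$ implements the equivalence $\mathcal{R}(T) \sim \mathcal{R}(T^*)$ relative to $\mathscr{M}$. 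The one point deserving genuine care — and the only real obstacle — is the claim that $V$ lies in $\mathscr{M}$ rather than merely in $\mathscr{B}(\mathscr{H})$; this is handled exactly as in (iii): for a unitary $U' \in \mathscr{M}'$, the operator $U'VU'^*$ is a partial isometry vanishing on $\ker|T|$ with $(U'VU'^*)|T| = U'TU'^* = T = V|T|$, so uniqueness of the polar decomposition forces $U'VU'^* = V$, whence $V \in (\mathscr{M}')' = \mathscr{M}$. Everything else is routine.
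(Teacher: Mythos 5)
Your argument is correct, and since the paper offers no proof of this proposition (it simply cites Kadison--Ringrose), your route --- the kernel/range duality $\ker T^*=(T\mathscr{H})^{\perp}$, the positivity trick for $\ker(TT^*)=\ker T^*$, and the polar decomposition combined with the unitary-commutant/double-commutant argument to place $\mathcal{R}(T)$, $\mathcal{N}(T)$ and the partial isometry $V$ inside $\mathscr{M}$ --- is essentially the standard proof from those references. The only step left implicit is that $|T|=(T^*T)^{1/2}\in\mathscr{M}$ (a norm-limit of polynomials in $T^*T$), which you tacitly use when commuting $U'^*$ past $|T|$ and when noting $U'$ preserves $\ker|T|$.
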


\begin{remark}
A normal (or $\sigma$-normal) linear map between ordered $*$-algebras is automatically $*$-preserving as by definition, it is order-preserving. In the case of normal (or $\sigma$-normal) homomorphisms, for emphasis we prefer to say normal (or $\sigma$-normal) $*$-homomorphism.
\end{remark}

\begin{definition}
A {\it normal state} of a von Neumann algebra $\mathscr{M}$ is a positive linear functional $\omega$ of $\mathscr{M}$ such that $\omega(I) = 1$ and for any increasing net of projections $\{ E_i \}$ in $\mathscr{M}$, we have $\omega(\sup E_i) = \sup \omega(E_i)$. The {\it support projection} of a normal state $\omega$ of $\mathscr{M}$ is the (unique) smallest projection $E$ in $\mathscr{M}$ such that $\omega(I-E) = 0$. We denote the support projection of $\omega$ by $S_{\omega}$. In fact, $\omega(TS_{\omega}) = \omega(S_{\omega}T) = \omega(T)$ for all operators $T \in \mathscr{M}$.

A normal state $\omega$ of $\mathscr{M}$ is said to be {\it tracial} if $\omega(AB) = \omega(BA)$ for all $A, B \in \mathscr{M}$. For a normal tracial state, the support projection lies in the center of $\mathscr{M}$.
\end{definition}

Note that for a normal state $\omega$ on a von Neumann algebra $\mathscr{M}$ and a decreasing net of projections $\{ E_i \}$ in $\mathscr{M}$, we have $\omega (\inf E_i) = \inf \omega (E_i)$.

\begin{remark}
\label{rmrk:tr_ineq}
Let $\mathscr{M}$ be a von Neumann algebra and $\tau$ be a normal tracial state on $\mathscr{M}$. For projections $E_1, E_2$ in $\mathscr{M}$, using the Kaplansky identity $(E_1 \vee E_2 - E_1) \sim (E_2 - E_1 \wedge E_2)$ and the fact that $\tau$ is tracial, we have $$\tau (E_1 \vee E_2) = \tau(E_1) + \tau (E_2) - \tau (E_1 \wedge E_2)$$ which implies that $\tau (E_1 \vee E_2) \le \tau (E_1) + \tau (E_2)$. By induction, we see that for projections $E_1, E_2, \ldots, E_n$ in $\mathscr{M}$, we have  $$\tau (\bigvee_{i=1}^n E_i) \le \sum_{i=1}^n \tau (E_i).$$ Using the normality of $\tau$, we conclude that for countably many projections $E_1, E_2, \ldots$ in $\mathscr{M}$, we have $$\tau (\bigvee_{i=1}^{\infty} E_i) \le \sum_{i=1}^{\infty} \tau (E_i).$$
\end{remark}

\begin{lem}[Absolute continuity]
\label{lem:absolute_continuity}
\textsl{
Let $\mathscr{M}$ be a von Neumann algebra. Let $\tau _1, \tau _2$ be normal tracial states on $\mathscr{M}$ such that $S_{\tau _2} \le S_{\tau _1}$. Then for every $\delta _2 > 0$, there exists $\delta _1 > 0$ such that whenever $\tau _1(E) < \delta _1$ for a projection $E$ in $\mathscr{M}$, we have $\tau _2 (E) < \delta _2$.
}
\end{lem}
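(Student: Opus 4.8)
The plan is to argue by contradiction, extracting from a failure of absolute continuity a sequence of projections whose $\omega_1$-values are summable but whose $\omega_2$-values stay bounded below, and then to produce a projection dominated by $S_{\omega_1}$ with $\omega_1$-value zero but $\omega_2$-value positive, contradicting $S_{\omega_2}\le S_{\omega_1}$. Concretely, suppose the conclusion fails for some $\varepsilon_2>0$. Then for each $n\in\N$ there is a projection $E_n$ in $\mathscr{M}$ with $\omega_1(E_n)<2^{-n}$ but $\omega_2(E_n)\ge\varepsilon_2$. Set $F_n:=\sup_{k\ge n}E_k$ and $F:=\inf_n F_n$. The $F_n$ form a decreasing net of projections, so by normality of $\omega_2$ on decreasing nets (noted right before the lemma) we get $\omega_2(F)=\inf_n\omega_2(F_n)\ge\inf_n\omega_2(E_n)\ge\varepsilon_2>0$, using $E_n\le F_n$ and positivity/monotonicity of $\omega_2$ on projections.

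On the other hand I want $\omega_1(F)=0$. Here the subtlety is that $\sup_{k\ge n}E_k$ need not be controlled by $\sum_{k\ge n}\omega_1(E_k)$ in a general von Neumann algebra, since the projection lattice is non-distributive and there is no countable subadditivity for the supremum of projections. The standard fix is to pass from suprema of projections to the range projections of sums of the corresponding positive operators: $\sup_{k\ge n}E_k=\mathcal{R}\!\left(\sum_{k\ge n}2^{-k}E_k\right)$, and for a positive operator $T$ one has the Chebyshev-type bound $\omega_1(\mathcal{R}(T))\le \tfrac{1}{t}\,\omega_1(T)$ applied to spectral projections, or more directly $\mathcal{R}(T)=\sup_{m}\,e_{[1/m,\infty)}(T)$ with $e_{[1/m,\infty)}(T)\le m\,T$. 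Using this with $T_n:=\sum_{k\ge n}2^{-k}E_k$ (a bounded positive operator since the partial sums are increasing and norm-bounded by $\sum 2^{-k}$, hence convergent in the strong operator topology and lying in $\mathscr{M}$), and normality of $\omega_1$ to evaluate $\omega_1(T_n)=\sum_{k\ge n}2^{-k}\omega_1(E_k)\le\sum_{k\ge n}2^{-k}2^{-k}=\tfrac{4}{3}4^{-n}$, we get $\omega_1(F_n)=\omega_1(\mathcal{R}(T_n))\le m\,\omega_1(T_n)$ for every $m$ after replacing $F_n$ by $e_{[1/m,\infty)}(T_n)$; taking the sup over $m$ needs a little care, so instead I would directly show $\omega_1(F)=0$ by noting $F\le F_n$ forces $\omega_1(F)\le\omega_1(F_n)$, and $\omega_1(F_n)\to 0$: indeed $F_n=\mathcal{R}(T_n)$ and since $T_n\to 0$ in norm while $\mathcal{R}(T_n)$ need not decrease, one instead observes $F=\inf_n F_n\le \mathcal{R}(T_n)$ for all $n$ and $F T_n = $ ... — cleaner is to use that $\omega_1(F_n)\le \varphi(n)\to 0$ via the spectral estimate for a fixed threshold. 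Let me simply record: for any $\delta\in(0,1)$, $F_n = \mathcal{R}(T_n) = e_{(0,\infty)}(T_n)$, and $e_{[\delta,\infty)}(T_n)\le\delta^{-1}T_n$ gives $\omega_1(e_{[\delta,\infty)}(T_n))\le\delta^{-1}\omega_1(T_n)\to 0$; letting first $n\to\infty$ then $\delta\to 0$, together with $\omega_1(F_n)=\sup_{\delta\to 0}\omega_1(e_{[\delta,\infty)}(T_n))$ by normality of $\omega_1$, yields $\inf_n\omega_1(F_n)=0$, hence $\omega_1(F)=0$.

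Finally, $\omega_1(F)=0$ means $F\le I-S_{\omega_1}$ by definition of the support projection, so $F\le I-S_{\omega_1}\le I-S_{\omega_2}$, whence $\omega_2(F)=0$, contradicting $\omega_2(F)\ge\varepsilon_2>0$. This completes the argument. The main obstacle, and the only place requiring genuine care rather than bookkeeping, is precisely the passage from countable suprema of projections to range projections of sums of positive operators together with the spectral (Chebyshev) estimate — this is what substitutes for the countable subadditivity of a measure and makes the non-commutative "Borel–Cantelli" step go through; everything else is monotone/normal-functional routine.
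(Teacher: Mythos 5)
Your skeleton (Borel--Cantelli: pass to $F_n=\bigvee_{k\ge n}E_k$ and $F=\bigwedge_nF_n$, get $\omega_2(F)\ge\varepsilon_2$ from normality on the decreasing net and $\omega_2(F)=0$ from $F\le I-S_{\omega_1}\le I-S_{\omega_2}$) is exactly the paper's, and those parts are fine. The gap is in your proof that $\omega_1(F)=0$. The Chebyshev estimate gives $\omega_1\bigl(e_{[\delta,\infty)}(T_n)\bigr)\le\delta^{-1}\omega_1(T_n)\le\delta^{-1}\tfrac43 4^{-n}$, and $\omega_1(F_n)=\lim_{\delta\downarrow0}\omega_1\bigl(e_{[\delta,\infty)}(T_n)\bigr)$ by normality; but to evaluate $\omega_1(F_n)$ you must send $\delta\to0$ \emph{for fixed $n$}, where your bound blows up. Your ``first $n\to\infty$, then $\delta\to0$'' computes the other iterated limit, and the two need not agree. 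Indeed the statement you are trying to derive --- that $\omega_1(E_k)<2^{-k}$ alone forces $\omega_1(F)=0$ --- is false for a non-tracial normal state: in $B(\ell^2)$ with $\omega_1=\mathrm{Tr}(S\,\cdot)$, $S$ a strictly positive trace-class density, one can build rank-one projections $E_k=P_{v_k}$ with $\langle Sv_k,v_k\rangle<2^{-k}$ whose tails $\{v_k\}_{k\ge n}$ span a dense subspace for every $n$ (take $v_{2k},v_{2k+1}$ proportional to $w\pm\lambda_k e_{N_k}$ with $w$ running through a dense set with repetition, $\lambda_k$ large and $N_k$ chosen so that $\langle Se_{N_k},e_{N_k}\rangle$ is tiny; then $w\in\mathrm{span}(v_{2k},v_{2k+1})$). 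For these, $F_n=I$ for all $n$, so $F=I$ and $\omega_1(F)=1$. Since your derivation of $\omega_1(F)=0$ never uses $\omega_2$, it cannot be correct, and this route is not repairable as written. (Such $E_k$ necessarily tend to $0$ $\sigma$-strongly, so no normal $\omega_2$ keeps $\omega_2(E_k)\ge\varepsilon_2$ and the lemma itself survives --- but that information is not used where you need it.)

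You correctly identified the crux: the paper closes this step by asserting $\omega_1\bigl(\bigvee_{k\ge n}E_k\bigr)\le\sum_{k\ge n}\omega_1(E_k)$, and you are right that this countable subadditivity over suprema of projections fails for general normal states (already in $M_2(\C)$). It does hold when $\omega_1$ is \emph{tracial}: by the Kaplansky formula $E\vee F-F\sim E-E\wedge F\precsim E$, so $\tau(E\vee F)\le\tau(E)+\tau(F)$, and one passes to countable suprema by normality; and in every application of the lemma in the paper the first state is tracial, so the paper's argument suffices for its purposes. If you want the lemma for arbitrary normal $\omega_1$ (as stated), abandon the limsup projection and argue by $\sigma$-weak compactness instead: the sets $K_\delta:=\{x\in\mathscr{M}^{+}:\|x\|\le1,\ \omega_1(x)\le\delta\}$ are $\sigma$-weakly compact and decrease, as $\delta\downarrow0$, to $\{x:0\le x\le I-S_{\omega_1}\}$, on which $\omega_2$ vanishes since $S_{\omega_2}\le S_{\omega_1}$; a $\sigma$-weak cluster point $x$ of the putative sequence $E_n\in K_{1/n}$ lies in every $K_\delta$, yet $\omega_2(x)\ge\varepsilon_2$ by $\sigma$-weak continuity of $\omega_2$ --- a contradiction. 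Either restrict to tracial $\omega_1$ and use subadditivity, or switch to this compactness argument; the Chebyshev/range-projection device does not bridge the gap.
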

\begin{proof}
 On the contrary, let us assume that there is a $\varepsilon > 0$ such that for every positive integer $n$, there is a projection $E_n$ in $\mathscr{R}$ such that $\tau _1(E_n) < \frac{1}{2^n}$ and $\tau _2(E_n) \ge \varepsilon$. Consider the following projection, $$F := \bigwedge_{n = 1}^{\infty}  \Big( \bigvee_{k=n}^{\infty} E_k \Big).$$

From Remark \ref{rmrk:tr_ineq}, for $n \in \N$, we have $\tau _1 (\bigvee_{k=n}^{\infty} E_k) \le \sum_{k=n}^{\infty}\tau _1(E_k) < \frac{1}{2^{n-1}}$. From the normality of $\tau _1$, we have $\tau _1(F) = 0$ which implies that $S_{\tau _1}F = 0$. Since $S_{\tau _2} \le S_{\tau _1}$, we conclude that $S_{\tau _2}F = 0$. On the other hand, since $\tau _2 (\bigvee_{k=n}^{\infty} E_k) \ge \varepsilon$ for all $n \in \N$, by the normality of $\tau _2$, we have $\tau _2(F) = \tau _2( S_{\tau _2} F) \ge \varepsilon$. This leads us to a contradiction.
\end{proof}

The classical Lebesgue decomposition theorem (see \cite[Theorem 6.10]{rudin2}) states that for every pair of $\sigma$-finite measures $\mu, \nu$ on a measurable space, there exist two $\sigma$-finite signed measures $\nu _1$ and $\nu_2$ on the space such that 
\begin{itemize}
    \item[(i)] $\nu = \nu_1 + \nu_2$;
    \item[(ii)] $\nu_1$  and $\mu$ are mutually singular;
    \item[(iii)] $\nu_2$ is absolutely continuous with respect to $\mu$.
\end{itemize}
We note a non-commutative version of the Lebesgue decomposition theorem below.
\begin{lem}[Non-commutative Lebesgue decomposition]
\label{lem:lebesgue-radon-nikodym}
\textsl{
Let $\mathscr{R}$ be a finite von Neumann algebra and $\tau, \rho$ be normal tracial states on $\mathscr{R}$. Then $\rho$ may be decomposed into a convex combination of two normal tracial states $\rho _1, \rho _2$ on $\mathscr{R}$ satisfying $S_{\rho _1} S_{\tau} = 0,$ and $S_{\rho _2 } \le S_{\tau}$. (In analogy with the classical Lebesgue decomposition theorem, $\rho _1$  and $\tau$ are mutually singular, and $\rho _2$ is absolutely continuous with respect to $\tau$.)
}
\end{lem}
\begin{proof}
 If $S_{\rho} S_{\tau} = 0$, that is, the support projections of $\tau$ and $\rho$ are orthogonal, we may decompose $\rho$ as $\rho = 1 \cdot \rho + 0 \cdot \tau$ (choosing $\rho _1 = \rho, \rho _2 = \tau$). If $S_{\rho} \le S_{\tau}$, we may choose a normal tracial state $\lambda$ on $\mathscr{R}$ such that the support projection of $\lambda$ is orthogonal to the support projection of $\tau$, and decompose $\rho$ as $\rho = 0 \cdot \lambda + 1 \cdot \rho$ (choosing $\rho _1 = \lambda, \rho _2 = \rho$). Thus for the rest of the proof, we may assume that $S_{\tau} S_{\rho} \ne 0, S_{\rho } (I  - S_{\tau}) \ne 0$.
 
 Let $E := S_{\rho} - S_{\tau} \wedge S_{\rho} = S_{\rho}(I-S_{\tau})$ and $F := S_{\rho} S_{\tau}$. Note that $\rho(E) \ne 0, \rho(F) \ne 0,$ and $\rho(E) + \rho(F) = 1$. Furthermore, $ES_{\tau} = 0$ and $F \le S_{\tau}$. Let $\rho _1, \rho _2$, respectively, be the normal tracial states on $\mathscr{R}$ defined by $$\rho _1(A) = \frac{\rho(EA)}{\rho(E)},\;\; \rho _2(A) = \frac{\rho (F A)}{\rho(F)}, \textrm { for } A \in \mathscr{R}.$$
 We may decompose $\rho$ as $\rho = \rho(E) \cdot \rho _1 + \rho(F) \cdot \rho _2$. Since $S_{\rho _1} = E, S_{\rho _2} = F$, we are done.
\end{proof}

\subsection{Murray-von Neumann algebras}

 A concise account of the theory of unbounded operators may be found in \cite[\S 4]{kadison-liu}. For a more thorough account, the interested reader may refer to \S 2.7, \S 5.6 in \cite{kadison-ringrose1}, or Chapter VIII in \cite{simon-reed}. In this subsection, we recollect the definition of Murray-von Neuman algebras and note some basic facts about them.

Let $\mathscr{R}$ be a finite von Neumann algebra acting on the Hilbert space $\mathscr{H}$. We denote the set of closed densely-defined operators affiliated with $\mathscr{R}$ by $\afr$. 

\begin{prop}[see {\cite[Proposition 6.8]{kadison-liu}}]
\label{prop:fund_mva}
\textsl{
Let $\mathscr{R}$ be a finite von Neumann algebra acting on the Hilbert space $\mathscr{H}$. For operators $A, B$ in $\mathscr{R}_{\textrm{aff}}$, we have:
\begin{itemize}
\item[(i)] $A+B$ is densely defined, preclosed and has a unique closed extension $A \afrsum B (:= \overline{A+B})$ in $\mathscr{R}_{\textrm{aff}}$;
\item[(ii)] $AB$ is densely defined, preclosed and has a unique closed extension $A \afrprod B (:= \overline{AB})$ in $\mathscr{R}_{\textrm{aff}}$.
\end{itemize}
}
\end{prop}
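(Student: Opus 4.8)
The plan is to reduce everything to facts about single affiliated operators by spectral truncation, to establish density of the relevant domains by a comparison-theoretic argument (which is where finiteness of $\mathscr{R}$ is used essentially), and then to dispatch preclosedness and affiliation of the closures by formal manipulations with adjoints and the commutant. The ingredients I would assemble about one operator $A \in \afr$ are standard: a polar decomposition $A = V_A|A|$ with $V_A \in \mathscr{R}$ a partial isometry and $|A| = (A^*A)^{1/2}$ positive self-adjoint affiliated with $\mathscr{R}$; the spectral truncations $E_n^A := E_{[0,n]}(|A|) \in \mathscr{R}$, with $E_n^A \uparrow I$, $|A|E_n^A$ bounded (so $E_n^A\mathscr{H} \subseteq \mathscr{D}(|A|) = \mathscr{D}(A)$) and $AE_n^A = V_A|A|E_n^A \in \mathscr{R}$; the fact that $A^* \in \afr$; and the fact that a unitary $U' \in \mathscr{R}'$ preserves $\mathscr{D}(A)$ and commutes with $A$ on it. To have comparison theory available for a possibly non-countably-decomposable $\mathscr{R}$, I would invoke the center-valued trace $\Delta : \mathscr{R} \to \mathcal{Z}(\mathscr{R})$ (faithful, normal, tracial), or equivalently a separating family of normal tracial states.

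The finiteness input is the following lemma: if $\{G_i\}$ and $\{H_j\}$ are increasing nets of projections in $\mathscr{R}$ with $\sup_i G_i = \sup_j H_j = I$, then $\sup_{i,j}(G_i \wedge H_j) = I$. This holds because $I - G_i\wedge H_j = (I-G_i)\vee(I-H_j)$ and, by the Kaplansky parallelogram law, $(I-G_i)\vee(I-H_j)\ominus(I-H_j) \sim (I-G_i)\ominus\big((I-G_i)\wedge(I-H_j)\big) \le I-G_i$, so $\Delta\big((I-G_i)\vee(I-H_j)\big) \le \Delta(I-H_j)+\Delta(I-G_i) \downarrow 0$, whence faithfulness and normality of $\Delta$ give $(I-G_i)\vee(I-H_j) \downarrow 0$. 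Since $\mathscr{D}(A+B) = \mathscr{D}(A)\cap\mathscr{D}(B) \supseteq (E_n^A \wedge E_m^B)\mathscr{H}$ for all $n,m$, the lemma immediately shows $A+B$ is densely defined.

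The step I expect to be the main obstacle is the density of $\mathscr{D}(AB) = \{x \in \mathscr{D}(B) : Bx \in \mathscr{D}(A)\}$, where intersecting domains does not suffice. Fixing $m$, I would set $T_n := E_n^{A\,\perp}\,B\,E_m^B$, a bounded operator in $\mathscr{R}$ (because $BE_m^B \in \mathscr{R}$), and observe that $\mathcal{N}(T_n)$ is increasing in $n$ (as $E_n^{A\,\perp}$ is decreasing) and that $\mathcal{N}(T_n) = I - \mathcal{R}(T_n^*)$ with $T_n^* = (BE_m^B)^* E_n^{A\,\perp}$. The elementary comparison estimate $\mathcal{R}(XQ) \precsim Q$ for $X \in \mathscr{R}$ and $Q$ a projection — immediate from polar decomposition, since $\mathcal{R}(XQ) \sim \mathcal{R}(QX^*) \le Q$ — yields $\mathcal{R}(T_n^*) \precsim E_n^{A\,\perp}$, hence $\Delta(\mathcal{R}(T_n^*)) \le \Delta(E_n^{A\,\perp}) \downarrow 0$, and therefore $\mathcal{N}(T_n) \uparrow I$ as $n \to \infty$. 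Now $(E_m^B \wedge \mathcal{N}(T_n))\mathscr{H} \subseteq \mathscr{D}(AB)$ — such a vector lies in $E_m^B\mathscr{H} \subseteq \mathscr{D}(B)$ and is carried by $B$ into $E_n^A\mathscr{H} \subseteq \mathscr{D}(A)$ — and a further trace estimate of the same flavor as the finiteness lemma (using $E_m^B \ominus (E_m^B \wedge \mathcal{N}(T_n)) \precsim \mathcal{R}(T_n^*)$) shows $\sup_m\sup_n\big(E_m^B \wedge \mathcal{N}(T_n)\big) = I$; hence $\mathscr{D}(AB)$ is dense.

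Preclosedness then follows formally: $(A+B)^* \supseteq A^* + B^*$ and $(AB)^* \supseteq B^* A^*$, and since $A^*, B^* \in \afr$ the density results above apply to show $\mathscr{D}((A+B)^*)$ and $\mathscr{D}((AB)^*)$ are dense, so $A+B$ and $AB$ are preclosed. For affiliation of the closures, one checks that each unitary $U' \in \mathscr{R}'$ maps $\mathscr{D}(A+B)$, resp.\ $\mathscr{D}(AB)$, onto itself and commutes with $A+B$, resp.\ $AB$, there — for the product using $BU'x = U'Bx$ and $U'\mathscr{D}(A) = \mathscr{D}(A)$ — and this passes to the closures, so $\overline{A+B}, \overline{AB} \in \afr$; one sets $A \afrsum B := \overline{A+B}$ and $A \afrprod B := \overline{AB}$. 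Finally, uniqueness of the closed \emph{affiliated} extension is routine: any closed $C \in \afr$ extending $A+B$ contains $\overline{A+B}$, and since $\overline{A+B}$ is already affiliated it admits no proper closed affiliated extension (compare, e.g., bounded transforms), so $C = \overline{A+B}$; the argument for the product is identical.
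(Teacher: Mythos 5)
Your argument is correct in substance, and it is worth noting that the paper itself offers no proof of this proposition: it is quoted from Kadison--Liu (Proposition 6.8), whose proof (going back to Murray and von Neumann's Theorem XV) runs through exactly the circle of ideas you have reconstructed -- spectral truncation of $|A|$, the fact that in a finite algebra the lattice operations on projections behave well under increasing limits, and ``essentially dense'' domains of the form $\bigcup_n E_n\mathscr{H}$ with $E_n \uparrow I$. Your version replaces the dimension-function bookkeeping of the classical treatment with center-valued trace estimates, which is an equivalent and clean way to quantify where finiteness enters: the lemma $\sup_{i,j}(G_i\wedge H_j)=I$ via the Kaplansky formula handles $\mathscr{D}(A)\cap\mathscr{D}(B)$, and your treatment of $\mathscr{D}(AB)$ via $T_n = (I-E_n^A)BE_m^B$, $\mathcal{N}(T_n)=I-\mathcal{R}(T_n^*)$ and $\mathcal{R}(T_n^*)\precsim I-E_n^A$ is exactly the right comparison estimate; the passage to preclosedness via $(A+B)^*\supseteq A^*+B^*$, $(AB)^*\supseteq B^*A^*$ and the affiliation of the closures are all sound.

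The one place you wave your hands is the uniqueness step. The claim that a closed densely-defined operator affiliated with a finite von Neumann algebra admits no proper closed affiliated extension is true, but it is itself a finiteness-dependent fact (it fails badly for, say, symmetric operators affiliated with $\mathscr{B}(\mathscr{H})$), so calling it ``routine'' leaves precisely the remaining point where finiteness must be invoked unproved. Your parenthetical hint is the right one, and it can be closed in two lines: for $T\in\afr$ the map $x\mapsto \big((I+T^*T)^{-1/2}x,\, T(I+T^*T)^{-1/2}x\big)$ is an isometry of $\mathscr{H}$ onto the graph of $T$ whose two components lie in $\mathscr{R}$, so the graph projection $\mathbf{G}_T \in M_2(\mathscr{R})$ is equivalent in $M_2(\mathscr{R})$ to $\mathrm{diag}(I,0)$. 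If $S\subseteq T$ with $S,T\in\afr$, then $\mathbf{G}_S\le \mathbf{G}_T$ and $\mathbf{G}_S\sim \mathbf{G}_T$, and finiteness of $M_2(\mathscr{R})$ forces $\mathbf{G}_S=\mathbf{G}_T$, i.e.\ $S=T$; applying this with $S=\overline{A+B}$ (resp.\ $\overline{AB}$) and $T=C$ any closed affiliated extension gives the uniqueness you assert. (Alternatively, uniqueness can be obtained by a separation argument of the type the paper later records in Lemma \ref{lem:sep_proj}, since $C$ and $\overline{A+B}$ agree on the essentially dense domain you constructed.) With that lemma supplied, your proof is complete.
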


\begin{definition}
In view of \cite[Proposition(s) 6.9-6.12]{kadison-liu}, for a finite von Neumann algebra $\mathscr{R}$, the set of affiliated operators $\afr$ may be endowed with the structure of a $*$-algebra with $\afrsum$ as addition and $\afrprod$ as multiplication. With this $*$-algebraic structure, $\mathscr{R}_{\textrm{aff}}$ is called the {\it Murray-von Neumann algebra} associated with $\mathscr{R}$. 

An operator $P$ in $\mathscr{R}_{\textrm{aff}}$ (which is, {\it ipso facto}, closed and densely defined) is said to be {\it positive} if $\langle Px, x \rangle \ge 0$ for all vectors $x \in \mathscr{D}(P)$. The set of positive operators in $\mathscr{R}_{\textrm{aff}}$ is a cone and with this positive cone, $\mathscr{R}_{\textrm{aff}}$ may be viewed as an ordered $*$-algebra. 
\end{definition}

\begin{prop}[see {\cite[Lemma 8.20, Theorem 8.22]{luck}}]
\label{prop:luck}
\textsl{
Let $A$ be an operator in $\afr$. In the context of the ring $\afr$, the following are equivalent:
\begin{itemize}
    \item[(i)] $A$ is not a left zero-divisor;
    \item[(ii)] $A$ is not a zero-divisor;
    \item[(iii)] $A$ is invertible;
    \item[(iv)] $A$ has dense range;
    \item[(v)] $A$ has trivial nullspace.
\end{itemize}
}
\end{prop}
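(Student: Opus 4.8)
The plan is to establish the five implications $(iv) \Leftrightarrow (v)$, $(v) \Rightarrow (iii)$, $(iii) \Rightarrow (ii)$, $(ii) \Rightarrow (i)$, and $(i) \Rightarrow (v)$, thereby closing the cycle. The easy links are the purely ring-theoretic ones: $(iii) \Rightarrow (ii)$ holds because a two-sided inverse makes any zero-divisor relation collapse (if $AX = 0$ then $X = A^{-1}AX = 0$, and symmetrically on the right), and $(ii) \Rightarrow (i)$ is trivial since a left zero-divisor is in particular a zero-divisor. For the spatial equivalence $(iv) \Leftrightarrow (v)$, I would invoke Proposition~\ref{prop:range_proj}: for $A \in \afr$ one has $\mathcal{R}(A) = I - \mathcal{N}(A^*)$ and, crucially, the finiteness of $\mathscr{R}$ forces $\mathcal{R}(A) \sim \mathcal{R}(A^*)$ with $\mathcal{R}(A), \mathcal{R}(A^*) \le I$, so $\mathcal{R}(A) = I$ if and only if $\mathcal{R}(A^*) = I$; combined with $\mathcal{N}(A) = I - \mathcal{R}(A^*)$ this gives that $A$ has dense range iff $\mathcal{N}(A) = 0$ iff $A^*$ has dense range. (One must be mildly careful that $A$ is unbounded, but the range/null-space projection statements carry over to closed affiliated operators via the polar decomposition $A = V|A|$ with $V$ a partial isometry in $\mathscr{R}$ and $|A| \in \afr$ positive; then $\mathcal{R}(A) = VV^*$, $\mathcal{N}(A) = I - V^*V$, and finiteness gives $VV^* = I \Leftrightarrow V^*V = I$.)

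The two substantive implications are $(v) \Rightarrow (iii)$ and $(i) \Rightarrow (v)$. For $(v) \Rightarrow (iii)$: assume $\mathcal{N}(A) = 0$. By part (iv) of Proposition~\ref{prop:range_proj} together with finiteness (as above), $A$ also has dense range, so $V$ in the polar decomposition $A = V|A|$ is a unitary in $\mathscr{R}$ and $|A|$ has trivial null space and dense range. It then suffices to invert $|A|$. Using the Borel (or even just unbounded) function calculus for the positive self-adjoint operator $|A|$ affiliated with $\mathscr{R}$, set $B := f(|A|)$ where $f(t) = t^{-1}$ on $(0,\infty)$; since the spectral projection of $|A|$ at $\{0\}$ is $\mathcal{N}(|A|) = 0$, $f$ is finite-valued on the spectrum "up to a null projection", $B$ is a well-defined positive operator in $\afr$, and $B \afrprod |A| = |A| \afrprod B = I$. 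Then $V^* \afrprod B$ is the required inverse of $A$ in $\afr$. For $(i) \Rightarrow (v)$: suppose $\mathcal{N}(A) \ne 0$; then $\mathcal{N}(A)$ is a nonzero projection in $\mathscr{R} \subseteq \afr$ with $A \afrprod \mathcal{N}(A) = 0$ (the product $A \cdot \mathcal{N}(A)$ is already the zero operator, hence equals its closure), exhibiting $A$ as a left zero-divisor. This completes the cycle.

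The main obstacle is purely bookkeeping rather than conceptual: one has to make sure that all the manipulations — polar decomposition, function calculus, the identities $\mathcal{R}(A)\sim\mathcal{R}(A^*)$ — are legitimately available for \emph{unbounded} closed affiliated operators, and that the "strong sum'' and "strong product'' operations $\afrsum, \afrprod$ of Proposition~\ref{prop:fund_mva} interact correctly with these (e.g., that $B \afrprod |A| = I$ genuinely holds on the nose, using that $|A|^{-1}$ maps the domain issues away because $\mathcal{N}(|A|)=0$). Since Proposition~\ref{prop:range_proj} is stated for operators in a von Neumann algebra and we need the affiliated analogue, the cleanest route is to reduce everything to the bounded case via $V$ and $|A|$ and to the spectral theorem for $|A|$; the finiteness hypothesis on $\mathscr{R}$ is used precisely once, but essentially — to upgrade "injective'' to "invertible'' by ruling out the shift-like behavior $V^*V = I \ne VV^*$ that occurs in properly infinite algebras.
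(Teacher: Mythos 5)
The paper gives no proof of this proposition at all --- it is quoted from L\"uck \cite[Lemma 8.20, Theorem 8.22]{luck} --- so there is no internal argument to compare against; your proposal supplies essentially the standard proof one finds there: reduce to the polar decomposition $A = V|A|$ with $V$ a partial isometry in $\mathscr{R}$ and $|A| \in \afr$ positive, use finiteness exactly once to pass between $V^*V = I$ and $VV^* = I$ (i.e.\ between trivial nullspace and dense range), invert $|A|$ by spectral/Borel calculus since its null projection vanishes, and handle the purely ring-theoretic implications and the implication (i)$\Rightarrow$(v) via the nonzero projection $\mathcal{N}(A) \in \mathscr{R}$ with $A \afrprod \mathcal{N}(A) = 0$. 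The logical structure (the cycle (iii)$\Rightarrow$(ii)$\Rightarrow$(i)$\Rightarrow$(v)$\Rightarrow$(iii) together with (iv)$\Leftrightarrow$(v)) is complete and correct.

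One small slip, easily repaired: with your convention $A = V|A|$ and $B := |A|^{-1}$, the inverse of $A$ is $B \afrprod V^*$ (equivalently $V^* \afrprod |A^*|^{-1}$, where $|A^*| = V|A|V^*$), not $V^* \afrprod B$. As written, $A \afrprod (V^* \afrprod B) = V \afrprod |A| \afrprod V^* \afrprod B$ need not equal $I$, since $V$ and $B$ need not commute; with the corrected order one checks directly that $A \afrprod (B \afrprod V^*) = V \afrprod (|A| \afrprod B) \afrprod V^* = VV^* = I$ and $(B \afrprod V^*) \afrprod A = B \afrprod |A| = I$. This is an ordering typo rather than a gap in the method; the rest of the domain bookkeeping you flag (null and range projections of closed affiliated operators, $B \afrprod |A| = I$ on the nose via the spectral projections of $|A|$) goes through as you indicate.
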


\section{The Measure Topology}

Let $\mathscr{R}$ be a countably decomposable finite von Neumann algebra with a faithful normal tracial state $\tau$. We start our discussion in this section by showing that the $\tau$-measure topology on $\mathscr{R}$ defined by Nelson in \cite[\S 2]{nelson} is, in fact, independent of the choice of $\tau$.

\begin{definition}
For $\varepsilon, \delta >0$, we define
$\sO _{\tau}(\varepsilon, \delta) 
    := \{ A \in \mathscr{R} : $ for some projection $E \textrm{ in } \mathscr{R} \textrm{ with } \tau(I - E)  \le \delta$, we have $\|AE\| \le \varepsilon \}$.
\end{definition}
The {\it $\tau$-measure topology} of $\mathscr{R}$ is defined as the translation-invariant topology generated by the fundamental system of neighborhoods of $0$ given by $\{\sO _{\tau}(\varepsilon, \delta) \}$. 

\begin{thm}
\label{thm:key_thm}
\textsl{
Let $\tau_1, \tau_2$ be faithful normal tracial states on $\mathscr{R}$. Then the $\tau_1$-measure topology and the $\tau_2$-measure topology on $\mathscr{R}$ are identical.
}
\end{thm}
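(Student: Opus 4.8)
The plan is to show that each basic neighborhood $\sO_{\tau_2}(\varepsilon,\delta)$ contains some $\sO_{\tau_1}(\varepsilon',\delta')$, and symmetrically; since the roles of $\tau_1$ and $\tau_2$ are interchangeable it suffices to prove one direction. Because $\mathscr{R}$ is finite and $\tau_1,\tau_2$ are both faithful, their support projections both equal $I$, so Lemma \ref{lem:absolute_continuity} applies in both directions: for every $\eta>0$ there is $\delta>0$ such that $\tau_1(E)<\delta \Rightarrow \tau_2(E)<\eta$ for any projection $E$, and vice versa. This ``mutual absolute continuity'' of the two traces on the projection lattice is the engine of the argument.

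First I would unwind the definition: $A\in\sO_{\tau_i}(\varepsilon,\delta)$ means there is a projection $E$ with $\|AE\|\le\varepsilon$ and $\tau_i(I-E)\le\delta$. Fix $\varepsilon>0$ and $\delta_2>0$; I want $\varepsilon_1,\delta_1$ with $\sO_{\tau_1}(\varepsilon_1,\delta_1)\subseteq\sO_{\tau_2}(\varepsilon,\delta_2)$. Take $\varepsilon_1:=\varepsilon$. Apply Lemma \ref{lem:absolute_continuity} with $\omega_1=\tau_1$, $\omega_2=\tau_2$ (note $S_{\tau_2}=I=S_{\tau_1}$, so $S_{\tau_2}\le S_{\tau_1}$): for the given $\delta_2$ there is $\delta_1>0$ such that $\tau_1(P)<\delta_1$ implies $\tau_2(P)<\delta_2$ for every projection $P$. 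Now if $A\in\sO_{\tau_1}(\varepsilon_1,\delta_1)$, pick a witnessing projection $E$ with $\|AE\|\le\varepsilon_1=\varepsilon$ and $\tau_1(I-E)\le\delta_1$. If the inequality $\tau_1(I-E)\le\delta_1$ is strict we immediately get $\tau_2(I-E)<\delta_2$; to handle the boundary case one can instead run Lemma \ref{lem:absolute_continuity} with $\delta_2$ replaced by $\delta_2/2$ and note $\tau_1(I-E)\le\delta_1$ still forces $\tau_2(I-E)\le\delta_2/2<\delta_2$ — a harmless $\varepsilon$-of-room maneuver. Either way, the same $E$ witnesses $A\in\sO_{\tau_2}(\varepsilon,\delta_2)$, giving the desired inclusion. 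Swapping the indices $1\leftrightarrow 2$ (again legitimate since both supports are $I$) yields the reverse inclusion, so the two fundamental systems of neighborhoods of $0$ are cofinal in each other and hence generate the same translation-invariant topology.

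I expect the only genuine subtlety to be bookkeeping around the non-strict inequalities in the definition of $\sO_\tau(\varepsilon,\delta)$ versus the strict ones in Lemma \ref{lem:absolute_continuity}; this is dispatched by the trivial trick of shrinking the target parameter slightly, so there is no real obstacle. One should also remark explicitly why faithfulness is being used — it is precisely what guarantees $S_{\tau_1}=S_{\tau_2}=I$, which is the hypothesis $S_{\omega_2}\le S_{\omega_1}$ needed to invoke the absolute continuity lemma in both directions; without faithfulness the two measure topologies could genuinely differ. Finally, since the translation-invariant topology on $\mathscr{R}$ is determined entirely by any fundamental system of neighborhoods of $0$, the cofinality just established is exactly the statement that the $\tau_1$-measure topology and the $\tau_2$-measure topology coincide.
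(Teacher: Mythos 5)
Your proposal is correct and follows essentially the same route as the paper: both apply Lemma \ref{lem:absolute_continuity} (using that faithfulness forces $S_{\tau_1}=S_{\tau_2}=I$) to obtain $\sO_{\tau_1}(\varepsilon,\delta_1)\subseteq\sO_{\tau_2}(\varepsilon,\delta_2)$ and conclude by symmetry. Your extra care about the strict-versus-non-strict inequality mismatch is a harmless refinement of a detail the paper elides.
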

\begin{proof}
Let $\delta_2 > 0$. Since $\tau _1$ and $\tau _2$ are faithful normal states, we observe that the identity projection is the support projection for both $\tau _1$ and $\tau _2$, that is, $S_{\tau _1} = S_{\tau _2} = I$. By Lemma \ref{lem:absolute_continuity}, there exists $\delta_1 > 0$ such that whenever $\tau_1(I-E) < \delta_1$ for a projection $E$ in $\mathscr{R}$, we have $\tau_2(I-E) < \delta_2$. Thus $\sO _{\tau_1}(\varepsilon, \delta_1) \subseteq \sO _{\tau_2}(\varepsilon, \delta_2)$ for all $\varepsilon > 0$ and we conclude that the $\tau_1$-measure topology is finer than the $\tau_2$-measure topology. By a symmetric argument, we conclude that the $\tau_2$-measure topology is finer than the $\tau_1$-measure topology. Thus the $\tau_1$-measure topology and the $\tau_2$-measure topology on $\mathscr{R}$ are identical.
\end{proof}

In view of Theorem \ref{thm:key_thm}, we may refer to the $\tau$-measure topology simply as the {\it measure topology} on $\mathscr{R}$. Furthermore, it suggests the possibility of defining the measure topology in a way which makes its intrinsic nature obvious from the outset and allows for generalization to finite von Neumann algebras that are not necessarily countably decomposable. Our main goal in this section is to accomplish this task. Although our treatment is heavily inspired by and closely mirrors that of \cite[\S 2]{nelson}, we refrain from handwavy allusions to the results therein and work out all the details in our context. In the rest of this section (and the manuscript), $\mathscr{R}$ denotes a finite von Neumann algebra acting on the Hilbert space $\mathscr{H}$.

\begin{definition}
For $\varepsilon, \delta > 0$ and a normal tracial state $\tau$ on $\mathscr{R}$, we define $\sO(\tau, \varepsilon, \delta) := \{ A \in \mathscr{R} : $ for some projection $E \textrm{ in } \mathscr{R} \textrm{ with } \tau(I - E)  \le \delta$, we have $\|AE\| \le \varepsilon \}$.

The translation-invariant topology generated by the fundamental system of neighborhoods of $0$ given by $\{  \sO(\tau, \varepsilon, \delta) \}$ is called the {\it $\mathfrak{m}$-topology}\footnote{We think of $\mathfrak{m}$ as `measure' or `measure-theoretic'.} of $\mathscr{R}$. 
\end{definition}

Thus a net $\{ A_i \}$ in $\mathscr{R}$ converges in the $\mathfrak{m}$-topology (or {\it converges in measure}) to an operator $A$ in $\mathscr{R}$ if and only if for every triple $(\tau, \varepsilon, \delta)$ with $\tau$ a normal tracial state on $\mathscr{R}$ and $\varepsilon, \delta > 0$, there is an index $j$ such that $A_i - A \in \sO(\tau, \varepsilon, \delta)$ for all $i \ge j$. We say that a net $\{ A_i \}$ in $\mathscr{R}$ is {\it Cauchy in measure} if for every triple $(\tau, \varepsilon, \delta)$ with $\tau$ a normal tracial state on $\mathscr{R}$ and $\varepsilon, \delta > 0$, there is an index $k$ such that $A_i - A_j \in \sO(\tau, \varepsilon, \delta)$ for all $i, j \ge k$ (or equivalently, if there is an index $k$ such that $A_i - A_k \in \sO(\tau, \varepsilon, \delta)$ for all $i \ge k$).

\begin{remark}
\label{rmrk:trace_contain}
Let $\tau_1, \tau_2$ be normal tracial states on $\mathscr{R}$ such that $S_{\tau_2} \le S_{\tau _1}$. By Lemma \ref{lem:absolute_continuity}, for every $\delta _2 > 0$, there exists $\delta _1 > 0$ such that whenever $\tau _1(I-E) < \delta _1$ for a projection $E$ in $\mathscr{R}$, we have $\tau_2(I-E) < \delta _2$. Rephrased in terms of $\mathcal{O}$-neighbourhoods, we see that for every $\delta _2 > 0$, there exists $\delta _1 > 0$ such that $\sO(\tau_1, \varepsilon, \delta _1) \subseteq \sO(\tau_2, \varepsilon, \delta _2)$ for all $\varepsilon > 0$.
\end{remark}

\begin{definition}
For $\varepsilon, \delta > 0$ and a normal tracial state $\tau$ on $\mathscr{R}$, we define $\sV(\tau, \varepsilon, \delta) 
    := \{ x \in \mathscr{H} : $ for some projection $E \textrm{ in } \mathscr{R} \textrm{ with } \tau(I - E)  \le \delta$, we have $\|Ex\| \le \varepsilon \}$.
    
The translation-invariant topology generated by the fundamental system of neighborhoods of $0$ given by $\{ \sV(\tau, \varepsilon, \delta) \}$ is called the {\it $\mathfrak{m}$-topology} of $\mathscr{H}$. 
\end{definition}

For a net of vectors in $\mathscr{H}$, the notions of {\it convergence in measure} and being {\it Cauchy in measure} are defined analogous to the case of operators in $\mathscr{R}$.

\begin{definition}
Let $\sT$ be a separating family of normal tracial states on $\mathscr{R}$.  The topology on $\mathscr{R}$ generated by the fundamental system of neighborhoods of $0$ given by $\{ \sO(\tau, \varepsilon, \delta):\varepsilon, \delta > 0, \tau \in \sT \}$ is called the {\it $\sT$-measure topology} of $\mathscr{R}$.
\end{definition}

\begin{lem}
\label{lem:family_sep_states}
\textsl{
Let $\sT$ be a separating family of normal tracial states on $\mathscr{R}$. Then the $\sT$-measure topology on $\mathscr{R}$ is identical to the $\mathfrak{m}$-topology on $\mathscr{R}$.
}
\end{lem}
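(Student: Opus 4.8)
The plan is to show the two topologies agree by establishing that each is finer than the other. One direction is immediate: since $\sT$ is a family of normal tracial states on $\mathscr{R}$, every basic neighborhood $\sO(\tau, \varepsilon, \delta)$ with $\tau \in \sT$ is among the basic neighborhoods defining the $\mathfrak{m}$-topology. Hence the $\mathfrak{m}$-topology is automatically finer than the $\sT$-measure topology, and no work is needed here.

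The substantive direction is to show the $\sT$-measure topology is finer than the $\mathfrak{m}$-topology: given an arbitrary normal tracial state $\tau_0$ on $\mathscr{R}$ and $\varepsilon, \delta > 0$, I must find finitely many $\tau_1, \dots, \tau_n \in \sT$ and parameters $\varepsilon_k, \delta_k > 0$ such that $\bigcap_{k=1}^n \sO(\tau_k, \varepsilon_k, \delta_k) \subseteq \sO(\tau_0, \varepsilon, \delta)$. The key step is to exploit that $\sT$ is separating together with the structure theory of the center of $\mathscr{R}$. Recall that for a normal tracial state the support projection is central; by a standard exhaustion argument using separation, one can find $\tau_1, \dots, \tau_n \in \sT$ whose support projections $S_{\tau_1}, \dots, S_{\tau_n}$ have supremum equal to $I$ — indeed, if $P := \sup_k S_{\tau_k}$ were a proper supremum over the whole family, then $I - P$ would be a nonzero central projection annihilated by every state in $\sT$, contradicting that $\sT$ separates points. (Countable decomposability of the center, or a direct transfinite/maximality argument, lets one extract a countable, then finite, subfamily with supremum $I$ up to an arbitrarily small trace remainder; alternatively, replace $\tau_0$ by a convex combination to reduce to finitely many pieces.) Set $\tau' := \frac{1}{n}\sum_{k=1}^n \tau_k$, a normal tracial state with support projection $\sup_k S_{\tau_k}$, which can be arranged so that $\tau_0(I - S_{\tau'})$ is as small as desired.

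The remaining step is a reduction from $\tau_0$ to the state $\tau'$ built from $\sT$. First, because $S_{\tau_0} \le S_{\tau'}$ (arrange this, shrinking a leftover central projection of small $\tau_0$-measure and absorbing it into the $\delta$ in $\sO(\tau_0,\varepsilon,\delta)$ via a projection $E$ that already kills that piece), apply Lemma \ref{lem:absolute_continuity} with $\omega_1 = \tau'$, $\omega_2 = \tau_0$: there is $\delta' > 0$ so that $\tau'(I - E) < \delta' \Rightarrow \tau_0(I - E) < \delta$ for any projection $E$. Therefore $\sO(\tau', \varepsilon, \delta') \subseteq \sO(\tau_0, \varepsilon, \delta)$ (modulo the small central correction). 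Finally, since $\tau' = \frac{1}{n}\sum \tau_k$, we have $\tau_k(I - E) \le n\,\tau'(I-E)$, so $\tau'(I-E) < \delta'$ holds whenever $\tau_k(I - E) < \delta'/n$ for every $k$; equivalently, if $E$ is a common projection witnessing membership, then $\bigcap_{k=1}^n \sO(\tau_k, \varepsilon, \delta'/(n^2))$ — using a common $E$ obtained by taking the infimum of the witnessing projections and absorbing the resulting $\varepsilon$ and $\delta$ losses — lands inside $\sO(\tau', \varepsilon, \delta') \subseteq \sO(\tau_0, \varepsilon, \delta)$. The main obstacle I anticipate is the bookkeeping in this last paragraph: passing from the separate projections $E_k$ witnessing $A \in \sO(\tau_k, \cdot, \cdot)$ to a single projection $E = \bigwedge_k E_k$ forces $\|AE\| \le \varepsilon$ but replaces each $\tau_k(I - E)$ by $\tau_k(\bigvee_k (I - E_k)) \le \sum_k \tau_k(I-E_k)$, so one must choose the input $\delta$'s as $\delta'/n^2$ (or similar) to compensate; keeping track of these constants, and handling the central remainder where $\sup_k S_{\tau_k} \ne I$ exactly rather than approximately, are the only genuinely delicate points, and both are routine once the separation argument is in place.
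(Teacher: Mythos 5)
Your overall architecture matches the paper's: one inclusion is trivial, and for the other you reduce an arbitrary normal tracial state $\tau_0$ to states drawn from $\sT$ via support projections and Lemma \ref{lem:absolute_continuity}. But the final combination step, which you dismiss as routine bookkeeping, contains a genuine gap. You take $E = \bigwedge_k E_k$, where $E_k$ witnesses $A \in \sO(\tau_k, \cdot, \delta_k)$, and then need to bound $\tau'(I-E) = \tfrac{1}{n}\sum_j \tau_j\bigl(\bigvee_k (I-E_k)\bigr) \le \tfrac{1}{n}\sum_j \sum_k \tau_j(I-E_k)$. The diagonal terms $\tau_k(I-E_k)$ are controlled by hypothesis, but the cross terms $\tau_j(I-E_k)$ for $j \ne k$ are completely uncontrolled: membership in $\sO(\tau_k,\varepsilon,\delta_k)$ only guarantees the existence of \emph{some} projection small in $\tau_k$-measure, and that projection may have $\tau_j(I-E_k) = 1$ for every other $j$ (think of $\mathscr{R}$ a direct sum with the $\tau_j$ supported on orthogonal central summands; a witness for $\tau_k$ may be chosen to vanish entirely off $S_{\tau_k}$). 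No choice of input parameters $\delta'/n^2$ or similar repairs this, so the claimed inclusion $\bigcap_k \sO(\tau_k,\varepsilon,\delta'/n^2) \subseteq \sO(\tau',\varepsilon,\delta')$ does not follow.

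The paper's device is to use the \emph{join} $E = \bigvee_{i=1}^k E_i$ instead of the meet. Then $I - E \le I - E_i$ for every $i$, so only diagonal terms $\tau_i(I-E) \le \tau_i(I-E_i)$ ever appear in estimating $\tau'(I-E)$; the price is that $\|AE\|$ is no longer bounded by a single $\|AE_i\|$, and one instead uses $\|AE\|^2 = \|AEA^*\| \le \bigl\|A\bigl(\sum_{i=1}^k E_i\bigr)A^*\bigr\| \le \sum_{i=1}^k \|AE_i\|^2$, which forces the shrinkage $\varepsilon \mapsto \varepsilon/\sqrt{k}$ in the input neighborhoods. You would need this (or an equivalent idea) to close your argument. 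A secondary, more minor issue: your plan to extract \emph{finitely} many states of $\sT$ whose supports cover $S_{\tau_0}$ only approximately forces you to invoke Lemma \ref{lem:absolute_continuity} with merely approximate support domination, which requires an additional Lebesgue-type decomposition of $\tau_0$ that you only gesture at; the paper sidesteps this by first producing a \emph{countable} subfamily with exact domination $S_{\tau_0} \le S_{\tau'}$ for $\tau' = \sum_n 2^{-(n+1)}\tau_n$, and truncating to finitely many states only at the last step, where the tail of the series is absorbed harmlessly into $\delta'$.
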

\begin{proof}
Let $\tau$ be a normal tracial state on $\mathscr{R}$. 
Let $\sF := \{ \bigvee _{n \in \N} S_{\tau _n} : \tau_n \in \sT \textrm{ for } n \in \N   \}$, that is, $\sF$ consists of countable unions of support projections of the states in $\sT$. Clearly $\sF$ is closed under countable unions of its projections. 

\begin{claim}
\label{claim:family_sep_states1}
\textsl{
$S_ {\tau} \le F$ for some projection $F$  in $\sF $.
}
\end{claim}
\begin{claimpff}
Let $\Gamma$ be the subset of $[0, 1]$ given by $\{ \tau(S_{\tau} \wedge E) : E \in \sF \}$ and $\alpha := \sup \Gamma$. Since $\sT$ is a separating family of states on $\mathscr{R}$, clearly $\alpha > 0$. For $n \in \N$, there is a projection $E_n$ in $\sF$ such that $\tau(S_{\tau} \wedge E_n) \ge \alpha - \frac{1}{n}$. Let $F$ denote the projection $\bigvee _{n \in \N} E_n$ in $\mathcal{F}$ so that $F \ge E_n$ for all $n \in \N$. We have $$\tau(S_{\tau} \wedge F) \ge \sup_{n \in \N} \tau(S_{\tau} \wedge E_n) \ge \alpha .$$
Since $F \in \sF$, we see that $\tau(S_{\tau} \wedge F) = \alpha \in \Gamma$. We claim that $S_{\tau} \le F$. On the contrary, let us assume that $S_{\tau} \wedge (I - F) \ne 0$. Since $\mathcal{T}$ is a separating family of states on $\mathscr{R}$, we may choose a normal tracial state $\rho$ in $\sT$ such that $S_{\tau} \wedge (I - F) \wedge S_{\rho} \ne 0$. Note that for a projection $E$ in $\mathscr{R}$, $\tau(S_{\tau} \wedge E) = 0$ if and only if $S_{\tau} \wedge E = 0$. Thus we have 
\begin{align*}
    \phantom{\Longrightarrow} &\tau \big(S_{\tau} \wedge (F \vee S_{\rho} - F)\big) =\tau(S_{\tau} \wedge (I - F) \wedge S_{\rho}) > 0\\
    \Longrightarrow &\tau(S_{\tau} \wedge (F \vee S_{\rho})) > \tau(S_{\tau} \wedge F) = \alpha.
\end{align*} Clearly $F \vee S_{\rho} \in \sF$, contradicting the fact that $\alpha = \sup \Gamma$. Thus $S_{\tau} \le F$. \hfill $\Diamond$
\end{claimpff}

From Claim \ref{claim:family_sep_states1}, there is a countable collection of states $\{ \tau _n \}_{n \in \N}$ in $\sT$ such that $S_{\tau} \le \bigvee_{n \in \N} S_{\tau _n}$. Define, $$\tau ' := \sum_{n=1}^{\infty} \frac{\tau _n}{2^{n+1}}.$$
Note that $\tau '$ is a normal tracial state on $\mathscr{R}$, and $S_{\tau} \le S_{\tau'}$. From Remark \ref{rmrk:trace_contain}, there is a $\delta ' > 0$ such that for all $\varepsilon > 0$, $$\sO(\tau ', \varepsilon, \delta ') \subseteq \sO(\tau, \varepsilon, \delta).$$ Let $k$ be a positive integer such that $2^{-(k+1)} < \frac{\delta '}{2}.$  

\begin{claim}
\label{clm:finer}
$\bigcap _{i=1}^k \sO(\tau _i, \frac{\varepsilon}{\sqrt{k}}, \frac{2^i}{k} \delta') \subseteq \sO(\tau, \varepsilon, \delta).$
\end{claim}
\begin{claimpff}
Let $A \in \bigcap _{i=1}^k \sO(\tau _i, \frac{\varepsilon}{\sqrt{k}}, \frac{2^i}{k} \delta')$. There are projections $E_1, E_2, \cdots, E_k$ in $\mathscr{R}$ such that $\|A E_i \|\le \frac{\varepsilon}{\sqrt{k}}$ and $\tau _i(I-E_i) \le \frac{2^i}{k} \delta'$ for $1 \le i \le k$. Since $E := \bigvee_{i=1}^k E_i \le \sum_{i=1}^k E_i$, we have 
\begin{align*}
\|AE\|^2 = \big\| A E A^* \big\| &\le \|A(E_1 + \cdots + E_k) A^* \| \\
&\le \sum_{i=1}^k \|A E_i A^*\| = \sum_{i=1}^k \|AE_i \|^2 \\
&\le \varepsilon ^2,
\end{align*}
and 
\begin{align*}
    \tau '(I-E) = \sum_{i=1}^{\infty} 2^{-i-1} \tau_i(I-E) &\le \sum_{i=1}^{k} 2^{-i-1} \tau_i(I-E_i) + \sum_{i=k+1}^{\infty} 2^{-i-1}  \\
    &\le \frac{\delta '}{2} + \frac{\delta '}{2} = \delta '.
\end{align*}
Hence, $A \in \sO(\tau ', \varepsilon, \delta ')$, and we conclude that $$\bigcap _{i=1}^k \sO(\tau _i, \frac{\varepsilon}{\sqrt{k}}, \frac{2^i}{k} \delta') \subseteq \sO(\tau ', \varepsilon, \delta ') \subseteq \sO(\tau, \varepsilon, \delta).$$
\hfill $\Diamond$
\end{claimpff}

From Claim \ref{clm:finer}, we observe that the $\sT$-measure topology is finer than the $\mathfrak{m}$-topology. Clearly the $\sT$-measure topology is coarser than the $\mathfrak{m}$-topology. Thus the $\sT$-measure topology on $\mathscr{R}$ is identical to the $\mathfrak{m}$-topology on $\mathscr{R}$.
\end{proof}

\begin{lem}
\label{lem:main}
\textsl{
Let $\tau$ be a normal tracial state on $\mathscr{R}$ and let $\varepsilon, \delta, \varepsilon_1, \delta_1, \varepsilon_2, \delta_2 > 0$. Then we have the following:
\begin{itemize}
    \item[(i)] $\sO(\tau, \varepsilon, \delta)^* (:= \{A^* : A \in \sO(\tau, \varepsilon, \delta)\}) \subseteq \sO(\tau, \varepsilon, 2\delta)$;
    \item[(ii)] $\sO(\tau, \varepsilon_1, \delta_1) + \sO(\tau, \varepsilon_2, \delta_2) \subseteq \sO(\tau, \varepsilon_1 + \varepsilon_2, \delta_1 + \delta_2)$;
    \item[(iii)] $\sO(\tau, \varepsilon_1, \delta_1) \cdot \sO(\tau, \varepsilon_2, \delta_2) \subseteq \sO(\tau, \varepsilon_1 \varepsilon_2, \delta_1 + \delta_2)$;
    \item[(iv)] $\sV(\tau, \varepsilon_1, \delta_1) + \sV(\tau, \varepsilon_2, \delta_2) \subseteq \sV(\tau, \varepsilon_1 + \varepsilon_2, \delta_1 + \delta_2)$;
    \item[(v)] $\sO(\tau, \varepsilon_1, \delta_1) \cdot \sV(\tau, \varepsilon_2, \delta_2) \subseteq \sV(\tau, \varepsilon_1 \varepsilon_2, 2 \delta_1 + \delta_2)$;
\end{itemize}
}
\end{lem}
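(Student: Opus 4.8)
The plan is to dispatch all five inclusions with a single device: membership of an element in $\sO(\tau,\varepsilon,\delta)$, or of a vector in $\sV(\tau,\varepsilon,\delta)$, is \emph{certified} by a witness projection $E$, and to certify a combination one manufactures a new witness from the given ones using the projection lattice of $\mathscr{R}$ together with range and null projections of products. The two recurring facts are the sub-additivity $\tau\big((I-E)\vee(I-F)\big)\le\tau(I-E)+\tau(I-F)$, valid for any normal tracial state, and $\mathcal{R}(PT)\le P$ for a projection $P$; combined with Proposition \ref{prop:range_proj}(iv) and traciality (equivalent projections have equal $\tau$), these control every trace estimate that appears. Inclusions (ii) and (iv) are then immediate: if $A_j$ (resp. $x_j$) is witnessed by $E_j$, set $E:=E_1\wedge E_2$; since $E\le E_j$, one has $\|A_jE\|=\|A_jE_jE\|\le\varepsilon_j$ (resp. $\|Ex_j\|\le\varepsilon_j$), the triangle inequality gives the norm bound, and $\tau(I-E)=\tau\big((I-E_1)\vee(I-E_2)\big)\le\delta_1+\delta_2$.

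For (iii) the additional idea is to pass to a subprojection $E\le E_2$ on which $A_2$ maps into the range of $E_1$. Take $E:=E_2\wedge\mathcal{N}\big((I-E_1)A_2\big)=E_2\wedge\big(I-\mathcal{R}(A_2^{*}(I-E_1))\big)$. Then $x\in E\mathscr{H}$ forces $(I-E_1)A_2x=0$, so $A_2E=E_1A_2E$, hence $A_1A_2E=A_1E_1A_2E$ and $\|A_1A_2E\|\le\|A_1E_1\|\,\|A_2E\|\le\varepsilon_1\varepsilon_2$ using $E\le E_2$. Since $\mathcal{R}(A_2^{*}(I-E_1))\sim\mathcal{R}\big((I-E_1)A_2\big)\le I-E_1$, we get $\tau\big(\mathcal{R}(A_2^{*}(I-E_1))\big)\le\delta_1$, so $\tau(I-E)\le\delta_1+\delta_2$.

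The crux is (i), the one inclusion not of ``meet-of-witnesses'' type. Given $A$ witnessed by $E$, I propose $F:=I-\mathcal{R}\big(A(I-E)A^{*}\big)$ as a witness for $A^{*}$. The mechanism is the splitting $AA^{*}=AEA^{*}+A(I-E)A^{*}$: because $F$ annihilates the range projection of the second summand, $FAA^{*}F=FAEA^{*}F$, whence $\|A^{*}F\|^{2}=\|FAA^{*}F\|\le\|AEA^{*}\|=\|AE\|^{2}\le\varepsilon^{2}$; and $\tau(I-F)=\tau\big(\mathcal{R}(A(I-E)A^{*})\big)=\tau\big(\mathcal{R}((I-E)A^{*})\big)\le\tau(I-E)\le\delta$, using Proposition \ref{prop:range_proj}(ii),(iv). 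This even yields $\sO(\tau,\varepsilon,\delta)^{*}\subseteq\sO(\tau,\varepsilon,\delta)$, hence a fortiori the stated inclusion with $2\delta$.

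Finally (v) marries (i) to the construction in (iii): given $A$ witnessed by $E_1$ and $x$ by $E_2$, use (i) to get $F_1$ with $\|F_1A\|\le\varepsilon_1$ and $\tau(I-F_1)\le2\delta_1$, let $P_2:=\mathcal{R}\big(A(I-E_2)\big)$ (so $\tau(P_2)\le\delta_2$ exactly as in (iii)), and put $F:=F_1\wedge(I-P_2)$. Then $FA(I-E_2)=0$, so $FAx=FAE_2x$ and $\|FAx\|\le\|FA\|\,\|E_2x\|\le\varepsilon_1\varepsilon_2$, while $\tau(I-F)\le2\delta_1+\delta_2$. I expect (i) to be the sole genuine obstacle: one must spot the splitting of $AA^{*}$ and recall $\mathcal{R}(A(I-E)A^{*})\sim\mathcal{R}((I-E)A^{*})\le I-E$. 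The remainder is bookkeeping, and since every operator in sight is bounded and lies in $\mathscr{R}$, there are no domain subtleties to track.
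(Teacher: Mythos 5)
Your proof is correct and follows essentially the same strategy as the paper: certify membership by witness projections built from meets together with range/null projections of products, and control the traces via $\mathcal{R}(T)\sim\mathcal{R}(T^{*})$, traciality, and subadditivity of $\tau$ on joins. The only divergence is in (i), where your single witness $F=I-\mathcal{R}\big(A(I-E)A^{*}\big)$ (which equals the paper's $E_{A}=I-\mathcal{R}\big(A(I-E)\big)$ by Proposition \ref{prop:range_proj}(ii)) already yields the sharper inclusion $\sO(\tau,\varepsilon,\delta)^{*}\subseteq\sO(\tau,\varepsilon,\delta)$, whereas the paper intersects with $E$ and settles for $2\delta$; your stronger form is valid and of course implies the stated one.
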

\begin{proof}
Before getting into the proofs of the assertions, we prepare the groundwork. For $A \in \mathscr{R}$ and a projection $E$ in $\mathscr{R}$, we define a projection $E_A := I - \sR \big( A(I-E) \big)$ in $\mathscr{R}$. Note that $E_A A(I-E) = 0$ and thus $E_A A = E_A A E$. Using Proposition \ref{prop:range_proj}, (iv), we have 
\begin{align*}
    \phantom{\Longrightarrow } &I-E_A = \mathcal{R}\big(A(I-E)\big) \sim \mathcal{R}\big((I-E)A^*\big) \le I-E\\
    \Longrightarrow &I-E_A \lesssim I-E \\
    \Longrightarrow &\tau(I-E_A) \le \tau(I-E).
\end{align*}
We note the two properties of $E_A$ which we use repeatedly below,
\begin{equation}
\label{eqn:mod_proj1}
    E_A A = E_A A E,
\end{equation}
\begin{equation}
\label{eqn:mod_proj2}
        \tau(I - E_A) \le \tau(I-E).
\end{equation}

\begin{itemize}
    \item[(i)] Let $A \in \sO(\tau, \varepsilon, \delta)$ and $E$ be a projection in $\mathscr{R}$ such that $\|A E \| \le \varepsilon$ and $\tau(I-E) \le \delta.$ Let $F := I - \sR \big( A(I-E) \big) (=: E_A)$. From (\ref{eqn:mod_proj1}), note that $A^*F = EA^*F.$ Since $E(E \wedge F) = F(E \wedge F) = E \wedge F$, we have $A^*(E \wedge F) = EA^*E(E \wedge F).$
Thus $$\|A^*(E \wedge F)\| = \|EA^*E(E \wedge F)\| \le \|EA^*E\| = \|EAE\| \le \|AE\| \le \varepsilon, $$ and using inequality (\ref{eqn:mod_proj2}), we have $$\tau(I - E \wedge F) = \tau\big((I-E) \vee (I-F)\big) \le \tau(I-E) + \tau(I-F) \le 2\delta.$$ Hence $A^* \in \sO(\tau, \varepsilon, 2\delta).$ 
\vskip 0.1in
\item[(ii)] Let $A_1 \in \sO(\tau, \varepsilon_1, \delta_1), A_2 \in \sO(\tau, \varepsilon_2, \delta_2)$. Let $E_1, E_2$ be projections in $\mathscr{R}$ such that $\|A_1 E_1 \| \le \varepsilon_1, \|A_2 E_2\| \le \varepsilon_2$, and $\tau(I - E_1) \le \delta_1, \tau(I-E_2) \le \delta_2$. Then we have $$(A_1 + A_2)(E_1 \wedge E_2) = (A_1 E_1 + A_2 E_2) (E_1 \wedge E_2),$$
which implies that
\begin{align*}
    \|(A_1 + A_2)(E_1 \wedge E_2) \| &\le \varepsilon_1 + \varepsilon_2, \textrm{ and}\\
    \tau(I-E_1 \wedge E_2) = \tau \big((I-E_1) \vee (I-E_2) \big) &\le \tau(I-E_1) + \tau(I-E_2) \le \delta_1 + \delta_2.
\end{align*}
Thus $A_1 + A_2 \in \sO(\tau, \varepsilon_1 + \varepsilon_2, \delta_1 + \delta_2).$
\vskip 0.1in
\item[(iii)] We consider projections $E_1, E_2$ as chosen in part (ii). Let $F :=  I - \sR \big(A_2 ^*(I-E_1)\big)$. From equation (\ref{eqn:mod_proj1}), we observe that $FA_2 ^* = F A_2 ^* E_1$, or equivalently, $A_2 F = E_1 A_2 F$. Thus we have
\begin{align*}
A_1A_2(F \wedge E_2) = A_1 (A_2F) (F \wedge E_2)  &=A_1(E_1 A_2 F)(F \wedge E_2) \\
&= A_1E_1 A_2E_2(F \wedge E_2),
\end{align*}
which implies that
 \begin{align*}
     \|A_1 A_2 (F \wedge E_2) \| &\le \|A_1 E_1 \| \cdot \|A_2 E_2 \| \le \varepsilon_1 \varepsilon_2,\\
     \tau(I - F \wedge E_2) \le \tau(I-F) + \tau(I-E_2) &\le \tau(I-E_1) + \tau(I-E_2) \le \delta_1 + \delta_2,
 \end{align*}
since $\tau(I-F) \le \tau(I-E_1)$  from inequality (\ref{eqn:mod_proj2}).  We conclude that $A_1A_2 \in \sO(\tau, \varepsilon_1 \varepsilon_2, \delta_1 + \delta_2)$.

\item[(iv)] The proof is analogous to that of part (ii). Let $x_1 \in \sV(\tau, \varepsilon_1, \delta_1), x_2 \in \sV(\tau, \varepsilon_2, \delta_2)$. Let $E_1, E_2$ be projections in $\mathscr{R}$ such that $\| E_1x_1 \| \le \varepsilon_1$, $\|E_2 x_2\| \le \varepsilon_2$ and $\tau(I - E_1) \le \delta_1, \tau(I-E_2) \le \delta_2$. Then we have
$$ (E_1 \wedge E_2)(x_1 + x_2) = (E_1 \wedge E_2)(E_1 x_1 + E_2 x_2),$$
which implies that 
\begin{align*}
    \| (E_1 \wedge E_2) (x_1 + x_2)\| &\le \varepsilon_1 + \varepsilon_2, \textrm{ and}\\
    \tau(I - E_1 \wedge E_2) &\le \delta_1 + \delta_2.
\end{align*}
Thus $x_1 + x_2 \in \sV(\tau, \varepsilon_1 + \varepsilon_2, \delta_1 + \delta_2).$

\item[(v)] Let $A \in \sO(\tau, \varepsilon_1, \delta_1), x \in \sV(\tau, \varepsilon_2, \delta_2)$. Since $A^* \in \sO(\tau, \varepsilon_1, 2\delta_1)$ (by part (i)), there is a projection $E$ in $\mathscr{R}$ such that $\|A^* E\| = \|EA \| \le \varepsilon_1$ with $\tau(I-E) \le 2\delta_1$. Let $F$ be a projection in $\mathscr{R}$ such that $\|Fx\| \le \varepsilon_2$ with $\tau(I-F) \le \delta_2$. Let $G := I- \sR (A(I-F))$. From (\ref{eqn:mod_proj1}), noting that $GA = GAF$, we have
$$(E \wedge G)Ax = (E \wedge G)GAx = (E \wedge G)GAFx = (E \wedge G)EAFx.$$
Thus $\| (E \wedge G) Ax\| \le \|EA\| \|Fx\| \le \varepsilon_1 \varepsilon_2$, and since $\tau(I - G) \le \tau(I-F)$ from inequality (\ref{eqn:mod_proj2}), we have  $\tau(I-E \wedge G) \le \tau(I-E) + \tau(I-G) \le 2\delta_1 + \delta_2$.
\end{itemize}
\end{proof}

\begin{cor}
\label{cor:unitary_commutant}
\textsl{
Let $\varepsilon, \delta > 0$ and $\tau$ be a normal tracial state on $\mathscr{R}$. For a unitary operator $U$ in the commutant of $\mathscr{R}$, $U \cdot \sV(\tau, \varepsilon, \delta) = \sV(\tau, \varepsilon, \delta)$.
}
\end{cor}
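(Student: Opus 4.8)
The plan is to exploit two elementary facts about a unitary $U$ in the commutant of $\mathscr{R}$: it is a norm-preserving bijection of $\mathscr{H}$, and it commutes with every projection $E$ in $\mathscr{R}$. Granting these, membership of a vector in $\sV(\tau, \varepsilon, \delta)$ turns out to be preserved under applying $U$, with \emph{the very same} witnessing projection, so no clever choice is needed.

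Concretely, first I would take $x \in \sV(\tau, \varepsilon, \delta)$ and choose a projection $E$ in $\mathscr{R}$ with $\|Ex\| \le \varepsilon$ and $\tau(I-E) \le \delta$. Since $U$ lies in the commutant of $\mathscr{R}$ and $E \in \mathscr{R}$, we have $UE = EU$, whence $\|E(Ux)\| = \|UEx\| = \|Ex\| \le \varepsilon$, the middle equality holding because $U$ is unitary. The same projection $E$ of course still satisfies $\tau(I-E) \le \delta$, so $Ux \in \sV(\tau, \varepsilon, \delta)$. This proves the inclusion $U \cdot \sV(\tau, \varepsilon, \delta) \subseteq \sV(\tau, \varepsilon, \delta)$.

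Then I would apply the identical argument with $U^*$ in place of $U$ --- noting that $U^*$ is again a unitary in the commutant of $\mathscr{R}$ --- to obtain $U^* \cdot \sV(\tau, \varepsilon, \delta) \subseteq \sV(\tau, \varepsilon, \delta)$, which is equivalent to $\sV(\tau, \varepsilon, \delta) \subseteq U \cdot \sV(\tau, \varepsilon, \delta)$. Combining the two inclusions gives the asserted equality. There is no genuine obstacle here; the only point worth flagging is that the projection witnessing membership in $\sV(\tau, \varepsilon, \delta)$ is required to lie in $\mathscr{R}$, which is precisely what makes it commute with $U$ in the commutant of $\mathscr{R}$.
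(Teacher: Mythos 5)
Your argument is correct and is essentially the paper's own proof: both rest on the identity $\|E(Ux)\| = \|U(Ex)\| = \|Ex\|$ coming from $UE = EU$ and the unitarity of $U$, with the same witnessing projection. Your explicit treatment of the reverse inclusion via $U^*$ just spells out what the paper leaves as "follows directly from the definition."
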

\begin{proof}
Note that for a projection $E$ in $\mathscr{R}$ and a vector $x \in \mathscr{H}$, if $\|Ex\|\le \varepsilon$, then $\|E(Ux)\| = \|U(Ex)\| = \|Ex\|$ (since $UE = EU$). Keeping this in mind, the assertion follows directly from the definition of $\sV(\tau, \varepsilon, \delta)$.
\end{proof}

\begin{lem}
\label{lem:bounded_measure}
\textsl{Let the nets $\{ A_i \}$ in $\mathscr{R}$, $\{ x_i \}$ in $\mathscr{H}$ be Cauchy in measure. For every normal tracial state $\tau$ on $\mathscr{R}$ and $\delta > 0$, there is an $\alpha > 0$ and an index $j$ such that  $A_i \in \sO(\tau, \alpha, \delta), x_i \in \sV(\tau, \alpha, \delta)$ for all $i \ge j$. }
\end{lem}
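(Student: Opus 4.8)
The plan is to exploit the Cauchy-in-measure hypothesis to first localize, i.e.\ find a single index $j$ past which all the $A_i$ (resp.\ $x_i$) differ from a fixed element by something in a small neighborhood, and then bound that fixed element by itself. Concretely, fix a normal tracial state $\tau$ and $\delta > 0$. Applying the Cauchy-in-measure condition with the triple $(\tau, 1, \delta/2)$, there is an index $j$ such that $A_i - A_j \in \sO(\tau, 1, \delta/2)$ for all $i \ge j$. Now $A_j$ is a bounded operator, so $A_j \in \sO(\tau, \|A_j\|, 0) \subseteq \sO(\tau, \|A_j\|, \delta/2)$ trivially (take $E = I$). By Lemma \ref{lem:main}(ii), for all $i \ge j$ we get
\[
A_i = (A_i - A_j) + A_j \in \sO(\tau, 1, \tfrac{\delta}{2}) + \sO(\tau, \|A_j\|, \tfrac{\delta}{2}) \subseteq \sO(\tau, 1 + \|A_j\|, \delta).
\]
So $\alpha := 1 + \|A_j\|$ and this same $j$ work for the operator net.

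For the vector net $\{x_i\}$, the argument is identical using Lemma \ref{lem:main}(iv) in place of (ii): pick $k$ with $x_i - x_k \in \sV(\tau, 1, \delta/2)$ for all $i \ge k$, note $x_k \in \sV(\tau, \|x_k\|, \delta/2)$ (again take $E = I$, since $\|I x_k\| = \|x_k\|$), and add to obtain $x_i \in \sV(\tau, 1 + \|x_k\|, \delta)$ for all $i \ge k$. Taking $\alpha$ to be the maximum of $1 + \|A_j\|$ and $1 + \|x_k\|$, and the index to be the larger of $j$ and $k$ (valid since $\Lambda$ is directed), gives the stated conclusion with a common $\alpha$ and common index.

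I do not anticipate a genuine obstacle here; the statement is essentially the observation that a Cauchy net is "eventually bounded'' in measure, and the only inputs are the additivity estimates of Lemma \ref{lem:main} together with the trivial fact that norm-bounded operators and all vectors lie in some basic neighborhood (witnessed by $E = I$). The one point worth stating carefully is that a single index suffices for both nets simultaneously, which follows from directedness of the common index set $\Lambda$; and that the choice of $\alpha$ may depend on $\tau$ and $\delta$, as the statement allows.
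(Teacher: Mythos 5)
Your proposal is correct and follows essentially the same route as the paper: both arguments fix an index $j$ past which $A_i - A_j$ lies in a small basic neighborhood, observe that the single operator $A_j$ lies in $\sO(\tau, \|A_j\|, \delta/2)$ (witnessed by $E = I$), and add via Lemma \ref{lem:main}(ii) (resp.\ (iv) for vectors). Your explicit handling of the common index and common $\alpha$ via directedness is a minor tidying of a point the paper leaves implicit.
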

\begin{proof}
For $\varepsilon > 0$, choose $j$ in the index set such that for all $i \ge k$, $A_i - A_j \in \sO(\tau, \varepsilon, \frac{\delta}{2})$. Clearly $A_j \in \sO(\tau, \|A_j\| + \varepsilon, \frac{\delta}{2})$ and hence $A_i \in A_j + \sO(\tau, \varepsilon, \frac{\delta}{2}) \subseteq \sO(\tau, 2\varepsilon + \|A_j\|, \delta)$ for all $i \ge j$. We choose $\alpha = 2\varepsilon + \|A_j\|$. The proof for $\{ x_i \}$ is similar.
\end{proof}

\begin{prop}
\label{prop:measure_cauchy_net}
\textsl{
Let the nets $\{ A_i \}$, $\{ B_i \}$ in $\mathscr{R}$ and the nets $\{ x_i \}, \{ y_i \}$ in $\mathscr{H}$ (all with the same index set) be Cauchy in measure. Then we have the following:
\begin{itemize}
    \item[(i)] $\{ A_i ^* \}$ is Cauchy in measure;
    \item[(ii)] $\{ A_i + B_i \}$ is Cauchy in measure;
    \item[(iii)] $\{ A_i B_i \}$ is Cauchy in measure;
    \item[(iv)] $\{ x_i + y_i \}$ is Cauchy in measure;
    \item[(v)] $\{ A_i x_i \}$ is Cauchy in measure.
\end{itemize}
}
\end{prop}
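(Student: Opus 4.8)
The plan is to prove all five clauses in one sweep by reducing each statement to the arithmetic of the sets $\sO(\tau,\varepsilon,\delta)$ and $\sV(\tau,\varepsilon,\delta)$ established in Lemma \ref{lem:main}, together with the boundedness statement of Lemma \ref{lem:bounded_measure}. The only subtlety is that a Cauchy-in-measure condition must be checked against \emph{every} normal tracial state $\tau$ on $\mathscr{R}$, so throughout one fixes an arbitrary triple $(\tau, \varepsilon, \delta)$ and produces the required tail index.

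First I would dispose of the easy clauses. For (i), given $(\tau,\varepsilon,\delta)$, pick $k$ with $A_i - A_j \in \sO(\tau,\varepsilon,\delta/2)$ for $i,j \ge k$; taking adjoints and applying Lemma \ref{lem:main}(i) gives $A_i^* - A_j^* \in \sO(\tau,\varepsilon,\delta)$. For (ii), pick a common tail index past which $A_i - A_j \in \sO(\tau,\varepsilon/2,\delta/2)$ and $B_i - B_j \in \sO(\tau,\varepsilon/2,\delta/2)$, then use Lemma \ref{lem:main}(ii). Clause (iv) is identical using Lemma \ref{lem:main}(iv) in $\mathscr{H}$.

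The substance is in (iii) and (v), where one needs the standard ``$ab - a'b' = (a-a')b + a'(b-b')$'' splitting combined with the boundedness lemma. For (iii): fix $(\tau,\varepsilon,\delta)$. By Lemma \ref{lem:bounded_measure} there is $\alpha>0$ and an index $j_0$ with $A_i \in \sO(\tau,\alpha,\delta/4)$ and $B_i \in \sO(\tau,\alpha,\delta/4)$ for $i \ge j_0$. Choose $j \ge j_0$ so that for $i,i' \ge j$ we have $A_i - A_{i'} \in \sO(\tau, \tfrac{\varepsilon}{2\alpha}, \tfrac{\delta}{4})$ and $B_i - B_{i'} \in \sO(\tau, \tfrac{\varepsilon}{2\alpha}, \tfrac{\delta}{4})$. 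Then write
\[
A_i B_i - A_{i'} B_{i'} = (A_i - A_{i'}) B_i + A_{i'} (B_i - B_{i'}),
\]
and apply Lemma \ref{lem:main}(iii) to each term: $(A_i - A_{i'})B_i \in \sO(\tau, \tfrac{\varepsilon}{2}, \tfrac{\delta}{2})$ and $A_{i'}(B_i - B_{i'}) \in \sO(\tau, \tfrac{\varepsilon}{2}, \tfrac{\delta}{2})$, whence by Lemma \ref{lem:main}(ii) the difference lies in $\sO(\tau,\varepsilon,\delta)$. Clause (v) is the same argument with the second factor a vector: use Lemma \ref{lem:bounded_measure} to bound $\{A_i\}$ in $\sO(\tau,\alpha,\cdot)$ and $\{x_i\}$ in $\sV(\tau,\alpha,\cdot)$, split $A_i x_i - A_{i'} x_{i'} = (A_i - A_{i'}) x_i + A_{i'}(x_i - x_{i'})$, apply Lemma \ref{lem:main}(v) to each term and Lemma \ref{lem:main}(iv) to combine, being mildly careful about the factor of $2$ on the $\delta$-parameter that Lemma \ref{lem:main}(v) introduces (so one should start from $\delta/8$-sized neighborhoods rather than $\delta/4$).

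The only place where anything could go wrong is bookkeeping of the $\varepsilon$ and $\delta$ budgets — in particular making sure the index $j$ chosen for the Cauchy condition is taken \emph{past} the index $j_0$ from Lemma \ref{lem:bounded_measure} so that the boundedness bound applies to $A_{i'}$ (resp.\ $x_i$) in the cross terms. There is no genuine analytic obstacle; the content has all been extracted into Lemmas \ref{lem:main} and \ref{lem:bounded_measure}, and what remains is the routine verification that a product of two Cauchy-in-measure nets is Cauchy in measure, carried out uniformly over all normal tracial states.
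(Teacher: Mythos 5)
Your argument is correct and follows essentially the same route as the paper: all five clauses are reduced to Lemma \ref{lem:main} together with the boundedness statement of Lemma \ref{lem:bounded_measure}, with the tail index taken past the index furnished by the boundedness lemma. The only difference is cosmetic: for (iii) and (v) you use the two-term splitting $A_iB_i - A_{i'}B_{i'} = (A_i - A_{i'})B_i + A_{i'}(B_i - B_{i'})$ with tolerance $\varepsilon/(2\alpha)$ on the Cauchy increments, whereas the paper uses the three-term splitting $(A_i - A_k)(B_i - B_k) + A_k(B_i - B_k) + (A_i - A_k)B_k$ and solves $\varepsilon'^2 + 2\alpha\varepsilon' = \varepsilon$; both bookkeepings close correctly, including your $\delta/8$ adjustment for the factor of $2$ in Lemma \ref{lem:main}(v).
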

\begin{proof}
Let $\varepsilon, \delta > 0$ and $\tau$ be a normal tracial state on $\mathscr{R}$.

\begin{itemize}
\item[(i)] Let $k$ be an index such that $A_i - A_k \in \sO(\tau, \varepsilon, \frac{\delta}{2})$ for all $i \ge k$. By Lemma \ref{lem:main}, (i), we have $A_i ^* - A_k ^* \in \sO(\tau, \varepsilon, \delta)$ for all $i \ge k$.
\vskip 0.1in

\item[(ii)] Let $k$ be an index such that $A_i - A_k \in \sO(\tau, \frac{\varepsilon}{2}, \frac{\delta}{2}), B_i - B_k \in \sO(\tau, \frac{\varepsilon}{2}, \frac{\delta}{2})$ for all $i \ge k$. By Lemma \ref{lem:main}, (ii), we have $(A_i + B_i) - (A_k + B_k) \in \sO\big(\tau, \varepsilon, \delta \big)$ for all $i \ge k$.
\vskip 0.1in

\item[(iii)]  By Lemma \ref{lem:bounded_measure} for the Cauchy nets $\{ A_i \}, \{ B_i \}$, there is an index $j$ and $\alpha > 0$ such that $A_i \in \sO(\tau, \alpha, \frac{\delta}{6}), B_i \in \sO(\tau, \alpha, \frac{\delta}{6})$ for all $i \ge j$. Let $\varepsilon ' := \sqrt{\alpha ^2 + \varepsilon} - \alpha$ so that $\varepsilon = {\varepsilon '}^2 + 2 \alpha \varepsilon '$. Let the index $j '$ be such that $A_i - A_{j'} \in \sO (\tau,  \varepsilon ', \frac{\delta}{6}), B_i - B_j \in \sO(\tau, {\varepsilon '}, \frac{\delta}{6})$ for all $i \ge j'$. We choose $k := \max \{ j, j'\}$. For $i \ge k$, using Lemma \ref{lem:main}, (ii)-(iii), we have
\begin{align*}
 A_i B_i - A_k B_k = &(A_i - A_k)(B_i - B_k) + A_k(B_i - B_k) + (A_i - A_k)B_k\\
\in &\; \sO\big(\tau, {\varepsilon '}, \frac{\delta}{6}\big) \cdot \sO\big(\tau, \varepsilon ', \frac{\delta}{6}\big) + \sO \big(\tau, \alpha, \frac{\delta}{6}\big) \cdot \sO\big(\tau, \varepsilon ', \frac{\delta}{6}\big) \\
 &+ \sO\big(\tau, \varepsilon ', \frac{\delta}{6}\big) \cdot \sO\big(\tau, \alpha, \frac{\delta}{6}\big)\\
\subseteq & \; \sO(\tau, {\varepsilon '}^2 +  2\alpha \varepsilon ', \delta) \\
=& \;\sO(\tau, \varepsilon, \delta).
\end{align*}
Thus, $A_iB_i - A_k B_k \in \sO(\tau, \varepsilon, \delta)$ for all $i \ge k$.
\vskip 0.1in

\item[(iv)] Let $k$ be an index such that $x_i - x_k \in \sV(\tau, \frac{\varepsilon}{2}, \frac{\delta}{2}), y_i - y_k \in \sV(\tau, \frac{\varepsilon}{2}, \frac{\delta}{2})$ for all $i \ge k$. By Lemma \ref{lem:main}, (iv), we have $(x_i + y_i) - (x_k + y_k) \in \sV\big(\tau, \varepsilon, \delta \big)$ for all $i \ge k$.
\vskip 0.1in

\item[(v)] By Lemma \ref{lem:bounded_measure} for the Cauchy nets $\{ A_i \}$ in $\mathscr{R}$, $\{ x_i \}$ in $\mathscr{H}$, there is an index $j$ and an $\alpha > 0$ such that $A_i \in \sO(\tau, \alpha, \frac{\delta}{9}), x_i \in \sV(\tau, \alpha, \frac{\delta}{9})$ for all $i \ge j$. Let $\varepsilon ' := \sqrt{\alpha ^2 + \varepsilon} - \alpha$. Let the index $j'$ be such that $A_i - A_{j'} \in \sO(\tau, {\varepsilon '}, \frac{\delta}{9}), x_i -x_{j'} \in \sV(\tau, {\varepsilon '}, \frac{\delta}{9})$ for all $i \ge j'$. We choose $k := \max \{ j, j' \}$. For $i \ge k$, using Lemma \ref{lem:main}, (iv)-(v), we have
\begin{align*} 
A_ix_i - A_kx_k = &(A_i - A_k)(x_i - x_k) + A_k (x_i - x_k) + (A_i - A_k)x_k\\
\in &\; \sO \big(\tau, {\varepsilon '}, \frac{\delta}{9} \big) \cdot \sV\big(\tau, {\varepsilon '}, \frac{\delta}{9}\big) + \sO\big(\tau, \alpha, \frac{\delta}{9}\big) \cdot \sV\big(\tau, {\varepsilon '}, \frac{\delta}{9} \big) \\
&+ \sO(\tau, {\varepsilon '}, \frac{\delta}{9}) \cdot \sV(\tau, \alpha,  \frac{\delta}{9})\\
\subseteq &\; \sV(\tau, {\varepsilon '}^2 +  2\alpha \varepsilon ', \delta) \\
= &\; \sV(\tau, \varepsilon, \delta).
\end{align*}
Thus, $A_i x_i - A_k x_k \in \sV(\tau, \varepsilon, \delta)$ for all $i \ge k$.
\end{itemize}
\end{proof}

We denote the completion of $\mathscr{R}$ in the $\mathfrak{m}$-topology (or $\mathfrak{m}$-completion) by $\Wtilde{R}$, and the $\mathfrak{m}$-completion of $\mathscr{H}$ by $\Wtilde{H}$.

\begin{thm}
\label{thm:cont_mvn}
\textsl{
The mappings
\begin{align}
A &\mapsto A^* \textrm{ of } \mathscr{R} \to \mathscr{R},\\
(A, B) &\mapsto A+B \textrm{ of } \mathscr{R} \times \mathscr{R} \to \mathscr{R},\\
(A, B) &\mapsto AB \textrm{ of } \mathscr{R} \times \mathscr{R} \to \mathscr{R},\\
(x, y) &\mapsto x+y \textrm{ of } \mathscr{H} \times \mathscr{H} \to \mathscr{H},\\
(A, x) &\mapsto Ax \textrm{ of } \mathscr{R} \times \mathscr{H} \to \mathscr{H},
\end{align}
are Cauchy-continuous in the $\mathfrak{m}$-topology and thus have unique continuous extensions as mappings of $\Wtilde{R} \, \to \, \Wtilde{R}, \; \Wtilde{R} \! \times \!\! \Wtilde{R} \, \to \, \Wtilde{R}, \; \Wtilde{R} \! \times \!\!\Wtilde{R} \, \to \, \Wtilde{R}, \; \Wtilde{H} \! \times \!\! \Wtilde{H} \, \to \, \Wtilde{H},$ and $\Wtilde{R} \! \times \!\! \Wtilde{H} \, \to \, \Wtilde{H}$, respectively, where $\Wtilde{R}$ is the completion of $\mathscr{R}$ in the $\mathfrak{m}$-topology of $\mathscr{R}$ and $\Wtilde{H}$ is the completion of $\mathscr{H}$ in the $\mathfrak{m}$-topology of $\mathscr{H}$.
}
\end{thm}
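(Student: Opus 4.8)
The plan is to deduce the theorem from Proposition~\ref{prop:measure_cauchy_net} together with the standard fact that a Cauchy-continuous map between uniform spaces extends uniquely to a continuous map on completions, so that essentially no new analysis is needed. First I would record that the $\mathfrak{m}$-topology makes $(\mathscr{R},+)$ and $(\mathscr{H},+)$ topological abelian groups. The system $\{\sO(\tau,\varepsilon,\delta)\}$ is symmetric, since $\|(-A)E\|=\|AE\|$ gives $\sO(\tau,\varepsilon,\delta)=-\sO(\tau,\varepsilon,\delta)$; it is directed, since $\tfrac{\tau+\rho}{2}$ is again a normal tracial state and $\sO(\tfrac{\tau+\rho}{2},\varepsilon,\tfrac{\delta}{2})\subseteq\sO(\tau,\varepsilon,\delta)\cap\sO(\rho,\varepsilon,\delta)$; and Lemma~\ref{lem:main}(ii) gives $\sO(\tau,\tfrac{\varepsilon}{2},\tfrac{\delta}{2})+\sO(\tau,\tfrac{\varepsilon}{2},\tfrac{\delta}{2})\subseteq\sO(\tau,\varepsilon,\delta)$. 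The same statements hold for $\{\sV(\tau,\varepsilon,\delta)\}$, using Lemma~\ref{lem:main}(iv). Hence $\mathscr{R}$ and $\mathscr{H}$ carry canonical group uniformities and have completions, namely the $\Wtilde{R}$ and $\Wtilde{H}$ of the statement; moreover the product uniformities on $\mathscr{R}\times\mathscr{R}$, $\mathscr{H}\times\mathscr{H}$ and $\mathscr{R}\times\mathscr{H}$ have completions $\Wtilde{R}\times\Wtilde{R}$, $\Wtilde{H}\times\Wtilde{H}$ and $\Wtilde{R}\times\Wtilde{H}$.

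Next I would observe that a net $\{(A_i,B_i)\}$ in $\mathscr{R}\times\mathscr{R}$ is Cauchy for the product uniformity precisely when both coordinate nets $\{A_i\}$ and $\{B_i\}$ are Cauchy in measure on the common index set --- which is exactly the hypothesis of Proposition~\ref{prop:measure_cauchy_net} --- and similarly for nets in $\mathscr{H}\times\mathscr{H}$ and in $\mathscr{R}\times\mathscr{H}$. With this identification, Cauchy-continuity of each of the five maps is immediate: part~(i) of that proposition handles $A\mapsto A^{*}$, parts~(ii) and~(iii) handle $(A,B)\mapsto A+B$ and $(A,B)\mapsto AB$, part~(iv) handles $(x,y)\mapsto x+y$, and part~(v) handles $(A,x)\mapsto Ax$. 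In every case the image of a Cauchy net is again Cauchy in measure, i.e.\ Cauchy for the uniformity on the target.

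Finally I would invoke the extension principle: a Cauchy-continuous map $f\colon X\to Y$ between uniform spaces, composed with the uniform embedding $Y\hookrightarrow\widehat{Y}$ into the (Hausdorff) completion, extends uniquely to a continuous map $\widehat{X}\to\widehat{Y}$, uniqueness holding because the image of $X$ is dense in $\widehat{X}$ and $\widehat{Y}$ is Hausdorff. Applying this with $X$ running through $\mathscr{R}$, $\mathscr{R}\times\mathscr{R}$, $\mathscr{R}\times\mathscr{R}$, $\mathscr{H}\times\mathscr{H}$, $\mathscr{R}\times\mathscr{H}$ and $Y$ the corresponding target ($\mathscr{R}$, $\mathscr{R}$, $\mathscr{R}$, $\mathscr{H}$, $\mathscr{H}$), and using that $\widehat{\mathscr{R}\times\mathscr{R}}=\Wtilde{R}\times\Wtilde{R}$ etc., yields the asserted unique continuous extensions to $\Wtilde{R}\to\Wtilde{R}$, $\Wtilde{R}\times\Wtilde{R}\to\Wtilde{R}$, $\Wtilde{R}\times\Wtilde{R}\to\Wtilde{R}$, $\Wtilde{H}\times\Wtilde{H}\to\Wtilde{H}$ and $\Wtilde{R}\times\Wtilde{H}\to\Wtilde{H}$. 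I expect no serious obstacle: all of the analytic content is already packaged in Lemma~\ref{lem:main} and Proposition~\ref{prop:measure_cauchy_net}, and what remains is bookkeeping that translates ``sends Cauchy nets to Cauchy nets'' into the completion formalism. The one point genuinely deserving a sentence of care is the verification that $\{\sO(\tau,\varepsilon,\delta)\}$ and $\{\sV(\tau,\varepsilon,\delta)\}$ really form filter bases of topological-group neighbourhoods of $0$ (the symmetry and directedness noted above), since the $\mathfrak{m}$-topology is built from an entire separating family of traces rather than a single one; once that is settled the completions behave as expected and the rest is routine.
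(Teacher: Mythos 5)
Your proposal is correct and follows essentially the same route as the paper, whose entire proof of this theorem is the single line ``Follows from Proposition \ref{prop:measure_cauchy_net}''; you have simply made explicit the routine uniform-space bookkeeping (symmetry and directedness of the neighbourhood bases, product completions, and the Cauchy-continuous extension principle) that the paper leaves implicit. The extra verification that $\{\sO(\tau,\varepsilon,\delta)\}$ forms a directed filter base via the averaged trace $\tfrac{\tau+\rho}{2}$ is a worthwhile detail, but it does not change the substance of the argument.
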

\begin{proof}
Follows from Proposition \ref{prop:measure_cauchy_net}.
\end{proof}

With the mappings described in Theorem \ref{thm:cont_mvn}, $\Wtilde{R}$ is a topological $*$-algebra with a continuous representation on the topological vector space $\Wtilde{H}$.

\begin{prop}
\label{prop:hausdorff_mvn}
\textsl{
The Hilbert space $\mathscr{H}$ and the von Neumann algebra $\mathscr{R}$ are Hausdorff spaces in the $\mathfrak{m}$-topology, and thus the canonical mappings into their respective $\mathfrak{m}$-completions $\Wtilde{H}, \, \Wtilde{R}$ are injective.
}
\end{prop}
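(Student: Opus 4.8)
The plan is to reduce both statements to the triviality of the intersection of the basic $\mathfrak{m}$-neighborhoods of $0$. Since the $\mathfrak{m}$-topologies on $\mathscr{R}$ and $\mathscr{H}$ are translation-invariant vector topologies, $\mathscr{R}$ (resp.\ $\mathscr{H}$) is Hausdorff precisely when $\bigcap\sO(\tau,\varepsilon,\delta)=\{0\}$ (resp.\ $\bigcap\sV(\tau,\varepsilon,\delta)=\{0\}$), the intersections running over all normal tracial states $\tau$ on $\mathscr{R}$ and all $\varepsilon,\delta>0$; indeed, in such a topology the closure of $\{0\}$ is exactly this intersection. Moreover, a separated uniform space embeds via its canonical map into its completion, so the injectivity claim is a formal consequence of Hausdorffness. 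Throughout I would use the standard fact that the normal tracial states of a finite von Neumann algebra form a separating family --- equivalently, every nonzero projection in $\mathscr{R}$ has strictly positive value under some normal tracial state (e.g.\ compose the faithful normal center-valued trace with a separating family of normal states on the center; cf.\ \cite[\S~8.2]{kadison-ringrose2}).

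The crux is $\bigcap\sV(\tau,\varepsilon,\delta)=\{0\}$, which I would prove as follows. Let $x$ lie in every $\sV(\tau,\varepsilon,\delta)$ and fix a normal tracial state $\tau$. For each $n\in\N$, since $x\in\sV(\tau,2^{-n},2^{-n})$, choose a projection $E_n\in\mathscr{R}$ with $\|E_nx\|\le 2^{-n}$ and $\tau(I-E_n)\le 2^{-n}$, and set $F_n:=I-E_n$, so $\|F_nx-x\|=\|E_nx\|\le 2^{-n}$. For $N\in\N$ put $G_N:=\bigvee_{n\ge N}F_n$; these projections decrease with $N$, and by normality of $\tau$ together with finite subadditivity of $\tau$ on projections (the estimate in the proof of Lemma \ref{lem:absolute_continuity}) one gets $\tau(G_N)\le\sum_{n\ge N}2^{-n}=2^{1-N}\to 0$. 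Let $G:=\bigwedge_N G_N\in\mathscr{R}$, so $\tau(G)=0$. From $G_N\ge F_N$ we obtain $G_Nx=F_Nx+G_N(I-F_N)x$ with $\|G_N(I-F_N)x\|\le\|(I-F_N)x\|=\|E_Nx\|\le 2^{-N}$, hence $\|G_Nx-x\|\le 2^{1-N}\to 0$; since $G_N\downarrow G$ in the strong operator topology we also have $G_Nx\to Gx$, whence $Gx=x$. Now $[\mathscr{R}'x]$, the projection of $\mathscr{H}$ onto $\overline{\mathscr{R}'x}$, lies in $\mathscr{R}=\mathscr{R}''$ and is the smallest projection in $\mathscr{R}$ fixing $x$ (if $Ex=x$ for a projection $E\in\mathscr{R}$, then $ER'x=R'Ex=R'x$ for all $R'\in\mathscr{R}'$, so $E$ dominates $[\mathscr{R}'x]$); therefore $[\mathscr{R}'x]\le G$, so $\tau([\mathscr{R}'x])=0$ for every normal tracial state $\tau$, and by the separating property $[\mathscr{R}'x]=0$, i.e.\ $x=[\mathscr{R}'x]x=0$.

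To finish, I would deduce $\bigcap\sO(\tau,\varepsilon,\delta)=\{0\}$ from the vector case using Lemma \ref{lem:main}(v): if $A$ lies in every $\sO(\tau,\varepsilon,\delta)$ and $x\in\mathscr{H}$, then for any $\tau$ and $\varepsilon,\delta>0$ we have $A\in\sO\!\bigl(\tau,\tfrac{\varepsilon}{1+\|x\|},\tfrac{\delta}{3}\bigr)$ and trivially $x\in\sV\!\bigl(\tau,\|x\|,\tfrac{\delta}{3}\bigr)$ (witnessed by $E=I$), so $Ax\in\sV\!\bigl(\tau,\tfrac{\varepsilon\|x\|}{1+\|x\|},\delta\bigr)\subseteq\sV(\tau,\varepsilon,\delta)$; thus $Ax\in\bigcap\sV(\tau,\varepsilon,\delta)=\{0\}$ for every $x$, so $A=0$. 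I expect the real obstacle to be the middle paragraph: passing from the measure-theoretic datum (that $x$ is recovered, up to arbitrarily small norm, off a projection of arbitrarily small trace) to the existence of a single projection of trace zero fixing $x$. The $\limsup$-type projection $G=\bigwedge_N\bigvee_{n\ge N}F_n$, together with the strong-operator continuity used to evaluate it on $x$, is precisely what makes this work: a single $F_n$ only approximately fixes $x$, whereas the full join $\bigvee_n F_n$ fixes $x$ exactly but may have trace as large as $1$. The remaining ingredients --- the reduction to triviality of the intersection of the $\mathfrak{m}$-neighborhoods of $0$, embedding into the completion, and separation by normal tracial states --- are routine.
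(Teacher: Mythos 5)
Your proof is correct and follows essentially the same route as the paper's: you build the $\limsup$-type projection $G=\bigwedge_N\bigvee_{n\ge N}(I-E_n)$ from the witnessing projections, use normality and subadditivity of $\tau$ on projections to force $\tau(G)=0$ while $G$ fixes $x$, conclude $x=0$ from a separating family of normal tracial states, and then reduce the operator case to the vector case via Lemma \ref{lem:main}(v). The only cosmetic differences are that the paper works with the complementary projections $\bigwedge_{k\ge n}F_k$ (which annihilate $x$ exactly, avoiding your strong-operator limit step) and finishes with the support projection $S_\tau$ rather than the cyclic projection $[\mathscr{R}'x]$.
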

\begin{proof}
Suppose that $x \in \mathscr{H}$ is in every neighborhood of $0$ in the $\mathfrak{m}$-topology of $\mathscr{H}$. Let $\tau$ be a normal tracial state on $\mathscr{R}$. For each positive integer $n$ there is a projection $F_n$ in $\mathscr{R}$ such that $\|F_nx\| \le 2^{-n}$ and $\tau(I - F_n) \le 2^{-n}$. Define $E_n := \bigwedge_{k=n}^{\infty} F_k.$
Then $\{ E_n \}$ is an increasing sequence of projections in $\mathscr{R}$, $E_n x = 0$ and $\tau(I-E_n) \le 2^{-n+1}$. Let $E$ denote the least upper bound of $\{ E_n \}$. Clearly $Ex = 0$ and by the normality of $\tau$, we have $\tau(I-E) = 0$. Thus $S_{\tau} \le E$ and $x$ is in the nullspace of the support projection of $\tau$. Since this holds for any normal tracial state (and there is a separating family of such states), we conclude that $x = 0$. Therefore $\mathscr{H}$ is Hausdorff in the $\mathfrak{m}$-topology. 

Suppose that $A \in \mathscr{R}$ is in every neighborhood of $0$ in the $\mathfrak{m}$-topology of $\mathscr{R}$. Let $x \in \mathscr{H}$. From Lemma \ref{lem:main}, (v), we observe that $Ax$ is in every neighborhood of $0$ in the $\mathfrak{m}$-topology of $\mathscr{H}$. The discussion in the preceding paragraph leads us to the conclusion that $Ax = 0$. Thus $A = 0$ as $Ax = 0$ for all $x \in \mathscr{H}$. Therefore $\mathscr{R}$ is Hausdorff in the $\mathfrak{m}$-topology.
\end{proof}

\begin{prop}
\label{prop:approx_proj}
\textsl{
Let $A$ be an element of $\Wtilde{R}$. Then for every $\varepsilon > 0$ and faithful normal tracial state $\tau$, there is a projection $E$ in $\mathscr{R}$ such that $AE \in \mathscr{R}$ and $\tau(I-E) \le \varepsilon$.
}
\end{prop}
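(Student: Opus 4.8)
The plan is to use the faithfulness of $\tau$ to pass to a metrizable setting in which $A$ is the limit of an honest Cauchy sequence from $\mathscr{R}$ whose increments decay geometrically, and then to carry out a Borel--Cantelli-type cutoff.

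First I would record a consequence of faithfulness. Since $\tau$ is faithful, $S_{\tau} = I$, so $S_{\rho} \le S_{\tau}$ for \emph{every} normal tracial state $\rho$ on $\mathscr{R}$, and Lemma \ref{lem:absolute_continuity} furnishes, for each such $\rho$ and each $\delta > 0$, a $\delta_0 > 0$ with $\sO(\tau, \varepsilon, \delta_0) \subseteq \sO(\rho, \varepsilon, \delta)$ for all $\varepsilon > 0$. Hence the countable family $\{\, \sO(\tau, \tfrac1k, \tfrac1k) : k \in \N \,\}$ is already a fundamental system of neighborhoods of $0$ for the $\mathfrak{m}$-topology of $\mathscr{R}$; being Hausdorff (Proposition \ref{prop:hausdorff_mvn}), first-countable and translation-invariant, this topology is metrizable, and hence so is the completion $\Wtilde{R}$. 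In particular $A$ is represented by a sequence $\{ A_n \}$ in $\mathscr{R}$ that is Cauchy in measure and converges to $A$ in $\Wtilde{R}$.

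Next I would sharpen this sequence. Applying the Cauchy-in-measure condition to the triples $(\tau, 2^{-k}, 2^{-k-1}\varepsilon)$ and passing to a subsequence, I may assume $A_{n+1} - A_n \in \sO(\tau, 2^{-n}, 2^{-n-1}\varepsilon)$ for every $n$ while retaining $A_n \to A$. For each $n$ I choose a projection $F_n$ in $\mathscr{R}$ with $\| (A_{n+1} - A_n) F_n \| \le 2^{-n}$ and $\tau(I - F_n) \le 2^{-n-1}\varepsilon$, and set $E := \bigwedge_{n=1}^{\infty} F_n$. Then $I - E = \bigvee_{n} (I - F_n)$, so subadditivity of $\tau$ on projections (used repeatedly in Lemma \ref{lem:main}) together with normality of $\tau$ gives $\tau(I - E) \le \sum_{n} \tau(I - F_n) \le \tfrac{\varepsilon}{2} < \varepsilon$, which is the desired estimate on $E$.

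It remains to check that $AE \in \mathscr{R}$. Since $E \le F_n$ we have $F_n E = E$, so $(A_{n+1} - A_n) E = (A_{n+1} - A_n) F_n E$ and hence $\| (A_{n+1} - A_n) E \| \le \| (A_{n+1} - A_n) F_n \| \le 2^{-n}$. Thus $\{ A_n E \}$ is a Cauchy sequence in $\mathscr{R}$ for the operator norm; as $\mathscr{R}$ is norm-closed it converges in norm to some $B \in \mathscr{R}$. Norm convergence implies convergence in the $\mathfrak{m}$-topology, so $A_n E \to B$ in $\Wtilde{R}$, while $A_n \to A$ in $\Wtilde{R}$ together with the continuity of multiplication (Theorem \ref{thm:cont_mvn}) gives $A_n E \to AE$ in $\Wtilde{R}$. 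Since $\Wtilde{R}$ is Hausdorff we conclude $AE = B \in \mathscr{R}$.

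The crux, I expect, is the opening reduction: the observation that a faithful normal tracial state by itself generates the $\mathfrak{m}$-topology, which is what makes metrizability, honest Cauchy sequences, and the geometric-increment trick available. Everything afterwards --- the cutoff, the trace bound for $E$, and the norm-Cauchy estimate --- is routine; the one point requiring a little attention is arranging simultaneously that the representing sequence converges to $A$ and has the prescribed increments, which is handled by passing to a subsequence of a fixed representing Cauchy sequence.
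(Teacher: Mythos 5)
Your argument is correct, and its second half --- choosing projections $F_n$ with geometrically controlled defects, taking $E=\bigwedge_n F_n$, bounding $\tau(I-E)$ by countable subadditivity plus normality, and identifying the norm-limit of $\{A_nE\}$ with $AE$ via the fact that the norm topology is finer than the $\mathfrak{m}$-topology together with Hausdorffness of the completion --- is exactly the cutoff mechanism of the paper's proof. Where you genuinely diverge is in the reduction preceding it. You exploit faithfulness: $S_\tau=I$, so Lemma \ref{lem:absolute_continuity} shows the single state $\tau$ generates the whole $\mathfrak{m}$-topology (this is also just Lemma \ref{lem:family_sep_states} applied to the separating family $\{\tau\}$, which you could have cited directly), whence the topology has a countable base at $0$, is metrizable, and $A$ is the limit of an honest Cauchy sequence from $\mathscr{R}$ whose increments you can prescribe after passing to a subsequence. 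The paper deliberately avoids using faithfulness: it fixes an arbitrary normal tracial state $\tau$, extracts from a Cauchy net a sequence with $\tau$-controlled increments, and then must prove the Claim --- via the Lebesgue--Radon--Nikodym decomposition (Lemma \ref{lem:lebesgue-radon-nikodym}) and Lemma \ref{lem:absolute_continuity} --- that the cut-down sequence $\{A_{i_k}S_\tau\}$ converges to $AS_\tau$ in the full $\mathfrak{m}$-topology, finally producing $E=S_\tau E_m$. Your shortcut buys a cleaner proof of the proposition as stated and dispenses with the Lebesgue--Radon--Nikodym lemma and the net bookkeeping; what it loses is the extra generality of the paper's argument, which works for non-faithful normal tracial states and is precisely what Corollary \ref{cor:approx_proj} invokes (it applies the \emph{proof} of Proposition \ref{prop:approx_proj} to a family of states with mutually orthogonal supports), so your proof would not by itself support that corollary. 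Two minor points to tidy: passing from the strict inequality in Lemma \ref{lem:absolute_continuity} to the non-strict one in the definition of $\sO(\tau,\varepsilon,\delta)$ needs the usual halving of $\delta$, and the estimate $\tau\big(\bigvee_n(I-F_n)\big)\le\sum_n\tau(I-F_n)$ should be justified by finite subadditivity of $\tau$ on projections plus normality, as in the proof of Lemma \ref{lem:absolute_continuity}.
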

\begin{proof}
Let $\tau$ be a fixed normal tracial state on $\mathscr{R}$ and $S_{\tau}$ denote the support projection of $\tau$. (Recall that $S_{\tau}$ is a central projection of $\mathscr{R}$.) Let $\{ A_i \}$ be a net in $\mathscr{R}$ which converges in measure to $A$. Clearly, the net $\{ A_i S_{\tau} \}$ in $\mathscr{R}$ converges in measure to $AS_{\tau}$. We wish to extract a subsequence of this net which converges in measure to $A S_{\tau}$. For each positive integer $k$, there is an index $i_k (> i_{k-1})$ such that $A_i - A_{i_k} \in \sO(\tau, 2^{-k}, 2^{-k})$  for all $i \ge i_k$.
\vskip 0.2in

\begin{claim}
\label{clm:net2seq}
\textsl{
The sequence $\{ A_{i_k}S_{\tau} \}_{k \in \N}$ in $\mathscr{R}$ converges in measure to $AS_{\tau}$.
}
\end{claim}
\begin{claimpff}
Let $\varepsilon, \delta > 0$ and $\rho$ be a normal tracial state on $\mathscr{R}$. From Lemma \ref{lem:lebesgue-radon-nikodym}, there are normal tracial states $\rho_1, \rho_2$ on $\mathscr{R}$ and real numbers $t_1, t_2 \ge 0$ with $t_1 + t_2 = 1$ such that $\rho = t_1 \rho_1 + t_2 \rho_2, S_{\rho_1} S_{\tau} = 0,$ and $S_{\rho_2} \le S_{\tau}$. By Lemma \ref{lem:absolute_continuity}, there is a $\delta ' > 0$ such that whenever $\tau(F) \le \delta '$ for a projection $F$ in $\mathscr{R}$, we have $\rho_2(F) \le \delta$.

Let $m$ be a positive integer such that $2^{-m} \le \min \{\varepsilon , \delta ' \}$. For $n \ge m$, let $F_n$ be a projection in $\mathscr{R}$ such that $\|(A_{i_n} - A_{i_m})F_n \| \le 2^{-m} \le \varepsilon$ and $\tau(I-F_n) \le 2^{-m} \le \delta'$. Note that $\rho_2(I-F_n) \le \delta$. Let $G_n := F_n S_{\tau} + I - S_{\tau}$ so that $F_n S_{\tau} = G_n S_{\tau}$. Since $\tau$ is a tracial normal state, we observe that $S_{\tau}$ is a central projection of $\mathscr{R}$. We have $\big( (A_{i_n}-A_{i_m})F_n \big) S_{\tau} = \big( (A_{i_n}-A_{i_m})S_{\tau} \big) F_n = (A_{i_n}S_{\tau} - A_{i_m} S_{\tau} ) G_n$. Thus
$$\|(A_{i_n}S_{\tau} - A_{i_m} S_{\tau} ) G_n \| = \|\big( (A_{i_n}-A_{i_m})F_n \big) S_{\tau}\| \le 2^{-m} \le \varepsilon.$$
Furthermore, since $\tau(S_{\tau}(I-F_n)) \le \tau(I-F_n) \le \delta'$, we have $\rho_2 (S_{\tau}(I-F_n)) \le \delta$. Thus 
\begin{align*}
    \rho(I-G_n) &= t_1 \rho_1(I-G_n) + t_2 \rho_2(I-G_n) \\
    &= t_1 \rho_1 \big(S_{\tau}(I-F_n)\big) + t_2 \rho_2 \big(S_{\tau}(I-F_n)\big) \le \delta.
\end{align*}
In summary, $(A_{i_n}S_{\tau} - A_{i_m} S_{\tau} ) \in \sO(\rho, \varepsilon, \delta) $ for $n \ge m$. Thus the sequence $\{ A_{i_k} S_{\tau} \}$ in $\mathscr{R}$ is Cauchy in measure and converges to $A S_{\tau}$ (as the net $\{ A_i S_{\tau} \}$ in $\mathscr{R}$ converges to $AS_{\tau}$.) \hfill $\Diamond$
\end{claimpff}
\vskip 0.05in
We reuse notation from the proof of Claim \ref{clm:net2seq}. Let $\varepsilon > 0$. For $k \ge 1$, we have $$\|(A_{i_{k+1}}S_{\tau} - A_{i_k}S_{\tau})F_k \| \le 2^{-k}, $$ and $\tau(I-F_k) \le 2^{-k}$. Let $$E_n := \bigwedge_{k=n}^{\infty} F_k.$$
The sequence of projections $\{ E_n \}$ in $\mathscr{R}$ is increasing and $\tau(I-E_n) \le 2^{-n+1}$. Fix $m$ such that $2^{-m+1} \le \varepsilon$. For $k \ge m$, since $E_m = F_k E_m$, we have $$\|(A_{i_{k+1}}S_{\tau} - A_{i_k} S_{\tau})E_m \| \le \|(A_{i_{k+1}}S_{\tau} - A_{i_k} S_{\tau})F_k \|  \le 2^{-k}.$$
This shows that the sequence $\{ A_{i_k}S_{\tau}E_m \}_{k \in \N}$ in $\mathscr{R}$ is Cauchy in norm and thus converges in norm to an operator $B$ in $\mathscr{R}$. Since the norm topology is finer than the $\mathfrak{m}$-topology, we conclude that the sequence converges to $B$ in measure. Thus $B = AS_{\tau}E_m$. Choosing $E := S_{\tau}E_m$, we see that $AE \in \mathscr{R}$ and $\tau(I-E) = \tau(I-S_{\tau}E_m) = \tau(I-E_m) \le 2^{-m+1} \le \varepsilon$.  
\end{proof}

\begin{cor}
\label{cor:approx_proj}
\textsl{
Let $A$ be an element of $\Wtilde{R}$. Then there is an increasing sequence of projections $\{ E_n \}$ in $\mathscr{R}$ converging in measure to $I$ with $AE_n \in \mathscr{R}$ for every $n \in \N$.}
\end{cor}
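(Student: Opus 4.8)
The plan is to read the required sequence off the \emph{construction} inside the proof of Proposition~\ref{prop:approx_proj} rather than off its statement. Note that one cannot obtain a nested exhausting family simply by intersecting or joining the projections supplied by that proposition: even the family of projections $E$ with $A E \in \mathscr{R}$ need not be directed upward (two such projections may have join $I$ while $A$ is unbounded). What makes the argument go through is that the projections $E_n := \bigwedge_{k=n}^{\infty} F_k$ produced there are automatically nested, since $E_n = F_n \wedge E_{n+1} \le E_{n+1}$.

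Concretely, I would first assume $\mathscr{R}$ admits a faithful normal tracial state $\tau$, so that $S_{\tau} = I$, and rerun the proof of Proposition~\ref{prop:approx_proj} with this $\tau$: pick a net $\{A_i\}$ in $\mathscr{R}$ converging in measure to $A$, pass to indices $i_1 < i_2 < \cdots$ with $A_i - A_{i_k} \in \sO(\tau, 2^{-k}, 2^{-k})$ for $i \ge i_k$, choose projections $F_k$ in $\mathscr{R}$ with $\|(A_{i_{k+1}} - A_{i_k})F_k\| \le 2^{-k}$ and $\tau(I - F_k) \le 2^{-k}$, and set $E_n := \bigwedge_{k=n}^{\infty} F_k$. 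Since $S_{\tau} = I$, the argument there (with no truncation by $S_{\tau}$ needed) shows directly that $\{E_n\}$ is increasing, that $\tau(I - E_n) \le 2^{-n+1}$, and that $\{A_{i_k} E_n\}_k$ is norm-Cauchy for each fixed $n$, whence $A E_n \in \mathscr{R}$ for all $n$. To finish, I would observe that $E := \bigvee_n E_n = I$: normality of $\tau$ gives $\tau(I - E) = \lim_n \tau(I - E_n) = 0$, and faithfulness forces $E = I$. Then for an arbitrary normal tracial state $\rho$ the decreasing net $\{I - E_n\}$ has infimum $I - E = 0$, so $\rho(I - E_n) \downarrow 0$ by normality of $\rho$; hence $I - E_n \in \sO(\rho, \varepsilon, \delta)$ for all $\varepsilon, \delta > 0$ once $\rho(I - E_n) \le \delta$ (with $E_n$ itself as the witnessing projection). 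This is exactly convergence of $E_n$ to $I$ in the $\mathfrak{m}$-topology.

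For a general finite von Neumann algebra $\mathscr{R}$, I would write $I = \sum_{\alpha} q_{\alpha}$ as an orthogonal sum of central projections with each $q_{\alpha} \mathscr{R}$ countably decomposable, apply the preceding paragraph to $A q_{\alpha}$ in the $\mathfrak{m}$-completion of $q_{\alpha} \mathscr{R}$ to obtain increasing projections $E_n^{(\alpha)} \le q_{\alpha}$ with $E_n^{(\alpha)} \uparrow q_{\alpha}$ and $A E_n^{(\alpha)} \in q_{\alpha} \mathscr{R}$, and put $E_n := \sum_{\alpha} E_n^{(\alpha)}$, an increasing sequence of projections in $\mathscr{R}$ with $\bigvee_n E_n = \sum_{\alpha} q_{\alpha} = I$. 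The step I expect to be the real obstacle is showing that $A E_n = \bigoplus_{\alpha} A E_n^{(\alpha)}$ is again bounded: this needs the $E_n^{(\alpha)}$ chosen so that $\sup_{\alpha} \|A E_n^{(\alpha)}\| < \infty$ for each fixed $n$, which is not automatic from Proposition~\ref{prop:approx_proj} and would require keeping track of the norm bounds it produces, or a further truncation of each $A E_n^{(\alpha)}$ within $q_{\alpha}\mathscr{R}$.
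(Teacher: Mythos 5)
Your route is essentially the paper's own: the paper fixes a family $\sT$ of normal tracial states with mutually orthogonal supports summing to $I$ (the same data as your central decomposition into countably decomposable corners $q_\alpha\mathscr{R}$), extracts from the \emph{proof} of Proposition~\ref{prop:approx_proj} an increasing sequence $\{E_{\tau,n}\}_n$ of projections with $E_{\tau,n}\uparrow S_\tau$ in measure and $AE_{\tau,n}\in\mathscr{R}$, and puts $E_n:=\sum_{\tau\in\sT}E_{\tau,n}$. Your treatment of the case of a faithful normal tracial state is correct and complete, and your opening caveat (that the projections $E$ with $AE\in\mathscr{R}$ need not be upward directed) is well taken; the only cosmetic difference is that the paper works with $S_\tau$ inside $\mathscr{R}$ rather than passing to the corner $q_\alpha\mathscr{R}$, which spares it the (harmless, but unaddressed in your sketch) point that the $\mathfrak{m}$-topology of the corner agrees with the restricted topology.

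The step you leave open is, however, a genuine gap in your proof as written: you never establish that $AE_n$ is bounded, and indeed nothing in Proposition~\ref{prop:approx_proj} controls $\sup_\alpha\|AE_n^{(\alpha)}\|$, since the subsequence and the projections produced there depend on the summand. (It is only fair to note that the paper's proof asserts $AE_n\in\mathscr{R}$ at exactly this point without comment, so you have put your finger on a real subtlety rather than overlooked an argument the paper supplies.) The repair is cheap and is of the ``further truncation'' kind you anticipate, achieved by re-indexing rather than spectral cutting: for each $\alpha$ set $m(\alpha,n):=\max\{m\le n:\|AE^{(\alpha)}_m\|\le n\}$, with $E^{(\alpha)}_{m(\alpha,n)}:=0$ when this set is empty, and use $E_n:=\sum_\alpha E^{(\alpha)}_{m(\alpha,n)}$. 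These are still increasing in $n$ (the defining set grows with $n$), each summand still increases to $q_\alpha$ because $m(\alpha,n)\to\infty$ for fixed $\alpha$, and now every piece $AE^{(\alpha)}_{m(\alpha,n)}$ has norm at most $n$, so the centrally orthogonal sum $B_n:=\sum_\alpha AE^{(\alpha)}_{m(\alpha,n)}$ lies in $\mathscr{R}$; taking measure-limits of the finite partial sums on both sides (Theorem~\ref{thm:cont_mvn} together with Corollary~\ref{cor:meas_cont_of_sup_conv}) identifies $B_n$ with $AE_n$. That $E_n\uparrow I$ in measure then follows from Corollary~\ref{cor:meas_cont_of_sup_conv}, or by your direct argument combined with the observation (as in Lemma~\ref{lem:dom_state}, using Lemma~\ref{lem:absolute_continuity}) that any normal tracial state charges only countably many of the summands.
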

\begin{proof}
Let $\sT$ be a collection of normal tracial states on $\mathscr{R}$ with mutually orthogonal support projections. From the proof of Proposition \ref{prop:approx_proj}, for each $\tau \in \sT$, there is an increasing sequence of projections $\{ E _ {\tau, n} \}$ converging in measure to $S_{\tau}$ such that the sequence $\{ A E_{\tau, n} \}$ is in $\mathscr{R}$. For $n \in \N$, let $$E_n := \bigvee_{\tau \in \sT} E_{\tau, n} = \sum_{\tau \in \sT} E_{\tau, n}.$$ 
Note that $E_n \uparrow \sum_{\tau \in \sT} S_{\tau} = I$ in measure and $AE_n \in \mathscr{R}$ for every $n \in \N$. 
\end{proof}
\section{Functorial Approach to Murray-von Neumann algebras}

In this section, we show that Murray-von Neumann algebras arise as $\mathfrak{m}$-completions of finite von Neumann algebras. With this intrinsic description at hand, we explore the appropriate notion of morphism between Murray-von Neumann algebras, the appropriate notion of Murray-von Neumann {\it subalgebra}, and the order structure. This enables us to view Murray-von Neumann algebras intrinsically as ordered complex topological $*$-algebras.

\begin{lem}
\label{lem:equality_proj}
\textsl{
Let $E_1$ and $E_2$ be projections in $\mathscr{R}$ such that for every $\varepsilon > 0$ and normal tracial state $\tau$ there is a projection $F$ in $\mathscr{R}$ with $\tau(I-F) \le \varepsilon$ and $E_1 \wedge F = E_2 \wedge F$. Then $E_1 = E_2$.
}
\end{lem}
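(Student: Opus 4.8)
The plan is to convert the hypothesis into a trace estimate on the ``symmetric difference'' projection $P := E_1 \vee E_2 - E_1 \wedge E_2$ and then appeal to the existence of a separating family of normal tracial states on the finite von Neumann algebra $\mathscr{R}$ (the same fact invoked in the proof of Proposition~\ref{prop:hausdorff_mvn}). Concretely, I will show that $\tau(P) = 0$ for every normal tracial state $\tau$ on $\mathscr{R}$; since $P$ is a projection, the separating property forces $P = 0$, i.e. $E_1 \vee E_2 = E_1 \wedge E_2$, and because $E_1 \wedge E_2 \le E_1 \le E_1 \vee E_2$ and likewise for $E_2$, this gives $E_1 = E_2$.

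To estimate $\tau(P)$, fix a normal tracial state $\tau$ and $\varepsilon > 0$, and use the hypothesis to pick a projection $F$ in $\mathscr{R}$ with $\tau(I - F) \le \varepsilon$ and $E_1 \wedge F = E_2 \wedge F =: G$. Since $G \le E_1$ and $G \le E_2$, we have $G \le E_1 \wedge E_2$. Next, for $j \in \{1,2\}$, the parallelogram law (Kaplansky's formula) for projections gives $E_j - E_j \wedge F \sim E_j \vee F - F \le I - F$; since Murray--von Neumann equivalence preserves the trace and $\tau$ is order-preserving on projections, $\tau(E_j) - \tau(G) = \tau(E_j - E_j \wedge F) \le \tau(I - F) \le \varepsilon$. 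Combined with $\tau(G) \le \tau(E_1 \wedge E_2)$, this yields $\tau(E_j) - \tau(E_1 \wedge E_2) \le \varepsilon$ for $j = 1, 2$. Finally, the parallelogram identity for traces, $\tau(E_1 \vee E_2) = \tau(E_1) + \tau(E_2) - \tau(E_1 \wedge E_2)$ (again Kaplansky's formula, now with roles $E_1, E_2$, followed by applying $\tau$), gives $\tau(P) = \big(\tau(E_1) - \tau(E_1 \wedge E_2)\big) + \big(\tau(E_2) - \tau(E_1 \wedge E_2)\big) \le 2\varepsilon$. As $\varepsilon > 0$ was arbitrary, $\tau(P) = 0$, completing the reduction.

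I do not expect a serious obstacle; the work is entirely lattice bookkeeping — checking $G \le E_1 \wedge E_2$, applying Kaplansky's formula in the form $E_j - E_j \wedge F \sim E_j \vee F - F$, and recording that on projections of the finite algebra $\mathscr{R}$ the trace $\tau$ is order-preserving and invariant under $\sim$. The one conceptual point worth stressing in the write-up is \emph{why} one must route through traces at all: the hypothesis only furnishes an $F$ whose defect $I - F$ is measure-small relative to $\tau$, so there is no purely algebraic shortcut — and passing to $P$ (rather than, say, merely comparing $\tau(E_1)$ with $\tau(E_2)$) is essential, since equality of traces of two projections is far from forcing the projections to be equal.
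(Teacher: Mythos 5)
Your argument is correct and follows essentially the same route as the paper: both proofs use Kaplansky's formula to convert the hypothesis $E_1\wedge F=E_2\wedge F$ with $\tau(I-F)\le\varepsilon$ into the trace estimate showing that a suitable ``difference'' projection has trace at most a multiple of $\varepsilon$ under every normal tracial state, and then invoke the separating family of such states to conclude that projection vanishes. The only (cosmetic) difference is that you run the estimate through $E_1\vee E_2-E_1\wedge E_2$ via the parallelogram identity, whereas the paper treats $E_j-E_1\wedge E_2$ for $j=1,2$ separately.
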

\begin{proof}
Let $E, F$ be projections in $\mathscr{R}$ such that $E \wedge F = 0$. From the Kaplansky formula (see \cite[Theorem 6.1.7]{kadison-ringrose2}), we note that $E = I - (I - E) = (I-E) \vee (I-F) - (I-E) \sim 
(I-F) - (I-E) \wedge (I-F).$ Thus $E \precsim I - F$ and for any tracial state $\tau$ on $\mathscr{R}$, we have $\tau(E) \le \tau(I-F)$.

Let $E_1, E_2, F$ be projections in $\mathscr{R}$ as described in the statement of the lemma (with $F$ depending on $\varepsilon$ and $\tau$). Since $E_1 \wedge F = E_2 \wedge F$, we have $E_1 \wedge F = (E_1 \wedge E_2) \wedge F$ which implies that $(E_1 - E_1 \wedge E_2) \wedge F = 0$. From the discussion in the preceding paragraph, we observe that $\tau(E_1 - E_1 \wedge E_2) \le \tau(I-F) \le \varepsilon$. Thus $\tau(E _1 - E_1 \wedge E_2) = 0$ for every normal tracial state $\tau$ on $\mathscr{R}$, and since the family of such states is separating, we conclude that $E_1 - E_1 \wedge E_2 = 0$. By a symmetric argument, we have $E_2 - E_1 \wedge E_2 = 0$. Hence $E_1 = E_2$.
\end{proof}

\begin{lem}
\label{lem:sep_proj}
\textsl{
Let $A$ and $B$ be closed densely-defined operators affiliated with $\mathscr{R}$. Suppose that for every $\varepsilon > 0$ and faithful normal tracial state $\tau$ there is a projection $E$ in $\mathscr{R}$ with $\tau(I-E) \le \varepsilon$ such that $E\mathscr{H} \subseteq \mathscr{D}(A) \cap \mathscr{D}(B)$ and $Ax = Bx$ for all $x \in E\mathscr{H}$. Then $A=B$.
}
\end{lem}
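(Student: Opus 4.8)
The plan is to reduce the statement to showing that a single element of the Murray–von Neumann algebra $\afr$ vanishes. Set
$T := A \afrsum (-B)$, which by Proposition \ref{prop:fund_mva}(i) (applied to $A$ and $-B$) is a well-defined closed, densely-defined operator affiliated with $\mathscr{R}$; as a closed extension of the operator $x \mapsto Ax - Bx$ on $\mathscr{D}(A)\cap\mathscr{D}(B)$, it extends that operator. Once we know $T = 0$ in $\afr$, the group law for $\afrsum$ gives $A = A \afrsum\big((-B)\afrsum B\big) = \big(A\afrsum(-B)\big)\afrsum B = B$, which is the assertion. So everything comes down to proving $T=0$.

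Next I would use the hypothesis to locate $\ker T$. Fix $\varepsilon>0$ and a faithful normal tracial state $\tau$, and let $E$ be the projection it provides: $\tau(I-E)\le\varepsilon$, $E\mathscr{H}\subseteq\mathscr{D}(A)\cap\mathscr{D}(B)\subseteq\mathscr{D}(T)$, and $Tx = Ax-Bx = 0$ for every $x\in E\mathscr{H}$. Hence $E\mathscr{H}\subseteq\ker T$, so $E\le N$, where $N$ is the projection onto $\ker T$; note $N\in\mathscr{R}$ because $\ker T$ is invariant under every unitary of $\mathscr{R}'$ (as $T$ is affiliated with $\mathscr{R}$), so $N\in(\mathscr{R}')'=\mathscr{R}$. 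Therefore $\tau(I-N)\le\tau(I-E)\le\varepsilon$ for every $\varepsilon>0$, so $\tau(I-N)=0$; since $\tau$ is faithful we get $N=I$, i.e. $\ker T$ is dense in $\mathscr{H}$. (For a finite $\mathscr{R}$ that is not countably decomposable, one instead runs this argument on the central slices $\mathscr{R}S_{\tau}$ for a maximal family of normal tracial states with mutually orthogonal central supports, exactly as in the proof of Corollary \ref{cor:approx_proj}.)

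Finally, closedness of $T$ upgrades density of $\ker T$ to $T=0$: if $x_n\in\ker T$ and $x_n\to x$, then $(x_n,Tx_n)=(x_n,0)\to(x,0)$ lies in the closed graph of $T$, so $x\in\ker T$; thus $\ker T$ is a closed subspace of $\mathscr{H}$, and being dense it equals $\mathscr{H}$. Hence $\mathscr{D}(T)=\mathscr{H}$ and $Tx=0$ for all $x$, so $T=0$ in $\afr$, and the first paragraph concludes that $A=B$.

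I expect the middle step — turning the measure-smallness of the complements $I-E$ into $N=I$ — to be the real content; the clean case is when $\mathscr{R}$ is countably decomposable and a single faithful normal tracial state suffices, while the general case needs the central-support decomposition. The other points ($N\in\mathscr{R}$, and that a closed operator with dense kernel is zero) are routine, provided one argues through $\afr$ and $\ker T$ rather than trying to compare $A$ and $B$ directly on a common dense subspace: the latter runs into the usual difficulty that $\bigcup_n E_n\mathscr{H}$ need not be a core for an unbounded $A$, whereas $\ker T$ is automatically closed.
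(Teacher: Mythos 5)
Your proof is correct, but it takes a genuinely different route from the paper. The paper never forms the difference of $A$ and $B$: it passes to $M_2(\mathscr{R})$ acting on $\mathscr{H}\oplus\mathscr{H}$, observes that the graph projections $\mathbf{G}_A,\mathbf{G}_B$ lie in $M_2(\mathscr{R})$, notes that the hypothesis forces $\mathbf{G}_A\wedge \mathbf{E}_2=\mathbf{G}_B\wedge \mathbf{E}_2$ for $\mathbf{E}_2=E\oplus E$, and then invokes Lemma \ref{lem:equality_proj} (itself resting on the Kaplansky formula and the separating family of normal tracial states) to get $\mathbf{G}_A=\mathbf{G}_B$, hence $A=B$ including equality of domains. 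You instead form $T=A\afrsum(-B)$ in $\afr$ and show its null projection is $I$; this is shorter and avoids matrices, but it leans on the full Murray--von Neumann ring structure of $\afr$ (Proposition \ref{prop:fund_mva} and associativity of $\afrsum$) twice — once to define $T$ and once to recover $A=B$ from $T=0$ via the ``group law''. That reliance is legitimate here, since the paper quotes the $*$-algebra structure of $\afr$ from the literature before this point, so there is no circularity with Theorem \ref{thm:aff_op_complete}; but the paper's graph-projection argument deliberately uses only the projection lattice and the trace, which is more in keeping with its program of re-deriving the structure of $\afr$ from scratch. Each step of your argument checks out: $E\mathscr{H}\subseteq\ker T$, the null projection $N$ of the affiliated operator $T$ lies in $\mathscr{R}$, $E\le N$ gives $\tau(I-N)\le\varepsilon$ for all $\varepsilon$, and $N=I$ forces $T=0$. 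One small shared caveat: as literally stated the hypothesis quantifies over \emph{faithful} normal tracial states, which is vacuous when $\mathscr{R}$ is not countably decomposable; both your proof and the paper's implicitly need the hypothesis for enough (non-faithful) normal tracial states to separate, exactly as you indicate with the central-slice remark modelled on Corollary \ref{cor:approx_proj}, so this is an infelicity of the statement rather than a gap in your argument.
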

\begin{proof}
Let $M_2(\mathscr{R})$ denote the von Neumann algebra of all operators acting on the Hilbert space $\mathscr{H}_2 \;(:= \mathscr{H} \oplus \mathscr{H})$ given by $2 \times 2$ matrices with entries in $\mathscr{R}$. As $A$ and $B$ are closed operators, their graphs are closed subspaces of $\mathscr{H}_2$. Let $\textbf{G}_A$ and $\textbf{G}_B$ be the projections onto the graphs of $A$ and $B$, respectively. Recall that every unitary operator in $M_2(\mathscr{R})'$ is of the form $\begin{bmatrix}
U & 0\\
0 & U
\end{bmatrix}$ for some unitary operator $U$ in $\mathscr{R}'$. For a vector $x \in \mathscr{D}(A)$ and $U \in \mathscr{R}'$, we have $$\begin{bmatrix}
U & 0\\
0 & U
\end{bmatrix}
\begin{bmatrix}
x\\
Ax
\end{bmatrix} =
\begin{bmatrix}
Ux\\
UAx
\end{bmatrix} =
\begin{bmatrix}
Ux\\
AUx
\end{bmatrix}.$$
Thus the graph of $A$ is invariant under the action of any unitary operator in $M_2(\mathscr{R})'$, and by the double commutant theorem, $\textbf{G}_A \in M_2(\mathscr{R})$. Similarly $\textbf{G}_B \in M_2(\mathscr{R})$.

For a normal tracial state $\tau$ on $\mathscr{R}$, we define a corresponding normal tracial state $\text{\boldmath$\tau _2$}$ on $M_2(\mathscr{R})$ by
$$
\text{\boldmath$\tau _2$} \Big(
\begin{bmatrix}
A_{11} & A_{12}\\
A_{21} & A_{22}
\end{bmatrix} \Big) = 
\frac{\tau(A_{11}) + \tau(A_{22})}{2}.
$$
(We remind the reader that every normal tracial state on $M_2(\mathscr{R})$ arises in this manner.) Let $\textbf{E}_2 := \begin{bmatrix}
E & 0 \\
0 & E
\end{bmatrix}$, and $\textbf{I}_2 := \begin{bmatrix}
I & 0\\
0 & I
\end{bmatrix}$ be the identity operator in $M_2(\mathscr{R})$. By the hypothesis, we have $\textbf{G}_A \wedge \textbf{E} = \textbf{G}_B \wedge \textbf{E}$ and $\text{\boldmath$\tau _2$}(\textbf{I}_2 - \textbf{E}_2) \le \varepsilon$. Thus by Lemma \ref{lem:equality_proj}, we have $\textbf{G}_A = \textbf{G}_B$ which implies that $A = B$.
\end{proof}

If $A$ is an element of $\Wtilde{R}$ and $x \in \mathscr{H}$, then from Theorem \ref{thm:cont_mvn} we know that $Ax \in \; \Wtilde{H}$. If $Ax$ is in $\mathscr{H}$, then we say that $x$ is in the domain of the operator of multiplication by $A$. We symbolically denote this by $M_A x = Ax$ and say that $x \in \mathscr{D}(M_A)$.

\begin{thm}
\label{thm:aff_op_complete}
\textsl{
\begin{itemize}
    \item[(i)] For every $A \in \; \Wtilde{R}$, $M_A$ is a closed densely-defined operator affiliated with $\mathscr{R}$;
    \item[(ii)] For every closed densely-defined operator $T$ affiliated with $\mathscr{R}$, there is an element $A \in \; \Wtilde{R}$ such that $T = M_A$;
    \item[(ii)] For $A \in \; \Wtilde{R}$, $M_{A}^* = M_{A^*}$;
    \item[(iii)] For $A, B \in \; \Wtilde{R}$, $M_{A + B} = \overline{M_A + M_B}$;
    \item[(iv)] For $A, B  \in \; \Wtilde{R}$, $M_{AB}  = \overline{M_A M_B}$.
\end{itemize}
Thus the mapping $A \mapsto M_A : \,\Wtilde{R} \to \afr$ is a $*$-isomorphism between $\Wtilde{R}$ and $\afr$ extending the identity mapping from $\mathscr{R}$ to $\mathscr{R}$.
}
\end{thm}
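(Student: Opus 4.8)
The plan is to build the $*$-isomorphism $A \mapsto M_A$ piece by piece, using the density of $\mathscr{R}$ in $\Wtilde{R}$ together with the continuity of the algebraic operations established in Theorem \ref{thm:cont_mvn}, and to verify each claimed identity of unbounded operators via the separation principle of Lemma \ref{lem:sep_proj} rather than by direct domain tracking. First I would establish (i): given $A \in \Wtilde{R}$, pick a net $\{A_i\}$ in $\mathscr{R}$ converging to $A$ in measure, and use Corollary \ref{cor:approx_proj} to produce an increasing sequence of projections $\{E_n\}$ in $\mathscr{R}$ with $E_n \uparrow I$ in measure and $AE_n \in \mathscr{R}$. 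For $x \in E_n\mathscr{H}$, $Ax = (AE_n)x \in \mathscr{H}$, so $\bigcup_n E_n\mathscr{H} \subseteq \mathscr{D}(M_A)$; since $E_n \uparrow I$ in measure and $\mathscr{H}$ is Hausdorff in the $\mathfrak{m}$-topology (Proposition \ref{prop:hausdorff_mvn}), the subspace $\bigcup_n E_n\mathscr{H}$ is dense in $\mathscr{H}$ in the norm topology (a standard fact: the ranges of an increasing sequence of projections converging strongly to $I$ have dense union), so $M_A$ is densely defined. That $M_A$ is affiliated with $\mathscr{R}$ follows from $U M_A = M_A U$ for unitaries $U \in \mathscr{R}'$, which in turn follows from Corollary \ref{cor:unitary_commutant} and the continuity of $(A,x) \mapsto Ax$; closedness of $M_A$ I would get by checking its graph is closed, again using that $AE_n \in \mathscr{R}$ has closed graph and patching, or alternatively by exhibiting $M_A$ as closable with the claimed closure in the later parts.

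For (ii), the surjectivity onto $\afr$, I would argue that the canonical map $\mathscr{R} \hookrightarrow \afr$ extends to $\Wtilde{R}$: given $T \in \afr$, one shows $T$ is a limit in measure of a net in $\mathscr{R}$ (e.g., via the bounded transform $T(I + T^*T)^{-1/2}$ and spectral truncations, or via the polar/spectral resolution giving $TE_n \in \mathscr{R}$ for suitable spectral projections $E_n$ of $|T|$ with $\tau(I - E_n) \to 0$, so $TE_n \to T$ in measure), hence $T$ corresponds to an element $A \in \Wtilde{R}$, and then $M_A = T$ by Lemma \ref{lem:sep_proj} applied to $M_A$ and $T$: on each $E_n\mathscr{H}$ both agree with $TE_n$. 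For the algebraic identities (ii$'$)--(iv), I would use continuity plus Lemma \ref{lem:sep_proj}. For instance, for $M_{A^*} = M_A^*$: approximate $A$ by $A_i \in \mathscr{R}$, note $A_i^* \to A^*$ by Theorem \ref{thm:cont_mvn}(1), and the adjoint relation holds at the bounded level; to pass to the limit one checks the graph of $M_A^*$ equals the graph of $M_{A^*}$ using that the graph projections live in $M_2(\mathscr{R})$ (as in the proof of Lemma \ref{lem:sep_proj}) and are continuous functions of $A$ in measure. For $M_{A+B} = \overline{M_A + M_B}$ and $M_{AB} = \overline{M_A M_B}$: by Proposition \ref{prop:fund_mva} the right-hand sides lie in $\afr$ and agree with the corresponding bounded operations when $A, B$ are truncated to $AE_n, BF_n \in \mathscr{R}$; choosing a common reducing projection $G_n = E_n \wedge F_n \wedge (\text{domain projection for the product})$ with $\tau(I - G_n) \to 0$, both sides agree on $G_n\mathscr{H}$, so Lemma \ref{lem:sep_proj} finishes it.

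The main obstacle I anticipate is part (ii), establishing that \emph{every} closed densely-defined operator affiliated with $\mathscr{R}$ actually arises as $M_A$ for some $A \in \Wtilde{R}$ — equivalently, that $\afr$ is not bigger than the abstract completion. This requires showing $\mathscr{R}$ is $\mathfrak{m}$-dense in $\afr$ (with $\afr$ carrying the topology making $M_A$-convergence correspond to $\mathfrak{m}$-convergence), which is essentially Nelson's density theorem in disguise; the clean way is: for $T \in \afr$ with polar decomposition $T = V|T|$ and spectral resolution $|T| = \int \lambda\, dE_\lambda$, set $T_n := V|T|E_{[0,n]} \in \mathscr{R}$ and check $T - T_n \in \sO(\tau, n, \delta)$-type neighborhoods shrink, i.e. the truncations are Cauchy in measure with limit (the abstract element corresponding to) $T$. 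One must verify this for \emph{all} normal tracial states simultaneously, but Lemma \ref{lem:family_sep_states} reduces this to a single well-chosen $\tau$ or a countable family, keeping it manageable. Once density is in hand, the rest is bookkeeping with Lemma \ref{lem:sep_proj}, and the final sentence — that $A \mapsto M_A$ is a $*$-isomorphism extending the identity on $\mathscr{R}$ — follows by combining injectivity (Proposition \ref{prop:hausdorff_mvn} plus the fact that $M_A = 0 \Rightarrow A$ kills a dense subspace $\Rightarrow A = 0$ in $\Wtilde{R}$), surjectivity (ii), and the homomorphism properties (ii$'$)--(iv).
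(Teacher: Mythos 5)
Your overall strategy is the paper's: define $M_A$ through the module action on $\Wtilde{H}$, use Proposition \ref{prop:approx_proj}/Corollary \ref{cor:approx_proj} to produce domain projections of large trace, obtain surjectivity from the spectral truncations $TE_n$ of $|T|$ (Cauchy in measure because $(TE_m - TE_n)E_n = 0$ and $\tau(I-E_n)\to 0$), and settle the algebraic identities by exhibiting a common projection with small co-trace and invoking Lemma \ref{lem:sep_proj}. Two steps, however, are not actually carried out. The smaller one is closedness of $M_A$: ``patching'' the closed graphs of the bounded operators $AE_n$ does not by itself give closedness of $M_A$. The argument that works — and for which you already cite all the ingredients — is: if $x_k \to x$ and $M_A x_k \to y$ in norm, then both convergences also hold in the $\mathfrak{m}$-topology, so by the continuity of $(A,x)\mapsto Ax$ (Theorem \ref{thm:cont_mvn}) one has $Ax_k \to Ax$ in $\Wtilde{H}$, and Hausdorffness (Proposition \ref{prop:hausdorff_mvn}) forces $Ax = y \in \mathscr{H}$, i.e. $x \in \mathscr{D}(M_A)$ and $M_Ax = y$.

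The genuine gap is the adjoint identity $M_A^* = M_{A^*}$. You propose to pass to the limit using the claim that the graph projections in $M_2(\mathscr{R})$ ``are continuous functions of $A$ in measure.'' Nothing in the paper, and nothing in your proposal, establishes $\mathfrak{m}$-continuity of $A \mapsto \mathbf{G}_{M_A}$, and this is not an elementary fact; as written, the step from $A_i^* \to A^*$ to equality of the graphs of $M_A^*$ and $M_{A^*}$ is unsupported. The paper avoids this entirely by the same device you use for sums and products: by Proposition \ref{prop:approx_proj} choose a projection $E$ with $A^*E \in \mathscr{R}$ (hence $EA \in \mathscr{R}$) and $\tau(I-E) \le \varepsilon$; then for $x \in \mathscr{D}(M_A)$ and $y \in E\mathscr{H}$,
\begin{equation*}
\langle M_A x, y \rangle = \langle (EA)x, y \rangle = \langle x, (A^*E)y \rangle = \langle x, M_{A^*}y \rangle,
\end{equation*}
so $E\mathscr{H} \subseteq \mathscr{D}(M_A^*) \cap \mathscr{D}(M_{A^*})$ with $M_A^* y = M_{A^*}y$ there; since both $M_A^*$ and $M_{A^*}$ are closed, densely defined and affiliated with $\mathscr{R}$, Lemma \ref{lem:sep_proj} yields $M_A^* = M_{A^*}$. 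Replacing your graph-continuity claim with this computation (and tightening the closedness argument as above) brings your proposal fully in line with a correct proof; the remaining parts, including the surjectivity step you flagged as the main obstacle, match the paper's treatment.
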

\begin{proof}
(i) Let $A \in \; \Wtilde{R}$ and $U$ be a unitary operator in $\mathscr{R}'$, the commutant of $\mathscr{R}$. Consider a net $\{ A_i \}$ in $\mathscr{R}$ which converges in measure to $A$. For every vector $x \in \mathscr{H}$, by Theorem \ref{thm:cont_mvn}, the nets $\{ A_i x \}, \{ A_i Ux \}$ in $\Wtilde{H}$ respectively converge in measure to $Ax, AUx$. From Corollary \ref{cor:unitary_commutant}, the net $\{ UA_i x \}$ in $\Wtilde{H}$ converges in measure to $UAx$. (We emphasize that the preceding assertion does not follow from Theorem \ref{thm:cont_mvn} as $U$ may not belong to $\mathscr{R}$.) Let $x \in \mathscr{D}(M_A)$ so that $Ax \in \mathscr{H}$. Since $UA_ix = A_i Ux$ for every index $i$, taking limits in the $\mathfrak{m}$-topology we conclude that $AUx = UAx \in \mathscr{H}$. Thus if $x \in \mathscr{D}(M_A)$, then $Ux \in \mathscr{D}(M_{A} )$. In other words, for a unitary operator $U$ in $\mathscr{R}'$, $U \cdot \mathscr{D}(M_A) = \mathscr{D}(M_A)$ and $U M_A = M_A U$. We conclude that $M_A$ is affiliated with $\mathscr{R}$.

Let $\{ x_i \}$ be a net of vectors in $\mathscr{D}(M_A)$ such that $x_i \rightarrow 0$ in norm, and $M_A x_i \rightarrow y$ in norm. Then $x_i \rightarrow 0$ in measure  so that, by Theorem \ref{thm:cont_mvn}, $M_A x_i \rightarrow 0$ in measure. Since $M_A x_i \rightarrow y$ in norm and thus in measure, and $\Wtilde{H}$ is Hausdorff by Proposition \ref{prop:hausdorff_mvn}, we conclude that $y=0$. Thus $M_A$ is closed.

By Proposition \ref{prop:approx_proj}, for $\varepsilon > 0$ and normal tracial state $\tau$ on $\mathscr{R}$ there is a projection $E$ in $\mathscr{R}$ such that $\tau(I-E) \le \varepsilon$ and $E\mathscr{H} \subseteq \mathscr{D}(M_A)$ or equivalently, $\mathscr{D}(M_A)^{\perp} \subseteq (I-E)\mathscr{H}$. Hence $\mathscr{D}(M_A)^{\perp} = 0$ and $M_A$ is densely-defined.
\vskip 0.2in

\noindent (ii)  Let $E_n \in \mathscr{R}$ denote the spectral projection of $|T| := (T^*T)^{\frac{1}{2}}$ corresponding to the 
interval $[0, n]$. Note that $\bigcup_{n=1}^{\infty} E_n \mathscr{H}$ is a core for $T$ and $E_n \uparrow I$ in measure. For $m \ge n$, clearly $(TE_m -  TE_n) E_n = 0$. Thus the sequence $\{ TE_n \}$ in $\mathscr{R}$ is Cauchy in measure and converges to an element $A$ of $\Wtilde{R}$. Moreover, for $x \in \bigcup_{n=1}^{\infty} E_n \mathscr{H}$, the sequence of vectors $\{ TE_nx \}$ eventually takes the constant value $Tx \in \mathscr{H}$ and also converges in measure to $Ax$. Hence $\bigcup_{n=1}^{\infty} E_n \mathscr{H} \subseteq M_A$ and $Tx = Ax$ for $x \in \bigcup_{n=1}^{\infty} E_n \mathscr{H}$. Thus $T = M_A$.
\vskip 0.2in

\noindent (iii) For $\varepsilon > 0$ and a faithful normal tracial state $\tau$, by Proposition \ref{prop:approx_proj}, there is a projection $E$ in $\mathscr{R}$ such that $A^*E \in \mathscr{R}$ (hence, $EA \in \mathscr{R}$) with $\tau(I-E) \le \varepsilon$. Thus $E\mathscr{H} \subseteq M_{A^*}$. For a vector $x$ in $\mathscr{D}(M_A)$ and a vector $y$ in $E\mathscr{H}$, we have $$\langle M_A x, y \rangle = \langle M_Ax, Ey \rangle = \langle EAx, y \rangle = \langle x, A^* Ey \rangle = \langle x, M_{A^*}y \rangle,$$
so that $y \in \mathscr{D}(M_{A} ^*)$ and $M_A ^* y = M_{A^*}y$. Thus $E\mathscr{H} \subseteq \mathscr{D}(M_{A^*}) \cap \mathscr{D}(M_A ^*)$ and for all $x \in \mathscr{H}$, we have $M_A ^* x = M_{A^*} x$. From Lemma \ref{lem:sep_proj}, we conclude that $M_A ^* = M_{A^*}.$
\vskip 0.2in

\noindent (iv) Clearly $M_A + M_B \subseteq M_{A+B}$ and since $M_{A+B}$ is closed, $M_A + M_B$ is pre-closed. For $\varepsilon > 0$ and a faithful normal tracial state $\tau$, by Proposition \ref{prop:approx_proj},  there are projections $E, F$ in $\mathscr{R}$ such that $AE, BF \in \mathscr{R}$ with $\tau(I-E) \le \frac{\varepsilon}{2}, \tau(I-F) \le \frac{\varepsilon}{2}$. Note that $A(E\wedge F) = AE(E \wedge F) \in \mathscr{R}, B(E \wedge F) = BF(E \wedge F) \in \mathscr{R}$ and $\tau(I-E\wedge F) \le \varepsilon$. Note that $(E \wedge F)\mathscr{H} \subseteq \mathscr{D}(\overline{M_A + M_B}) \cap \mathscr{D}(M_{A+B})$ and for $x \in (E\wedge F)\mathscr{H}$, we have $(M_A + M_B)x = \overline{M_A + M_B}\,x = M_{A+B}\,x$. By Lemma \ref{lem:sep_proj}, $\overline{M_A + M_B} = M_{A+B}$.
\vskip 0.2in

\noindent (v) We reuse notation from part (iv). Clearly $M_A M_B \subseteq M_{AB}$ and since $M_{AB}$ is closed, $M_A M_B$ is pre-closed. Let $G := \sN \big( (I-E)BF \big) = I - \sR \big( FB^*(I-E) \big)$ so that $x \in G\mathscr{H}$ if and only if $BFx \in E \mathscr{H}$ (note $E \mathscr{H} \subseteq \mathscr{D}(M_A)$). Thus $(F \wedge G) \mathscr{H} \subseteq \mathscr{D}(\overline{M_A M_B}) \cap \mathscr{D}(M_{AB})$. From (\ref{eqn:mod_proj2}), we observe that $\tau(I - F \wedge G) \le \varepsilon$, and for all $x \in (F \wedge G)\mathscr{H}$, we have $M_A M_B x = \overline{M_A M_B} x = M_{AB}x$. By Lemma \ref{lem:sep_proj}, $\overline{M_A M_B} = M_{AB}$.
\end{proof}

\begin{lem}
\label{lem:close_to_zero}
\textsl{
Let $\mathscr{R}$ be a finite von Neumann algebra and $\tau$ be a normal tracial state on $\mathscr{R}$. Let $H$ be a positive operator in $\mathscr{R}$ and $\varepsilon > 0$ such that $\tau(H) \le \varepsilon$. Then there is a projection $E$ in $\mathscr{R}$ such that $\|H E \| \le \sqrt{\varepsilon}$ and $\tau(I-E) \le \sqrt{\varepsilon}$. In other words, $H \in \sO(\tau, \sqrt{\varepsilon}, \sqrt{\varepsilon})$.
}
\end{lem}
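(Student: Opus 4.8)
The statement is a noncommutative Markov (Chebyshev) inequality, so the plan is to invoke the spectral theorem for the bounded positive operator $H \in \mathscr{R}$ and choose the cutoff so that both estimates come out to $\sqrt{\varepsilon}$. Concretely, let $\{E_\lambda\}_{\lambda \ge 0}$ be the spectral resolution of $H$, so that $H = \int_0^{\|H\|} \lambda \, \mathrm{d}E_\lambda$ with all $E_\lambda \in \mathscr{R}$, and set $E := E_{\sqrt{\varepsilon}}$, the spectral projection of $H$ for the interval $[0, \sqrt{\varepsilon}]$. Since $E$ commutes with $H$, we may write $H = HE + H(I-E)$ with both summands positive, and on the range of $E$ the functional calculus gives $\|HE\| = \|EHE\| \le \sqrt{\varepsilon}$, which is the first required bound.

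For the second bound, observe that $H(I-E) \ge \sqrt{\varepsilon}(I-E)$ in $\mathscr{R}^{+}$, because $I-E$ is the spectral projection for $(\sqrt{\varepsilon}, \infty)$. Applying the positive linear functional $\tau$ and using $HE \ge 0$ yields
\[
\varepsilon \ge \tau(H) = \tau(HE) + \tau\big(H(I-E)\big) \ge \tau\big(H(I-E)\big) \ge \sqrt{\varepsilon}\,\tau(I-E),
\]
hence $\tau(I-E) \le \varepsilon / \sqrt{\varepsilon} = \sqrt{\varepsilon}$. Together with the first bound this shows $H \in \sO(\tau, \sqrt{\varepsilon}, \sqrt{\varepsilon})$, as claimed.

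There is no real obstacle here: the argument is a direct application of the bounded Borel function calculus together with positivity and traciality of $\tau$ (traciality is not even used beyond positivity). The only point requiring a moment's thought is the bookkeeping choice of the threshold $\sqrt{\varepsilon}$, which is dictated by wanting the operator-norm bound ($\le \sqrt{\varepsilon}$) and the trace bound ($\le \varepsilon/\sqrt{\varepsilon}$) to balance.
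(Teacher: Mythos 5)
Your proof is correct and is essentially the same as the paper's: both take $E$ to be the spectral projection of $H$ for $[0,\sqrt{\varepsilon}]$, get $\|HE\|\le\sqrt{\varepsilon}$ from the functional calculus, and deduce $\tau(I-E)\le\sqrt{\varepsilon}$ from $\sqrt{\varepsilon}(I-E)\le H(I-E)$ together with $\tau(H(I-E))\le\tau(H)\le\varepsilon$. Your side remark that only positivity of $\tau$ (not traciality) is used is also accurate.
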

\begin{proof}
Let $E_{\lambda}$ denote the spectral projection of $H$ corresponding to the interval $[0, \lambda]$ for $\lambda \ge 0$. From \cite[Theorem 5.2.2, (iv)]{kadison-ringrose1}, for $\mu  \ge 0$, we observe that
\begin{align*}
    HE_{\mu}  \le \mu E_{\mu},\\
    H(I-E_{\mu}) \ge \mu (I-E_{\mu}).    
\end{align*}
Choosing $\mu = \sqrt{\varepsilon}$, we have 
\begin{align*}
    HE_{\sqrt{\varepsilon}} \le \sqrt{\varepsilon} E_{\sqrt{\varepsilon}},\\
    H(I - E_{\sqrt{\varepsilon}}) \ge \sqrt{\varepsilon} (I - E_{\sqrt{\varepsilon}}).
\end{align*}

Thus $\|H E_{\sqrt{\varepsilon}} \| \le \sqrt{\varepsilon}$ and $\sqrt{\varepsilon} \tau (I - E_{\sqrt{\varepsilon}}) \le \tau \big( H(I - E_{\sqrt{\varepsilon}}) \big) \le \tau(H) \le \varepsilon $, which implies that $\tau(I - E_{\sqrt{\varepsilon}}) \le \sqrt{\varepsilon}$.
\end{proof}

\begin{cor}
\label{cor:meas_cont_of_sup_conv}
\textsl{
Let $\{ H_i \}$ be an increasing net of self-adjoint operators in $\mathscr{R}$ converging to $H$ in the strong-operator topology (or equivalently, in the ultra-weak topology). Then $H_i \uparrow H$ in the $\mathfrak{m}$-topology.
}
\end{cor}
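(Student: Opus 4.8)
The plan is to reduce the whole statement to the single scalar estimate provided by Lemma~\ref{lem:close_to_zero}. First I would observe that, since $\{H_i\}$ is increasing and converges to $H$ in the strong-operator topology, for each unit vector $x$ the net $\langle H_i x,x\rangle$ increases to $\langle Hx,x\rangle$, whence $H_i \le H$ for every $i$. Therefore $K_i := H - H_i$ is a \emph{positive} element of $\mathscr{R}$, the net $\{K_i\}$ is decreasing, and $\inf_i K_i = 0$ (any self-adjoint lower bound $L$ of $\{K_i\}$ satisfies $H-L \ge H_i$ for all $i$, hence $H - L \ge \sup_i H_i = H$, i.e. $L \le 0$). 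Fixing an index $i_0$ and passing to the cofinal tail $\{i : i \ge i_0\}$, we moreover have $0 \le K_i \le H - H_{i_0}$, so this tail is uniformly norm-bounded; that is the form in which normality will be applied.

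Next I would invoke normality of the trace. A normal positive linear functional is order-continuous on bounded monotone nets of self-adjoint operators (this upgrades, via the spectral theorem, the decreasing-net-of-projections statement recorded just after the definition of normal state); applied to $K_i \downarrow 0$ it yields $\tau(K_i) \to 0$ for every normal tracial state $\tau$ on $\mathscr{R}$. Now fix a triple $(\tau,\varepsilon,\delta)$ and set $\eta := \min\{\varepsilon^2,\delta^2\}$. Choose $j \ge i_0$ with $\tau(K_i) \le \eta$ for all $i \ge j$. By Lemma~\ref{lem:close_to_zero}, $K_i \in \sO(\tau,\sqrt{\eta},\sqrt{\eta}) \subseteq \sO(\tau,\varepsilon,\delta)$ for all $i \ge j$; and since $\|(-A)E\| = \|AE\|$, the neighborhood $\sO(\tau,\varepsilon,\delta)$ is stable under negation, so $H_i - H = -K_i \in \sO(\tau,\varepsilon,\delta)$ for all $i \ge j$. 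As the triple was arbitrary, this is exactly the assertion that $H_i \to H$ in the $\mathfrak{m}$-topology, and combined with monotonicity of the net it reads $H_i \uparrow H$ in the $\mathfrak{m}$-topology.

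I do not expect a genuine obstacle: the substance is entirely carried by Lemma~\ref{lem:close_to_zero} together with order-continuity of $\tau$. The only point needing a little care is the passage from normality as stated for (decreasing nets of) projections to normality on bounded decreasing nets of positive operators; this is routine but worth flagging, and it is precisely why one first truncates to the bounded tail $\{K_i : i \ge i_0\}$ before invoking it. Everything else is bookkeeping with the neighborhood basis $\{\sO(\tau,\varepsilon,\delta)\}$.
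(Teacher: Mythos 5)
Your proof is correct and follows essentially the same route as the paper's: normality of each normal tracial state $\tau$ gives $\tau(H - H_i) \to 0$, and Lemma \ref{lem:close_to_zero} converts the small trace of the positive operator $H - H_i$ into membership in $\sO(\tau,\varepsilon,\delta)$. The extra care you take (positivity of $H-H_i$, truncating to a bounded tail before invoking normality, stability of the neighborhood under negation) merely makes explicit steps the paper leaves implicit.
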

\begin{proof}
Note that $H = \sup H_i$. Let $\tau$ be a normal tracial state on $\mathscr{R}$. From Lemma \ref{lem:close_to_zero}, if $|\tau(H) - \tau(H_i)| = |\tau (H - H_i)|\le \varepsilon$, then $(H - H_i) \in \sO(\tau, \sqrt{\varepsilon}, \sqrt{\varepsilon})$. By the normality of $\tau$, we have $$\lim_i \tau(H_i) = \tau(H).$$ Thus $H_i \uparrow H$ in the $\mathfrak{m}$-topology on $\mathscr{R}$.
\end{proof}

\begin{remark}
We make frequent use of Corollary \ref{cor:meas_cont_of_sup_conv} in the context of a fixed self-adjoint operator $A$ in $\afr$ for the increasing sequence of projections $\{ E_n \}$  where $E_n$ denotes the spectral projection of $A$ corresponding to the interval $[-n, n]$ for $n \in \N$. 
\end{remark}

\begin{remark}
It is natural to wonder how the $\mathfrak{m}$-topology on $\mathscr{R}$ relates to other intrinsic topologies such as the norm topology and ultraweak topology on $\mathscr{R}$. In \cite[Theorem 3.17]{nayak-matrix}, it is shown that the $\mathfrak{m}$-topology is coarser than the norm topology. Furthermore, it is shown via counterexamples in the setting of $II_1$ factors that the $\mathfrak{m}$-topology is neither finer nor coarser than the ultraweak topology.
\end{remark}

\begin{cor}
\label{cor:vn_subalg}
\textsl{
Let $\mathscr{S}$ be a $*$-subalgebra of $\mathscr{R}$ which is closed in the $\mathfrak{m}$-topology and contains the same identity as $\mathscr{R}$. Then $\mathscr{S}$ is a von Neumann subalgebra of $\mathscr{R}$.
}
\end{cor}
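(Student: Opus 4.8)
The plan is to verify the hypotheses of the double commutant theorem for $\mathscr{S}$, i.e.\ to show that $\mathscr{S}$ is a $*$-subalgebra of $B(\mathscr{H})$ containing $I$ that is closed in the strong-operator topology (equivalently, weak-operator topology). By assumption $\mathscr{S}$ is already a unital $*$-subalgebra, so the only thing to check is strong-operator closedness. Since $\mathscr{R}$ is a von Neumann algebra, and a strong-operator limit of a net in $\mathscr{R}$ stays in $\mathscr{R}$, it suffices to show: if $\{A_i\}$ is a net in $\mathscr{S}$ converging strongly to some $A \in \mathscr{R}$, then $A \in \mathscr{S}$. The key bridge is that strong-operator convergence of a \emph{norm-bounded} net in $\mathscr{R}$ implies $\mathfrak{m}$-convergence; once we know the limit can be reached $\mathfrak{m}$-ly by elements of $\mathscr{S}$, the $\mathfrak{m}$-closedness of $\mathscr{S}$ finishes the argument.

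First I would reduce to norm-bounded nets via Kaplansky's density theorem: the unit ball of $\mathscr{S}$ (in the norm inherited from $\mathscr{R}$) is strong-operator dense in the unit ball of its strong-operator closure $\overline{\mathscr{S}}^{\,\mathrm{SOT}}$, which is a von Neumann algebra. So it is enough to show that whenever $\{A_i\}$ is a net in $\mathscr{S}$ with $\|A_i\| \le 1$ converging strongly to $A \in \mathscr{R}$, we have $A \in \mathscr{S}$. Next I would promote strong-operator convergence of this bounded net to $\mathfrak{m}$-convergence. For a self-adjoint bounded net this is essentially Corollary \ref{cor:meas_cont_of_sup_conv} applied after a monotonization, but more directly: for a norm-bounded net, strong convergence $A_i \to A$ gives, for each normal tracial state $\tau$ and each vector-like estimate, $\tau\big((A_i - A)^*(A_i - A)\big) \to 0$, because $\tau$ is a normal positive functional and $x \mapsto \tau(x^*x)$ is a strong-operator continuous seminorm on norm-bounded sets (it is implemented by the GNS vector of $\tau$, and strong convergence means $(A_i - A)\xi_\tau \to 0$ for that vector). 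Then Lemma \ref{lem:close_to_zero} applied to the positive operator $H = (A_i - A)^*(A_i - A) \in \mathscr{R}$ shows $(A_i - A)^*(A_i - A) \in \sO(\tau, \sqrt{\varepsilon}, \sqrt{\varepsilon})$ for large $i$, whence $A_i - A \in \sO(\tau, \varepsilon^{1/4}, \sqrt{\varepsilon})$; letting $\tau$, $\varepsilon$ vary shows $A_i \to A$ in the $\mathfrak{m}$-topology.

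Finally, since $\{A_i\} \subseteq \mathscr{S}$ and $\mathscr{S}$ is $\mathfrak{m}$-closed, $A \in \mathscr{S}$. This shows the unit ball of $\mathscr{S}$ contains the unit ball of $\overline{\mathscr{S}}^{\,\mathrm{SOT}}$, hence $\mathscr{S} = \overline{\mathscr{S}}^{\,\mathrm{SOT}}$ is a von Neumann algebra; being a unital $*$-subalgebra of $\mathscr{R}$, it is a von Neumann subalgebra. The main obstacle I anticipate is the passage from SOT-convergence to $\mathfrak{m}$-convergence in a clean way: one must be careful that the trace estimate $\tau((A_i-A)^*(A_i-A)) \to 0$ really follows from strong convergence (this uses normality of $\tau$ and boundedness of the net, via the GNS representation), and that the bound in Lemma \ref{lem:close_to_zero} chains correctly through the product inequality in Lemma \ref{lem:main}(iii). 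An alternative to worrying about general nets is to invoke that a von Neumann algebra is generated by its projections together with the observation that $\mathscr{S}$, being $\mathfrak{m}$-closed, is norm-closed (the norm topology is finer than the $\mathfrak{m}$-topology), so $\mathscr{S}$ is at least a $C^*$-subalgebra; then one only needs SOT-limits of increasing nets of projections in $\mathscr{S}$, which land back in $\mathscr{S}$ by Corollary \ref{cor:meas_cont_of_sup_conv} and $\mathfrak{m}$-closedness, giving that $\mathscr{S}$ contains all the spectral projections of its self-adjoint elements and is closed under suprema of projections — enough to conclude it is a von Neumann algebra.
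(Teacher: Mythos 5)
Your argument is correct, but it takes a genuinely different route from the paper. The paper's proof is much shorter: it takes a bounded monotonically increasing net $\{H_i\}$ of self-adjoint elements of $\mathscr{S}$, notes via Corollary \ref{cor:meas_cont_of_sup_conv} that $H_i \uparrow \sup_i H_i$ in the $\mathfrak{m}$-topology, concludes $\sup_i H_i \in \mathscr{S}$ by $\mathfrak{m}$-closedness, and then invokes (implicitly) the standard criterion that a unital $*$-subalgebra of a von Neumann algebra containing the suprema of its bounded increasing self-adjoint nets is a von Neumann subalgebra. You instead prove strong-operator closedness directly: Kaplansky density reduces to norm-bounded nets, and for a bounded net SOT-convergence is upgraded to $\mathfrak{m}$-convergence via the estimate $\tau\big((A_i-A)^*(A_i-A)\big)\to 0$ together with Lemma \ref{lem:close_to_zero} (your chaining is fine: if $H:=(A_i-A)^*(A_i-A)\in\sO(\tau,\sqrt{\varepsilon},\sqrt{\varepsilon})$ with witness $E$, then $\|(A_i-A)E\|^2=\|EHE\|\le\|HE\|\le\sqrt{\varepsilon}$, so Lemma \ref{lem:main}(iii) is not even needed), after which $\mathfrak{m}$-closedness and the double commutant theorem finish. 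What your approach buys is a self-contained proof that does not lean on the monotone-closedness characterization of von Neumann subalgebras; what it costs is Kaplansky density plus a careful justification of the trace estimate. On that last point, your parenthetical about the GNS vector is the one spot needing repair: $\xi_\tau$ lives in the GNS Hilbert space, not $\mathscr{H}$, so SOT-convergence on $\mathscr{H}$ does not literally say $(A_i-A)\xi_\tau\to 0$; the correct justification is that normality lets you write $\tau=\sum_n \omega_{x_n}$ with $x_n\in\mathscr{H}$, $\sum_n\|x_n\|^2=1$, so boundedness of the net plus dominated convergence gives $\tau\big((A_i-A)^*(A_i-A)\big)=\sum_n\|(A_i-A)x_n\|^2\to 0$ (equivalently, the $\sigma$-strong and strong topologies agree on bounded sets). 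Two further small points: before applying Kaplansky you should record that $\mathscr{S}$ is norm-closed (the norm topology is finer than the $\mathfrak{m}$-topology), which you only mention in your final paragraph; and the closing claim of that alternative sketch, that containing spectral projections and suprema of increasing nets of projections is already ``enough to conclude it is a von Neumann algebra,'' is itself the nontrivial step and is not justified as stated, so you should rely on your main argument rather than that shortcut.
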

\begin{proof}
Let $\{ H_i \}$ be a monotonically increasing net of self-adjoint elements of $\mathscr{S}$ that is bounded above. By Corollary \ref{cor:meas_cont_of_sup_conv}, $\{ H_i \}$ converges in measure to $\sup H_i$. Since $\mathscr{S}$ is $\mathfrak{m}$-closed and $H_i \uparrow \sup H_i$ in measure, we observe that $\sup H_i \in \mathscr{S}$. Thus $\mathscr{S}$ is a von Neumann subalgebra of $\mathscr{R}$.
\end{proof}

\begin{thm}
\label{thm:functorial}
\textsl{
Let $\mathscr{R}_1$ and $\mathscr{R}_2$ be finite von Neumann algebras and $\Phi : \mathscr{R}_1 \to \mathscr{R}_2$ be a unital $*$-homomorphism. Then the following are equivalent:
\begin{itemize}
    \item[(i)] $\Phi$ is normal;
    \item[(ii)] $\Phi$ is Cauchy-continuous for the $\mathfrak{m}$-topologies on $\mathscr{R}_1$ and $\mathscr{R}_2$.
\end{itemize}
}
\end{thm}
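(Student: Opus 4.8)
The plan is to establish the two implications separately, with Corollary~\ref{cor:meas_cont_of_sup_conv} (converting strong-operator monotone convergence into $\mathfrak{m}$-convergence) doing the heavy lifting in both directions, together with the observation that normal tracial states pull back along normal unital $*$-homomorphisms. Recall that a linear map between topological vector spaces is continuous if and only if it is Cauchy-continuous, so in (i) $\Rightarrow$ (ii) it suffices to prove $\mathfrak{m}$-continuity.

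For (i) $\Rightarrow$ (ii): since $\Phi$ is $\C$-linear it commutes with translations, so it is enough to check continuity at $0$; and since the preimage of a finite intersection is the finite intersection of preimages, it suffices to show that for every normal tracial state $\tau_2$ on $\mathscr{R}_2$ and all $\varepsilon, \delta > 0$ the set $\Phi^{-1}\big(\sO(\tau_2, \varepsilon, \delta)\big)$ is an $\mathfrak{m}$-neighborhood of $0$ in $\mathscr{R}_1$. The crucial remark is that $\tau_1 := \tau_2 \circ \Phi$ is again a normal tracial state on $\mathscr{R}_1$: it is positive and unital because $\Phi$ is, tracial because $\Phi$ is multiplicative and $\tau_2$ is tracial, and normal because $\Phi$ (being normal) and $\tau_2$ both preserve suprema of increasing nets of projections. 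Now if $A \in \sO(\tau_1, \varepsilon, \delta)$, choose a projection $E$ in $\mathscr{R}_1$ with $\|AE\| \le \varepsilon$ and $\tau_1(I-E) \le \delta$; then $\Phi(E)$ is a projection in $\mathscr{R}_2$, $\|\Phi(A)\Phi(E)\| = \|\Phi(AE)\| \le \|AE\| \le \varepsilon$ (a $*$-homomorphism of $C^*$-algebras is norm-decreasing), and $\tau_2(I-\Phi(E)) = \tau_2\big(\Phi(I-E)\big) = \tau_1(I-E) \le \delta$, so $\Phi(A) \in \sO(\tau_2, \varepsilon, \delta)$. Hence $\Phi\big(\sO(\tau_1, \varepsilon, \delta)\big) \subseteq \sO(\tau_2, \varepsilon, \delta)$, which supplies the required neighborhood and proves $\mathfrak{m}$-continuity.

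For (ii) $\Rightarrow$ (i): let $\{H_i\}$ be a monotonically increasing net of self-adjoint operators in $\mathscr{R}_1$ with an upper bound in $\mathscr{R}_1$, and set $H := \sup_i H_i \in \mathscr{R}_1$. Then $\{H_i\}$ converges to $H$ in the strong-operator topology, so $H_i \uparrow H$ in the $\mathfrak{m}$-topology of $\mathscr{R}_1$ by Corollary~\ref{cor:meas_cont_of_sup_conv}, and therefore $\Phi(H_i) \to \Phi(H)$ in the $\mathfrak{m}$-topology of $\mathscr{R}_2$ by the assumed continuity of $\Phi$. On the other hand, $\Phi$ is positive (it is a $*$-homomorphism), so $\{\Phi(H_i)\}$ is a monotonically increasing net of self-adjoint operators in $\mathscr{R}_2$ bounded above by $\Phi(H)$; writing $K := \sup_i \Phi(H_i) \in \mathscr{R}_2$, the net $\{\Phi(H_i)\}$ converges to $K$ in the strong-operator topology, and so $\Phi(H_i) \uparrow K$ in the $\mathfrak{m}$-topology of $\mathscr{R}_2$, again by Corollary~\ref{cor:meas_cont_of_sup_conv}. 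Since $\mathscr{R}_2$ is Hausdorff in the $\mathfrak{m}$-topology (Proposition~\ref{prop:hausdorff_mvn}), limits of nets are unique, whence $\Phi(\sup_i H_i) = \Phi(H) = K = \sup_i \Phi(H_i)$; that is, $\Phi$ is normal.

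The principal point to get right is not a single hard step but the bookkeeping in the second implication: one must verify that $\{\Phi(H_i)\}$ is bounded above \emph{within} $\mathscr{R}_2$, so that its least upper bound exists there and coincides with its strong-operator limit (this is exactly where positivity of $\Phi$ is used), after which the double application of Corollary~\ref{cor:meas_cont_of_sup_conv} together with Hausdorffness closes the argument. The remaining facts invoked --- that a unital $*$-homomorphism of von Neumann algebras is positive and contractive, that $\tau_2 \circ \Phi$ is a normal tracial state, and that a bounded increasing net of self-adjoint operators converges strongly to its least upper bound --- are all standard.
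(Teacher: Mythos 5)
Your proof is correct, and while your argument for (i) $\Rightarrow$ (ii) is essentially the paper's own (pull back $\tau_2$ to $\tau_1 := \tau_2 \circ \Phi$ and verify $\Phi\big(\sO(\tau_1,\varepsilon,\delta)\big) \subseteq \sO(\tau_2,\varepsilon,\delta)$), your argument for (ii) $\Rightarrow$ (i) takes a genuinely different route. The paper sets $B_i := \Phi(H - H_i)$, uses the Kadison--Schwarz inequality to get the quantitative bound $\tau_2(B_i) \le \varepsilon + \|H\|\sqrt{\delta}$ whenever $B_i \in \sO(\tau_2,\varepsilon,\delta)$, deduces $\lim_i \tau_2(B_i)=0$ for every normal tracial state $\tau_2$, and then invokes normality of $\tau_2$ together with the separating family of normal tracial states to conclude $\inf_i B_i = 0$. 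You instead apply Corollary \ref{cor:meas_cont_of_sup_conv} twice --- once in $\mathscr{R}_1$ to get $H_i \uparrow H$ in measure, and once in $\mathscr{R}_2$ to get $\Phi(H_i) \uparrow K := \sup_i \Phi(H_i)$ in measure (after checking, via positivity of $\Phi$, that $\{\Phi(H_i)\}$ is increasing and bounded above by $\Phi(H)$, so $K$ exists and is the strong-operator limit) --- and then identify $K = \Phi(H)$ by uniqueness of $\mathfrak{m}$-limits, citing Proposition \ref{prop:hausdorff_mvn}. This is cleaner in that it avoids the Kadison--Schwarz estimate entirely and delegates the role of the separating family of normal tracial states to the already-proved Hausdorffness result; the paper's computation, in exchange, is self-contained at that point and yields an explicit trace estimate, though that estimate is not used elsewhere. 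Both arguments rest on the same underlying ingredients, and all the auxiliary facts you invoke (positivity and contractivity of a unital $*$-homomorphism, normality of $\tau_2 \circ \Phi$, strong convergence of a bounded increasing net to its least upper bound) are standard and correctly deployed.
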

\begin{proof}
(i) $\Longrightarrow$ (ii). Let $\tau_2$ be a normal tracial state on $\mathscr{R}_2$. Since $\Phi$ is normal, we note that $\tau_1 := \tau_2 \circ \Phi$ is a normal tracial state on $\mathscr{R}_1$. For $\varepsilon, \delta > 0$, let $A \in \sO(\tau_1, \varepsilon, \delta)$ so that there is a projection $E$ in $\mathscr{R}_1$ such that $\|AE \| \le \varepsilon$ and $\tau_1(I-E) \le \delta$. Note that $\|\Phi(A) \Phi(E) \|  = \| \Phi(AE)\| \le \varepsilon$, $\tau_2(I - \Phi(E)) = \tau_1(I-E) \le \delta$ and $\Phi(E)$ is a projection in $\mathscr{R}_2$. Consequently, $$\Phi \big( \sO(\tau_1, \varepsilon, \delta) \big) \subseteq \sO(\tau_2, \varepsilon, \delta).$$
Thus if a net $\{ A_i \}$ in $\mathscr{R}_1$ is Cauchy in measure, then the net $\{ \Phi(A_i) \}$ in $\mathscr{R}_2$ is also Cauchy in measure. We conclude that $\Phi$ is Cauchy-continuous for the $\mathfrak{m}$-topologies on $\mathscr{R}_1$ and $\mathscr{R}_2$.
\vskip 0.1in
(ii) $\Longrightarrow$ (i). Let $\{ H_i \}_{i \in \Lambda}$ be an increasing net of self-adjoint operators in $\mathscr{R}_1$ with supremum $H$. Without loss of generality, we may assume that $\{ H_i \}$ is a net of positive operators in $\mathscr{R}_1$ (considering the net $\{ H_i - H_1 \}$ if necessary). By Corollary \ref{cor:meas_cont_of_sup_conv}, $H_i \uparrow H$ in the $\mathfrak{m}$-topology on $\mathscr{R}_1$. By our hypothesis, $\Phi(H_i) \uparrow \Phi(H)$ in the $\mathfrak{m}$-topology on $\mathscr{R}_2$. 

For the sake of brevity, define $K_i := \Phi(H - H_i)$. Note that $K_i \downarrow 0$ in the $\mathfrak{m}$-topology on $\mathscr{R}_2$, and $0 \le K_i \le \Phi(H)$ for all $i \in \Lambda$. Let $\tau_2$ be a faithful normal tracial state on $\mathscr{R}_2$. Let $F$ be a projection in $\mathscr{R}_2$ such that $\|K_{i}F \| \le \varepsilon$, $\tau_2(I-F) \le \delta$. Using the Cauchy-Schwarz inequality in the context of the sesquilinear form given by $\langle A, B \rangle = \tau_2(B^*A)$ for $A, B \in \mathscr{R}$, and noting that $\|K_i\| \le \|\Phi(H)\| \le \|H\|$, we have
\begin{align*}
    \tau_2(K_i) &= \tau_2(K_iF) + \tau_2(K_i(I-F)) \\
    & \le \|K_i F\| + \sqrt{\tau_2(K_i^2)} \sqrt{\tau_2(I-F)}\\
    & \le \|K_i F\| + \sqrt{\|K_i^2\|} \sqrt{\tau_2(I-F)}\\
    & \le \varepsilon + \|H\| \cdot \sqrt{\delta}.
\end{align*}

In other words, if $K_i \in \sO(\tau_2, \varepsilon, \delta)$, then $\tau_2(K_i) \le \varepsilon + \|H\|\sqrt{\delta}$. Since $K_i \downarrow 0$ in the $\mathfrak{m}$-topology, we conclude that $\lim_i \tau_2(K_i) = 0$. Let $K := \inf K_i$ (the infimum exists as the decreasing net $\{ K_i \}$ is bounded below by $0$). Since $\tau_2$ is normal, we conclude that $\tau_2(K) = 0$. As the family of normal tracial states on $\mathscr{R}_2$ is separating, we conclude that $K = 0$, which implies that $\sup \Phi(H_i) = \Phi(H) = \Phi(\sup H_i)$. Thus $\Phi$ is normal.
\end{proof}

From Theorem \ref{thm:functorial}, a normal unital $*$-homomorphism between finite von Neumann algebras lifts to a $\mathfrak{m}$-continuous unital $*$-homomorphism between the corresponding Murray-von Neumann algebras. Thus $\mathfrak{m}$-completion gives rise to a functor from the category of finite von Neumann algebras to the category of unital complex topological $*$-algebras.

\begin{prop}[Order Structure]
\label{prop:mtilde_pos}
\textsl{
\begin{itemize}
\item[(i)] $\mathscr{R}^{\,\textrm{sa}}$ is $\mathfrak{m}$-closed in $\mathscr{R}$. The $\mathfrak{m}$-closure of $\mathscr{R}^{\,\textrm{sa}}$ in $\Wtilde{R}$ is $\Wtilde{R}^{\!\!\textrm{sa}}$, the set of self-adjoint elements in $\Wtilde{R}$.
\item[(ii)] $\mathscr{R}^{+}$ is $\mathfrak{m}$-closed in $\mathscr{R}$. Let $\Wtilde{R}^{\!\!+}$ denote the $\mathfrak{m}$-closure of $\mathscr{R}^{+}$ in $\Wtilde{R}$. Then $\Wtilde{R}^{\!\!+}$ is a cone in $\Wtilde{R}$.
\end{itemize}
The cone $\,\Wtilde{R}^{\!\!+}$ equips $\, \Wtilde{R} ^{\!\!\mathrm{sa}}$ with a natural order structure making $(\Wtilde{R}; \Wtilde{R}^{\!\!+})$ an ordered complex topological $*$-algebra.
}
\end{prop}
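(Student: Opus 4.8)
The plan is to establish the three topological/order-theoretic claims in turn, exploiting the multiplication-by-$A$ identification $\Wtilde{R} \cong \afr$ from Theorem~\ref{thm:aff_op_complete} and the continuity of the $*$-operation and addition from Theorem~\ref{thm:cont_mvn}.

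\textbf{Step 1: $\mathscr{R}^{\textrm{sa}}$ and $\mathscr{R}^{+}$ are $\mathfrak{m}$-closed.} First I would observe that the map $A \mapsto \frac{1}{2}(A - A^*)$ is $\mathfrak{m}$-continuous on $\mathscr{R}$ (by Theorem~\ref{thm:cont_mvn}) with $\mathscr{R}^{\textrm{sa}}$ as its zero set; since $\mathscr{R}$ is Hausdorff in the $\mathfrak{m}$-topology (Proposition~\ref{prop:hausdorff_mvn}), the zero set is closed, giving $\mathfrak{m}$-closedness of $\mathscr{R}^{\textrm{sa}}$. For $\mathscr{R}^{+}$, it suffices to show that a net $\{H_i\}$ in $\mathscr{R}^{+}$ converging in measure to some $H \in \mathscr{R}^{\textrm{sa}}$ has $H \ge 0$; equivalently, that the spectral projection $P$ of $H$ for $(-\infty, 0)$ is $0$. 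Fix a normal tracial state $\tau$; if $P \neq 0$ one can find $\lambda > 0$ and a nonzero subprojection $Q \le P$ with $HQ \le -\lambda Q$, so that $\tau(Q) > 0$ and $\|(H-H_i)Q\| \ge \lambda$ whenever $H_i Q \ge 0$, which contradicts convergence in measure once the "bad" projection in the definition of $\sO(\tau, \varepsilon, \delta)$ is forced to avoid a fixed-trace chunk of $Q$. (This is the standard measure-topology argument: convergence in measure of self-adjoints implies convergence in measure of spectral data on any fixed norm-bounded corner.) Alternatively, and perhaps more cleanly, I would pass through $\Wtilde{R} \cong \afr$ and use that $\mathfrak{m}$-limits of positive affiliated operators are positive, which follows from the spectral-projection approximation (Corollary~\ref{cor:approx_proj}) together with the fact that positivity is detected on $E_n\mathscr{H}$ for the truncating projections.

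\textbf{Step 2: $\Wtilde{R}^{\textrm{sa}}$ is the $\mathfrak{m}$-closure of $\mathscr{R}^{\textrm{sa}}$.} The inclusion "$\supseteq$" is immediate since $\Wtilde{R}^{\textrm{sa}}$ is $\mathfrak{m}$-closed (it is the zero set of the continuous map $A \mapsto \frac{1}{2}(A - A^*)$ on $\Wtilde{R}$, again using Hausdorffness and Theorem~\ref{thm:cont_mvn}) and contains $\mathscr{R}^{\textrm{sa}}$. For "$\subseteq$", given $A \in \Wtilde{R}^{\textrm{sa}}$ take any net $\{A_i\}$ in $\mathscr{R}$ with $A_i \to A$ in measure; then $A_i^* \to A^* = A$, so $\frac{1}{2}(A_i + A_i^*) \to A$ with each $\frac{1}{2}(A_i + A_i^*) \in \mathscr{R}^{\textrm{sa}}$, exhibiting $A$ in the $\mathfrak{m}$-closure. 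The analogous description $\Wtilde{R}^{+} = \overline{\mathscr{R}^{+}}^{\,\mathfrak{m}}$ is by definition what we call $\Wtilde{R}^{+}$.

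\textbf{Step 3: $\Wtilde{R}^{+}$ is a cone and induces an ordered-algebra structure.} Closure under nonnegative scalar multiplication is clear since scalar multiplication is $\mathfrak{m}$-continuous and preserves $\mathscr{R}^{+}$. Closure under addition follows because addition is $\mathfrak{m}$-continuous (Theorem~\ref{thm:cont_mvn}): if $A = \lim A_i$, $B = \lim B_i$ with $A_i, B_i \in \mathscr{R}^{+}$, then $A_i + B_i \in \mathscr{R}^{+}$ and $A_i + B_i \to A + B$ in measure, so $A + B \in \Wtilde{R}^{+}$. For $\Wtilde{R}^{+}$ to be a \emph{proper} cone (so that it genuinely orders $\Wtilde{R}^{\textrm{sa}}$) I must check $\Wtilde{R}^{+} \cap (-\Wtilde{R}^{+}) = \{0\}$; via $\Wtilde{R} \cong \afr$ this says an affiliated operator that is both a positive and a negative operator is $0$, which is clear from the spectral theorem for (unbounded) self-adjoint operators. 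Finally, the order structure is compatible with the $*$-algebra operations in the sense required for an "ordered complex topological $*$-algebra": the positive cone spans $\Wtilde{R}^{\textrm{sa}}$ over $\R_{\ge 0}$, the cone is $\mathfrak{m}$-closed by construction, and the topology is compatible with addition and scalar multiplication by Theorem~\ref{thm:cont_mvn}; I would also remark that this intrinsic cone $\Wtilde{R}^{+}$ coincides with the cone of positive affiliated operators in $\afr$, which is where Step~1's identification is used.

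\textbf{Main obstacle.} The crux is Step~1 — showing $\mathscr{R}^{+}$ is $\mathfrak{m}$-closed — because positivity is not the zero set of a single $\mathfrak{m}$-continuous map, so one genuinely needs a spectral/measure-theoretic argument: either the "convergence in measure controls spectral projections on bounded corners" estimate, or a transfer through $\afr$ using the truncation projections $E_n$ from Corollary~\ref{cor:approx_proj} and Lemma~\ref{lem:sep_proj} to reduce positivity of the limit to positivity tested on $\bigcup_n E_n\mathscr{H}$. Everything else reduces to continuity of the algebraic operations plus Hausdorffness, both already in hand.
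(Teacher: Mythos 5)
Your overall route is sound and is in the same spirit as the paper's (which gives no inline argument, deferring to \cite[Proposition 3.8]{nayak-matrix} ``and use Corollary \ref{cor:approx_proj}''). The substantive point, $\mathfrak{m}$-closedness of $\mathscr{R}^{+}$ in $\mathscr{R}$, is correct as sketched and closes up rigorously along the lines you indicate: if $H_i \ge 0$, $H_i \to H = H^*$ in measure, and $Q \ne 0$ is a spectral projection of $H$ with $HQ \le -\lambda Q$, choose a normal tracial state $\tau$ with $\tau(Q) > 0$; whenever $\|(H_i - H)E\| \le \varepsilon < \lambda$ for a projection $E$, testing on a unit vector $x \in (E \wedge Q)\mathscr{H}$ (where $\langle (H_i - H)x, x\rangle \ge \lambda$) forces $E \wedge Q = 0$, and then the Kaplansky-formula comparison used in Lemma \ref{lem:equality_proj} gives $\tau(Q) \le \tau(I - E)$; taking $\varepsilon < \lambda$ and $\delta < \tau(Q)$ contradicts $H_i - H \in \sO(\tau, \varepsilon, \delta)$ for large $i$. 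Your treatment of $\mathscr{R}^{\mathrm{sa}}$, of the identity $\overline{\mathscr{R}^{\mathrm{sa}}} = \Wtilde{R}^{\mathrm{sa}}$, and of closure of $\Wtilde{R}^{+}$ under sums and nonnegative scalars is fine (modulo a harmless swap of the two inclusion labels in Step 2).

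The one step that, as written, leans on something not yet available is properness of the cone, $\Wtilde{R}^{+} \cap (-\Wtilde{R}^{+}) = \{0\}$: you justify it by saying the intrinsic cone coincides with the cone of positive operators in $\afr$, but that coincidence is exactly Proposition \ref{prop:pos_conj}, which appears \emph{after} this proposition and is itself nontrivial; and your alternative sketch (``positivity is detected on $E_n\mathscr{H}$'') still needs an argument passing from the truncated corners back to $A$ (a core argument or its substitute). The fix is short and is precisely why the paper points to Corollary \ref{cor:approx_proj}: if $A \in \Wtilde{R}^{+} \cap (-\Wtilde{R}^{+})$, then $A \in \Wtilde{R}^{\mathrm{sa}}$ and there are projections $E_n \uparrow I$ in measure with $AE_n \in \mathscr{R}$, hence $E_n A E_n \in \mathscr{R}$; by the continuous extension of multiplication to $\Wtilde{R} \times \Wtilde{R}$ (Theorem \ref{thm:cont_mvn}), $E_n A E_n$ is an $\mathfrak{m}$-limit of operators $E_n A_i E_n \in \mathscr{R}^{+}$ and also of operators in $-\mathscr{R}^{+}$, so the already-established closedness of $\mathscr{R}^{+}$ in $\mathscr{R}$ yields $E_n A E_n = 0$ for every $n$; finally $E_n A E_n \to A$ in measure because $E_n \to I$, so $A = 0$ by Hausdorffness of the completion. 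With this patch (which avoids any appeal to Proposition \ref{prop:pos_conj}), your proof is complete and matches the intended argument.
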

\begin{proof}
See proof of \cite[Proposition 3.8]{nayak-matrix} (and use Corollary \ref{cor:approx_proj}). 
\end{proof}

\begin{prop}
\label{prop:pos_conj}
\textsl{
 $A \in \; \Wtilde{R}^{\!\!+}$ if and only if $M_A$ is a positive operator. 
}
\end{prop}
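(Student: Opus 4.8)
The plan is to establish both implications using the identification $\Wtilde{R} \cong \afr$ from Theorem \ref{thm:aff_op_complete} together with the approximation tools from \S 3. For the forward direction, suppose $A \in \Wtilde{R}^{\!\!+}$, so by definition there is a net $\{H_i\}$ in $\mathscr{R}^{+}$ converging to $A$ in the $\mathfrak{m}$-topology. I would first observe that each $M_{H_i} = H_i$ is a positive operator in $\mathscr{R}$, and that $M_A$ is self-adjoint by Proposition \ref{prop:mtilde_pos}(i) (since $A \in \Wtilde{R}^{\!\!\textrm{sa}}$). To conclude $M_A \ge 0$, pick an arbitrary $x \in \mathscr{D}(M_A)$; using Corollary \ref{cor:approx_proj} choose an increasing sequence of projections $\{E_n\}$ in $\mathscr{R}$ with $E_n \uparrow I$ in measure and $AE_n \in \mathscr{R}$. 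For fixed $n$, the net $\{H_i E_n\}$ converges in measure to $AE_n$ in $\mathscr{R}$ (by continuity of multiplication, Theorem \ref{thm:cont_mvn}); since $E_n H_i E_n \ge 0$ and, after further restricting to a projection $F$ with $\tau(I-F)$ small on which convergence is in norm (via Proposition \ref{prop:approx_proj} / Lemma \ref{lem:bounded_measure}), one transfers positivity of $E_n H_i E_n$ in the norm limit to get $E_n (AE_n) = E_n A E_n \ge 0$ in $\mathscr{R}$. Then for $y = E_n x$ one has $\langle M_A y, y\rangle = \langle AE_n x, E_n x\rangle \ge 0$, and letting $E_n x \to x$ (which happens in measure, hence after passing through the Hausdorff property and using that $M_A$ is closed) yields $\langle M_A x, x\rangle \ge 0$.

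For the converse, suppose $M_A \ge 0$. Then $A$ is self-adjoint in $\Wtilde{R}$, and I would use the Borel/spectral function calculus: let $E_n$ be the spectral projection of $M_A$ for $[0,n]$, so that $H_n := M_A E_n = \overline{M_A M_{E_n}}$ is a bounded positive operator in $\mathscr{R}^{+}$. By the Remark following Corollary \ref{cor:meas_cont_of_sup_conv}, $E_n \uparrow I$ in the strong-operator topology, hence $M_{E_n} \to I$ in measure; then by continuity of multiplication (Theorem \ref{thm:cont_mvn}) the net $A \afrprod M_{E_n}$ converges in measure to $A$. Translating back through the isomorphism $\Wtilde{R}\cong\afr$, this says $\{H_n\}$ is a sequence in $\mathscr{R}^{+}$ converging to $A$ in the $\mathfrak{m}$-topology, so $A$ lies in the $\mathfrak{m}$-closure of $\mathscr{R}^{+}$, i.e.\ $A \in \Wtilde{R}^{\!\!+}$.

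The main obstacle is the forward direction: passing from the positivity of the approximants $H_i$ (or rather their compressions) to the pointwise inequality $\langle M_A x, x\rangle \ge 0$ requires care, because convergence in measure does not immediately give convergence of the quadratic forms $\langle H_i x, x\rangle$. The device to handle this is exactly the localization via a projection $E$ with $\tau(I-E)$ small on which $\mathfrak{m}$-convergence upgrades to norm convergence (as in the proof of Proposition \ref{prop:approx_proj}); on $E\mathscr{H}$ everything is bounded and one can take limits of inner products honestly, and then the family of such $E$'s exhausts $\mathscr{H}$ in measure, so one recovers the inequality on a dense set of $x$ and closes up using that $M_A$ is closed and self-adjoint. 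An alternative, cleaner route for the forward direction — which I would actually prefer — is to avoid pointwise estimates entirely: note that $\Wtilde{R}^{\!\!+}$ is a cone (Proposition \ref{prop:mtilde_pos}(ii)), and that the map $A \mapsto M_A$ is a $*$-isomorphism onto $\afr$, so it suffices to check that $M$ carries $\mathscr{R}^{+}$ into the positive operators of $\afr$ (clear, since $M$ is the identity on $\mathscr{R}$) and that the positive operators in $\afr$ form an $\mathfrak{m}$-closed set; the latter follows because a self-adjoint $T\in\afr$ is positive iff $T = S \afrprod S$ for some self-adjoint $S \in \afr$ (take $S = T^{1/2}$ via Borel function calculus), combined with continuity of the algebraic operations and the fact that $\afr^{\,\textrm{sa}}$ is $\mathfrak{m}$-closed. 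Whichever route is taken, the content is the interplay between the spectral truncations $E_n$ and the $\mathfrak{m}$-topology, both of which are already in hand from \S 3.
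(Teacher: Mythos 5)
Your converse direction is sound and is the natural argument: truncate $M_A$ by its spectral projections $E_n$ for $[0,n]$, note $A\afrprod E_n \in \mathscr{R}^{+}$, and use Corollary \ref{cor:meas_cont_of_sup_conv} plus the continuity of multiplication from Theorem \ref{thm:cont_mvn} to get $A\afrprod E_n \to A$ in measure. The forward direction, however, has a genuine gap at exactly the step you flag. From $E_n \afrprod H_i \afrprod E_n \ge 0$ and the fact that $\mathscr{R}^{+}$ is $\mathfrak{m}$-closed in $\mathscr{R}$ (Proposition \ref{prop:mtilde_pos}(ii)) you do get $E_n\afrprod A\afrprod E_n \ge 0$ — your detour through ``norm convergence on a sub-projection'' is unnecessary for this — and hence $\langle M_A y, y\rangle \ge 0$ for $y \in \bigcup_n E_n\mathscr{H}$. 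But passing from this to $\langle M_A x, x\rangle \ge 0$ for an arbitrary $x \in \mathscr{D}(M_A)$ is not justified by ``$E_n x \to x$ and $M_A$ is closed'': the vectors $M_A E_n x$ need not converge to $M_A x$, so $\langle M_A E_n x, E_n x\rangle$ need not converge to $\langle M_A x, x\rangle$ (the cross term $\langle M_A E_n x,(I-E_n)x\rangle$ is uncontrolled, since $\|M_A E_n x\|$ may grow with $n$). What you actually need is that $\bigcup_n E_n\mathscr{H}$ is a \emph{core} for $M_A$, i.e.\ dense in the graph norm; this is not automatic for the projections supplied by Corollary \ref{cor:approx_proj} and must be argued, e.g.\ by applying Lemma \ref{lem:sep_proj} to $M_A$ and to the closure of its restriction to $\bigcup_n E_n\mathscr{H}$. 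A cleaner repair avoids Corollary \ref{cor:approx_proj} altogether: since $M_A$ is self-adjoint (as $A\in\;\Wtilde{R}^{\!\!\mathrm{sa}}$ and $M_A^*=M_{A^*}$ by Theorem \ref{thm:aff_op_complete}), take $E_n$ to be the spectral projections of $M_A$ for $[-n,n]$; then $E_n\afrprod A\afrprod E_n = A \afrprod E_n = M_AE_n \in \mathscr{R}$, the compression argument gives $M_A E_n \ge 0$ for every $n$, and if the spectral projection of $(-\infty,-\varepsilon]$ were nonzero one would find a unit vector $x$ in some $E_n\mathscr{H}$ with $\langle M_AE_n x, x\rangle \le -\varepsilon$, a contradiction; hence $\mathrm{sp}(M_A)\subseteq[0,\infty)$.

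Your ``preferred cleaner route'' does not close this gap — it introduces another one. Knowing that the positive operators in $\afr$ are exactly those of the form $S\afrprod S$ with $S$ self-adjoint does not make that set $\mathfrak{m}$-closed: continuity of multiplication only says that squaring is continuous, and the continuous image of a closed set need not be closed; if $T_i = S_i\afrprod S_i \to T$ in measure, nothing forces the square roots $S_i$ to converge. (Closedness of the positive cone is in fact what the paper's proof of Proposition \ref{prop:pos_desc_mvn}, (iii)$\Rightarrow$(i), rests on, so arguing in this direction also risks circularity; likewise, the Borel function calculus of \S \ref{sec:approx_meas} is developed after, and partly on top of, the order-structure results here, though for a single positive affiliated operator the classical spectral theorem would suffice for the square root.) The paper itself proves the proposition by citing \cite[Proposition 3.10]{nayak-matrix} together with Theorem \ref{thm:aff_op_complete} and Corollary \ref{cor:approx_proj}, so the comparison to make is with a completed argument along the lines above: your outline identifies the right ingredients, but the passage from positivity of the compressions to positivity of the unbounded operator $M_A$ needs the core (or spectral-projection) argument made explicit.
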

\begin{proof}
See proof of \cite[Proposition 3.10]{nayak-matrix} (and use Theorem \ref{thm:aff_op_complete}, Corollary \ref{cor:approx_proj}).
\end{proof}

\begin{remark}
\label{rmrk:raff_pos}
From Proposition \ref{prop:pos_conj}, note that the $*$-algebra isomorphism $A \mapsto M_A :\, \Wtilde{R} \to \afr$ also induces an order isomorphism between $(\Wtilde{R};\Wtilde{R}^{\!\!+}) $ and $(\afr; \afr ^{+})$. Thus we may transfer the topology of $\Wtilde{R}$ to $\afr$ and view $\afr$ as a unital ordered complex topological $*$-algebra.  We think of $\Wtilde{R}$ as an abstract Murray-von Neumann algebra, and  $\afr$ as a concrete representation of $\Wtilde{R}$.
\end{remark}

\begin{definition}
The $\mathfrak{m}$-completion of an abstract finite von Neumann algebra is said to be an {\it abstract Murray-von Neumann algebra.}
\end{definition}

\begin{remark}
An operator $A \in \afr$ is bounded (that is, $A \in \mathscr{R}$) if and only if $A^*A \le \lambda ^2 I$ for some $\lambda \in \R_{+}$. Thus the underlying finite von Neumann algebra of an abstract Murray-von Neumann algebra may be extracted using the order structure. 
\end{remark}

\begin{thm}[see {\cite[Proposition 4.3]{nayak-matrix}}]
\label{thm:matrix-iso}
\textsl{
For a positive integer $n$, $M_n(\afr)$ and $M_n(\mathscr{R})_{\textrm{aff}}$ are isomorphic as unital ordered complex topological $*$-algebras with the isomorphism extending the identity mapping of $M_n(\mathscr{R})$ to $M_n(\mathscr{R})$.
}
\end{thm}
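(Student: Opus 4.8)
The plan is to deduce the theorem from one topological fact and then let uniqueness of completions do the rest. The fact I would isolate is: \emph{under the entrywise identification $M_n(\mathscr R)\cong\mathscr R^{n^2}$, the $\mathfrak m$-topology on the finite von Neumann algebra $M_n(\mathscr R)$ coincides with the product topology in which each coordinate carries the $\mathfrak m$-topology of $\mathscr R$.} Throughout I would use that every normal tracial state on $M_n(\mathscr R)$ has the form $\boldsymbol\tau\big([A_{ij}]\big)=\tfrac1n\sum_{k}\tau(A_{kk})$ for a normal tracial state $\tau$ on $\mathscr R$, and that, as $\tau,\varepsilon,\delta$ vary, the sets $\sO(\boldsymbol\tau,\varepsilon,\delta)$ (computed in $M_n(\mathscr R)$) form a neighbourhood base at $0$ for the $\mathfrak m$-topology of $M_n(\mathscr R)$.

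Granting this coincidence, the theorem follows quickly. Applying Theorem \ref{thm:aff_op_complete} to $\mathscr R$ and to $M_n(\mathscr R)$, I identify $M_n(\afr)$ with $M_n(\Wtilde R)$, topologized as $(\Wtilde R)^{n^2}$, and $M_n(\mathscr R)_{\textrm{aff}}$ with the $\mathfrak m$-completion of $M_n(\mathscr R)$. Since the completion of a finite product of topological abelian groups is canonically the product of the completions, the coincidence exhibits $M_n(\Wtilde R)$ and the $\mathfrak m$-completion of $M_n(\mathscr R)$ as two completions of the \emph{same} topological group $M_n(\mathscr R)$, so there is a unique homeomorphism $\Psi$ between them restricting to the identity on $M_n(\mathscr R)$. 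Both targets are topological $*$-algebras: $M_n(\Wtilde R)$ because matrix addition, multiplication and $*$ are coordinatewise built from the jointly continuous operations of $\Wtilde R$ provided by Theorem \ref{thm:cont_mvn}, and the $\mathfrak m$-completion of $M_n(\mathscr R)$ by Theorem \ref{thm:cont_mvn} applied to $M_n(\mathscr R)$ itself. Hence $\Psi$, being the identity on the dense $*$-subalgebra $M_n(\mathscr R)$, is automatically a unital $*$-algebra isomorphism. Finally, the positive cone on each side is, by Propositions \ref{prop:mtilde_pos} and \ref{prop:pos_conj} applied to $M_n(\mathscr R)$, the $\mathfrak m$-closure of $M_n(\mathscr R)^{+}$; a homeomorphism fixing $M_n(\mathscr R)$ carries one such closure onto the other, so $\Psi$ is an order isomorphism too, and the theorem is proved.

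It remains to establish the coincidence of topologies, which I would do by comparing the two neighbourhood bases at $0$. For the easy inclusion, suppose every entry $A_{ij}$ of $A=[A_{ij}]$ lies in $\sO(\tau,\varepsilon',\delta')$, with witnessing projections $E_{ij}$ satisfying $\|A_{ij}E_{ij}\|\le\varepsilon'$ and $\tau(I-E_{ij})\le\delta'$. Put $E:=\bigwedge_{i,j}E_{ij}$ and $\mathbf E:=\mathrm{diag}(E,\dots,E)$. Then $A_{ij}E=A_{ij}E_{ij}E$, so $\|A\mathbf E\|\le\sum_{i,j}\|A_{ij}E\|\le n^2\varepsilon'$ and $\boldsymbol\tau(\mathbf I_n-\mathbf E)=\tau(I-E)\le\sum_{i,j}\tau(I-E_{ij})\le n^2\delta'$; hence $A\in\sO(\boldsymbol\tau,n^2\varepsilon',n^2\delta')$. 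Taking $\varepsilon'=\varepsilon/n^2,\ \delta'=\delta/n^2$ shows every $\sO(\boldsymbol\tau,\varepsilon,\delta)$ contains a product neighbourhood, so the $\mathfrak m$-topology of $M_n(\mathscr R)$ is coarser than the product topology.

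The reverse inclusion is the crux, and the obstacle is that a projection $\mathbf F=[F_{ij}]\in M_n(\mathscr R)$ witnessing $A\in\sO(\boldsymbol\tau,\varepsilon',\delta')$ need not be diagonal, so its entries are merely positive contractions and one cannot read off entrywise norm control directly. I would circumvent this using the matrix units $e_{ij}\in M_n(\mathscr R)$ (the identity of $\mathscr R$ in the $(i,j)$ slot, $0$ elsewhere), noting $e_{ij}\in\sO(\boldsymbol\tau,1,0)$ (take $\mathbf E=\mathbf I_n$). If $A\in\sO(\boldsymbol\tau,\varepsilon',\delta')$, applying Lemma \ref{lem:main}(iii) twice in $M_n(\mathscr R)$ gives $B:=e_{1k}Ae_{l1}\in\sO(\boldsymbol\tau,\varepsilon',\delta')$, where $B$ has $A_{kl}$ in the $(1,1)$ slot and $0$ elsewhere; pick a witnessing projection $\mathbf F=[F_{ij}]$ for $B$. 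Since $\|B\mathbf F\|^2=\|B\mathbf F B^{*}\|$ and $B\mathbf F B^{*}$ equals $A_{kl}F_{11}A_{kl}^{*}$ in the $(1,1)$ slot and $0$ elsewhere, one gets $\|A_{kl}F_{11}^{1/2}\|=\|B\mathbf F\|\le\varepsilon'$, while $\tau(I-F_{11})\le n\,\boldsymbol\tau(\mathbf I_n-\mathbf F)\le n\delta'$. Now, exactly as in Lemma \ref{lem:close_to_zero}, let $G$ be the spectral projection of $F_{11}$ for $[\tfrac12,1]$: from $\tfrac12(I-G)\le(I-F_{11})(I-G)$ we get $\tau(I-G)\le 2n\delta'$, and writing $A_{kl}G=A_{kl}F_{11}^{1/2}\,g(F_{11})$ with $g(t)=t^{-1/2}$ on $[\tfrac12,1]$ and $g(t)=0$ on $[0,\tfrac12)$ (so $\|g(F_{11})\|\le\sqrt2$) we get $\|A_{kl}G\|\le\sqrt2\,\varepsilon'$. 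Thus $A_{kl}\in\sO(\tau,\sqrt2\,\varepsilon',2n\delta')$ for every $k,l$, so $\sO(\boldsymbol\tau,\varepsilon',\delta')$ lies in the product neighbourhood cut out by requiring each entry to lie in $\sO(\tau,\sqrt2\,\varepsilon',2n\delta')$. Since an arbitrary product neighbourhood of $0$ contains one of this special shape for a single state (each coordinate condition contains some $\sO(\sigma_{ij},\varepsilon_{ij},\delta_{ij})$, and with $\tau:=\tfrac1{n^2}\sum_{i,j}\sigma_{ij}$ one has $\sigma_{ij}\le n^2\tau$, hence $\sO(\tau,\eta,\theta)\subseteq\sO(\sigma_{ij},\varepsilon_{ij},\delta_{ij})$ for all $i,j$ once $\eta,\theta$ are small enough), this shows the $\mathfrak m$-topology of $M_n(\mathscr R)$ is finer than the product topology, completing the proof of the coincidence. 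The only genuinely delicate step, as indicated, is this matrix-unit reduction together with the spectral-projection estimate that tames the non-projection entries of $\mathbf F$.
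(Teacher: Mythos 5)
Your proposal is correct and follows essentially the same route as the paper: identify the $\mathfrak{m}$-topology of $M_n(\mathscr{R})$ with the product of the entrywise $\mathfrak{m}$-topologies via mutual inclusions of the neighbourhood bases $\sO(\boldsymbol\tau,\cdot,\cdot)$ and $\prod\sO(\tau,\cdot,\cdot)$, and then invoke uniqueness of completions together with Theorem \ref{thm:aff_op_complete}. The only difference is that the paper imports the two key inclusions from \cite[Proposition 4.3]{nayak-matrix} (with slightly different constants), whereas you supply a self-contained and correct derivation — in particular the matrix-unit reduction $e_{1k}Ae_{l1}$ combined with the spectral cut of $F_{11}$ at $\tfrac12$ is a valid way to handle the non-diagonal witnessing projection.
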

\begin{proof}
Let $\tau$ be a normal tracial state on $\mathscr{R}$, and $\text{\boldmath$\tau _n$}$ denote the corresponding normal tracial state on $M_n(\mathscr{R})$ given by $$\text{\boldmath$\tau _n$}({\bf A}) = \frac{\tau(A_{11}) + \cdots + \tau(A_{nn})}{n}, \textrm{ for } {\bf A} = (A_{ij}) \in M_n(\mathscr{R}).$$ For $\varepsilon_{ij}, \delta_{ij} > 0 \; (1 \le i, j \le n)$, from \cite[Proposition 4.3]{nayak-matrix}, we have
\begin{equation}
\prod_{1 \le i, j \le n} \sO(\tau, \varepsilon_{ij}, \delta_{ij}) \subseteq \sO \Big(\text{\boldmath$\tau _n$}, \sum_{1 \le i, j \le n} \varepsilon_{ij}, \sum_{1 \le i, j \le n} \delta_{ij} \Big).
\end{equation}
For $0 < \varepsilon$ and $0 < \delta < \frac{1}{16n} $, again from \cite[Proposition 4.3]{nayak-matrix}, we have 
\begin{equation}
\sO(\text{\boldmath$\tau _n$}, \varepsilon, \delta) \subseteq \prod_{i=1}^{n^2} \sO \big(\tau, 2\varepsilon, \sqrt{4n \delta}\big).
\end{equation}
Thus the product topology on $M_n(\mathscr{R})$ (viewed as $\mathscr{R} \times \overset{n^2}{\cdots} \times \mathscr{R}$) derived from the $\mathfrak{m}$-topology on $\mathscr{R}$ is identical to the $\mathfrak{m}$-topology on $M_n(\mathscr{R})$ considered as a finite von Neumann algebra. The assertion follows by noting that $M_n(\afr)$ may be viewed as the $\mathfrak{m}$-completion of $M_n(\mathscr{R})$.
\end{proof}

\begin{lem}
\label{lem:dom_state}
\textsl{
Let $\mathscr{R}$ be a finite von Neumann algebra and $\omega$ be a normal state on $\mathscr{R}$. Then there is a normal tracial state $\tau$ on $\mathscr{R}$ such that $S_{\omega} \le S_{\tau}$.
}
\end{lem}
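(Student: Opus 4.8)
The plan is to build $\tau$ from the canonical center-valued trace of $\mathscr{R}$ together with the restriction of $\omega$ to the center. Write $\mathscr{Z}$ for the center of $\mathscr{R}$ and let $\Delta\colon \mathscr{R}\to\mathscr{Z}$ be the center-valued trace (see \cite[\S 8.2]{kadison-ringrose2}): it is a faithful normal positive linear map onto $\mathscr{Z}$ with $\Delta(I)=I$, $\Delta(AB)=\Delta(BA)$ for all $A,B\in\mathscr{R}$, and $\Delta(Z)=Z$ for every $Z\in\mathscr{Z}$. The reason for using $\Delta$ rather than a faithful normal tracial state is that $\Delta$ exists for \emph{every} finite von Neumann algebra, with no countable decomposability hypothesis. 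Set $\omega_0:=\omega|_{\mathscr{Z}}$, a normal state on the abelian von Neumann algebra $\mathscr{Z}$, and define $\tau:=\omega_0\circ\Delta$.

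First I would check that $\tau$ is a normal tracial state. Positivity is clear and $\tau(I)=\omega_0(\Delta(I))=\omega_0(I)=1$; normality holds because $\tau$ is the composite of the normal map $\Delta$ with the normal state $\omega_0$; and $\tau(AB)=\omega_0(\Delta(AB))=\omega_0(\Delta(BA))=\tau(BA)$, so $\tau$ is tracial. Consequently $S_\tau$ is a central projection.

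The heart of the argument is to identify $S_\tau$. Since $S_\tau$ is central, it is the smallest \emph{central} projection $Z$ with $\tau(I-Z)=0$. For a central projection $Z$ we have $\tau(I-Z)=\omega_0(\Delta(I-Z))=\omega_0(I-Z)$ because $\Delta$ fixes the center; hence $\tau(I-Z)=0\iff\omega_0(I-Z)=0$, and therefore $S_\tau=S_{\omega_0}$. On the other hand, for a projection $Z\in\mathscr{R}$ one has $\omega_0(I-Z)=\omega(I-Z)=0$ if and only if $S_\omega\le Z$: ``$\Leftarrow$'' follows from monotonicity of $\omega$ together with $\omega(I-S_\omega)=0$, and ``$\Rightarrow$'' is immediate from the minimality in the definition of $S_\omega$. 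Thus $S_{\omega_0}$ is the least central projection dominating $S_\omega$, i.e.\ the central carrier $C_{S_\omega}$. Combining these, $S_\tau=S_{\omega_0}=C_{S_\omega}\ge S_\omega$, as required.

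The only genuine obstacle is having the center-valued trace available with exactly these properties; after that the argument is routine bookkeeping with central and support projections, and I would stress that no $\sigma$-finiteness hypothesis is needed, since that is precisely what makes the lemma hold in the stated generality. An essentially equivalent route avoiding $\Delta$: take a maximal family $\{\tau_\alpha\}$ of normal tracial states with pairwise orthogonal central supports summing to $I$ (as in the proof of Corollary~\ref{cor:approx_proj}); normality of $\omega$ forces $\omega(S_{\tau_\alpha})\ne 0$ for at most countably many indices $\alpha_1,\alpha_2,\dots$, and with $c_n:=\omega(S_{\tau_{\alpha_n}})$ (so $\sum_n c_n=\omega(I)=1$) the normal tracial state $\tau:=\sum_n c_n\tau_{\alpha_n}$ has $S_\tau=\bigvee_n S_{\tau_{\alpha_n}}$, while $\omega$ annihilates $I-S_\tau=\bigvee_{\alpha\notin\{\alpha_n\}}S_{\tau_\alpha}$, whence $S_\omega\le S_\tau$.
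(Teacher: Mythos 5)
Your main argument is correct and takes a genuinely different route from the paper. The paper's proof is a decomposition-plus-countability argument: it picks a separating family of normal tracial states with mutually orthogonal (central) supports, uses normality of $\omega$ to see that $\omega(S_{\omega}\wedge S_{\tau_i})\neq 0$ for at most countably many indices, and forms a weighted $\ell^1$-combination of the corresponding states. Your second, parenthetical route is essentially that same proof (with weights $c_n=\omega(S_{\tau_{\alpha_n}})$ instead of $2^{-n}$), so it adds nothing new; but your primary construction $\tau=(\omega|_{\mathscr{Z}})\circ\Delta$ via the center-valued trace is a different and arguably cleaner mechanism. All the verifications go through: $\tau$ is a normal tracial state since $\Delta$ is a normal positive unital map with $\Delta(AB)=\Delta(BA)$ and $\Delta|_{\mathscr{Z}}=\mathrm{id}$; since $S_\tau$ is central (as the paper records for any normal tracial state) and $\tau(I-Z)=\omega(I-Z)$ for central $Z$, one gets $\omega(I-S_\tau)=\tau(I-S_\tau)=0$, hence $S_\omega\le S_\tau$ — indeed your sharper identification $S_\tau=C_{S_\omega}$, the central carrier, is also correct, and even that last step only needs the equality $\tau(I-S_\tau)=\omega(I-S_\tau)$ rather than the full minimality argument. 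What each approach buys: yours outsources the hard work to the existence of the center-valued trace (\cite[Theorem 8.2.8]{kadison-ringrose2}), a heavier tool than anything the paper invokes here, in exchange for a shorter, coordinate-free argument that pins down $S_\tau$ exactly and manifestly needs no countable decomposability; the paper's proof stays within the elementary toolkit it has already set up (separating families of normal tracial states with orthogonal supports, as in Corollary \ref{cor:approx_proj}) and only uses countable additivity of $\omega$ on orthogonal central projections. Either proof is acceptable in the stated generality.
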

\begin{proof}
Let $\{ \tau_i \}_{i \in \Lambda}$ be a separating family of normal tracial states on $\mathscr{R}$ with mutually orthogonal supports. For an index $i \in \Lambda$, we observe that $\omega (S_{\omega} \wedge S_{\tau_i}) = 0$ if and only if $S_{\omega} \wedge S_{\tau_i} = 0$. Since $\omega$ is normal and $\{ S_{\tau _i} \}_{i \in \Lambda}$ are mutually orthogonal projections, we have $$\sum_{i \in \Lambda} \omega(S_{\omega} \wedge S_{\tau _i}) = \omega( S_{\omega} \wedge (\bigvee_{i \in \Lambda} S_{\tau _i})
  ) = \omega(S_{\omega}) = 1.$$ Thus $S_{\omega} \wedge S_{\tau _i} \ne 0$ for at most countably many indices $i$ (which we relabel as $1,2, \cdots $). For the normal tracial state $\tau$ on $\mathscr{R}$
 given by $$\tau := \frac{\tau _1}{2} + \frac{\tau _2}{4} + \cdots + \frac{\tau _n}{2^{n+1}} + \cdots ,$$
we have $S_{\omega} \le S_{\tau }$.
\end{proof}

\begin{prop}
\label{prop:compatibility_subalgebra}
\textsl{
Let $\mathscr{R}$ be a finite von Neumann algebra and $\mathscr{S}$ be a von Neumann subalgebra of $\mathscr{R}$. Then the $\mathfrak{m}$-topology on $\mathscr{S}$ is identical to the subspace topology induced from the $\mathfrak{m}$-topology on $\mathscr{R}$. 
}
\end{prop}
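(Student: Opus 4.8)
The plan is to reduce the statement to a single pointwise identity between basic neighbourhoods of $0$, exploiting the fact that a bounded operator lying in $\mathscr{S}$ carries its spectral data inside $\mathscr{S}$. First I would introduce the family $\sT := \{ \tau|_{\mathscr{S}} : \tau \text{ a normal tracial state on } \mathscr{R} \}$. Since $\mathscr{R}$ admits a separating family of normal tracial states and $\mathscr{S}$ contains the unit of $\mathscr{R}$, restriction shows that $\sT$ is a separating family of normal tracial states on $\mathscr{S}$ (which is itself a finite von Neumann algebra). By Lemma~\ref{lem:family_sep_states} applied to $\mathscr{S}$, the $\sT$-measure topology on $\mathscr{S}$ is identical to the $\mathfrak{m}$-topology on $\mathscr{S}$. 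Consequently it suffices to prove that for every normal tracial state $\tau$ on $\mathscr{R}$ and all $\varepsilon, \delta > 0$ one has $\sO(\tau, \varepsilon, \delta) \cap \mathscr{S} = \sO(\tau|_{\mathscr{S}}, \varepsilon, \delta)$, because the sets on the left form a neighbourhood base at $0$ for the subspace topology induced on $\mathscr{S}$, while the sets on the right form a neighbourhood base at $0$ for the $\sT$-measure (equivalently, $\mathfrak{m}$-) topology on $\mathscr{S}$.

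The inclusion $\sO(\tau|_{\mathscr{S}}, \varepsilon, \delta) \subseteq \sO(\tau, \varepsilon, \delta) \cap \mathscr{S}$ is immediate, since a projection $F \in \mathscr{S}$ witnessing membership of $A$ on the left is a projection in $\mathscr{R}$ and $\tau(I-F) = \tau|_{\mathscr{S}}(I-F) \le \delta$. For the reverse inclusion --- the heart of the matter --- suppose $A \in \mathscr{S}$ and $E$ is a projection in $\mathscr{R}$ with $\|AE\| \le \varepsilon$ and $\tau(I-E) \le \delta$; the projection $E$ need not lie in $\mathscr{S}$, and the idea is to replace it by a spectral projection of $A^*A$, which does lie in $\mathscr{S}$ because $A^*A \in \mathscr{S}^{+}$. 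Let $F$ be the spectral projection of $A^*A$ for the interval $[0, \varepsilon^2]$; then $FA^*AF \le \varepsilon^2 F$ gives $\|AF\|^2 = \|FA^*AF\| \le \varepsilon^2$. The crucial point is that $E \wedge (I-F) = 0$: a nonzero vector $x$ in the range of this meet would satisfy $\langle A^*Ax, x\rangle = \|AEx\|^2 \le \varepsilon^2 \|x\|^2$ (as $x \in E\mathscr{H}$ and $\|AE\| \le \varepsilon$), yet $\langle A^*Ax, x\rangle > \varepsilon^2\|x\|^2$ (as $x$ lies in the spectral subspace of $A^*A$ for $(\varepsilon^2, \infty)$, and the spectral measure of $A^*A$ at such a nonzero $x$ is a nonzero measure carried by that open half-line), a contradiction. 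Now the Kaplansky-formula computation already carried out in the proof of Lemma~\ref{lem:equality_proj} yields $I - F \precsim I - E$ relative to $\mathscr{R}$, so $\tau(I-F) \le \tau(I-E) \le \delta$; and since $I - F \in \mathscr{S}$ this reads $\tau|_{\mathscr{S}}(I-F) \le \delta$. Hence $A \in \sO(\tau|_{\mathscr{S}}, \varepsilon, \delta)$, which completes the identity and hence the proof.

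The only genuinely non-formal step, and therefore the main obstacle, is precisely the passage from the ambient witnessing projection $E \in \mathscr{R}$ to a witnessing projection lying in $\mathscr{S}$: without replacing $E$ one cannot even evaluate $\tau|_{\mathscr{S}}$ on $I-E$. The replacement works because $A$ is bounded and lies in $\mathscr{S}$ --- so its own spectral projections are available in $\mathscr{S}$ --- and because the comparison theory of projections in the finite algebra $\mathscr{R}$ (via the Kaplansky formula and monotonicity of the trace under subequivalence) transfers the ``small trace'' estimate from $E$ to the spectral projection $F$. The remaining ingredients --- reducing to restrictions of traces from $\mathscr{R}$ via Lemma~\ref{lem:family_sep_states}, and the trivial inclusion --- are routine manipulations with the definitions.
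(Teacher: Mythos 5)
Your argument is correct, and for the harder inclusion it takes a genuinely different route from the paper. The paper also begins by restricting a separating family of tracial states from $\mathscr{R}$ to $\mathscr{S}$ and invoking Lemma \ref{lem:family_sep_states}, but only to get one direction; for the converse it appeals to the existence of a faithful normal conditional expectation $\Phi:\mathscr{R}\to\mathscr{S}$ (citing Takesaki), pulls back a tracial state of $\mathscr{S}$ to a normal state of $\mathscr{R}$, dominates its support by a tracial state via Lemma \ref{lem:dom_state}, uses absolute continuity (Lemma \ref{lem:absolute_continuity}), and then manufactures a witnessing projection in $\mathscr{S}$ as a spectral projection of $\Phi(E)^2$ --- at the cost of degraded parameters ($\varepsilon/\sqrt{2}$, $4\delta$, and an auxiliary $\delta'$). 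You instead reduce, via Lemma \ref{lem:family_sep_states} applied to the restricted states, to the exact identity $\sO_{\mathscr{R}}(\tau,\varepsilon,\delta)\cap\mathscr{S}=\sO_{\mathscr{S}}(\restr{\tau}{\mathscr{S}},\varepsilon,\delta)$, and prove the nontrivial inclusion by replacing the ambient witness $E$ with the spectral projection $F$ of $A^*A$ for $[0,\varepsilon^2]$, which lies in $\mathscr{S}$; the meet argument $E\wedge(I-F)=0$ is sound (the strict inequality $\langle A^*Ax,x\rangle>\varepsilon^2\|x\|^2$ for $0\ne x$ in the spectral subspace for $(\varepsilon^2,\infty)$ does hold), and the Kaplansky-formula step, exactly as in the proof of Lemma \ref{lem:equality_proj}, gives $I-F\precsim I-E$ and hence $\tau(I-F)\le\delta$. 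What your approach buys is a more elementary and sharper proof: no conditional expectation, no Lemma \ref{lem:dom_state} or \ref{lem:absolute_continuity}, and no loss in $(\varepsilon,\delta)$, the key point being that for bounded $A\in\mathscr{S}$ one can always canonize the witnessing projection as a spectral projection of $A^*A$; what the paper's route buys is that it handles an arbitrary normal tracial state on $\mathscr{S}$ directly, without first passing through the $\sT$-measure topology of restricted states.
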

\begin{proof}
 Let $\sT _{\mathscr{R}}$ be a separating family of normal tracial states on $\mathscr{R}$. Then 
 $\sT _{\mathscr{S}} := \{ \restr{\tau}{\mathscr{S}}: \tau \in \sT \}$ is a separating family of normal
 tracial states on $\mathscr{S}$. For $\varepsilon, \delta > 0$ and $\tau \in \sT _{\mathscr{R}}$, it is
 easy to see that $\sO _{\mathscr{S}}(\restr{\tau}{\mathscr{S}}, \varepsilon, \delta) \subseteq \sO_{\mathscr{R}}(\tau, \varepsilon, \delta) \cap \mathscr{S}.$ Thus by Lemma \ref{lem:family_sep_states},
 the $\mathfrak{m}$-topology on $\mathscr{S}$ is finer than the subspace topology induced from the
 $\mathfrak{m}$-topology on $\mathscr{R}$.

The proof of the converse requires more effort as we need to show that the projection in the definition of the fundamental system of neighborhoods of $0$ may be chosen in $\mathscr{S}$.  From \cite[Theorem 2.15, Proposition 2.36]{takesaki1}, there is a faithful normal conditional expectation $\Phi$ from $\mathscr{R}$ onto $\mathscr{S}$. For a normal tracial state $\tau$ on $\mathscr{S}$, let $\Phi ^*(\tau)$ denote the pullback of $\tau$ via $\Phi$, that is, $\Phi ^* (\tau) \big( A \big) = \tau \big( \Phi(A) \big)$ for all $A \in \mathscr{R}$. Note that since $\Phi$ is a unital normal map, $\Phi ^*(\tau)$ is a normal state on $\mathscr{R}$.

\begin{claim}
For $\varepsilon, \delta > 0$ and a normal tracial state $\tau$ on $\mathscr{S}$, there is $\delta ' > 0$ and a normal tracial state $\rho$ on $\mathscr{R}$ such that $\sO _{\mathscr{R}} \big( \rho, \frac{\varepsilon}{\sqrt{2}}, \delta ' \big) \cap \mathscr{S} \subseteq \sO _{\mathscr{S}}(\tau , \varepsilon, \delta).$
\end{claim}
\begin{claimpff}
From Lemma \ref{lem:dom_state}, we may choose a normal tracial state $\rho$ on $\mathscr{R}$ such that $S_{\Phi ^* (\tau)} \le S_{\rho}$. By Lemma \ref{lem:absolute_continuity}, there is $\delta ' > 0$ such that whenever $\rho(F) \le \delta '$ for a projection $F$ in $\mathscr{R}$, we have $\Phi^*(\tau) \big (F \big) \le \frac{\delta}{4}$.

Let $A \in \sO _{\mathscr{R}} ( \rho, \frac{\varepsilon}{\sqrt{2}}, \delta ') \cap \mathscr{S}$ so that there is a projection $E$ in $\mathscr{R}$ with $\| AE \| \le \frac{\varepsilon}{\sqrt{2}}$ and $\rho (I-E) \le \delta '$. Thus $\| A \Phi(E)\| = \| \Phi(AE)\| \le \|AE\|\le \frac{\varepsilon}{\sqrt{2}}$ and $\tau(I - \Phi(E)) = \Phi ^*(\tau) \big(I - E \big) \le \frac{\delta}{4}.$ Let $H := \Phi(E)^2$. Note that $0 \le H \le I$ and hence the spectrum of $H$ lies in $[0, 1]$. 

For $0 \le \lambda \le 1$, let $E_{\lambda}$ denote the spectral projection of $H$ corresponding to the interval $[0, \lambda]$. Since $H$ is in $\mathscr{S}$, the spectral projections $E_{\lambda}$ are also in $\mathscr{S}$ for all $\lambda \in [0, 1]$. As in the proof of Lemma \ref{lem:close_to_zero}, for $\mu \in [0,1]$ we note that $\mu (I - E_\mu) \le H (I - E_{\mu}) \le I - E_{\mu}$ and $HE_{\mu} \le \mu E_{\mu}$. Moreover, for any self-adjoint operator $T$ in $\mathscr{R}$, we have $\tau\big((T-I)^2\big) \ge 0$ from which it follows that $\tau(I - T^2) \le \tau(2I - 2T)$. Putting these observations together, we have
\begin{align*}
\tau(E_{\mu}) \le \tau \big(I - H(I - E_{\mu}) \big) &\le \tau(I-H) + \tau(H E_{\mu})\\
&=\tau(I-\Phi(E)^2) + \tau(H E_{\mu})\\
& \le \tau \big( 2I - 2\Phi(E) \big) + \mu \tau(E_{\mu}) \\
& \le \frac{\delta}{2} + \mu \tau(E_{\mu}),
\end{align*} which implies (for $\mu \ne 1$) that $$\tau(E_{\mu}) \le \frac{\delta}{2(1-\mu)}.$$ Furthermore, since $\mu(I - E_{\mu}) \le H(I-E_{\mu}) \le H$, we observe that
\begin{align*}
    \mu \| A(I-E_{\mu}) \|^2 &= \| A \big(\mu (I-E_{\mu})\big) A^* \| \le  \|A H(I-E_{\mu}) A^* \| \\
    &\le \|AHA^*\| =  \|A \Phi(E)\|^2 \le \frac{\varepsilon ^2}{2}.
\end{align*}
Choosing $\mu = \frac{1}{2}$ and setting $F = I- E_{\frac{1}{2}}$, we have $\| AF \| \le \varepsilon, \tau(I-F) \le \delta,$ and $F \in \mathscr{S}$. Thus $A \in \sO _{\mathscr{S}}(\tau, \varepsilon, \delta)$. \hfill $\Diamond$
\end{claimpff}

From the above claim, we conclude that the subspace topology on $\mathscr{S}$ induced from the $\mathfrak{m}$-topology on $\mathscr{R}$ is finer than the $\mathfrak{m}$-topology on $\mathscr{S}$. In summary, the two topologies are identical.
\end{proof}

For a complex ordered $*$-algebra $\fA$, we define $$\fA _{\textrm{nb}} := \{ A \in \fA : \exists \lambda \in \R_{+} \textrm{ such that } A^*A \le \lambda ^2 I \}.$$
Let $\fM$ be a Murray-von Neumann algebra. Note that the underlying finite von Neumann algebra of $\fM$ is given by $\fM _{\textrm{nb}}$. Let $\fN$ be a $\mathfrak{m}$-closed $*$-subalgebra of $\fM$ containing the same identity as $\fM$. We may consider $\fN$ as a complex ordered $*$-algebra with the positive cone $\fN ^{+} := \fN \cap \fM ^{+}$. Clearly $\fN _{\textrm{nb}} = \fN \cap \fM _{\textrm{nb}}$. Let $\{ A_i \}$ be a net in $\fN _{\textrm{nb}}$ which converges in measure to $A \in \fM _{\textrm{nb}}$. As $\fN$ is $\mathfrak{m}$-closed, $A \in \fN$. Thus $A$ is in $\fN \cap \fM _{\textrm{nb}} = \fN _{\textrm{nb}}$. This shows that $\fN _{\textrm{nb}}$ is a $\mathfrak{m}$-closed $*$-subalgebra of $\fM _{\textrm{nb}}$. From Corollary \ref{cor:vn_subalg}, $\fN _{\textrm{nb}}$ is a von Neumann subalgebra of $\fM _{\textrm{nb}}$. Furthermore, by Proposition \ref{prop:compatibility_subalgebra}, we have that $\fN$ is the $\mathfrak{m}$-completion of $\fN _{\textrm{nb}}$. This leads us to the appropriate notion of subobject in the category of Murray-von Neumann algebras.

\begin{definition}
An $\mathfrak{m}$-closed $*$-subalgebra of a Murray-von Neumann algebra $\fM$ containing the same identity as $\fM$ is said to be a {\it Murray-von Neumann subalgebra} of $\fM$. 
\end{definition}

\begin{remark}
\label{rmrk:mvn_subalgebra}
From the preceding discussion, we observe that Murray-von Neumann subalgebras of a Murray-von Neumann algebra $\fM$ correspond to von Neumann subalgebras of the underlying finite von Neumann algebra of $\fM$.
\end{remark}

\begin{cor}
\textsl{
Let $\mathscr{R}$ be a finite von Neumann algebra with center $\mathscr{C}$. The center of $\afr$ is the Murray-von Neumann algebra $\mathscr{C}_{\textrm{aff}}$.
}
\end{cor}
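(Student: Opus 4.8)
The plan is to establish the two inclusions $\mathscr{C}_{\textrm{aff}} \subseteq Z(\afr)$ and $Z(\afr) \subseteq \mathscr{C}_{\textrm{aff}}$ separately, where $Z(\cdot)$ denotes the center of a $*$-algebra. Since $\mathscr{C}$ is an abelian von Neumann subalgebra of $\mathscr{R}$, Proposition \ref{prop:compatibility_subalgebra} together with Remark \ref{rmrk:mvn_subalgebra} tells us that $\mathscr{C}_{\textrm{aff}}$ embeds in $\afr$ as precisely the $\mathfrak{m}$-closure of $\mathscr{C}$; this identification is the bridge I will use to move between the topological and the algebraic pictures.

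For $\mathscr{C}_{\textrm{aff}} \subseteq Z(\afr)$ I would argue by continuity and density. Fix $B \in \mathscr{R}$. Since multiplication on $\afr$ is $\mathfrak{m}$-continuous (Theorem \ref{thm:cont_mvn}) and $\afr$, being an $\mathfrak{m}$-completion, is Hausdorff, the ``equalizer'' $Z_B := \{ A \in \afr : A \afrprod B = B \afrprod A \}$ is $\mathfrak{m}$-closed. As $\mathscr{C}$ lies in the center of $\mathscr{R}$, we have $\mathscr{C} \subseteq Z_B$, hence $\mathscr{C}_{\textrm{aff}}$, being the $\mathfrak{m}$-closure of $\mathscr{C}$, is contained in $Z_B$. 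Letting $B$ range over $\mathscr{R}$ shows that every element of $\mathscr{C}_{\textrm{aff}}$ commutes with all of $\mathscr{R}$. A second application of the same ``closed equalizer plus dense subset'' principle — now fixing $A \in \mathscr{C}_{\textrm{aff}}$ and using that $\mathscr{R}$ is $\mathfrak{m}$-dense in $\afr$ (Theorem \ref{thm:aff_op_complete}) — upgrades this to commutation with all of $\afr$, i.e. $A \in Z(\afr)$.

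For the reverse inclusion I would pass to the concrete realization of $\afr$ as closed densely-defined operators affiliated with $\mathscr{R}$. Let $A \in Z(\afr)$. For each unitary $U \in \mathscr{R}$ we have $A \afrprod U = U \afrprod A$; unravelling the Murray--von Neumann products (observing that $UA$ is already closed on $\mathscr{D}(A)$, while $AU$ is closed on $U^{*}\mathscr{D}(A)$) converts this into $U\mathscr{D}(A) = \mathscr{D}(A)$ and $UAU^{*} = A$. Writing the polar decomposition $A = W|A|$, with $W \in \mathscr{R}$ a partial isometry and the spectral projections $E_{\lambda}$ of $|A|$ lying in $\mathscr{R}$ (legitimate since $A$ is affiliated with $\mathscr{R}$), the relation $UAU^{*} = A$ together with uniqueness of the polar decomposition and of the spectral resolution forces $UWU^{*} = W$ and $UE_{\lambda}U^{*} = E_{\lambda}$ for all $\lambda$. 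Since $U$ was arbitrary, all of $W$ and the $E_{\lambda}$ lie in $\mathscr{R}'$, hence in $\mathscr{R} \cap \mathscr{R}' = \mathscr{C}$; therefore $|A|$, and so $A = W|A|$, is affiliated with $\mathscr{C}$, i.e. $A \in \mathscr{C}_{\textrm{aff}}$.

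I expect the main technical point to be the bookkeeping in the reverse inclusion: correctly identifying $A \afrprod U$ and $U \afrprod A$ with honest operator products on the right domains in order to extract $UAU^{*} = A$, and then invoking the standard (but unbounded-operator) facts that the polar decomposition and the spectral resolution of an operator affiliated with $\mathscr{R}$ have their constituent parts in $\mathscr{R}$ and transform covariantly under conjugation by unitaries of $\mathscr{R}$. The remaining steps are routine combinations of $\mathfrak{m}$-continuity, density of $\mathscr{R}$ and of $\mathscr{C}$, and the identification of $\mathscr{C}_{\textrm{aff}}$ with the $\mathfrak{m}$-closure of $\mathscr{C}$.
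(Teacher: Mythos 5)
Your argument is correct, but for the harder inclusion it takes a genuinely different route from the paper. The paper's proof is purely abstract: the center of $\afr$ is an $\mathfrak{m}$-closed unital $*$-subalgebra (your closed-equalizer observation, which also makes the paper's ``straightforward'' forward inclusion precise), hence a Murray-von Neumann subalgebra of $\afr$; by Corollary \ref{cor:vn_subalg}, Proposition \ref{prop:compatibility_subalgebra} and Remark \ref{rmrk:mvn_subalgebra} it is therefore the $\mathfrak{m}$-completion of its bounded part, and that bounded part lies in $\mathscr{R} \cap \mathscr{R}' = \mathscr{C}$, so the whole center lies in $\mathscr{C}_{\textrm{aff}}$ --- two lines, with no contact with domains of unbounded operators. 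You instead pass to the spatial picture: from $A \afrprod U = U \afrprod A$ for all unitaries $U \in \mathscr{R}$ you extract $UAU^* = A$, then invoke uniqueness and unitary covariance of the polar decomposition and of the spectral resolution of an operator affiliated with $\mathscr{R}$ to place $W$ and the spectral projections of $|A|$ in $\mathscr{R} \cap \mathscr{R}' = \mathscr{C}$, concluding that $A$ is affiliated with $\mathscr{C}$. This is sound (the domain bookkeeping you flag does work out: $AU$ and $UA$ are already closed, so $A \afrprod U = AU$, $U \afrprod A = UA$, and equality of these closed operators gives $U^*\mathscr{D}(A) = \mathscr{D}(A)$ and $UAU^* = A$), but it leans on exactly the kind of unbounded-operator spatial theory that the paper's subalgebra correspondence was built to avoid, and it does not exploit the observation that the center is itself a Murray-von Neumann subalgebra. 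What your route buys is independence from Remark \ref{rmrk:mvn_subalgebra} and a concrete description of how central elements decompose; what the paper's route buys is brevity and a proof that stays entirely inside the intrinsic topological--order-theoretic framework. Your forward inclusion (closed equalizers plus $\mathfrak{m}$-density of $\mathscr{C}$ in $\mathscr{C}_{\textrm{aff}}$ and of $\mathscr{R}$ in $\afr$, via Theorem \ref{thm:cont_mvn} and Theorem \ref{thm:aff_op_complete}) is essentially the argument the paper leaves implicit.
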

\begin{proof}
It is straightforward to see that $\mathscr{C}_{\textrm{aff}}$ is contained in the center of $\afr$. Since the center of $\afr$ is a Murray-von Neumann subalgebra of $\afr$, its underlying finite von Neumann algebra is contained in $\mathscr{C}$ which implies that the center of $\afr$ is contained in $\mathscr{C}_{\textrm{aff}}$. 
\end{proof}

\begin{prop}[Monotone completeness]
\label{prop:lub_mvna}
\textsl{
Let $\mathscr{R}$ be a finite von Neumann algebra. If $\{ H_i \}_{n \in \N}$ is an increasing net of self-adjoint operators in $\afr$ bounded above by a self-adjoint operator $H$ in $\afr$, then $\{ H_i \}$ has a least upper bound in $\afr$. Furthermore, $\{ H_i \}$ converges in measure to $\sup H_i$.
}
\end{prop}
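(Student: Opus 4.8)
The plan is to linearise the problem by means of the bounded transform $T \mapsto T(I+T)^{-1}$, which replaces the (possibly unbounded) operators $H_i$ by bounded ones inside $\mathscr{R}$, where the monotone completeness of the von Neumann algebra is at our disposal, and then to transport the resulting supremum back to $\afr$. As a preliminary reduction, fix an index $i_0$: the tail $\{H_i\}_{i \ge i_0}$ is cofinal in the net, hence has exactly the same upper bounds and the same least upper bound (if one exists), and $X \mapsto X \afrsum H_{i_0}$ is an order-automorphism of $\afr^{\mathrm{sa}}$ carrying suprema to suprema. Replacing $H_i$ by $H_i \afrdiff H_{i_0}$ (for $i \ge i_0$) and $H$ by $H \afrdiff H_{i_0}$, I may therefore assume that $0 \le H_i \le H$ for all $i$, that $0$ is the least element of the net, and hence that every upper bound $K \in \afr^{\mathrm{sa}}$ of $\{H_i\}$ satisfies $K \ge 0$.

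Put $g(t) = t(1+t)^{-1}$, a homeomorphism of $[0, \infty)$ onto $[0,1)$ with inverse $g^{-1}(s) = s(1-s)^{-1}$. If $T$ is a positive self-adjoint operator affiliated with $\mathscr{R}$, then $I + T$ has a bounded, everywhere-defined inverse $(I+T)^{-1} \in \mathscr{R}$ with $0 \le (I+T)^{-1} \le I$, so $g(T) := I - (I+T)^{-1}$ is a positive contraction in $\mathscr{R}$ with $\mathcal{N}(I - g(T)) = \mathcal{N}((I+T)^{-1}) = 0$. Conversely, if $A \in \mathscr{R}$ satisfies $0 \le A \le I$ and $\mathcal{N}(I-A) = 0$, then $I - A$ is a positive element of $\mathscr{R}$ with trivial nullspace, hence invertible in $\afr$ by Proposition~\ref{prop:luck}, and $g^{-1}(A) := A \afrprod (I \afrdiff A)^{-1}$ is a positive self-adjoint operator affiliated with $\mathscr{R}$; the two assignments are mutually inverse bijections between $\afr^{+}$ and $\mathcal{P} := \{A \in \mathscr{R} : 0 \le A \le I,\ \mathcal{N}(I-A) = 0\}$. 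The step I expect to be the main obstacle is to show that $g$ and $g^{-1}$ are, moreover, \emph{order isomorphisms}; equivalently, that the resolvent map $T \mapsto (I+T)^{-1}$, and more generally inversion $P \mapsto P^{-1}$ on positive injective affiliated operators, is order-reversing. For bounded operators this is classical operator monotonicity; for affiliated operators it may be deduced from the standard quadratic-form description of the order (namely $0 \le S \le T$ precisely when $\mathscr{D}(T^{1/2}) \subseteq \mathscr{D}(S^{1/2})$ and $\|S^{1/2}\xi\| \le \|T^{1/2}\xi\|$ for all $\xi \in \mathscr{D}(T^{1/2})$), or, alternatively, by first establishing $g(f_n(S)) \le g(f_n(T))$ for the bounded truncations $f_n(t) = \min\{t,n\}$ (classical operator monotonicity on $\mathscr{R}$) and then letting $n \to \infty$ with the help of Corollary~\ref{cor:meas_cont_of_sup_conv} and the $\mathfrak{m}$-closedness of $\afr^{+}$ from Proposition~\ref{prop:mtilde_pos}.

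Granting this, $\{g(H_i)\}$ is an increasing net in $\mathcal{P}$ bounded above by $g(H) \le I$, so by the monotone completeness of $\mathscr{R}^{\mathrm{sa}}$ the supremum $A_\infty := \sup_i g(H_i)$ exists in $\mathscr{R}$ and $g(H_i) \uparrow A_\infty$ in the strong-operator topology. From $0 \le A_\infty \le g(H)$ one gets $\mathcal{N}(I - A_\infty) = 0$: if $(I-A_\infty)x = 0$ then $0 = \langle (I-A_\infty)x, x \rangle \ge \langle (I-g(H))x, x \rangle \ge 0$, forcing $x = 0$. Thus $A_\infty \in \mathcal{P}$ and $A_\infty = \sup_i g(H_i)$ already within $\mathcal{P}$, so, applying the order isomorphism $g^{-1} : \mathcal{P} \to \afr^{+}$, the element $H_\infty := g^{-1}(A_\infty) \in \afr^{+}$ is the supremum of $\{H_i\} = \{g^{-1}(g(H_i))\}$ in $\afr^{+}$; as every upper bound of $\{H_i\}$ in $\afr^{\mathrm{sa}}$ lies in $\afr^{+}$, it is also the supremum in $\afr^{\mathrm{sa}}$. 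Undoing the translation of the first step produces the least upper bound of the original net.

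For the convergence assertion, still working under the reduction, set $D_i := H_\infty \afrdiff H_i \ge 0$; since $X \mapsto H_\infty \afrdiff X$ is an order-reversing bijection of $\afr^{\mathrm{sa}}$, the net $\{D_i\}$ is decreasing with $\inf_i D_i = 0$. Let $B_i := g(D_i) = I - (I+D_i)^{-1} \in \mathscr{R}$; this net is decreasing with $0 \le B_i \le I$, and since $g$ preserves infima, $\inf_i B_i = g(0) = 0$, whence $\tau(B_i) \downarrow 0$ for every normal tracial state $\tau$ on $\mathscr{R}$, by normality. Given $\varepsilon, \delta > 0$, choose $s \in (0,1)$ with $g^{-1}(s) \le \varepsilon$ and let $P_i$ be the spectral projection of $B_i$ corresponding to $[0,s]$. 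Then $B_i \ge s(I - P_i)$ yields $\tau(I - P_i) \le s^{-1}\tau(B_i)$, which is $\le \delta$ for all sufficiently large $i$, while $D_i = g^{-1}(B_i)$ commutes with $P_i$ and $D_i P_i \le g^{-1}(s)\, P_i$, so $\|D_i P_i\| \le g^{-1}(s) \le \varepsilon$. Hence $D_i \in \sO(\tau, \varepsilon, \delta)$ for all large $i$; as $\tau, \varepsilon, \delta$ were arbitrary, $D_i \to 0$ in the $\mathfrak{m}$-topology, i.e.\ $\{H_i\}$ converges in measure to $\sup_i H_i$.
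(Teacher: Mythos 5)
Your argument is correct in outline but takes a genuinely different route from the paper's. The paper never leaves the $\mathfrak{m}$-topology: it compresses each $H_i$ by the spectral projections $E_m$ of the upper bound $H$, invokes Corollary \ref{cor:meas_cont_of_sup_conv} to see that the bounded nets $\{E_m \afrprod H_i \afrprod E_m\}$ are Cauchy in measure, upgrades this (using that $A \afrdiff E_m \afrprod A \afrprod E_m$ lies in a small neighborhood of $0$ uniformly in $A$) to the statement that $\{H_i\}$ itself is Cauchy in measure, and only then identifies the $\mathfrak{m}$-limit as the least upper bound via the $\mathfrak{m}$-closedness of $\afr^{+}$. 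You proceed in the opposite order: you first manufacture the supremum by transporting the net into the unit ball of $\mathscr{R}$ via the bounded transform $t \mapsto t(1+t)^{-1}$ and appealing to monotone completeness of the von Neumann algebra, and only afterwards prove $\mathfrak{m}$-convergence by a spectral-projection estimate on $g(H_\infty \afrdiff H_i)$ (essentially Lemma \ref{lem:close_to_zero} in disguise). Your route produces the supremum without any reference to the measure topology, which is conceptually pleasant; the price is the lemma you yourself single out as the main obstacle.

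Concerning that obstacle: your primary justification (the quadratic-form description of the order together with the classical equivalence of form domination and resolvent domination) does work, but be aware that it silently presupposes that the form order coincides with the order actually used here, namely $S \le T$ iff $T \afrdiff S \in \afr^{+}$; both implications require a (routine but not free) argument with common cores. A way to avoid forms entirely is direct conjugation: from $I \afrsum S \le I \afrsum T$ one gets $(I\afrsum T)^{-1/2} \afrprod (I \afrsum S) \afrprod (I\afrsum T)^{-1/2} \le I$, so $c := (I\afrsum S)^{1/2} \afrprod (I\afrsum T)^{-1/2}$ is a contraction; then $c \afrprod c^* \le I$ reads $(I\afrsum S)^{1/2} \afrprod (I+T)^{-1} \afrprod (I\afrsum S)^{1/2} \le I$, and conjugating by $(I+S)^{-1/2}$ gives $(I+T)^{-1} \le (I+S)^{-1}$. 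Your second suggested route, however, fails as stated: $f_n(t) = \min\{t,n\}$ is not operator monotone, so $S \le T$ does not yield $f_n(S) \le f_n(T)$, and $g \circ f_n$ is not operator monotone either; truncation must be performed with the spectral projections of a single fixed operator (as in the paper's proof of Theorem \ref{thm:op-mon}), not by applying $\min\{\cdot\,,n\}$ to both sides. With the first route carried out, the remainder of your argument --- the injectivity of $I - A_\infty$, the transfer of the supremum through the order isomorphism, and the $\varepsilon$--$\delta$ estimate giving $D_i \to 0$ in measure --- is sound.
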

\begin{proof}
For $n \in \N$, let $E_n$ be the spectral projection of $H$ corresponding to the interval $[-n, n]$. For a fixed $k \in \N$, note that $\{ E_k \afrprod H_i \afrprod E_k \}_{i \in \Lambda}$ is a monotonically increasing net of self-adjoint elements in $\mathscr{R}$ bounded above by $E_k \afrprod  H \afrprod E_k \in \mathscr{R}$. Let $\varepsilon, \delta >0$ and $\tau$ be a faithful tracial state on $\mathscr{R}$. Since $E_n \uparrow I$ in the $\mathfrak{m}$-topology, there is an $m \in \N$ such that $$A - E_m \afrprod A \afrprod E_m = A \afrprod (I \afrdiff E_m) \afrsum (I \afrdiff E_m ) \afrprod A \afrprod E_m \in \sO(\tau, \frac{\varepsilon}{3}, \frac{\delta}{3})$$ for all $A \in \afr$. By Corollary \ref{cor:meas_cont_of_sup_conv}, the net $\{ E_m \afrprod H_i \afrprod E_m \}_{i \in \Lambda}$in $\mathscr{R}$ is Cauchy in measure and hence there is an index $j$ such that $$E_m \afrprod H_i \afrprod E_m - E_m \afrprod H_j \afrprod E_m \in \sO(\tau, \frac{\varepsilon}{3}, \frac{\delta}{3}),$$ for all $i \ge j$. Thus 
\begin{align*}
&\phantom{=} H_i - H_j \\
&= (H_i \afrdiff E_m \afrprod  H_i \afrprod E_m ) \afrsum (E_m \afrprod H_i \afrprod E_m \afrdiff E_m \afrprod H_j \afrprod E_m) \afrsum (E_m \afrprod H_j \afrprod E_m \afrdiff H_j) \\
&\in \sO(\tau, \varepsilon, \delta),
\end{align*}
for all $i \ge j$. Hence $\{ H_i \}$ is Cauchy in measure and converges to an operator $\bar{H}$ in $\afr$. Let $G$ be a self-adjoint operator in $\afr$ such that $H_i \le G$ for all indices $i$. Recall that the set of positive operators in $\afr$ is closed in the $\mathfrak{m}$-topology. Thus $\bar{H} = \lim_i H_i \le G $ which implies that $\bar{H}$ is the least upper bound of $\{ H_i \}$.
\end{proof}

\begin{remark}
Proposition \ref{prop:lub_mvna} tells us that the self-adjoint part of a Murray-von Neumann algebra is {\it monotone complete}.
\end{remark}

\begin{definition}
Let $\fM _1, \fM _2$ be Murray-von Neumann algebras. An adjoint-preserving homomorphism $\Phi : \fM _1 \to \fM _2$ is said to be {\it normal} if for every bounded monotonically increasing net $\{ H_i \}$ of self-adjoint elements in $\fM _1$ with an upper bound, we have $\Phi(\sup H_i) = \sup \Phi(H_i)$ in $\fM _2$.
\end{definition}

\begin{thm}
\label{thm:normal_mvna}
\textsl{
Let $\fM _1$ and $\fM _2$ be Murray-von Neumann algebras and $\Phi : \fM _1 \to \fM _2$ be a unital $*$-homomorphism. Then the following are equivalent:
\begin{itemize}
    \item[(i)] $\Phi$ is normal;
    \item[(ii)] $\Phi$ is $\mathfrak{m}$-continuous.
\end{itemize}
}
\end{thm}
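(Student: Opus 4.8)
The plan is to reduce everything to the underlying finite von Neumann algebras and then invoke Theorem \ref{thm:functorial}. Write $\mathscr{R}_i := (\fM _i)_{\textrm{nb}}$ for the underlying finite von Neumann algebra of $\fM _i$, so that $\fM _i$ is canonically the $\mathfrak{m}$-completion of $\mathscr{R}_i$; realizing $\mathscr{R}_i$ faithfully and normally on a Hilbert space one may, if convenient, assume $\fM _i = (\mathscr{R}_i)_{\textrm{aff}}$, since all the cited results hold in that concrete setting. Because the positive cone of a Murray-von Neumann algebra is exactly the set of squares of self-adjoint elements (Proposition \ref{prop:pos_desc_mvn}), $\Phi$ is order-preserving; hence $A^*A \le \lambda^2 I$ in $\fM _1$ implies $\Phi(A)^*\Phi(A) \le \lambda^2 I$ in $\fM _2$, so $\Phi$ restricts to a unital $*$-homomorphism $\Phi _0 := \restr{\Phi}{\mathscr{R}_1} : \mathscr{R}_1 \to \mathscr{R}_2$ of von Neumann algebras.

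For the implication (ii) $\Rightarrow$ (i), which is the easy direction, let $\{ H_i \}$ be a monotonically increasing net of self-adjoint elements of $\fM _1$ with an upper bound $G$. By Proposition \ref{prop:lub_mvna} the supremum $\sup_i H_i$ exists in $\fM _1$ and $H_i \to \sup_i H_i$ in the $\mathfrak{m}$-topology; applying $\Phi$ and using $\mathfrak{m}$-continuity gives $\Phi(H_i) \to \Phi(\sup_i H_i)$. On the other hand $\{ \Phi(H_i) \}$ is increasing and bounded above by $\Phi(G)$, so by Proposition \ref{prop:lub_mvna} again it has a supremum in $\fM _2 ^{\textrm{sa}}$ to which it $\mathfrak{m}$-converges. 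Since $\fM _2$ is Hausdorff (Proposition \ref{prop:hausdorff_mvn}), these two $\mathfrak{m}$-limits coincide, i.e.\ $\Phi(\sup_i H_i) = \sup_i \Phi(H_i)$, so $\Phi$ is normal.

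For (i) $\Rightarrow$ (ii) I would first verify that $\Phi _0$ is normal as a homomorphism of von Neumann algebras: given a bounded increasing net $\{ H_i \}$ in $\mathscr{R}_1 ^{\textrm{sa}}$, its $\fM _1$-supremum is norm-bounded (by the common norm bound of the net), hence lies in $\mathscr{R}_1$, and one checks directly that it is then the $\mathscr{R}_1$-supremum; likewise inside $\mathscr{R}_2 \subseteq \fM _2$. Thus normality of $\Phi$ yields $\Phi _0(\sup_i H_i) = \sup_i \Phi _0(H_i)$, and Theorem \ref{thm:functorial} shows $\Phi _0$ is Cauchy-continuous for the $\mathfrak{m}$-topologies. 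As $\mathscr{R}_1$ is $\mathfrak{m}$-dense in $\fM _1$ and $\fM _2$ is $\mathfrak{m}$-complete, $\Phi _0$ extends uniquely to a continuous map $\Psi : \fM _1 \to \fM _2$, which is a unital $*$-homomorphism by continuity of the algebraic operations (Theorem \ref{thm:cont_mvn}).

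It remains to prove $\Phi = \Psi$, which then gives $\mathfrak{m}$-continuity of $\Phi$; this is the crux. Fix $A \in \fM _1$ and, using Corollary \ref{cor:approx_proj}, choose an increasing sequence of projections $E_n \in \mathscr{R}_1$ with $E_n \uparrow I$ in the $\mathfrak{m}$-topology and $AE_n \in \mathscr{R}_1$ for all $n$. Then $\Phi(AE_n) = \Phi _0(AE_n) = \Psi(AE_n)$, and expanding each side as a product and using $\restr{\Psi}{\mathscr{R}_1} = \Phi _0 = \restr{\Phi}{\mathscr{R}_1}$ gives $(\Phi(A) - \Psi(A))\Phi(E_n) = 0$ for every $n$. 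Now $\{ \Phi(E_n) \}$ is an increasing sequence of projections in $\mathscr{R}_2$; since $E_n \uparrow I$ in measure forces $\sup_n E_n = I$ (Corollary \ref{cor:meas_cont_of_sup_conv} together with Hausdorffness) and $\Phi$ is normal, $\sup_n \Phi(E_n) = \Phi(I) = I$, whence $\Phi(E_n) \to I$ in the $\mathfrak{m}$-topology by Corollary \ref{cor:meas_cont_of_sup_conv} again. Passing to the limit and invoking continuity of multiplication (Theorem \ref{thm:cont_mvn}) and Hausdorffness of $\fM _2$ yields $\Phi(A) - \Psi(A) = (\Phi(A)-\Psi(A))\cdot I = 0$. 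Hence $\Phi = \Psi$ is $\mathfrak{m}$-continuous. The main obstacle is precisely this last step: \emph{a priori} $\Phi$ is only an algebraic $*$-homomorphism, so it need not visibly coincide with the continuous extension $\Psi$ of its restriction to $\mathscr{R}_1$, and it is the normality hypothesis — which forces $\Phi(E_n) \to I$ for the domain-filling projections of Corollary \ref{cor:approx_proj} — that closes the gap.
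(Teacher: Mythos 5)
Your proof is correct and follows essentially the same route as the paper: restrict $\Phi$ to the underlying finite von Neumann algebras, apply Theorem \ref{thm:functorial} for (i)$\Rightarrow$(ii), and use Proposition \ref{prop:lub_mvna} together with uniqueness of $\mathfrak{m}$-limits for (ii)$\Rightarrow$(i). The one point where you go beyond the paper's write-up --- checking via Corollary \ref{cor:approx_proj} and normality (through $\Phi(E_n)\uparrow I$) that $\Phi$ coincides with the continuous extension $\Psi$ of $\restr{\Phi}{\mathscr{R}_1}$ --- is a step the paper leaves implicit, and your argument for it is sound.
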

\begin{proof}
Let $\mathscr{R} _1, \mathscr{R} _2,$ respectively, be finite von Neumann algebras whose corresponding Murray-von Neumann algebras are $\fM_1, \fM_2$, respectively. As noted in \S \ref{sec:intro}, $\Phi$ restricts to a unital $*$-homomorphism between $\mathscr{R}_1$ and $\mathscr{R}_2$.

(i) $\Rightarrow$ (ii). If $\Phi$ is normal, then $\restr{\Phi}{\mathscr{R}_1} : \mathscr{R}_1 \to \mathscr{R}_2$ is also normal. By Theorem \ref{thm:functorial}, $\restr{\Phi}{\mathscr{R}_1}$ is continuous in the $\mathfrak{m}$-topologies on $\mathscr{R}_1$ and $\mathscr{R}_2$, and thus $\Phi$ is $\mathfrak{m}$-continuous.

(ii) $\Rightarrow$ (i). Let $\{ H_i \}$ be a monotonically increasing net of self-adjoint elements in $\fM _1$ with supremum $H$. Since the net $\{ \Phi(H_i) \}$ in $\fM _2$ has an upper bound $\Phi(H)$, it has a least upper bound $K \in \fM _2$. From Proposition \ref{prop:lub_mvna}, we observe that $H_i \uparrow H$ in the $\mathfrak{m}$-topology of $\fM _1$ and $\Phi(H_i) \uparrow K$ in the $\mathfrak{m}$-topology of $\fM _2$. From the $\mathfrak{m}$-continuity of $\Phi$, we conclude that $$K = \lim _i \Phi(H_i) = \Phi(\lim _i H_i) = \Phi(H).$$
Thus $\Phi$ is normal.
\end{proof}

From the above theorem, we may consider the morphisms in the category of Murray-von Neumann algebras to be normal unital $*$-homomorphisms. Thus every morphism between Murray-von Neumann algebras arises from a morphism between their underlying finite von Neumann algebras.

\section{Abelian Murray-von Neumann Algebras}
\label{sec:abelian_mvn}

Let $X$ be a locally compact Hausdorff space with a positive Radon measure $\mu$. Let $L^0 (X; \mu)$ denote the algebra of $\mu$-measurable functions on $X$. For a compact set $K \subseteq X$, $\varepsilon, \delta >0$, let
$$ \sO(K, \varepsilon, \delta) := \{ f \in L^0(X; \mu) : \mu(\{ x \in K : |f(x)| > \varepsilon \}) \le \delta \}$$
The translation-invariant topology generated by the fundamental system of neighborhoods of $0$ given by $\{ \sO(K, \varepsilon, \delta) \}$ is called the {\it topology of local convergence in measure} or simply, {\it measure topology}.

\begin{prop}
\label{prop:complete_L0}
\textsl{
With the measure topology, $L^{0}(X; \mu)$ is a complete topological $*$-algebra.
}
\end{prop}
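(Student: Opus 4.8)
The proposition has two halves---that $L^0(X;\mu)$ is a topological $*$-algebra and that it is complete---and I would prove them in that order. For the topological $*$-algebra structure the strategy is the one used for $\mathscr{R}$ in Lemma \ref{lem:main}: establish inclusion estimates for the basic neighborhoods $\sO(K,\varepsilon,\delta)$. First, $f\mapsto\overline{f}$ is a homeomorphism because $|\overline f| = |f|$ gives $\sO(K,\varepsilon,\delta)^{*} = \sO(K,\varepsilon,\delta)$. Next, fixing a compact $K$ (so $\mu(K)<\infty$) and intersecting exceptional sets exactly as in the proof of Lemma \ref{lem:main}(ii)--(iii), one obtains
$$\sO(K,\varepsilon_1,\delta_1) + \sO(K,\varepsilon_2,\delta_2) \subseteq \sO(K,\varepsilon_1+\varepsilon_2,\delta_1+\delta_2),$$
$$\sO(K,\varepsilon_1,\delta_1)\cdot \sO(K,\varepsilon_2,\delta_2) \subseteq \sO(K,\varepsilon_1\varepsilon_2,\delta_1+\delta_2),$$
together with $\lambda\,\sO(K,\varepsilon,\delta) = \sO(K,|\lambda|\varepsilon,\delta)$ for $\lambda\neq 0$, which give continuity of the operations at $0$. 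To pass to joint continuity at a general point one writes $fg - f_0g_0 = (f-f_0)(g-g_0) + f_0(g-g_0) + (f-f_0)g_0$, and similarly for $+$ and for scalar multiplication, estimating each summand; the only extra ingredient is that for a fixed $f_0\in L^0(X;\mu)$ and fixed $K,\varepsilon,\delta$ one can choose $M$ with $\mu(\{x\in K:|f_0(x)|>M\})\le\delta$ (using that $f_0$ is finite $\mu$-a.e.\ and $\mu(K)<\infty$), so that $f_0\cdot\sO(K,\varepsilon/M,\delta)\subseteq\sO(K,\varepsilon,2\delta)$. Finally, Hausdorffness follows from inner regularity of $\mu$: a function lying in every $\sO(K,\varepsilon,\delta)$ must vanish $\mu$-a.e.\ on every compact set and hence is $0$ in $L^0(X;\mu)$.

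For completeness, let $\{f_i\}_{i\in I}$ be a net Cauchy for the measure topology and fix a compact $K$. Then $\{f_i|_K\}$ is Cauchy in measure on the finite measure space $(K,\mu|_K)$, and $L^0(K;\mu|_K)$ with the topology of convergence in measure is a complete metrizable topological vector space (via the translation-invariant metric $d_K(f,g)=\int_K\min(1,|f-g|)\,d\mu$); since a Cauchy net in a complete metric space converges---extract a Cauchy sequence refining the Cauchy condition along the net, and observe the whole net converges to its limit---there is $g_K\in L^0(K;\mu|_K)$ with $f_i|_K\to g_K$ in measure on $K$. Because $K_1\cup K_2$ is compact whenever $K_1,K_2$ are and restriction is continuous, the family $\{g_K\}$ is compatible: $g_{K'}|_K = g_K$ $\mu$-a.e.\ on $K$ whenever $K\subseteq K'$.

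It remains to patch the $g_K$ into a single $g\in L^0(X;\mu)$ with $g|_K = g_K$ for every compact $K$---this is the main obstacle. Here I would invoke inner regularity of the Radon measure $\mu$: by Zorn's lemma choose a maximal family $\{K_\alpha\}_{\alpha\in A}$ of pairwise disjoint compact sets of positive measure, put $g:=g_{K_\alpha}$ on $K_\alpha$ and $g:=0$ on the complement (which is $\mu$-null, since by maximality it contains no positive-measure compact set and $\mu$ is inner regular). Then $g|_K=g_K$ $\mu$-a.e.\ for every compact $K$, and $g$ is $\mu$-measurable because any compact $K$ meets only countably many $K_\alpha$ of positive measure (as $\sum_\alpha\mu(K\cap K_\alpha)\le\mu(K)<\infty$), so $g|_K$ coincides $\mu$-a.e.\ with a measurable function. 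Finally $f_i\to g$: given a basic neighborhood $\sO(K,\varepsilon,\delta)$, we have $f_i|_K\to g_K=g|_K$ in measure on $K$, hence $f_i-g\in\sO(K,\varepsilon,\delta)$ for all large $i$. The delicate point throughout is exactly this patching step---assembling the local a.e.-limits into a genuinely $\mu$-measurable global function---which is where regularity of $\mu$ is essential; the rest is a routine transcription of Lemma \ref{lem:main} and the classical completeness argument for $L^0$ of a finite measure space.
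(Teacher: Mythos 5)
Your overall plan is sound, and the first half (continuity of the operations, which the paper's proof does not even spell out) is fine. For completeness, your route is genuinely different from the paper's: the paper defines the candidate limit $f$ globally and pointwise from the start (as $\lim_i f_i(x)$ on the set where the net of values is Cauchy in $\C$, and $0$ elsewhere), then for each compact $K$ extracts a rapidly Cauchy subsequence to show $f\mathbbm{1}_K$ is $\mu$-measurable, and finally invokes the fact that $\mu$-measurability is a local property for Radon measures. You instead produce a limit $g_K$ on each compact separately, via metric completeness of $L^0$ of a finite measure space, and then must glue. Both arguments ultimately rest on the same locality principle, but the paper's order of operations means there is nothing to glue: the global function exists from the outset and only its measurability is checked locally.

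The gap is in your gluing step. From maximality of the disjoint family $\{K_\alpha\}$ you can only conclude that the complement $N$ of $\bigcup_\alpha K_\alpha$ contains no compact set of positive measure; to upgrade this to ``$N$ is $\mu$-null'' via inner regularity you need $N$ (or at least $N\cap K$ for each compact $K$) to be $\mu$-measurable, and an uncountable union of compact sets need not be Borel or $\mu$-measurable. Likewise, a compact $K$ meets only countably many $K_\alpha$ in sets of \emph{positive} measure, but it may meet uncountably many $K_\alpha$ in null sets, and the union of those null intersections is again not obviously measurable or null; so the assertion that $g|_K$ agrees $\mu$-a.e.\ with a measurable function does not follow as written. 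These are exactly the points handled by the concassage (decomposition) theorem for Radon measures in Bourbaki's Chapter IX, which produces a disjoint compact family whose complement is locally negligible and for which measurability can be tested piece by piece; citing that theorem, or restricting to $\sigma$-compact $X$ or $\sigma$-finite $\mu$, closes the gap. Alternatively, adopting the paper's device --- define the limit function pointwise on all of $X$ first, prove $f\mathbbm{1}_K$ is measurable for each compact $K$, and then appeal to locality of $\mu$-measurability --- avoids the gluing problem entirely.
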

\begin{proof}
Let the net $\{ f_i \}$ in $L^0(X; \mu)$ be Cauchy in measure, and $F \subseteq X$ be the set of points $x$ in $X$ for which $\{ f_i(x) \}$ is a Cauchy net in $\C$. We define a function $f$ on $X$ as follows,
$$
f(x) :=
\left\{
	\begin{array}{ll}
		\lim_i f_i(x) & \mbox{if } x \in F\\
		0 & \mbox{if } x \in X \backslash F \textrm{      }.
	\end{array}
\right.$$

Let $K$ be a compact subset of $X$. We may choose a subsequence $\{ f_n \}$ such that $\mu(\{ x \in K : |f_{n+1}(x) - f_n(x)| > 2^{-n} \}) \le 2^{-n}$. For $n \in \N$, let $$S_n := \{ x \in K : |f_{n+1}(x) - f_n(x)| \le 2^{-n} \}.$$
Note that $S_n$ is $\mu$-measurable. Consider the $\mu$-measurable subset of $K$ defined by $$S := \bigcup_{n = 1}^{\infty} \big( \bigcap_{i = n}^{\infty} S_i \big).$$
Clearly $K \backslash S$ is $\mu$-null and for $x \in S$, $\{ f_n(x) \}$ is a Cauchy sequence in $\C$. 
Since $f \mathbbm{1} _{S}$ is the pointwise limit of the sequence of $\mu$-measurable functions given by  $\{ f_n \mathbbm{1} _{S} \}$ and $K \backslash S$ is $\mu$-null, we note that $f \mathbbm{1} _K$ is $\mu$-measurable (see \cite[Corollaries to Theorem 1.14]{rudin2}).
Hence $f$ is locally $\mu$-measurable, and since $\mu$-measurability is a local property (see discussion following \cite[Definition 7, Chapter IX-\S 1.5]{bourbaki-int2}), we conclude that $f$ is $\mu$-measurable. It is easy to verify that $\{ f_i \}$ converges in measure to $f$.
\end{proof}

Let $L^{\infty}(X; \mu)$ denote the abelian von Neumann algebra of all essentially bounded $\mu$-measurable functions on $X$. It is usually represented on the Hilbert space $L^2(X; \mu)$ via multiplier operators. For a non-$\mu$-null compact subset $K$ of $X$, let $\mu_K : L^{\infty}(X;\mu) \to \C$ be the normal state on $L^{\infty}(X;\mu)$ defined by the mapping $f \mapsto \frac{1}{\mu(K)} \int_{K} f \; d \mu$ (normality of $\mu _K$ follows from the fact that  $\mathbbm{1} _{K} \in L^1(X;\mu)$). Note that $\{ \mu _K \}$ is a separating family of normal tracial states on $L^{\infty}(X ;\mu)$. Since $\mu_K(\mathbbm{1} _F) = \frac{\mu(K \cap F)}{\mu(K)}$ for every $\mu$-measurable set $F \subseteq X$, from Lemma \ref{lem:family_sep_states} we see that the subspace topology induced on $L^{\infty}(X; \mu)$ from the measure topology on $L^0(X; \mu)$ is the $\mathfrak{m}$-topology on $L^{\infty}(X; \mu)$.  

 Let $f \in L^0(X; \mu)$, and $S_n := \{ x \in X : f(x) \le n \}$ for $n \in \N$. We observe that $\{ S_n \}$ is an increasing sequence of $\mu$-measurable subsets of $X$ with $\mathbbm{1} _{S_n} \uparrow \mathbbm{1}_X$ in the weak-$*$ topology (and hence by Corollary \ref{cor:meas_cont_of_sup_conv}, in the $\mathfrak{m}$-topology.) The net $\{ f \mathbbm{1} _{S_n} \}$ in $L^{\infty}(X; \mu) \subseteq L^0(X; \mu)$ is Cauchy in the $\mathfrak{m}$-topology and converges in measure to $f$. In other words, $L^{\infty}(X; \mu)$ is dense in $L^0(X; \mu)$. It is not hard to see from the definitions that the measure topology on $L^0(X;\mu)$ is the one obtained from the completion of the $\mathfrak{m}$-topology on $L^{\infty}(X; \mu)$.
 The positive cone of $L^0(X; \mu)$ is given by the set of all essentially positive $\mu$-measurable functions on $X$, which is the closure of the set of essentially positive $\mu$-measurable functions in $L^{\infty}(X;\mu)$ in the measure topology. Hence $$\big( L^{\infty}(X; \mu) \big)_{\textrm{aff}} \cong L^0(X; \mu),$$
 as unital ordered complex topological $*$-algebras.

\begin{remark}
\label{rmrk:abelian_mvn}
Every abelian von Neumann algebra is of the form $L^{\infty}(X; \mu)$ for a locally compact Hausdorff space $X$ with a positive Radon measure $\mu$ (see Theorem 1.18 in \cite[Chapter III]{takesaki1}). Hence, from the preceding discussion, we note that every abelian Murray-von Neumann algebra is of the form $L^0(X; \mu)$.
\end{remark}

\section{Borel Function Calculus and Operator Inequalities}
\label{sec:approx_meas}

In this section, we algebraically define the notions of {\it spectrum}, {\it point spectrum} for elements in a Murray-von Neumann algebra $\fM$. This gives us a definition that is independent of the spatial representation of $\fM$. We define the Borel function calculus for normal elements in $\fM$ and note that it is compatible with the usual definition of the Borel function calculus for (unbounded) normal operators acting on a Hilbert space (see \cite[\S 5.6]{kadison-ringrose1}). For a normal element $A$ in $\fM$, the two-variable complex polynomial functions in $A, A^*$ are shown to be $\mathfrak{m}$-dense in the space of Borel functions in $A$ allowing for the use of polynomial approximation techniques.

\begin{definition}
\label{def:spectrum}
Let $A$ be an element of a Murray-von Neumann algebra $\fM$. We say that a complex number $\lambda$ is a {\it spectral value} for $A$ (relative to $\fM$) when $A \afrdiff \lambda I$ does not have a two-sided inverse in $\fM _{\textrm{nb}}$, the underlying finite von Neumann algebra of $\fM$. The set of spectral values of $A$ is called the {\it spectrum} of $A$ and is denoted by $\mathrm{sp} ^{\fM}(A)$. 

We say that a complex number $\lambda$ is an {\it eigenvalue} of $A$ (relative to $\fM$) when $A \afrdiff \lambda I$ does not have a two-sided inverse in $\fM$. The set of eigenvalues of $A$ is called the {\it point spectrum} of $A$ and is denoted by $\mathrm{sp} _e ^{\fM}(A)$. (If $\fM := \afr$ where $\mathscr{R}$ is a finite von Neumann algebra acting on the Hilbert space $\mathscr{H}$, recall from Proposition \ref{prop:luck} that $\lambda \in \mathrm{sp}_e ^{\fM}(A)$ if and only if $A - \lambda I$ has non-trivial nullspace. This explains our usage of the term `eigenvalue'.)
\end{definition}

\begin{prop}
\label{prop:spec_inv}
\textsl{
Let $\fM$ be a Murray-von Neumann algebra and $\fN$ be a Murray-von Neumann subalgebra of $\fM$. For $A \in \fN$, we have 
\begin{itemize}
    \item[(i)] $\mathrm{sp} ^{\fM}(A) = \mathrm{sp} ^{\fN}(A)$;
    \item[(ii)] $\mathrm{sp}_e ^{\fM}(A) = \mathrm{sp}_e ^{\fN}(A)$.
\end{itemize}
}
\end{prop}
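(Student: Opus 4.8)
The plan is to derive both equalities from two elementary facts: the zero-divisor dichotomy of Proposition \ref{prop:luck}, and the identification $\fN_{\textrm{nb}} = \fN \cap \fM_{\textrm{nb}}$ recorded in the discussion preceding Remark \ref{rmrk:mvn_subalgebra}. The one structural observation needed at the outset is that a Murray-von Neumann subalgebra $\fN$ of $\fM$ is itself a Murray-von Neumann algebra: by Proposition \ref{prop:compatibility_subalgebra} it is the $\mathfrak{m}$-completion of the finite von Neumann algebra $\fN_{\textrm{nb}}$, hence ring-isomorphic to $(\fN_{\textrm{nb}})_{\textrm{aff}}$, so Proposition \ref{prop:luck} applies to $\fN$ just as it does to $\fM$. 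I will also use, without further comment, that two-sided inverses in a ring are unique, and that a relation $C \afrprod D = 0$ (or $D \afrprod C = 0$) with $C, D \in \fN$ persists in $\fM$, so that a zero-divisor of $\fN$ is a zero-divisor of $\fM$.

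For (i), the inclusion $\mathrm{sp}^{\fM}(A) \subseteq \mathrm{sp}^{\fN}(A)$ is immediate, since a two-sided inverse of $A \afrdiff \lambda I$ lying in $\fN_{\textrm{nb}} \subseteq \fM_{\textrm{nb}}$ is a fortiori one lying in $\fM_{\textrm{nb}}$. For the reverse inclusion I would start from $\lambda \notin \mathrm{sp}^{\fM}(A)$, so $A \afrdiff \lambda I$ has a two-sided inverse $B \in \fM_{\textrm{nb}}$; in particular $A \afrdiff \lambda I$ is not a zero-divisor in $\fM$, hence not a zero-divisor in $\fN$, hence invertible in $\fN$ by Proposition \ref{prop:luck}, with inverse $B' \in \fN$. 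As $B'$ is then also a two-sided inverse of $A \afrdiff \lambda I$ in $\fM$, uniqueness forces $B' = B \in \fM_{\textrm{nb}}$, whence $B' \in \fN \cap \fM_{\textrm{nb}} = \fN_{\textrm{nb}}$; thus $\lambda \notin \mathrm{sp}^{\fN}(A)$.

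For (ii) the argument is the same but simpler, as no passage to the bounded parts is involved: invertibility of $A \afrdiff \lambda I$ in $\fN$ trivially gives invertibility in $\fM$ (the inverse already lies in $\fN \subseteq \fM$), so $\mathrm{sp}_e^{\fM}(A) \subseteq \mathrm{sp}_e^{\fN}(A)$; conversely, if $A \afrdiff \lambda I$ is invertible in $\fM$ then it is not a zero-divisor in $\fM$, hence not in $\fN$, hence invertible in $\fN$ by Proposition \ref{prop:luck}, giving $\mathrm{sp}_e^{\fN}(A) \subseteq \mathrm{sp}_e^{\fM}(A)$. The only nontrivial input anywhere is Proposition \ref{prop:luck}; the single point that requires care is ensuring, in part (i), that the inverse produced in $\fN$ actually lands in $\fN_{\textrm{nb}}$ and not merely in $\fN$, which is precisely where uniqueness of inverses together with $\fN_{\textrm{nb}} = \fN \cap \fM_{\textrm{nb}}$ is used.
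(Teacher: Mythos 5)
Your proof is correct, and for the nontrivial direction of (i) it takes a genuinely different route from the paper. The paper argues spatially: realizing $\fN \cong \mathscr{S}_{\textrm{aff}}$ for a von Neumann subalgebra $\mathscr{S}$ of $\mathscr{R}$, it shows directly that a bounded inverse $B \in \mathscr{R}$ of an element of $\mathscr{S}_{\textrm{aff}}$ must lie in $\mathscr{S}$ --- first for self-adjoint elements, by checking that $B$ commutes with every unitary in $\mathscr{S}'$ on the dense range of $A$ and invoking the double commutant theorem, and then for general $A$ by passing through $A^* \afrprod A$ and writing $B = (B \afrprod B^*) \afrprod A^*$. You instead stay entirely inside the ring structure: since $\fN \cong (\fN_{\textrm{nb}})_{\textrm{aff}}$, Proposition \ref{prop:luck} applies to $\fN$, so non-zero-divisibility of $A \afrdiff \lambda I$ (inherited from its invertibility in $\fM$) already yields an inverse in $\fN$, and uniqueness of two-sided inverses together with $\fN_{\textrm{nb}} = \fN \cap \fM_{\textrm{nb}}$ places that inverse in $\fN_{\textrm{nb}}$. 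Your version is shorter, avoids domain-tracking and the commutant argument altogether, and is arguably more in the spirit of the paper's stated aim of intrinsic, Hilbert-space-free reasoning; the cost is that it leans on Proposition \ref{prop:luck} applied to the subalgebra, hence on the prior identification of $\fN$ with $(\fN_{\textrm{nb}})_{\textrm{aff}}$, whereas the paper's spatial argument establishes as a byproduct the slightly more quotable fact that any bounded inverse of an element of $\mathscr{S}_{\textrm{aff}}$ automatically lies in $\mathscr{S}$. Your treatment of the easy inclusion in (i) and of both inclusions in (ii) matches what the paper does or leaves to the reader.
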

\begin{proof}
Let $\mathscr{R}$ be a finite von Neumann algebra acting on the Hilbert space $\mathscr{H}$ such that $\fM \cong \afr$. By Remark \ref{rmrk:mvn_subalgebra}, there is a von Neumann subalgebra $\mathscr{S}$ of $\mathscr{R}$ such that $\fN \cong \mathscr{S}_{\textrm{aff}}$.
\vskip 0.1in

\noindent (i) If $\lambda \in \mathrm{sp} ^{\fM}(A)$, then $A$ does not have an inverse in $\mathscr{R}$; this implies that it has no inverse in $\mathscr{S}$. Hence $\mathrm{sp} ^{\fM}(A) \subseteq \mathrm{sp} ^{\fN}(A)$. In order to establish the inclusion $\mathrm{sp} ^{\fN}(A) \subseteq \mathrm{sp} ^{\fM}(A)$, we show below that: if $A \in \fN$, and $A$ has an inverse $B$ in $\mathscr{R}$, then $B \in \mathscr{S}$.

First we assume that $A$ is self-adjoint. Let $U$ be a unitary operator in $\mathscr{S}'$ so that $UA = AU$. For a vector $x \in \mathscr{D}(A)$, we have $BU(Ax) =  BAUx = Ux =  UB(Ax)$. From Proposition \ref{prop:luck}, since $A$ has dense range in $\mathscr{H}$,  we observe that $BU = UB$. Thus by the double commutant theorem, $B \in \mathscr{S}$. 

We next consider an element $A \in \fN$ (not necessarily self-adjoint) with an inverse $B \in 
\mathscr{R}$. Then $A^* \in \fN$ with inverse $B^*$ in $\mathscr{R}$. Since $A^* \afrprod A$ has inverse
$B \afrprod B^*$ in $\mathscr{S}$, we conclude from the preceding paragraph that $BB^* \in \mathscr{S}$. Thus $B = (B \afrprod B^*) \afrprod A^* \in \fN _{\textrm{nb}} = \mathscr{S}$.
\vskip 0.1in

\noindent (ii) The proof is similar to that of part (i) and is left to the reader.
\end{proof}

In view of Proposition \ref{prop:spec_inv}, we omit the superscripts in the definition of the spectrum and the point spectrum of an element in a Murray-von Neumann algebra.

\begin{prop}
\textsl{
For an element $A$ of a Murray-von Neumann algebra $\fM$, $\mathrm{sp} (A)$ is a closed subset of $\C$. 
}
\end{prop}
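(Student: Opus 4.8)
The plan is to show that the complement of $\mathrm{sp}(A)$ in $\C$ is open. By definition, $\lambda_0 \notin \mathrm{sp}(A)$ precisely when $A \afrdiff \lambda_0 I$ has a two-sided inverse in $\fM_{\textrm{nb}}$, the underlying finite von Neumann algebra of $\fM$. So fix such a $\lambda_0$, write $B := A \afrdiff \lambda_0 I$ and let $C \in \fM_{\textrm{nb}}$ be its inverse. For $\lambda$ near $\lambda_0$ we have $A \afrdiff \lambda I = B \afrdiff (\lambda - \lambda_0) I$, and I want to produce an inverse in $\fM_{\textrm{nb}}$ whenever $|\lambda - \lambda_0|$ is small enough. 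First I would exploit that $\fM_{\textrm{nb}}$ is a von Neumann algebra (in particular a Banach algebra under its operator norm $\|X\| = \inf\{\mu : X^*X \le \mu^2 I\}$, as recorded in the introduction), and that the set of invertible elements of a unital Banach algebra is open: if $\|(\lambda - \lambda_0) C\| = |\lambda - \lambda_0|\,\|C\| < 1$, then $I \afrdiff (\lambda-\lambda_0) C$ is invertible in $\fM_{\textrm{nb}}$ via the Neumann series $\sum_{n\ge 0} (\lambda-\lambda_0)^n C^n$, and consequently $A \afrdiff \lambda I = (I \afrdiff (\lambda - \lambda_0) C) \afrprod B$ (here one uses $C \afrprod B = I$, so $B \afrdiff (\lambda-\lambda_0)I = (I \afrdiff (\lambda-\lambda_0)C)\afrprod B$) is a product of two invertible elements of $\fM_{\textrm{nb}}$, hence invertible in $\fM_{\textrm{nb}}$.

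The key algebraic identity to verify carefully is $B \afrdiff (\lambda - \lambda_0) I = \big(I \afrdiff (\lambda - \lambda_0) C\big) \afrprod B$ inside $\fM$: expanding the right-hand side gives $B \afrdiff (\lambda - \lambda_0)(C \afrprod B)$, and since $C$ is a two-sided inverse of $B$ in $\fM_{\textrm{nb}} \subseteq \fM$ we have $C \afrprod B = I$, yielding the claim. One must also note that the associativity and distributivity needed here hold since $\fM$ is a genuine $*$-algebra (Proposition \ref{prop:fund_mva} and the ensuing definition), and that for $B, C$ both in the subalgebra $\fM_{\textrm{nb}}$ the operations $\afrsum, \afrprod$ restrict to the ordinary von Neumann algebra operations, so there is no subtlety about unbounded closures. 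Thus the whole argument actually takes place inside the von Neumann algebra $\fM_{\textrm{nb}}$ once $\lambda$ is restricted to the disc $\{ \lambda : |\lambda - \lambda_0| < \|C\|^{-1} \}$ (interpreting $\|C\|^{-1} = \infty$ if $C = 0$, i.e. if $\fM_{\textrm{nb}} = \{0\}$, a degenerate case one can dismiss at the outset).

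I do not expect a serious obstacle here: the only thing to be mildly careful about is that $\mathrm{sp}(A)$ was defined via invertibility in $\fM_{\textrm{nb}}$ rather than in $\fM$, so the openness statement is really the openness of the resolvent set of $B$ as an element of the Banach algebra $\fM_{\textrm{nb}}$ — a completely standard fact — and one just has to phrase the translation $A \afrdiff \lambda I \leftrightarrow B \afrdiff (\lambda-\lambda_0)I$ cleanly. If desired, one could instead invoke Proposition \ref{prop:spec_inv} to reduce to the concrete case $\fM = \afr$, $\fM_{\textrm{nb}} = \mathscr{R}$, but that is unnecessary: the Banach-algebra argument is intrinsic and needs nothing beyond the norm structure on $\fM_{\textrm{nb}}$ already described in the introduction. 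Hence the complement of $\mathrm{sp}(A)$ is open and $\mathrm{sp}(A)$ is closed in $\C$.
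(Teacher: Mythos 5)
Your argument is correct and is essentially the paper's own proof: both factor $A \afrdiff \lambda I$ as $\bigl(I \afrdiff (\lambda - \lambda_0)C\bigr) \afrprod (A \afrdiff \lambda_0 I)$ with $C$ the bounded inverse of $A \afrdiff \lambda_0 I$, and invert the first factor by a Neumann series in the von Neumann algebra $\fM_{\textrm{nb}}$ when $|\lambda - \lambda_0|\,\|C\| < 1$. Your write-up is in fact slightly more careful than the paper's (which states the radius condition as $|\alpha-\lambda| < \|B\|$ rather than $< \|B\|^{-1}$), so nothing further is needed.
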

\begin{proof}
Let $\lambda \notin \mathrm{sp} (A)$ so that $B := (A \afrdiff \lambda I)^{-1} \in \fM _{\textrm{nb}}$. For $\alpha \in \C$, we have $A \afrdiff \alpha I = \big( I \afrdiff (\alpha -\lambda)(A \afrdiff \lambda I)^{-1} \big) \afrprod (A \afrdiff \lambda I) = (I \afrdiff (\alpha -\lambda)B) \afrprod (A \afrdiff \lambda I)$. If $|\alpha - \lambda | < \|B\|$, then $(I \afrdiff (\alpha -\lambda)B)$ is invertible in $\fM$ with bounded inverse and hence $(A \afrdiff \alpha I)$ has an inverse in $\fM _{\mathrm{nb}}$. Thus $\alpha \notin \mathrm{sp}(A)$. We conclude that $\C \backslash \mathrm{sp}(A)$ is open or equivalently, $\mathrm{sp}(A)$ is closed.
\end{proof}

\begin{prop}
\textsl{
Let $\mathscr{R}$ be a finite von Neumann algebra and $A$ be a normal element in $\afr$. Then there is a smallest von Neumann subalgebra $\mathscr{A}$ of $\mathscr{R}$ such that $A \in \mathscr{A}_{\textrm{aff}}$. Furthermore, $\mathscr{A}$ is abelian.
}
\end{prop}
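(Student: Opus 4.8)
The plan is to construct $\mathscr{A}$ explicitly as the von Neumann subalgebra of $\mathscr{R}$ generated by the spectral projections of $A$, and then to verify separately (a) that $A$ is affiliated with $\mathscr{A}$, and (b) that $\mathscr{A}$ is contained in every von Neumann subalgebra of $\mathscr{R}$ with respect to which $A$ is affiliated. Item (b) yields at once both the minimality and the uniqueness of such a smallest algebra, so that $\mathscr{A}$ is indeed \emph{the} smallest one.

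Since $A$ is a closed densely-defined normal operator on $\mathscr{H}$ affiliated with $\mathscr{R}$, the spectral theorem for unbounded normal operators (see \cite[\S 5.6]{kadison-ringrose1}) provides a unique projection-valued Borel measure $E_A$ on $\mathbb{C}$ with $A=\int_{\mathbb{C}}\lambda\,dE_A(\lambda)$; the projections $E_A(S)$ mutually commute, and they lie in $\mathscr{R}$ (for any unitary $U$ in $\mathscr{R}'$ we have $UAU^{*}=A$, so $U E_A(\cdot) U^{*}$ is a spectral resolution of $A$, hence equals $E_A(\cdot)$ by uniqueness, so $E_A(S)\in(\mathscr{R}')'=\mathscr{R}$). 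I would then let $\mathscr{A}$ be the von Neumann subalgebra of $\mathscr{R}$ generated by $\{E_A(S):S\subseteq\mathbb{C}\text{ Borel}\}$. A von Neumann algebra generated by a commuting family of projections is abelian, so $\mathscr{A}$ is abelian; and since $\mathscr{R}$ is finite, so is its von Neumann subalgebra $\mathscr{A}$, whence $\mathscr{A}_{\textrm{aff}}$ is a Murray-von Neumann algebra. Put $P_n:=E_A(\{\lambda:|\lambda|\le n\})$, so that $P_n\uparrow I$ and $AP_n=P_nA\in\mathscr{A}$ (being the bounded Borel function $\lambda\mapsto\lambda\,\mathbbm{1}_{\{|\lambda|\le n\}}$ of $A$).

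To show $A\in\mathscr{A}_{\textrm{aff}}$, I would check that $A$ is affiliated with $\mathscr{A}$. If $U$ is a unitary operator in $\mathscr{A}'$, then $U$ commutes with each $P_n$ and with each $AP_n$, so on the core $\bigcup_n P_n\mathscr{H}$ of $A$ we get $UAx=U(AP_n)x=(AP_n)(Ux)=A(Ux)$, using $Ux\in P_n\mathscr{H}$. As $A$ is closed and $U$ is bounded, this gives $UA=AU$ and $U\mathscr{D}(A)=\mathscr{D}(A)$, i.e. $A$ is affiliated with $\mathscr{A}$. (Equivalently, $\{AP_n\}$ is Cauchy in measure by Corollary \ref{cor:meas_cont_of_sup_conv}, so by Proposition \ref{prop:compatibility_subalgebra} and Theorem \ref{thm:aff_op_complete} it converges to an element of $\mathscr{A}_{\textrm{aff}}$ whose image in $\afr$ is forced to be $A$.)

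For minimality, let $\mathscr{B}$ be any von Neumann subalgebra of $\mathscr{R}$ with $A\in\mathscr{B}_{\textrm{aff}}$. Running the argument of the second paragraph with $\mathscr{B}$ in place of $\mathscr{R}$: for every unitary $U$ in $\mathscr{B}'$ we have $UAU^{*}=A$, so $U E_A(\cdot) U^{*}=E_A(\cdot)$ by uniqueness of the spectral measure, and hence $E_A(S)\in(\mathscr{B}')'=\mathscr{B}$ for every Borel $S$. Therefore $\mathscr{A}\subseteq\mathscr{B}$, so $\mathscr{A}$ is the smallest von Neumann subalgebra of $\mathscr{R}$ with $A$ affiliated, and it is abelian. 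The only point requiring care is the correct invocation of the unbounded spectral calculus --- existence and, crucially, uniqueness of the spectral measure of a normal affiliated operator, together with its equivariance under unitary conjugation; these are the standard facts underlying the Borel function calculus, and everything else is routine bookkeeping. (If one wishes to sidestep the unbounded spectral theorem, one can instead generate $\mathscr{A}$ by the bounded normal operators $AP_n\in\mathscr{R}$ together with the $P_n$, at the cost of arguing that any $\mathscr{B}$ with $A$ affiliated must contain each $P_n$ and each $AP_n$, which again reduces to a uniqueness statement for the self-adjoint spectral calculus.)
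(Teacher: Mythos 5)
Your proof is correct, but it takes a genuinely different route from the paper. The paper argues intrinsically: it takes $\mathfrak{A}$ to be the $\mathfrak{m}$-closure in $\afr$ of the $*$-algebra of polynomials in $A, A^*$, observes that this is by construction the smallest Murray-von Neumann subalgebra of $\afr$ containing $A$ and is abelian (normality of $A$ plus continuity of multiplication in the $\mathfrak{m}$-topology), and then invokes the correspondence between Murray-von Neumann subalgebras and von Neumann subalgebras (Remark \ref{rmrk:mvn_subalgebra}, resting on Corollary \ref{cor:vn_subalg} and Proposition \ref{prop:compatibility_subalgebra}) to produce $\mathscr{A}$ with $\mathfrak{A} \cong \mathscr{A}_{\textrm{aff}}$; minimality among von Neumann subalgebras transfers through that correspondence. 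You instead work spatially: you build $\mathscr{A}$ as the von Neumann algebra generated by the spectral projections of $A$, using the spectral theorem for unbounded normal operators, check affiliation of $A$ with $\mathscr{A}$ on the core $\bigcup_n P_n\mathscr{H}$, and get minimality from uniqueness of the spectral measure together with the double commutant theorem. Both arguments are sound, and yours has the virtue of identifying the smallest algebra concretely as the one generated by the spectral projections; there is no circularity, since the classical spectral theorem is external standard material (the paper itself cites \cite[\S 5.6]{kadison-ringrose1}). What you give up is exactly what the paper's approach is designed to buy: your proof relies on the Hilbert-space representation, domain and core bookkeeping, and the unbounded spectral calculus, whereas the paper's proof stays within the abstract ordered topological $*$-algebra framework it has just built and uses only the subalgebra correspondence already established. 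Two small points of care in your write-up: the parenthetical claim that $\{AP_n\}$ is Cauchy in measure does not follow from Corollary \ref{cor:meas_cont_of_sup_conv} as stated (that corollary concerns increasing nets of self-adjoint operators, and $AP_n$ need not be self-adjoint); a direct estimate as in the proof of Theorem \ref{thm:aff_op_complete}(ii), using $(AP_m - AP_n)P_n = 0$ for $m \ge n$, is the right substitute. Also, the assertion $U\mathscr{D}(A)=\mathscr{D}(A)$ should be obtained by running the closedness argument for both $U$ and $U^*$, which you implicitly do but should state.
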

\begin{proof}
Let $\fA$ be the Murray-von Neumann subalgebra of $\afr$ given by the $\mathfrak{m}$-closure of the $*$-subalgebra of $\fM$ consisting of polynomials in $A, A^*$. Clearly $\fA$ is the smallest Murray-von Neumann subalgebra of $\afr$ which contains $A$. Furthermore, $\fA$ is abelian as $A$ is normal. By Remark \ref{rmrk:mvn_subalgebra}, there is a von Neumann subalgebra $\mathscr{A}$  of $\mathscr{R}$ such that $\fA  \cong \mathscr{A}_{\textrm{aff}}$, and $\mathscr{A}$ is the smallest von Neumann subalgebra of $\mathscr{R}$ such that $A \in \mathscr{A}_{\textrm{aff}}$. Since $\fA$ is abelian, $\mathscr{A}$ being a subalgebra is also abelian.
\end{proof}

For a subset $S$ of $\C$, we denote the set of {\it bounded} Borel functions on $S$ by $\mathscr{B}_b(S)$, and the set of Borel functions on $S$ by $\mathscr{B}_u(S)$.

\begin{lem}
\textsl{
Let $X$ be a locally compact Hausdorff space with a positive Radon measure $\mu$. For $g \in L^0(X; \mu)$, we have $\mu \big( g^{-1}(\C \backslash \mathrm{sp}(g) ) \big) = 0$. Hence the spectrum of a normal element in a Murray-von Neumann algebra is non-empty.
}
\end{lem}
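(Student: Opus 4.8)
The plan is to first make the open set $\C \setminus \mathrm{sp}(g)$ explicit in measure-theoretic terms. Viewing $g$ as an element of $L^0(X;\mu)$, the element $g \afrdiff \lambda I$ is just the function $g - \lambda$, and the underlying finite von Neumann algebra is $L^\infty(X;\mu)$. I would observe that $g - \lambda$ has a (two-sided) inverse in $L^\infty(X;\mu)$ precisely when $|g-\lambda|$ is essentially bounded away from $0$: if $\mu(\{g=\lambda\})>0$ then $g-\lambda$ is a zero-divisor and has no inverse even in $L^0$; otherwise $1/(g-\lambda)$ is the unique candidate in $L^0$ and it lies in $L^\infty$ exactly under that condition. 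Hence
$$\lambda \notin \mathrm{sp}(g) \iff \exists\, \varepsilon > 0 \text{ with } \mu\big(\{x \in X : |g(x) - \lambda| < \varepsilon\}\big) = 0 ,$$
i.e. $\mathrm{sp}(g)$ is precisely the essential range of $g$.

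Next, set $U := \C \setminus \mathrm{sp}(g)$, which is open (the spectrum of an element of a Murray-von Neumann algebra being closed). For each $\lambda \in U$ I would fix, by the equivalence above, an open disc $D_\lambda := \{ z \in \C : |z - \lambda| < \varepsilon_\lambda \}$ with $\mu\big(g^{-1}(D_\lambda)\big) = 0$. The family $\{ D_\lambda \cap U : \lambda \in U \}$ is an open cover of the subspace $U$ of $\C$; since $\C$, hence $U$, is second countable, $U$ is Lindel\"of, so this cover has a countable subcover, yielding a sequence $(\lambda_n)_{n \in \N}$ in $U$ with $U \subseteq \bigcup_{n \in \N} D_{\lambda_n}$. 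As $g$ is $\mu$-measurable and $U$ is Borel, $g^{-1}(U)$ is $\mu$-measurable, and by countable subadditivity $\mu\big(g^{-1}(U)\big) \le \sum_{n \in \N} \mu\big(g^{-1}(D_{\lambda_n})\big) = 0$, which is the first assertion. The one genuinely delicate point is this passage to a countable subcover: an uncountable union of $\mu$-null sets need not be $\mu$-null, so the second-countability of $\C$ is doing the essential work, while the rest of the argument is routine bookkeeping.

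For the second assertion, let $A$ be a normal element of a Murray-von Neumann algebra $\fM$. I would take $\fA$ to be the $\mathfrak{m}$-closure in $\fM$ of the $*$-algebra of polynomials in $A$ and $A^*$; this is an abelian Murray-von Neumann subalgebra of $\fM$ containing $A$, so by Remark \ref{rmrk:abelian_mvn} there is an isomorphism $\fA \cong L^0(X;\mu)$ carrying $A$ to some $g \in L^0(X;\mu)$, and since $\fA$ contains $I \ne 0$ we have $\mu(X) > 0$. If $\mathrm{sp}(g)$ were empty, the first part, applied with $\C \setminus \mathrm{sp}(g) = \C$, would force $\mu(X) = \mu\big(g^{-1}(\C)\big) = 0$, a contradiction; hence $\mathrm{sp}(g) \ne \emptyset$. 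Finally, by Proposition \ref{prop:spec_inv} the spectrum is insensitive to the ambient Murray-von Neumann algebra, so $\mathrm{sp}^{\fM}(A) = \mathrm{sp}^{\fA}(A) = \mathrm{sp}(g) \ne \emptyset$.
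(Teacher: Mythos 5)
Your proof is correct and follows essentially the same route as the paper: identify $\mathrm{sp}(g)$ with the essential range so that each $\lambda \notin \mathrm{sp}(g)$ has a disc with $\mu$-null preimage, pass to a countable subcover of the open complement, and deduce non-emptiness by embedding a normal element into its abelian Murray-von Neumann subalgebra $\cong L^0(X;\mu)$ with $\mu(X) \neq 0$. The only difference is that you spell out details the paper leaves implicit (the Lindel\"of extraction and the appeal to Proposition \ref{prop:spec_inv}), which is harmless.
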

\begin{proof}
Let $\lambda \notin \mathrm{sp}(g)$ so that $\frac{1}{g - \lambda \mathbbm{1} _X} \in L^{\infty}(X; \mu)$. There is an $\varepsilon > 0$ such that $|g - \lambda \mathbbm{1} _X| \ge \varepsilon \mathbbm{1}_X$. Thus for the open set $O := \{ z \in \C : |z - \lambda | < \frac{\varepsilon}{2}\} \subseteq \C$, we have $\mu(g^{-1}(O)) = 0$. We can find a countable collection of such $O$'s that cover the open set $\C \backslash \mathrm{sp}(g)$. Thus $\mu \big( g^{-1}(\C \backslash \mathrm{sp}(g) ) \big) = 0$. Since $\mu(X) \ne 0$, we conclude that $\sigma(g)$ is non-empty.
\end{proof}

\begin{prop}
\label{prop:borel-fun-calc}
\textsl{
Let $X$ be a locally compact Hausdorff space and with a positive Radon measure $\mu$. Let $g \in L^0(X; \mu)$ and without loss of generality, assume that $g$ takes values in $\mathrm{sp}(g)$ (after modifying $g$ on a $\mu$-null set if necessary). There is a unique $\sigma$-normal $*$-homomorphism $\Psi _B : \mathscr{B}_u \big(  \mathrm{sp} (g) \big) \to L^0(X; \mu)$ mapping the constant function $1$ to $\mathbbm{1} _X$ and the identity transformation $\iota$ on $\mathrm{sp}(g)$ ($\iota(z) = z$) onto $g$. More specifically, the mapping $\Psi _B$ is given by $\Psi _B(f) = f \circ g$.
}
\end{prop}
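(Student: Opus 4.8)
The plan is to verify that the prescribed map is $\Psi_B(f)=f\circ g$ and then to prove uniqueness by a functional monotone-class argument based on bounded ``resolvent-type'' functions of $\iota$. For existence, I would first check well-definedness: $g$ pulls back Borel subsets of $\mathrm{sp}(g)$ to $\mu$-measurable subsets of $X$ (from the Lusin-type characterization of $\mu$-measurability for Radon measures), so for a simple Borel function $f=\sum c_i\mathbbm{1}_{B_i}$ the function $f\circ g=\sum c_i\mathbbm{1}_{g^{-1}(B_i)}$ is $\mu$-measurable; passing to pointwise limits (using the same stability of $\mu$-measurability under pointwise limits invoked in the proof of Proposition~\ref{prop:complete_L0}) gives $f\circ g\in L^0(X;\mu)$ for every $f\in\mathscr{B}_u(\mathrm{sp}(g))$. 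Since $f\mapsto f\circ g$ is defined pointwise, the identities $(f_1+f_2)\circ g=f_1\circ g+f_2\circ g$, $(f_1f_2)\circ g=(f_1\circ g)(f_2\circ g)$, $\overline f\circ g=\overline{f\circ g}$, $1\circ g=\mathbbm{1}_X$ and $\iota\circ g=g$ are immediate, so $\Psi_B$ is a unital $*$-homomorphism with the required values.

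Next I would establish $\sigma$-normality of $\Psi_B$. Given an increasing sequence $\{f_n\}$ of real Borel functions on $\mathrm{sp}(g)$ with a real Borel upper bound $f$, set $f_\infty:=\sup_n f_n$; then $f_n\circ g\uparrow f_\infty\circ g$ pointwise on $X$, and since the order on $L^0(X;\mu)$ is the $\mu$-a.e.\ pointwise order (its positive cone being the essentially positive functions), $f_\infty\circ g$ is the least upper bound of $\{f_n\circ g\}$; this convergence is moreover in the topology of local convergence in measure, as Egorov's theorem on each compact $K\subseteq X$ (which has finite $\mu$-measure) shows, consistently with the principle of Corollary~\ref{cor:meas_cont_of_sup_conv}. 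Hence $\Psi_B$ is a $\sigma$-normal homomorphism with $\Psi_B(1)=\mathbbm{1}_X$ and $\Psi_B(\iota)=g$.

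For uniqueness, let $\Psi'$ be any $\sigma$-normal unital $*$-homomorphism with $\Psi'(1)=\mathbbm{1}_X$ and $\Psi'(\iota)=g$, and put $\mathcal{M}:=\{f\in\mathscr{B}_u(\mathrm{sp}(g)):\Psi'(f)=\Psi_B(f)\}$. Because both maps are homomorphisms, $\mathcal{M}$ is a unital $*$-subalgebra of $\mathscr{B}_u(\mathrm{sp}(g))$ containing $\iota$, hence every polynomial in $\iota,\overline\iota$, in particular $h:=1+\iota\overline\iota$. Since $\Psi'(h)=\mathbbm{1}_X+g\overline g=\Psi_B(h)$ is $\ge\mathbbm{1}_X$ and therefore invertible in $L^0(X;\mu)$, applying $\Psi'$ and $\Psi_B$ to $h\cdot h^{-1}=1$ forces the bounded continuous functions $b:=h^{-1}$ and $c:=\iota\, h^{-1}$ (and $\overline c$) to lie in $\mathcal{M}$. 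As $c(z)/b(z)=z$, the functions $b,c$ separate the points of $\mathrm{sp}(g)$, so the $\sigma$-algebra they generate on the second-countable space $\mathrm{sp}(g)$ is its Borel $\sigma$-algebra. Now $\mathcal{M}_b:=\mathcal{M}\cap\mathscr{B}_b(\mathrm{sp}(g))$ is a conjugation-closed linear subspace containing $\mathbbm{1}$ and the multiplicative class generated by $\{b,c,\overline c\}$, and it is closed under bounded monotone sequential limits: if $f_n\in\mathcal{M}_b$ with $f_n\uparrow f$ bounded, then $\sigma$-normality of both $\Psi'$ and $\Psi_B$ gives $\Psi'(f)=\sup_n\Psi'(f_n)=\sup_n\Psi_B(f_n)=\Psi_B(f)$. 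By the functional form of the monotone-class theorem, $\mathcal{M}_b=\mathscr{B}_b(\mathrm{sp}(g))$. Finally, writing an arbitrary $f\in\mathscr{B}_u(\mathrm{sp}(g))$ as a linear combination of four nonnegative real Borel functions, each such $f^{(j)}$ is the increasing pointwise limit of $\min(f^{(j)},n)\in\mathcal{M}_b$ with upper bound $f^{(j)}$, so $\sigma$-normality gives $f^{(j)}\in\mathcal{M}$ and hence $f\in\mathcal{M}$. Thus $\Psi'=\Psi_B$.

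The main obstacle is exactly the uniqueness step: polynomials in $\iota,\overline\iota$ are not uniformly dense in the bounded continuous functions on $\mathrm{sp}(g)$ when $\mathrm{sp}(g)$ is unbounded, so one cannot pass from polynomials to Borel functions by a norm-approximation; replacing them by the bounded functions $b=(1+\iota\overline\iota)^{-1}$ and $c=\iota(1+\iota\overline\iota)^{-1}$, which always make sense because $1+\iota\overline\iota$ is invertible in $\mathscr{B}_u(\mathrm{sp}(g))$, and then running the functional monotone-class theorem with $\sigma$-normality is what rescues the argument (and also covers the degenerate case $\mathrm{sp}(g)=\C$, where no resolvent $(\iota-\lambda)^{-1}$ is available). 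A secondary, purely technical point is justifying that $f\circ g$ is $\mu$-measurable in the Bourbaki sense used for Radon measures and that monotone convergence of Borel functions of $g$ is convergence in the local-measure topology.
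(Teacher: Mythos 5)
Your proof is correct and follows essentially the same route as the paper's: the map is $f \mapsto f\circ g$, $\sigma$-normality comes from the fact that the order on $L^0(X;\mu)$ is the a.e.\ pointwise order, and uniqueness hinges on exactly the same two bounded functions $(1+|\iota|^2)^{-1}$ and $\iota(1+|\iota|^2)^{-1}$, whose images are forced by multiplicativity together with the invertibility of $\mathbbm{1}_X + g\bar{g}$. The only divergence is in the bootstrap from these two functions to all of $\mathscr{B}_u(\mathrm{sp}(g))$: you invoke the functional monotone class theorem (using that $\iota = c/b$ is $\sigma(b,c)$-measurable) and then truncate, whereas the paper passes through $C_0(\mathrm{sp}(g))$ via Stone--Weierstrass, then indicators of open sets via Urysohn, then a $\sigma$-algebra argument and the factorization $f = f(1+|f|^2)^{-1}\cdot(1+|f|^2)$ for unbounded $f$ --- both bootstraps work.
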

\begin{proof}
Since $\sigma(g)$ is closed, note that for a Borel measurable function $f$ on $\mathrm{sp}(g)$, $f \circ g$ is a $\mu$-measurable function on $X$, that is, $f \circ g  \in L^0(X; \mu)$. First we show that the mapping $\Psi _B(f) = f \circ g$ is in fact a $\sigma$-normal $*$-homomorphism between $ \mathscr{B}_u \big(  \mathrm{sp} (g) \big)$ and $L^0(X; \mu)$. It is easy to verify that $\Psi_B$ is a $*$-homomorphism which sends the constant function $1$ to $\mathbbm{1} _X$ and the identity transformation $\iota$ on $\mathrm{sp}(g)$ to $g$. The key part to prove is that $\Psi _B$ is $\sigma$-normal. Let $\{ f_n \}$ be an increasing sequence of Borel functions on $\mathrm{sp}(g)$ tending pointwise to the Borel function $f$. Then $\{ f_n \circ g \}$ is an increasing sequence of functions in $L^0(X; \mu)$ bounded above by $f \circ g$. By Proposition \ref{prop:lub_mvna}, the increasing sequence $\{ f_n \circ g \}$ has a least upper bound $h$ in $L^0(X; \mu)$. Let $$X_n := \{ x \in X : f_n(g(x)) \le h(x) \}, X_0 := \{ x \in X : h(x) \le f(g(x)) \}.$$
For $ x \in \bigcap _{k=0}^{\infty} X_k$, clearly $f(g(x)) = h(x)$. Since $X \backslash X_k$ is a $\mu$-null set for $k \in \N \cup \{ 0 \}$, we conclude that $h = f \circ g$ $\mu$-almost everywhere and thus $h = f \circ g$ in $L^0(X; \mu)$. Hence $\Psi _B$ is $\sigma$-normal.

We next prove the uniqueness of the mapping. Let $\Phi : \mathscr{B}_u \big(  \mathrm{sp} (g) \big) \to L^0(X; \mu)$ be another mapping with the prescribed properties. Recall that  $ \mathscr{B}_b \big(  \mathrm{sp} (g) \big)$ is a $C^*$-algebra with complex conjugation as involution and the sup-norm. From the $\sigma$-normality of $\Psi _B$ and $\Phi$, we observe that both of these mappings are adjoint-preserving and order-preserving. As $\Psi _B(\mathbbm{1} _{\mathrm{sp}(g)}) = \Phi(\mathbbm{1} _{\mathrm{sp}(g)}) = I$, we note that the mappings $\Psi _B$ and $\Phi$ take $ \mathscr{B}_b \big(  \mathrm{sp} (g) \big)$ into $L^{\infty}(X; \mu)$, and are norm-decreasing. Since $\Phi(\iota) = \Psi_B (\iota)$, we have $$ \Psi _B \big( (1 + |\iota|^2)^{-1} \big) = \Psi _B (1 + |\iota|^2)^{-1} = \Phi (1 + |\iota|^2)^{-1} = \Phi \big( (1 + |\iota|^2)^{-1} \big).$$
Similarly, $$\Psi _B \big( \iota (1 + |\iota|^2)^{-1} \big) = \Phi \big( \iota (1 + |\iota|^2)^{-1} \big).$$
By the Stone-Weierstrass theorem, the $*$-subalgebra of $\mathscr{B}_b \big(  \mathrm{sp} (g) \big)$ generated by $\iota (1 + |\iota|^2)^{-1}, (1 + |\iota|^2)^{-1}$ and the constant function $1$ is norm-dense in $C_0(\mathrm{sp}(g))$. Thus $\Psi_B(f) = \Phi(f)$ for all $f \in C_0(\mathrm{sp}(g))$. 

For an open subset $O \subseteq \mathrm{sp}(g)$, there is an increasing sequence of positive continuous functions in $C_0(\mathrm{sp}(g))$ converging pointwise to $\mathbbm{1} _{O}$. (Use Urysohn's lemma appropriately.) From the $\sigma$-normality of $\Psi _B$ and $\Phi$, we see that $\Psi _B(\mathbbm{1} _{O}) = \Phi(\mathbbm{1} _{O})$. Let $\mathscr{F}$ be the family of Borel sets $S$ of $\mathrm{sp}(A)$ such that $\Psi _B(\mathbbm{1} _{S}) = \Phi(\mathbbm{1} _{S})$. We have shown that all open subsets of $\mathrm{sp}(g)$ are in $\mathscr{F}$. It is easy to see that if $S \in \mathscr{F}$, then $\mathrm{sp}(g) \backslash S \in \mathscr{F}$. As $\Psi _B$ and $\Phi$ are $\sigma$-normal, we note that $\mathscr{F}$ is a $\mathrm{sp}$-algebra. Thus $\mathscr{F}$ is in fact the full family of Borel sets of $\mathrm{sp}(g)$. Since every function $f \in \mathscr{B}_b \big(  \mathrm{sp} (g) \big)$ is a norm-limit of step functions, we conclude that $\Psi _B (f) = \Phi(f)$ for $f \in \mathscr{B}_b \big(  \mathrm{sp} (g) \big)$. Note that $(1 + |f|^2)^{-1} , f(1 + |f|^2)^{-1} \in \mathscr{B}_b(\mathrm{sp}(g))$ for $f \in \mathscr{B}_u(\mathrm{sp}(g))$. Thus for all $f \in \mathscr{B}_u(\mathrm{sp}(g))$, we have
\begin{align*}
 \Psi _B(f) &= \Psi _B(f(1+|f|^2)^{-1}(1+|f|^2)) = \Psi _B(f(1+|f|^2)^{-1})\, \Psi _B(1+|f|^2)\\
 &= \Phi(f(1+|f|^2)^{-1})\, \Phi(1+|f|^2) = \Phi(f(1+|f|^2)^{-1}(1+|f|^2))\\
 &= \Phi(f).
\end{align*}
\end{proof}

\begin{thm}[Borel function calculus]
\label{thm:borel-fun-calc}
\textsl{
Let $\fM$ be a Murray-von Neumann algebra and let $A$ be a normal element in $\fM$. Then there is a unique $\sigma$-normal $*$-homomorphism $f \mapsto f(A): \mathscr{B}_u \big( \mathrm{sp} (A) \big) \to \fM$ (called the Borel function calculus) mapping the constant function $1$ to $I$ and the identity transformation $\iota$ on $\mathrm{sp}(A)$ onto $A$. Furthermore, we have the following:
\begin{itemize}
    \item[(i)] Let $\fA$ be the smallest Murray-von Neumann subalgebra of $\fM$ containing $A$. Then for every Borel function $f$ on $\mathrm{sp}(A)$, $f(A)$ is in $\fA$.
    \item[(ii)] If $A$ is bounded, the restriction of the Borel function calculus to $C(\mathrm{sp}(A))$ gives the continuous function calculus. 
\end{itemize}
}
\end{thm}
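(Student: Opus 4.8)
The plan is to reduce to the abelian situation treated in Proposition~\ref{prop:borel-fun-calc} and then obtain uniqueness by transcribing the $C^*$-algebraic and monotone-class argument given there. Since every Murray-von Neumann algebra is isomorphic to $\afr$ for some finite von Neumann algebra $\mathscr{R}$ (Theorem~\ref{thm:aff_op_complete}), we may assume $\fM = \afr$. Let $\fA$ be the smallest Murray-von Neumann subalgebra of $\fM$ containing $A$, which is abelian (as in the preceding proposition, $A$ being normal). By Remark~\ref{rmrk:abelian_mvn} there are a locally compact Hausdorff space $X$, a positive Radon measure $\mu$, and an isomorphism $\Theta : \fA \to L^0(X;\mu)$ of unital ordered complex topological $*$-algebras; put $g := \Theta(A)$. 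By Proposition~\ref{prop:spec_inv}(i), $\mathrm{sp}^{\fM}(A) = \mathrm{sp}^{\fA}(A)$, and the latter equals $\mathrm{sp}(g)$ because $\Theta$ is an isomorphism; as in Proposition~\ref{prop:borel-fun-calc} we may assume $g$ takes values in $\mathrm{sp}(g)$. That proposition then furnishes the $\sigma$-normal homomorphism $\Psi_B : \mathscr{B}_u(\mathrm{sp}(A)) \to L^0(X;\mu)$, $f \mapsto f\circ g$, fixing the constant $1$ and sending $\iota$ to $g$, and I would set $f(A) := \Theta^{-1}(f\circ g)$. This map is a $*$-homomorphism sending $1$ to $I$ and $\iota$ to $A$, and its range lies in $\fA$, which gives (i).

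Next I would check that $f \mapsto f(A)$ is $\sigma$-normal as a map into $\fM$, not merely into $\fA$. The one point requiring care is that the inclusion $\fA \hookrightarrow \fM$ carries suprema of bounded increasing sequences of self-adjoint elements computed in $\fA$ to the corresponding suprema in $\fM$; this follows from Proposition~\ref{prop:lub_mvna}, which identifies such a supremum with the $\mathfrak{m}$-limit of the sequence, together with Proposition~\ref{prop:compatibility_subalgebra} (the $\mathfrak{m}$-topology on $\fA$ is the subspace topology) and the fact that $\fA$ is $\mathfrak{m}$-closed in $\fM$. Composing with the $\sigma$-normality of $\Psi_B$ and of the order isomorphism $\Theta^{-1}$ then yields the $\sigma$-normality of $f \mapsto f(A)$.

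For uniqueness, let $\Phi : \mathscr{B}_u(\mathrm{sp}(A)) \to \fM$ be another $\sigma$-normal homomorphism with $\Phi(1) = I$ and $\Phi(\iota) = A$. The argument is the uniqueness half of Proposition~\ref{prop:borel-fun-calc} with $\fM$ (and $\fM_{\textrm{nb}}$) in place of $L^0(X;\mu)$ (and $L^\infty(X;\mu)$): $\sigma$-normality forces $\Phi$ to be adjoint- and order-preserving, so $0 \le f \le \|f\|_\infty 1$ for $0 \le f \in \mathscr{B}_b(\mathrm{sp}(A))$ gives $0 \le \Phi(f) \le \|f\|_\infty I$, whence $\Phi$ maps $\mathscr{B}_b(\mathrm{sp}(A))$ into $\fM_{\textrm{nb}}$ and is norm-decreasing there; Stone--Weierstrass applied to $(1 + |\iota|^2)^{-1}$ and $\iota(1 + |\iota|^2)^{-1}$ pins down $\Phi$ on $C_0(\mathrm{sp}(A))$; Urysohn's lemma with $\sigma$-normality extends this to indicators of open sets, a monotone-class argument to indicators of all Borel sets, norm approximation by step functions to $\mathscr{B}_b(\mathrm{sp}(A))$, and the identity $f = f(1 + |f|^2)^{-1}(1 + |f|^2)$ to all of $\mathscr{B}_u(\mathrm{sp}(A))$. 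Finally, for (ii): when $A$ is bounded, $\mathrm{sp}(A)$ is compact, and the boundedness estimate above shows $f \mapsto f(A)$ restricts to a unital $*$-homomorphism $C(\mathrm{sp}(A)) \to \fM_{\textrm{nb}}$ carrying $\iota$ to $A$; by uniqueness of the continuous function calculus it coincides with it.

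I expect the main obstacle to be the bookkeeping of the reduction rather than any isolated deep step: one must confirm that the abelian model $\Theta$ is simultaneously compatible with the spectrum (via Proposition~\ref{prop:spec_inv}) and with the topological-order structure, and that the embedding $\fA \hookrightarrow \fM$ is $\sigma$-normal. Granting these, existence, property (i), and uniqueness all follow by transporting Proposition~\ref{prop:borel-fun-calc} essentially verbatim.
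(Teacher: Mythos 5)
Your proposal is correct and follows essentially the same route as the paper: reduce to the abelian Murray-von Neumann subalgebra $\fA \cong L^0(X;\mu)$ generated by $A$, invoke Proposition \ref{prop:borel-fun-calc} for existence (giving (i)), and rerun its uniqueness argument inside $\fM$ using the boundedness of $(I \afrsum A^* \afrprod A)^{-1}$, $A \afrprod (I \afrsum A^* \afrprod A)^{-1}$ and the l.u.b.\ property, with (ii) following from the norm-decreasing restriction to bounded Borel functions. Your explicit verification that the inclusion $\fA \hookrightarrow \fM$ respects suprema (via Propositions \ref{prop:lub_mvna} and \ref{prop:compatibility_subalgebra}) makes precise a point the paper handles only by alluding to the l.u.b.\ property, but it is the same argument.
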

\begin{proof}
Let $\fA$ be the smallest Murray-von Neumann subalgebra of $\fM$ containing $A$. From Remark \ref{rmrk:abelian_mvn}, we have a locally compact Hausdorff space $X$ with a positive Radon measure $\mu$ such that $\fA \cong L^0(X; \mu)$. By Proposition \ref{prop:borel-fun-calc}, there is a $\sigma$-normal $*$-homomorphism $\mathscr{B}_u \big( \mathrm{sp} (A) \big) \to L^0(X; \mu)$ mapping the constant function $1$ to $I$ and the identity transformation $\iota$ on $\mathrm{sp}(A)$ onto $A$. Bearing in mind that the bounded operators $(I \afrsum A^* \afrprod A)^{-1}, A \afrprod (I \afrsum A^* \afrprod A)^{-1}$ lie in $\fA _{\textrm{nb}} \cong L^{\infty}(X; \mu)$ (see Proposition \ref{prop:spec_inv}) which is norm-closed and the fact that $\fA$ is monotone complete, we may follow the argument used in Proposition \ref{prop:borel-fun-calc} to conclude the uniqueness of the mapping implementing the Borel function calculus. This also shows that $f(A) \in \fA$ for all Borel functions $f$ on $\mathrm{sp}(A)$. Since the Borel function calculus restricts to a norm-decreasing map from the $C^*$-algebra $\mathscr{B}_b(\mathrm{sp}(A))$ to $\fA _{\mathrm{nb}}$, (ii) follows from norm-approximation using the polynomial function calculus.
\end{proof}

For a Borel function $f \in \mathscr{B}_u (\C)$ and a normal element $A$ of a Murray-von Neumann algebra, we define $f(A)$ as $\restr{f}{\mathrm{sp}(A)}(A)$. Note that if $f$ is a Borel function on $S$, a Borel subset of $\C$, then we may extend $f$ to the whole of $\C$ by defining it to be $0$ on $\C \backslash S$. Let $\C [z, w]$ denote the polynomial ring in two variables over the field of complex numbers.

\begin{cor}[Polynomial approximation]
\label{cor:complex-poly-approx}
\textsl{
Let $\fM$ be a Murray-von Neumann algebra and let $A$ be a normal element in $\fM$. For a Borel function $f$ on $\C$, there is a net of two-variable complex polynomials $\{ p_i \}$ in $\C[z, w]$ such that the net $\{ p_i(A, A^*) \}$ in $\fM$ converges in measure to $f(A)$.
}
\end{cor}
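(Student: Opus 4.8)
The plan is to push the whole problem into the abelian Murray--von Neumann algebra generated by $A$ and then invoke classical measure-theoretic approximation. Let $\fA$ be the smallest Murray--von Neumann subalgebra of $\fM$ containing $A$. It is abelian (since $A$ is normal), so by Remark~\ref{rmrk:abelian_mvn} there are a locally compact Hausdorff space $X$, a positive Radon measure $\mu$, and an isomorphism $\fA \cong L^0(X;\mu)$ of unital ordered complex topological $*$-algebras; let $g \in L^0(X;\mu)$ correspond to $A$, and (modifying $g$ on a $\mu$-null set) assume $g$ takes its values in $\mathrm{sp}(g)$, which equals $\mathrm{sp}(A)$ by Proposition~\ref{prop:spec_inv}. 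By Theorem~\ref{thm:borel-fun-calc}(i) we have $f(A) \in \fA$, and by the way the Borel function calculus is built in Theorem~\ref{thm:borel-fun-calc} (via Proposition~\ref{prop:borel-fun-calc}) the element $f(A)$ corresponds to $f \circ g$. Since the isomorphism is a $*$-homomorphism, $A^*$ corresponds to $\bar g$, and for $p \in \C[\iota,\bar\iota]$ the element $p(A, A^*)$ corresponds to $\check p \circ g$, where $\check p$ denotes the function $z \mapsto p(z,\bar z)$ on $\C$. Finally, by Proposition~\ref{prop:compatibility_subalgebra} the $\mathfrak{m}$-topology of $\fA$ is the subspace topology inherited from $\fM$, and under $\fA \cong L^0(X;\mu)$ it is the topology of local convergence in measure. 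So it is enough to show that $f \circ g$ lies in the closure of $\{\check p \circ g : p \in \C[\iota,\bar\iota]\}$ in $L^0(X;\mu)$: a point in the closure of a set is the limit of some net drawn from it, and that net, transported back through the isomorphism, is the desired $\{p_i(A,A^*)\}$.

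So fix a basic neighbourhood $\sO(C, \varepsilon, \delta)$ of $0$ with $C \subseteq X$ compact and $\varepsilon, \delta > 0$; I want $p \in \C[\iota,\bar\iota]$ with $(f - \check p) \circ g \in \sO(C, \varepsilon, \delta)$. Let $\nu := (g|_C)_*(\mu|_C)$ be the pushforward of $\mu|_C$ under $g$; since $\mu$ is Radon, $\mu(C) < \infty$, so $\nu$ is a finite Borel measure on $\C$ (concentrated on $\mathrm{sp}(g)$), and for every Borel function $h$ on $\C$ one has $\mu(\{x \in C : |h(g(x))| > \varepsilon\}) = \nu(\{z : |h(z)| > \varepsilon\})$. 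Because $\nu$ is finite and the closed disks $D_n := \{ z \in \C : |z| \le n\}$ increase to $\C$, I may pick a compact $K := D_n$ with $\nu(\C \setminus K) < \delta/2$. Now $\nu|_K$ is a finite (hence Radon) Borel measure on the compact metric space $K$, so Lusin's theorem provides $\phi \in C(K)$ with $\nu(\{z \in K : \phi(z) \ne f(z)\}) < \delta/2$, and the Stone--Weierstrass theorem provides $p \in \C[\iota,\bar\iota]$ with $\sup_{z \in K}|\check p(z) - \phi(z)| < \varepsilon$.

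Putting these together, $\{z \in K : |f(z) - \check p(z)| \ge \varepsilon\} \subseteq \{z \in K : \phi(z) \ne f(z)\}$, because off the latter set $f = \phi$ while $|\check p - \phi| < \varepsilon$ on all of $K$; hence
\[
\nu(\{z \in \C : |f(z) - \check p(z)| > \varepsilon\}) \le \nu(\C \setminus K) + \nu(\{z \in K : \phi(z) \ne f(z)\}) < \tfrac{\delta}{2} + \tfrac{\delta}{2} = \delta,
\]
and translating through the change of variables gives $\mu(\{x \in C : |(f - \check p)(g(x))| > \varepsilon\}) < \delta$, i.e. $(f - \check p) \circ g \in \sO(C,\varepsilon,\delta)$, as required. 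Letting $(C,\varepsilon,\delta)$ range over the directed family of such neighbourhoods yields the net. The step I expect to be the crux is the passage from uniform to ``local-in-measure'' approximation: since $\mathrm{sp}(A)$ need not be bounded, $f$ cannot be approximated by polynomials uniformly on the spectrum, and the point is that the $\mathfrak{m}$-topology only tests against compact subsets of $X$, whose images under $g$ carry finite --- hence tight --- measures on $\C$, so Lusin together with Stone--Weierstrass suffices on one compact piece at a time. A secondary point requiring care is verifying that the isomorphism $\fA \cong L^0(X;\mu)$ genuinely intertwines the Borel function calculus of $A$ with composition by $g$ and sends $p(A,A^*)$ to $\check p \circ g$.
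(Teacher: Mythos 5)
Your proof is correct, but it takes a genuinely more concrete route than the paper's. The paper's argument is essentially two lines: the smallest Murray--von Neumann subalgebra $\fA$ of $\fM$ containing $A$ is \emph{by construction} the $\mathfrak{m}$-closure of the unital $*$-algebra of polynomials $p(A,A^*)$, and Theorem \ref{thm:borel-fun-calc}(i) places $f(A)$ in $\fA$; membership in that closure is literally the existence of the desired approximating net, so nothing more is needed. You use the same two ingredients ($\fA$ and Theorem \ref{thm:borel-fun-calc}(i)) but never exploit the description of $\fA$ as a closure of polynomials; instead you pass to the model $\fA \cong L^0(X;\mu)$ of Remark \ref{rmrk:abelian_mvn}, identify $f(A)$ with $f\circ g$ and $p(A,A^*)$ with $\check p\circ g$, and prove density of $\{\check p\circ g\}$ by hand: push $\mu|_C$ forward to a finite Borel measure on $\C$, use tightness to reduce to a closed disk, then apply Lusin and Stone--Weierstrass. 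This is a valid and self-contained verification; the two points you flag as delicate --- that the $\mathfrak{m}$-topology of $\fA$ agrees with the subspace topology from $\fM$ (Proposition \ref{prop:compatibility_subalgebra} together with the discussion preceding the definition of Murray--von Neumann subalgebras), and that the isomorphism intertwines the Borel calculus with composition by $g$ (uniqueness in Proposition \ref{prop:borel-fun-calc} plus the order-preserving nature of the isomorphism) --- are exactly the ones that need care, and you handle them correctly. What your route buys is an explicit, quantitative approximation scheme, one that would work even if $\fA$ were given abstractly as the smallest Murray--von Neumann subalgebra rather than as a closure of polynomials; what it costs is extra machinery (the structure theorem for abelian Murray--von Neumann algebras, pushforward measures, Lusin's theorem), all of which the paper's one-line closure argument avoids. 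One cosmetic remark: the equality $\mathrm{sp}(g)=\mathrm{sp}(A)$ is invariance of the spectrum under an isomorphism of Murray--von Neumann algebras (which preserves the bounded parts), not literally Proposition \ref{prop:spec_inv}, which concerns subalgebras; but this is harmless, since your argument only uses $f$ as a Borel function on all of $\C$.
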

\begin{proof}
Let $\fA$ be the Murray-von Neumann subalgebra of $\fM$ given by the $\mathfrak{m}$-closure of the $*$-subalgebra of $\fM$ consisting of two-variable complex polynomials in $A, A^*$. Clearly $\fA$ is the smallest Murray-von Neumann subalgebra of $\fM$ which contains $A$. By Theorem \ref{thm:borel-fun-calc}, $f(A) \in \fA$ and the assertion follows.
\end{proof}

\begin{cor}
\textsl{
Let $A$ be a normal element in a Murray-von Neumann algebra $\fM$ and $f$ be a Borel function on $\C$.
For a self-adjoint element $B \in \fM$, if $A \afrprod B = B \afrprod A$, then $f(A) \afrprod B  = B \afrprod f(A)$. In particular, for a projection $E \in \fM$, if $A \afrprod E = E \afrprod A$, then $f(A \afrprod E)  = f(A) \afrprod E = E \afrprod f(A)$.
}
\end{cor}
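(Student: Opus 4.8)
The plan is to prove the commutation assertion by a density argument in the $\mathfrak{m}$-topology and then read off the projection case. First I would note that $A^*$ also commutes with $B$: since $B$ is self-adjoint and $\fM$ is a $*$-algebra, $B \afrprod A^* = (A \afrprod B)^* = (B \afrprod A)^* = A^* \afrprod B$, where the middle equality uses the hypothesis $A \afrprod B = B \afrprod A$. Because $\afrsum$ and $\afrprod$ are bilinear and associative and $B = B^*$, the set $\fC _B := \{ X \in \fM : X \afrprod B = B \afrprod X \}$ is a unital $*$-subalgebra of $\fM$; as it contains $A$ and $A^*$, it contains $p(A, A^*)$ for every two-variable complex polynomial $p \in \C[\iota, \bar\iota]$.

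Next I would invoke Corollary \ref{cor:complex-poly-approx} to obtain a net $\{ p_i \}$ in $\C[\iota, \bar\iota]$ with $p_i(A, A^*) \to f(A)$ in the $\mathfrak{m}$-topology. Since multiplication in $\fM$ is $\mathfrak{m}$-continuous (Theorem \ref{thm:cont_mvn}), the maps $X \mapsto X \afrprod B$ and $X \mapsto B \afrprod X$ are $\mathfrak{m}$-continuous, so $p_i(A,A^*) \afrprod B \to f(A) \afrprod B$ and $B \afrprod p_i(A,A^*) \to B \afrprod f(A)$. These two nets agree term by term by the previous paragraph, and the $\mathfrak{m}$-topology on $\fM$ is Hausdorff (Proposition \ref{prop:hausdorff_mvn}), so their limits coincide: $f(A) \afrprod B = B \afrprod f(A)$. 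This proves the first assertion, and taking $B = E$ (a projection is self-adjoint) gives $f(A) \afrprod E = E \afrprod f(A)$.

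For the remaining identity $f(A \afrprod E) = f(A) \afrprod E$ I would pass to an abelian model. The relations $A \afrprod E = E \afrprod A$ and $A^* \afrprod E = E \afrprod A^*$, together with the normality of $A$ and $E^2 = E$, show that $A \afrprod E$ is normal and that the smallest Murray-von Neumann subalgebra $\fA$ of $\fM$ containing $A$ and $E$ is abelian. By Remark \ref{rmrk:abelian_mvn}, $\fA \cong L^0(X; \mu)$ for a locally compact Hausdorff space $X$ with a positive Radon measure $\mu$; under this isomorphism $A$ corresponds to some $g \in L^0(X;\mu)$ taking values in $\mathrm{sp}(A)$, the projection $E$ to an indicator function $\mathbbm{1}_S$, and $A \afrprod E$ to $g\,\mathbbm{1}_S$. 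By Proposition \ref{prop:borel-fun-calc} the Borel calculus in $L^0(X;\mu)$ is composition, so $f(A) \afrprod E$ corresponds to $(f \circ g)\,\mathbbm{1}_S$ while $f(A \afrprod E)$ corresponds to $f \circ (g\,\mathbbm{1}_S)$; these agree on $S$, and on $X \setminus S$ the former is $0$ and the latter the constant $f(0)$. Thus the stated identity holds when $f(0) = 0$; in general one has the unconditional identity $f(A \afrprod E) \afrprod E = f(A) \afrprod E$.

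The argument is essentially \emph{soft}: once polynomial approximation (Corollary \ref{cor:complex-poly-approx}) and continuity of multiplication (Theorem \ref{thm:cont_mvn}) are available, the commutation claim is automatic, and the only point requiring care is the behaviour of the two Borel calculi off the range of $E$ — invisible in the bounded, $f(0) = 0$ setting but responsible for the $f(0)$ proviso in the projection identity. I do not foresee a genuine obstacle beyond this bookkeeping.
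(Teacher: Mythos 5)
Your proof of the main commutation claim is essentially the paper's own proof: the paper's argument is exactly that $A^*\afrprod B=B\afrprod A^*$ follows by taking adjoints, that the identity holds for polynomials $p(A,A^*)$, and that one then passes to $f(A)$ via Corollary \ref{cor:complex-poly-approx} together with $\mathfrak{m}$-continuity of multiplication and Hausdorffness, precisely as you do. Where you go beyond the paper is the ``in particular'' clause: the paper's one-line proof does not address $f(A\afrprod E)=f(A)\afrprod E$ at all, and your analysis in the abelian model $L^{0}(X;\mu)$ is correct and exposes a real proviso in the statement. Since $E\ne I$ forces $0\in\mathrm{sp}(A\afrprod E)$ (the nullspace of $A\afrprod E$ contains $(I-E)\mathscr{H}$, so Proposition \ref{prop:luck} applies), the function calculus gives $f(A\afrprod E)$ the value $f(0)$ on the range of $I-E$ while $f(A)\afrprod E$ vanishes there; taking $f\equiv 1$ yields $f(A\afrprod E)=I\ne E=f(A)\afrprod E$. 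So the identity as printed holds only under $f(0)=0$ (or trivially $E=I$), and your unconditional substitute $f(A\afrprod E)\afrprod E=f(A)\afrprod E$ is the correct repair. In short: same route as the paper for the part the paper proves, plus a legitimate correction to the part it doesn't.
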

\begin{proof}
If $B$ is self-adjoint and $A \afrprod B = B \afrprod A$, then $A^* \afrprod B = B \afrprod A^*$. Since the assertion holds for two-variable complex polynomials in $A, A^*$, the conclusion follows from Corollary \ref{cor:complex-poly-approx} by an approximation argument in the $\mathfrak{m}$-topology.
\end{proof}

\begin{cor}
\label{cor:real-poly-approx}
\textsl{
Let $A_1, A_2$ be self-adjoint elements in a Murray-von Neumann algebra $\fM$,  and $f$ be a Borel function on $\C$. Let $B_1, B_2 \in \fM$. If $A_1 ^n \afrprod B_1 = B_2 \afrprod A _2 ^n$ for all $n\in \N$, then $f(A_1) \afrprod B_1 = B_2 \afrprod f(A_2)$.
}
\end{cor}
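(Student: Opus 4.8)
The plan is to reduce the statement to the corollary immediately preceding it (commutation of a self-adjoint element with Borel functions of a normal element) by passing to $2\times 2$ matrices. First, evaluating the hypothesis at $n=0$ gives $B_1 = B_2$; write $B$ for this common element. The remaining content is the case $n=1$, i.e.\ $A_1\afrprod B = B\afrprod A_2$; once this holds the cases $n\ge 2$ follow at once by multiplying on the left by powers of $A_1$ and on the right by powers of $A_2$.

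Next I would work in $M_2(\fM)$, which by Theorem~\ref{thm:matrix-iso} is again a Murray-von Neumann algebra, and set
\[
\mathbf{A} := \begin{pmatrix} A_2 & 0\\ 0 & A_1\end{pmatrix},\qquad
\mathbf{B} := \begin{pmatrix} 0 & 0\\ B & 0\end{pmatrix}\ \in\ M_2(\fM),
\]
so that $\mathbf{A}$ is self-adjoint. A direct block computation shows that $A_1\afrprod B = B\afrprod A_2$ is precisely the identity $\mathbf{A}\afrprod\mathbf{B} = \mathbf{B}\afrprod\mathbf{A}$. Taking $\afrprod$-adjoints and using $\mathbf{A}^* = \mathbf{A}$ gives $\mathbf{A}\afrprod\mathbf{B}^* = \mathbf{B}^*\afrprod\mathbf{A}$ as well, so $\mathbf{A}$ commutes with each of the self-adjoint elements $\mathbf{B}_{\mathrm r} := \tfrac12(\mathbf{B}\afrsum\mathbf{B}^*)$ and $\mathbf{B}_{\mathrm i} := \tfrac1{2i}(\mathbf{B}\afrdiff\mathbf{B}^*)$, which satisfy $\mathbf{B} = \mathbf{B}_{\mathrm r}\afrsum i\,\mathbf{B}_{\mathrm i}$.

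Applying the preceding corollary to the normal element $\mathbf{A}$ and to $\mathbf{B}_{\mathrm r}, \mathbf{B}_{\mathrm i}$ yields $f(\mathbf{A})\afrprod\mathbf{B}_{\mathrm r} = \mathbf{B}_{\mathrm r}\afrprod f(\mathbf{A})$ and $f(\mathbf{A})\afrprod\mathbf{B}_{\mathrm i} = \mathbf{B}_{\mathrm i}\afrprod f(\mathbf{A})$, hence $f(\mathbf{A})\afrprod\mathbf{B} = \mathbf{B}\afrprod f(\mathbf{A})$. It then remains to identify $f(\mathbf{A})$ with the block-diagonal matrix having diagonal entries $f(A_2), f(A_1)$. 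Since $\mathbf{A}$ is block diagonal, $\mathrm{sp}(\mathbf{A}) = \mathrm{sp}(A_1)\cup\mathrm{sp}(A_2)$, and the map $g\mapsto \mathrm{diag}\big(g(A_2), g(A_1)\big)$ is a unital $*$-homomorphism $\mathscr{B}_u(\mathrm{sp}(\mathbf{A}))\to M_2(\fM)$ sending $\iota$ to $\mathbf{A}$; it is $\sigma$-normal, because the Borel function calculi of $A_1$ and $A_2$ are $\sigma$-normal, the restriction maps $\mathscr{B}_u(\mathrm{sp}(\mathbf{A}))\to\mathscr{B}_u(\mathrm{sp}(A_j))$ preserve increasing pointwise limits, and the diagonal embedding $\fM\times\fM\to M_2(\fM)$ preserves countable suprema (using Proposition~\ref{prop:lub_mvna}, the product description of the $\mathfrak{m}$-topology on $M_2(\fM)$, and Hausdorffness). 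By the uniqueness clause of Theorem~\ref{thm:borel-fun-calc} this homomorphism coincides with the Borel function calculus of $\mathbf{A}$, so $f(\mathbf{A}) = \mathrm{diag}\big(f(A_2), f(A_1)\big)$. Comparing the $(2,1)$-entries of $f(\mathbf{A})\afrprod\mathbf{B} = \mathbf{B}\afrprod f(\mathbf{A})$ now gives $f(A_1)\afrprod B = B\afrprod f(A_2)$, that is, $f(A_1)\afrprod B_1 = B_2\afrprod f(A_2)$.

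I expect the main obstacle to be this last identification of $f(\mathbf{A})$: the only non-formal point is the $\sigma$-normality of the candidate block-diagonal homomorphism, after which the uniqueness of the Borel function calculus finishes the job. As an alternative to the matrix amplification, one can instead imitate the uniqueness argument in the proof of Proposition~\ref{prop:borel-fun-calc} directly: the set $\{\,h\in\mathscr{B}_b(\mathrm{sp}(A_1)\cup\mathrm{sp}(A_2)) : h(A_1)\afrprod B = B\afrprod h(A_2)\,\}$ is a conjugation-closed subalgebra of $\mathscr{B}_b$, closed under bounded monotone pointwise limits, and it contains $(1+\iota^2)^{-1}$ and $\iota(1+\iota^2)^{-1}$ (the subalgebra property relying on the fact that a bounded Borel function of $A_j$ commutes with $A_j$); hence it is all of $\mathscr{B}_b$ by Stone-Weierstrass and a monotone-class argument, and the passage to an arbitrary Borel $f$ is the usual $(1+|f|^2)^{-1}$ trick using Corollary~\ref{cor:complex-poly-approx} and $\mathfrak{m}$-continuity of the algebraic operations.
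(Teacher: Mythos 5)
Your argument is correct, but it follows a genuinely different route from the paper's. The paper's own proof is essentially two lines: the hypothesis gives $p(A_1)\afrprod B_1 = B_2\afrprod p(A_2)$ for every real polynomial $p$ (constants included, which is where $B_1=B_2$ enters implicitly, exactly as in your $n=0$ observation --- and indeed the statement would fail for constant $f$ without it), and one then passes to a general Borel $f$ via Corollary \ref{cor:complex-poly-approx} and the $\mathfrak{m}$-continuity of the algebraic operations. You instead recast the intertwining relation as a commutation relation in $M_2(\fM)$ (legitimate by Theorem \ref{thm:matrix-iso}), split the off-diagonal intertwiner into self-adjoint parts, apply the corollary immediately preceding this one (Borel functions of a normal element commute with commuting self-adjoint elements), and then identify $f\bigl(\mathrm{diag}(A_2,A_1)\bigr)$ with $\mathrm{diag}\bigl(f(A_2),f(A_1)\bigr)$ by verifying $\sigma$-normality of the block-diagonal calculus and invoking the uniqueness clause of Theorem \ref{thm:borel-fun-calc}; your verification of that $\sigma$-normality via Proposition \ref{prop:lub_mvna}, the product description of the $\mathfrak{m}$-topology on $M_2(\fM)$ and Hausdorffness is the right argument. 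What your longer route buys is precisely the point the paper's one-line approximation leaves tacit: Corollary \ref{cor:complex-poly-approx} applied to $A_1$ alone yields a net of polynomials converging at $A_1$ but not, a priori, simultaneously at $A_2$; the standard repair is to apply it to the single normal element $\mathrm{diag}(A_1,A_2)$, i.e.\ exactly the matrix amplification and block-diagonal identification you carry out explicitly. The paper's approach is shorter and stays at the level of direct polynomial approximation; yours is more detailed but closes that gap, and your closing alternative --- a Stone--Weierstrass plus monotone-class argument on $\{h\in\mathscr{B}_b : h(A_1)\afrprod B = B\afrprod h(A_2)\}$ modeled on the uniqueness proof of Proposition \ref{prop:borel-fun-calc}, followed by the $(1+|f|^2)^{-1}$ trick --- is a third viable route.
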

\begin{proof}
From the hypothesis, for a polynomial function $p$ on $\R$, we have $p(A_1) \afrprod B_1 = B_2 \afrprod p(A_2)$. The assertion follows from Corollary \ref{cor:complex-poly-approx} by an approximation argument in the $\mathfrak{m}$-topology.
\end{proof}

\begin{cor}
\textsl{
Let $\fM _1$ and $\fM _2$ be Murray-von Neumann algebras and $\Phi : \fM _1 \to \fM _2$ be a morphism. Let $A$ be a normal element of $\fM _1$ and $f$ be a Borel function on $\mathrm{sp}(A)$. Then $\Phi(A)$ is a normal element of $\fM _2$, $\mathrm{sp}(\Phi(A)) \subseteq \mathrm{sp}(A)$, and $\Phi(f(A)) = f(\Phi(A))$.
}
\end{cor}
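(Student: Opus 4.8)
First I would dispatch the two easy claims. Since $\Phi$ is a unital $*$-homomorphism, $\Phi(A)^{*}\afrprod\Phi(A)=\Phi(A^{*}\afrprod A)=\Phi(A\afrprod A^{*})=\Phi(A)\afrprod\Phi(A)^{*}$, so $\Phi(A)$ is normal; consequently $\mathrm{sp}(\Phi(A))$ is a non-empty closed (hence Borel) subset of $\C$, so both $f(\Phi(A))$ and the restriction $\restr{f}{\mathrm{sp}(\Phi(A))}$ make sense. For the spectral inclusion, suppose $\lambda\notin\mathrm{sp}(A)$, so that $A\afrdiff\lambda I$ has a two-sided inverse $B$ in the underlying finite von Neumann algebra $(\fM_{1})_{\textrm{nb}}$. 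Since a morphism of Murray--von Neumann algebras restricts to a unital $*$-homomorphism of the underlying finite von Neumann algebras, $\Phi(B)\in(\fM_{2})_{\textrm{nb}}$, and applying $\Phi$ to the relations $(A\afrdiff\lambda I)\afrprod B=I=B\afrprod(A\afrdiff\lambda I)$ exhibits $\Phi(B)$ as a two-sided inverse of $\Phi(A)\afrdiff\lambda I$ in $(\fM_{2})_{\textrm{nb}}$; hence $\lambda\notin\mathrm{sp}(\Phi(A))$.

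For the commutation identity $\Phi(f(A))=f(\Phi(A))$ the plan is a uniqueness argument for the Borel function calculus. I would introduce two maps $\mathscr{B}_u(\mathrm{sp}(A))\to\fM_{2}$: namely $\Psi:=\Phi\circ\big(f\mapsto f(A)\big)$, the composition of the Borel function calculus of $A$ (Theorem \ref{thm:borel-fun-calc}) with $\Phi$; and $\Psi':=\big(g\mapsto g(\Phi(A))\big)\circ\big(f\mapsto\restr{f}{\mathrm{sp}(\Phi(A))}\big)$, the composition of the restriction map $\mathscr{B}_u(\mathrm{sp}(A))\to\mathscr{B}_u(\mathrm{sp}(\Phi(A)))$ (legitimate since $\mathrm{sp}(\Phi(A))\subseteq\mathrm{sp}(A)$ is a Borel subset) with the Borel function calculus of the normal element $\Phi(A)$. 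Both are unital $*$-homomorphisms, both send $1\mapsto I$ and $\iota\mapsto\Phi(A)$ (for $\Psi'$, note that $\restr{\iota}{\mathrm{sp}(\Phi(A))}$ is the identity transformation on $\mathrm{sp}(\Phi(A))$), and by the conventions introduced after Theorem \ref{thm:borel-fun-calc} one has $\Psi'(f)=f(\Phi(A))$ while $\Psi(f)=\Phi(f(A))$ by construction. Both are $\sigma$-normal: the two Borel function calculi are $\sigma$-normal by Theorem \ref{thm:borel-fun-calc}, the restriction map visibly preserves countable pointwise suprema, and $\Phi$, being a morphism, is normal --- equivalently $\mathfrak{m}$-continuous, by Theorem \ref{thm:normal_mvna} --- hence preserves least upper bounds of bounded increasing sequences of self-adjoint elements (so that $\Phi(\sup_n g_n(A))=\sup_n\Phi(g_n(A))$ whenever $g_n\uparrow g$ pointwise with $g$ bounded).

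The concluding step is to invoke uniqueness to get $\Psi=\Psi'$, which is exactly $\Phi(f(A))=f(\Phi(A))$ for every Borel function $f$ on $\mathrm{sp}(A)$. I would take this from the uniqueness argument in Proposition \ref{prop:borel-fun-calc} and Theorem \ref{thm:borel-fun-calc}, which in fact establishes that a $\sigma$-normal unital $*$-homomorphism from $\mathscr{B}_u(S)$, with $S$ a non-empty closed subset of $\C$, into a Murray--von Neumann algebra is completely determined by the images of $1$ and $\iota$: the proof uses only that such a homomorphism is adjoint- and order-preserving, hence norm-decreasing from $\mathscr{B}_b(S)$ into the underlying finite von Neumann algebra, together with Stone--Weierstrass density of the $*$-algebra generated by $1$, $(1+|\iota|^2)^{-1}$, $\iota(1+|\iota|^2)^{-1}$ in $C_0(S)$, a monotone-class argument over the Borel subsets of $S$, and the l.u.b.\ property of the target (Proposition \ref{prop:lub_mvna}); none of this is specific to the abelian $L^0$-model, so it applies to $\fM_2$ (or, if one prefers to cite Proposition \ref{prop:borel-fun-calc} verbatim, to the abelian Murray--von Neumann subalgebra of $\fM_2$ generated by $\Phi(A)$, which contains the ranges of both $\Psi$ and $\Psi'$ by Theorem \ref{thm:borel-fun-calc}(i) and the $\mathfrak{m}$-continuity of $\Phi$). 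The main point requiring actual work, rather than mere citation, is verifying that this uniqueness statement survives enlarging the domain from $\mathscr{B}_u\big(\mathrm{sp}(\Phi(A))\big)$ --- the form in which it literally appears in Proposition \ref{prop:borel-fun-calc} --- to $\mathscr{B}_u\big(\mathrm{sp}(A)\big)$; I expect this to be routine, since each step of the cited argument works for an arbitrary closed subset of $\C$ as domain and never uses that the domain is pinned to the spectrum of the element being acted on. Everything else is bookkeeping.
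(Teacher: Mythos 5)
Your handling of the normality of $\Phi(A)$ and of the spectral inclusion is exactly the paper's argument, but for the main identity $\Phi(f(A))=f(\Phi(A))$ you take a genuinely different route. The paper argues by approximation: $\Phi(p(A,A^*))=p(\Phi(A),\Phi(A)^*)$ for every $p\in\C[\iota,\bar\iota]$, and then it appeals to Corollary \ref{cor:complex-poly-approx} together with the $\mathfrak{m}$-continuity of $\Phi$ (Theorem \ref{thm:normal_mvna}). You instead compare the two $\sigma$-normal unital homomorphisms $f\mapsto\Phi(f(A))$ and $f\mapsto f(\Phi(A))$ on $\mathscr{B}_u(\mathrm{sp}(A))$, both sending $1\mapsto I$ and $\iota\mapsto\Phi(A)$, and invoke the uniqueness underlying the Borel function calculus. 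The one point you flag as needing work --- that the uniqueness argument of Proposition \ref{prop:borel-fun-calc}/Theorem \ref{thm:borel-fun-calc} tolerates a domain $\mathscr{B}_u(S)$ with $S$ any closed subset of $\C$ containing $\mathrm{sp}(\Phi(A))$, rather than the spectrum itself --- is indeed routine: the Stone--Weierstrass step, the passage to indicators of open sets, the monotone-class argument, and the bootstrap $f=f(1+|f|^2)^{-1}(1+|f|^2)$ from bounded to unbounded functions use nothing about $S$ beyond its being closed, and your parenthetical restriction of $\sigma$-normality checks to sequences with bounded limits is harmless for the same reason (the unbounded case in the uniqueness proof is handled by the homomorphism identity, not by $\sigma$-normality). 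As for what each approach buys: the paper's proof is shorter on the page, but it is terse exactly where you are careful --- Corollary \ref{cor:complex-poly-approx} produces a net $p_i(A,A^*)\to f(A)$ and $\mathfrak{m}$-continuity gives $p_i(\Phi(A),\Phi(A)^*)\to\Phi(f(A))$, yet identifying that limit with $f(\Phi(A))$ still needs a compatibility/uniqueness input of the kind you make explicit; your route pays for this with the $\sigma$-normality verification of the composite $\Phi\circ(f\mapsto f(A))$ (immediate from normality of $\Phi$ and the fact that unital $*$-homomorphisms are order-preserving). Both proofs are acceptable; yours is the more self-contained at the key step.
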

\begin{proof}
Since $A^* \afrprod A = A \afrprod A^*$, we have $\Phi(A^* \afrprod A) = \Phi(A \afrprod A^*)$ which implies that $\Phi(A)^* \afrprod \Phi(A) = \Phi(A) \afrprod \Phi(A)^*$. Hence $\Phi(A)$ is a normal element of $\fM _2$. For $\lambda \in \C$, if $A - \lambda I$ has an inverse $B$ in $(\fM _1)_{\textrm{nb}}$, then $\Phi(B) \in (\fM _2)_{\textrm{nb}}$ and $(\Phi(A) - \lambda I) \afrprod \Phi(B) = I$. Thus $\mathrm{sp}(\Phi(A)) \subseteq \mathrm{sp}(A)$. As $\Phi$ is a $*$-homomorphism, for any polynomial $p$ in $\C[z, w]$, we have $\Phi(p(A, A^*)) = p(\Phi(A), \Phi(A)^*)$. As $\Phi$ is $\mathfrak{m}$-continuous, the assertion follows from Corollary \ref{cor:complex-poly-approx}.
\end{proof}

\begin{cor}
\label{cor:comp_borel}
\textsl{
Let $A$ be a normal element in a Murray-von Neumann algebra $\fM$ and $f, g$ be Borel functions on $\C$. Then $$(f \circ g)(A) = f(g(A)).$$
}
\end{cor}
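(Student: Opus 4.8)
The plan is to push everything into the concrete model $L^0(X;\mu)$ furnished by Remark \ref{rmrk:abelian_mvn}, where the Borel function calculus is literally composition of functions, and then to observe that both $(f\circ g)(A)$ and $f(g(A))$ become the single element $f\circ(g\circ h)$.

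First I would let $\fA$ be the smallest Murray-von Neumann subalgebra of $\fM$ containing $A$; since $A$ is normal, $\fA$ is abelian, so by Remark \ref{rmrk:abelian_mvn} I may fix an isomorphism $\fA\cong L^0(X;\mu)$ of ordered complex topological $*$-algebras. Let $h\in L^0(X;\mu)$ be the image of $A$. By Proposition \ref{prop:spec_inv} the spectrum is intrinsic, so $\mathrm{sp}(h)=\mathrm{sp}(A)$, and after replacing $h$ by an a.e.-equal representative (the lemma preceding Proposition \ref{prop:borel-fun-calc}) I may assume $h$ takes values in $\mathrm{sp}(A)$. Transported through the isomorphism, the Borel function calculus of $A$ is a $\sigma$-normal homomorphism $\mathscr{B}_u(\mathrm{sp}(A))\to L^0(X;\mu)$ sending $1$ to $\mathbbm{1}_X$ and $\iota$ to $h$, hence by the uniqueness in Proposition \ref{prop:borel-fun-calc} it is the map $\varphi\mapsto\varphi\circ h$. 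Consequently $g(A)=\restr{g}{\mathrm{sp}(A)}(A)$ corresponds to $\restr{g}{\mathrm{sp}(A)}\circ h=g\circ h$, and likewise $(f\circ g)(A)=\restr{f\circ g}{\mathrm{sp}(A)}(A)$ corresponds to $(f\circ g)\circ h=f\circ(g\circ h)$.

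Next I would run the same argument one level down for the normal element $B:=g(A)$, which lies in $\fA$ and corresponds to $g\circ h\in L^0(X;\mu)$. The smallest Murray-von Neumann subalgebra of $\fM$ containing $B$ is contained in $\fA$, so by the uniqueness part of Theorem \ref{thm:borel-fun-calc} the Borel function calculus of $B$ may be computed inside $\fA$; transported through the isomorphism it is the Borel function calculus of $g\circ h$ in $L^0(X;\mu)$. By Proposition \ref{prop:spec_inv} and the isomorphism, $\mathrm{sp}(B)=\mathrm{sp}(g\circ h)$, and after an a.e. modification of $g\circ h$ as permitted by Proposition \ref{prop:borel-fun-calc}, this calculus is $\psi\mapsto\psi\circ(g\circ h)$. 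Hence $f(g(A))=\restr{f}{\mathrm{sp}(B)}(B)$ corresponds to $\restr{f}{\mathrm{sp}(B)}\circ(g\circ h)=f\circ(g\circ h)$. Comparing with the previous paragraph, $(f\circ g)(A)$ and $f(g(A))$ have the same image $f\circ(g\circ h)$ under $\fA\cong L^0(X;\mu)$, so they are equal.

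The routine points (composition of Borel functions is Borel; modifying functions on $\mu$-null sets does not change elements of $L^0(X;\mu)$; restricting $f$ and $g$ to the relevant spectra before composing is harmless because $h$ and $g\circ h$ are spectrum-valued a.e.) I would mention only in passing. The step I expect to be the main obstacle is the bookkeeping in the third paragraph: one must legitimately identify the Borel function calculus of the \emph{derived} element $g(A)$ with honest composition, and for that one needs both that $\mathrm{sp}(g(A))$ coincides with the spectrum of $g\circ h$ computed in $L^0(X;\mu)$ (Proposition \ref{prop:spec_inv} together with the isomorphism) and that $g\circ h$ may be taken spectrum-valued (the lemma feeding Proposition \ref{prop:borel-fun-calc}); without these one cannot conclude $f(g(A))$ corresponds to $f\circ(g\circ h)$.
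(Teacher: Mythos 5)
Your argument is correct, but it takes a different route from the paper. The paper's proof never leaves $\fM$: fixing $g$, it observes that $f \mapsto f \circ g$ is a $\sigma$-normal homomorphism of $\mathscr{B}_u(\C)$ into itself, so that both $f \mapsto (f\circ g)(A)$ and $f \mapsto f(g(A))$ are $\sigma$-normal unital homomorphisms from $\mathscr{B}_u(\C)$ into $\fM$ sending $1$ to $I$ and $\iota$ to $g(A)$; the uniqueness clause of Theorem \ref{thm:borel-fun-calc} applied to the element $g(A)$ then gives the identity in one stroke. You instead realize everything in the concrete model $\fA \cong L^0(X;\mu)$ and verify that both sides become $f\circ(g\circ h)$, which makes the statement transparent as associativity of composition, but at the cost of the bookkeeping you yourself flag: spectrum-valued a.e.\ representatives at two levels, the identification $\mathrm{sp}(g(A)) = \mathrm{sp}(g\circ h)$, and the (tacit) point that a $\sigma$-normal homomorphism into the subalgebra $\fA$ is still $\sigma$-normal into $\fM$ because least upper bounds of bounded increasing nets in $\fA$ agree with those in $\fM$ (Proposition \ref{prop:compatibility_subalgebra} together with Proposition \ref{prop:lub_mvna} and $\mathfrak{m}$-closedness of the positive cone) --- a point worth stating explicitly, though the paper's own proof of Theorem \ref{thm:borel-fun-calc} glosses over the same issue. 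Both arguments ultimately rest on the same uniqueness theorem; the paper's version is shorter and avoids null-set modifications entirely, needing only that $f\mapsto f\circ g$ preserves increasing pointwise limits, while yours is more hands-on and doubles as a sanity check of the $L^0$ picture.
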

\begin{proof}
Let $g$ be a fixed Borel function on $\C$. Note that the mapping $f \mapsto f \circ g : \mathscr{B}_u(\C) \to \mathscr{B}_u(\C)$ is a $\sigma$-normal $*$-homomorphism. Hence the two mappings $f \mapsto (f \circ g)(A) : \mathscr{B}_u(\C) \to \fM$ and $f \mapsto f(g(A)) : \mathscr{B}_u(\C) \to \fM$ are $\sigma$-normal $*$-homomorphisms from $\mathscr{B}_u(\C)$ to $\fM$ taking the constant function $1$ onto $I$ and the identity transformation $\iota$ onto $g(A)$. From the uniqueness of the Borel function calculus of $g(A)$ (see Theorem \ref{thm:borel-fun-calc}), we conclude that $(f \circ g)(A) = f(g(A))$ for all Borel functions $f$ on $\C$.
\end{proof}

\begin{cor}
\label{cor:unique_square_root}
\textsl{
Let $A$ be a positive element in a Murray-von Neumann algebra $\fM$. Then there is a unique positive element $H$ in $\fM$ such that $A = H^2$. ($H$, also denoted by $\sqrt{A}$, is called the positive square root of $A$.)
}
\end{cor}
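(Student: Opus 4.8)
The plan is to build $H$ from the Borel function calculus of $A$ applied to the square-root function, and to deduce uniqueness by reducing to the abelian case $L^0(X;\mu)$.

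\emph{Existence.} First I would observe that $\mathrm{sp}(A) \subseteq [0,\infty)$. Indeed, letting $\fA$ be the smallest Murray-von Neumann subalgebra of $\fM$ containing $A$, Remark~\ref{rmrk:abelian_mvn} identifies $\fA \cong L^0(X;\mu)$ with $A$ corresponding to a function $g$ in the positive cone of $L^0(X;\mu)$; by Proposition~\ref{prop:spec_inv}, $\mathrm{sp}(A) = \mathrm{sp}^{\fA}(A) = \mathrm{sp}(g)$, and if $\lambda \notin [0,\infty)$ then $g - \lambda \mathbbm{1}_X$ is bounded below in modulus by $\mathrm{dist}(\lambda,[0,\infty))>0$, hence invertible in $L^{\infty}(X;\mu)$, so $\lambda \notin \mathrm{sp}(g)$. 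Thus the function $s(t) := \sqrt{t}$ is a continuous (hence Borel) nonnegative function on $\mathrm{sp}(A)$, and I set $H := s(A)$ using Theorem~\ref{thm:borel-fun-calc}. Since $s$ is real-valued, $H$ is self-adjoint; since $s \ge 0$ on $\mathrm{sp}(A)$ and the Borel function calculus is order-preserving (as recorded in the proofs of Proposition~\ref{prop:borel-fun-calc} and Theorem~\ref{thm:borel-fun-calc}), $H$ is positive; and since $s \cdot s = \iota$ on $\mathrm{sp}(A)$ and the calculus is a homomorphism, $H \afrprod H = (s \cdot s)(A) = \iota(A) = A$. This produces the desired positive square root.

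\emph{Uniqueness.} Suppose $K$ is a positive element of $\fM$ with $K \afrprod K = A$. Then $K$ commutes with $A = K \afrprod K$. Let $\fB$ be the smallest Murray-von Neumann subalgebra of $\fM$ containing $K$; as $K$ is self-adjoint, $\fB$ is abelian, so by Remark~\ref{rmrk:abelian_mvn} we may identify $\fB \cong L^0(X;\mu)$ with $K$ corresponding to a function $k$ in the positive cone of $L^0(X;\mu)$. Since $\fB$ is an algebra it contains $A = K \afrprod K$, which corresponds to $k^2$; hence the smallest subalgebra $\fA \ni A$ satisfies $\fA \subseteq \fB$, and by Theorem~\ref{thm:borel-fun-calc}(i) the element $H = s(A)$ lies in $\fA \subseteq \fB$. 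Because $\mathrm{sp}(A)$ and the Borel function calculus of $A$ do not depend on the ambient Murray-von Neumann algebra (Proposition~\ref{prop:spec_inv} together with the uniqueness clause of Theorem~\ref{thm:borel-fun-calc}), the calculus of $A$ computed inside $\fB$ is, under $\fB \cong L^0(X;\mu)$, the map $f \mapsto f \circ (k^2)$ of Proposition~\ref{prop:borel-fun-calc}; applying it to $s$ gives $s \circ (k^2) = |k| = k$ since $k \ge 0$. Thus $H$ and $K$ correspond to the same element of $L^0(X;\mu)$, i.e. $H = K$.

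\emph{Main obstacle.} The one point requiring care is precisely the ambient-independence just invoked: that the square root $H$ constructed in $\fM$ coincides with the one seen inside the abelian subalgebra $\fB$, which is what Proposition~\ref{prop:spec_inv} and the uniqueness part of the Borel function calculus are for. An alternative to the $L^0(X;\mu)$-reduction, staying entirely within $\fM$, is purely algebraic: from $K \afrprod A = A \afrprod K$ and the preceding corollary on Borel functions of commuting elements, $K$ commutes with $H = s(A)$; writing $D := H \afrdiff K$ one has $D \afrprod (H \afrsum K) \afrprod D = D \afrprod (H \afrprod H \afrdiff K \afrprod K) \afrprod D = 0$, and splitting the positive summands $D \afrprod H \afrprod D$ and $D \afrprod K \afrprod D$ (together with the fact that $T^* \afrprod T = 0$ forces $T = 0$, applied to $s(H) \afrprod D$ and $s(K) \afrprod D$) yields $H \afrprod D = K \afrprod D = 0$, whence $D \afrprod D = 0$ and finally $D = 0$. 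Either route completes the proof.
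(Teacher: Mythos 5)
Your proof is correct, and both existence and the first uniqueness route are in the same spirit as the paper's, though the paper's version is considerably shorter: it applies Corollary \ref{cor:comp_borel} directly to the competing root $K$, writing $K = (g\circ f)(K) = g(f(K)) = g(K^2) = g(A) = \sqrt{A}$ with $f(t)=t^2$, $g(t)=\sqrt{t}$, which makes the ``ambient-independence'' issue you single out as the main obstacle disappear entirely --- one never has to compare the calculus of $A$ computed in two different subalgebras, only to use that $g(A)$ is defined from $A$ alone, however $A$ arose. Your unwinding through $\fB\cong L^0(X;\mu)$ is a legitimate substitute, but it does force you to justify that the calculus of $A$ inside $\fB$ agrees with the one inside $\fM$ (via the uniqueness clause of Theorem \ref{thm:borel-fun-calc} and the fact that suprema in an $\mathfrak m$-closed subalgebra agree with those in $\fM$), which is exactly the overhead the paper avoids. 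Your second, algebraic route is genuinely different from anything in the paper: it is the classical Riesz-style uniqueness argument, using only the commutation corollary, Corollary \ref{cor:pos_conj}, properness of the positive cone, and $T^*\afrprod T=0\Rightarrow T=0$; it buys independence from Corollary \ref{cor:comp_borel} at the cost of invoking existence of square roots twice more. One small slip there: the displayed chain should read $D\afrprod(H\afrsum K)\afrprod D = D\afrprod\big((H\afrsum K)\afrprod D\big) = D\afrprod(H\afrprod H\afrdiff K\afrprod K)=0$; as written you have an extra factor of $D$ on the middle term, though the conclusion $D\afrprod H\afrprod D\afrsum D\afrprod K\afrprod D=0$ is unaffected.
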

\begin{proof}
Note that $\R _{+}$ is a Borel subset of $\C$. Let $f: \R_{+} \to \R_{+}$ denote the Borel function on $\R _{+}$ given by $t \mapsto t^2$, and $g : \R_ {+} \to \R _{+}$ denote the Borel function on $\R _{+}$ given by $t \mapsto \sqrt{t}$. Note that $f \circ g = g \circ f = \iota$ on $\R _{+}$. Thus from Corollary \ref{cor:comp_borel}, we have $A = (f \circ g)(A) = f(g(A)) = (\sqrt{A})^2$. Let $B$ be another positive element in $\fM$ such that $A = f(B) = B^2$. From Corollary \ref{cor:comp_borel}, we observe that $B = (g \circ f)(B) = g(f(B)) = g(B^2) = g(A) = \sqrt{A}$.
\end{proof}

\begin{prop}
\label{prop:pos_desc_mvn}
\textsl{
Let $\fM$ be a Murray-von Neumann algebra and $A \in \fM$. Then the following are equivalent:
\begin{itemize}
    \item[(i)] $A$ is positive;
    \item[(ii)] There is a self-adjoint element $H$ in $\fM$ such that $A = H^2$;
    \item[(iii)] There is an element $B$ in $\fM$ such that $A = B^*B$.
\end{itemize}
}
\end{prop}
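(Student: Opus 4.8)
The plan is to prove the cycle (i) $\Rightarrow$ (ii) $\Rightarrow$ (iii) $\Rightarrow$ (i); the first two implications are essentially immediate, and all the work sits in the last one. For (i) $\Rightarrow$ (ii): if $A$ is positive, then Corollary \ref{cor:unique_square_root} provides a positive---hence self-adjoint---element $H \in \fM$ with $A = H^2$. For (ii) $\Rightarrow$ (iii): given $A = H^2$ with $H^* = H$, simply take $B := H$, so that $A = H^*H = B^*B$.

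For (iii) $\Rightarrow$ (i) the strategy is to approximate $B$ in the $\mathfrak{m}$-topology by bounded operators---for which positivity of $B^*B$ is classical---and then pass to the limit using the closedness of the positive cone. Since every Murray-von Neumann algebra is of the form $\afr$ for a finite von Neumann algebra $\mathscr{R}$ acting on a Hilbert space $\mathscr{H}$, and in view of Proposition \ref{prop:pos_conj}, it suffices to show $B^* \afrprod B \in \afr^{+}$ for every $B \in \afr$. First I would invoke Corollary \ref{cor:approx_proj} to fix an increasing sequence of projections $\{ E_n \}$ in $\mathscr{R}$ with $E_n \to I$ in measure and $C_n := B \afrprod E_n \in \mathscr{R}$ for all $n$. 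Since each $C_n$ is a bounded operator, $C_n^* \afrprod C_n = C_n^* C_n$ is a positive element of the $C^*$-algebra $\mathscr{R}$, hence lies in $\mathscr{R}^{+} \subseteq \afr^{+}$. By the continuity of multiplication and of the adjoint in the $\mathfrak{m}$-topology (Theorem \ref{thm:cont_mvn}), $C_n = B \afrprod E_n \to B \afrprod I = B$, then $C_n^* \to B^*$, and then $C_n^* \afrprod C_n \to B^* \afrprod B$, all convergences being in measure. Since $\afr^{+}$ is $\mathfrak{m}$-closed by Proposition \ref{prop:mtilde_pos}, the limit $B^* \afrprod B$ lies in $\afr^{+}$, which is exactly the assertion that $A$ is positive.

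I expect the only genuinely delicate point to be the bookkeeping between the extended product $\afrprod$ on $\afr$ and ordinary operator composition: for $C_n \in \mathscr{R}$ one has $C_n^* \afrprod C_n = C_n^* C_n$, so the classical positivity of $C_n^* C_n$ in the $C^*$-algebra $\mathscr{R}$ is precisely what feeds the limiting argument, and one must be explicit that the $\mathfrak{m}$-limit of a sequence drawn from the positive cone remains in the positive cone. An alternative route to (iii) $\Rightarrow$ (i) that sidesteps the approximation is to use Theorem \ref{thm:aff_op_complete} to identify $M_{B^*B}$ with $(M_B)^* M_B$ and then quote the classical fact that $T^*T$ is a positive self-adjoint operator for every closed densely-defined $T$; I would nonetheless prefer the approximation argument, since it stays within the intrinsic $\mathfrak{m}$-topological framework developed in the paper.
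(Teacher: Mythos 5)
Your proposal is correct and follows essentially the same route as the paper: (i) $\Rightarrow$ (ii) via the positive square root from the Borel function calculus, (ii) $\Rightarrow$ (iii) trivially, and (iii) $\Rightarrow$ (i) by approximating $B$ with bounded elements, using classical positivity of $C^*C$ in $\mathscr{R}$, and invoking the $\mathfrak{m}$-closedness of the positive cone from Proposition \ref{prop:mtilde_pos}. The only cosmetic difference is that the paper uses an arbitrary net in $\fM_{\textrm{nb}}$ converging to $B$ (available since $\fM$ is the $\mathfrak{m}$-completion of $\fM_{\textrm{nb}}$), whereas you use the concrete truncations $B \afrprod E_n$ from Corollary \ref{cor:approx_proj}; both work for the same reason.
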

\begin{proof}
\noindent (i) $\Longrightarrow$ (ii). Choose $H = \sqrt{A}$ using the Borel function calculus.
\vskip 0.1in
\noindent (ii) $\Longrightarrow$ (iii). Choose $B = H$. 
\vskip 0.1in
\noindent (iii) $\Longrightarrow$ (i). Let $\{ B_i \}$ be a net in $\fM _{\textrm{nb}}$ converging in measure to $B$. Then $\{ B_i ^* B_i \}$ is a net of positive operators in $\fM _{\textrm{nb}}$ converging in measure to $B ^* B$. By Proposition \ref{prop:mtilde_pos}, we conclude that $B^*B$ is positive.
\end{proof}

\begin{cor}
\label{cor:pos_conj}
\textsl{
Let $\fM$ be a Murray-von Neumann algebra and $A, B$ be self-adjoint elements in $\fM$ such that $A \le B$. Then for every element $C$ in $\fM$, we have $$C^*AC \le C^*BC.$$
}
\end{cor}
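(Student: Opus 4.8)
The plan is to reduce the inequality to the algebraic characterization of positivity in Proposition \ref{prop:pos_desc_mvn}, so that no analytic input (domain tracking, approximation in the $\mathfrak{m}$-topology, etc.) is required. The hypothesis $A \le B$ says exactly that $B \afrdiff A$ is a positive element of $\fM$. By the implication (i) $\Rightarrow$ (ii) of Proposition \ref{prop:pos_desc_mvn}, there is a self-adjoint element $H \in \fM$ with $B \afrdiff A = H \afrprod H$.

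Next I would use that $\fM$ is a complex $*$-algebra: the product $\afrprod$ is associative and distributes over $\afrsum$, and the involution is a conjugate-linear anti-automorphism, so that $(X \afrprod Y)^* = Y^* \afrprod X^*$ for all $X, Y \in \fM$. Combining these identities with $H^* = H$, for an arbitrary $C \in \fM$ we obtain
\begin{align*}
C^* \afrprod B \afrprod C \afrdiff C^* \afrprod A \afrprod C
&= C^* \afrprod (B \afrdiff A) \afrprod C
= C^* \afrprod H \afrprod H \afrprod C \\
&= (H \afrprod C)^* \afrprod (H \afrprod C).
\end{align*}

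Finally, applying the implication (iii) $\Rightarrow$ (i) of Proposition \ref{prop:pos_desc_mvn} to the element $H \afrprod C$, the right-hand side above is a positive element of $\fM$; that is, $C^* \afrprod B \afrprod C \afrdiff C^* \afrprod A \afrprod C \ge 0$, which is precisely the claim $C^* \afrprod A \afrprod C \le C^* \afrprod B \afrprod C$. There is no genuine obstacle in this argument; the only mild care needed is the bookkeeping of the $*$-algebra identities in $\fM$, all of which are built into the definition of a Murray-von Neumann algebra (or, if one prefers, can be read off from the $*$-isomorphism $\Wtilde{R} \cong \afr$ of Theorem \ref{thm:aff_op_complete}). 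One could instead argue in the concrete representation $\fM \cong \afr$ by approximating $C$ by elements of $\mathscr{R}$ and passing to the limit in the $\mathfrak{m}$-topology, but the purely algebraic route above is cleaner and avoids any continuity considerations.
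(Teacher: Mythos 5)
Your argument is correct and is essentially identical to the paper's own proof: both write $B \afrdiff A = H^2$ with $H$ self-adjoint via Proposition \ref{prop:pos_desc_mvn}, observe that $C^* \afrprod (B \afrdiff A) \afrprod C = (H \afrprod C)^* \afrprod (H \afrprod C)$, and invoke the same proposition again to conclude positivity. No differences worth noting.
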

\begin{proof}
Since $B - A$ is positive, by Proposition \ref{prop:pos_desc_mvn}, there is a self-adjoint element $H$ in $\fM$ such that $B - A = H^2$. We have $C^*(B-A)C = (HC)^*(HC)$ and again from Proposition \ref{prop:pos_desc_mvn}, we conclude that $C^*(B-A)C = C^*BA - C^*AC$ is positive.
\end{proof}

In the rest of this section, we use the Borel function calculus and approximation techniques in the $\mathfrak{m}$-topology to transfer many standard operator inequalities for bounded self-adjoint operators to the setting of self-adjoint elements in Murray-von Neumann algebras. Before we dive into the results, we first recall the definition of operator monotone and operator convex functions.

\begin{definition}
Let $f$ be a real-valued continuous function defined on the interval $\Gamma \subseteq \R$. The function $f$ is said to be {\it operator monotone} if for every pair of bounded self-adjoint operators $A, B$ acting on a Hilbert space with spectra in $\Gamma$ and such that $A \le B$, we have $$f(A) \le f(B).$$ The function $f$ is said to be {\it operator convex} if for every pair of bounded self-adjoint operators $A, B$ acting on a Hilbert space with spectra in $\Gamma$, we have $$f(t A + (1-t)B) \le t f(A) + (1-t)f(B).$$ 
\end{definition}
Note that all operator monotone and operator convex functions are continuous.

\begin{lem}
\label{lem:trace-approx}
\textsl{
Let $A$ be a positive operator in a $C^*$-algebra $\mathfrak{A}$, $E$ be a projection in $\mathfrak{A}$ and $\tau$ be a norm-continuous trace functional on $\mathfrak{A}$. Then for any real-valued continuous function $f$ on $[0, \infty)$, we have $$\tau \big( f(\sqrt{A} E \sqrt{A}) \big) = \tau \big( f(EAE) \big).$$
}
\end{lem}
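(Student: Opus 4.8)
The plan is to realize both operators as $B^{*}B$ and $BB^{*}$ for a single $B\in\mathfrak{A}$, prove $\tau(f(B^{*}B))=\tau(f(BB^{*}))$ first for polynomials using the trace property, and then pass to arbitrary continuous $f$ by norm-density. Concretely, set $B:=E\sqrt{A}$; since $A$ is positive, $\sqrt{A}\in\mathfrak{A}$ is self-adjoint, and since $E$ is a projection one computes $B^{*}B=\sqrt{A}\,E\,\sqrt{A}$ and $BB^{*}=E\,A\,E$. Both are positive elements of norm at most $\|B\|^{2}=:M$ (if $B=0$ there is nothing to prove), so $\mathrm{sp}(B^{*}B)$ and $\mathrm{sp}(BB^{*})$ both lie in the compact interval $[0,M]$, and the claim reduces to showing $\tau\big(f(B^{*}B)\big)=\tau\big(f(BB^{*})\big)$ for every real-valued continuous function $f$ on $[0,M]$.

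For each integer $n\ge 1$, the trace property of $\tau$ yields
\[
\tau\big((B^{*}B)^{n}\big)=\tau\big(B^{*}(BB^{*})^{n-1}B\big)=\tau\big((BB^{*})^{n-1}BB^{*}\big)=\tau\big((BB^{*})^{n}\big),
\]
so by linearity $\tau(p(B^{*}B))=\tau(p(BB^{*}))$ for every polynomial $p$ with $p(0)=0$. By the Stone--Weierstrass theorem such polynomials are norm-dense in the (non-unital) $C^{*}$-algebra $\{g\in C([0,M]):g(0)=0\}$; since each continuous function calculus map $g\mapsto g(B^{*}B)$ and $g\mapsto g(BB^{*})$ is norm-decreasing and $\tau$ is norm-continuous, the functionals $g\mapsto\tau(g(B^{*}B))$ and $g\mapsto\tau(g(BB^{*}))$ agree on this norm-dense subspace, hence on all of $\{g\in C([0,M]):g(0)=0\}$.

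Finally, given $f$ as in the statement, put $g:=f-f(0)$, which vanishes at $0$; then $f(B^{*}B)-f(BB^{*})=g(B^{*}B)-g(BB^{*})$ lies in $\mathfrak{A}$ and has trace zero by the previous paragraph, giving $\tau\big(f(\sqrt{A}\,E\,\sqrt{A})\big)=\tau\big(f(EAE)\big)$. (When $\mathfrak{A}$ is unital this is simply the statement that the constant term $f(0)\cdot 1$ contributes $f(0)\,\tau(1)$ to each side and cancels; when $\mathfrak{A}$ is non-unital, $f(B^{*}B)$ and $f(BB^{*})$ are to be read in the unitization, but their difference already lies in $\mathfrak{A}$, so no choice of extension of $\tau$ need be fixed.) The only step that requires any care is this bookkeeping of the constant $f(0)$; everything else is the standard observation that $B^{*}B$ and $BB^{*}$ have the same trace-moments, so I expect no genuine obstacle.
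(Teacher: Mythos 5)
Your proof is correct and follows essentially the same route as the paper: both establish $\tau\big((\sqrt{A}E\sqrt{A})^{n}\big)=\tau\big((EAE)^{n}\big)$ from the trace property (your $B^{*}B$ versus $BB^{*}$ packaging with $B=E\sqrt{A}$ is the same identity) and then pass to general continuous $f$ by uniform polynomial approximation and norm-continuity of $\tau$. Your extra bookkeeping of the constant $f(0)$ and the non-unital case is a mild refinement the paper omits (it applies the lemma only in a unital von Neumann algebra), but it does not change the argument.
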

\begin{proof}
Let $\lambda > 0$ be such that the spectrum of $\sqrt{A} E \sqrt{A}$ and $EAE$ lie in $[0, \lambda]$. Since $\tau$ is a trace functional, note that $\tau \big( (\sqrt{A} E \sqrt{A} )^n \big) = \tau \big( (AE)^n \big) = \tau \big ((EAE)^n \big)$ ($n \in \N$) and thus for any real polynomial $p$, we have
$$\tau \big (p(\sqrt{A} E \sqrt{A}) \big) = \tau \big( p(EAE) \big).$$
We may uniformly approximate $f$ on $[0,\lambda]$ by real polynomials to reach the desired conclusion.
\end{proof}

\begin{lem}
\textsl{
Let $\mathscr{R}$ be a finite von Neumann algebra. Let $A$ be a positive operator in $\mathscr{R}$ and $\{ E_n \}_{n \in \N}$ be an increasing sequence of projections converging to $I$ in the strong-operator topology. Let $f : [0, \infty) \to \R$ be an operator monotone function. Then $\{ f(\sqrt{A}E_n\sqrt{A}) \}_{n \in \N}$ is an increasing sequence of positive operators with least upper bound $f(A)$. 
}
\end{lem}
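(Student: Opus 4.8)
The plan is to dispatch the three assertions --- that the sequence is increasing, that it is bounded above by $f(A)$, and that $f(A)$ is the \emph{least} upper bound --- in turn, only the last carrying any real content. Since $E_n$ is a projection, $\sqrt{A}E_n\sqrt{A} = (E_n\sqrt{A})^*(E_n\sqrt{A})$ is a positive operator in $\mathscr{R}$, and from $E_n \le I$ we get $\sqrt{A}E_n\sqrt{A} \le \sqrt{A}\,\sqrt{A} = A$; in particular its spectrum lies in the fixed compact interval $[0,\|A\|]$, so $f(\sqrt{A}E_n\sqrt{A})$ is a well-defined self-adjoint element of $\mathscr{R}$ via the continuous function calculus (and it is positive, since an operator monotone function on $[0,\infty)$ is in particular scalar-nondecreasing, so under the standing normalization $f(0)=0$ one has $f \ge 0$ there). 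From $E_n \le E_{n+1}$ we obtain $\sqrt{A}E_n\sqrt{A} \le \sqrt{A}E_{n+1}\sqrt{A}$ by conjugation, and applying operator monotonicity of $f$ (all operators in sight being positive, hence with spectrum in $[0,\infty)$) gives $f(\sqrt{A}E_n\sqrt{A}) \le f(\sqrt{A}E_{n+1}\sqrt{A}) \le f(A)$. Thus $\{f(\sqrt{A}E_n\sqrt{A})\}$ is an increasing sequence of self-adjoint operators bounded above by $f(A)$, so by Vigier's theorem it has a supremum $B \le f(A)$ in $\mathscr{R}$, with $f(\sqrt{A}E_n\sqrt{A}) \uparrow B$ in the strong-operator topology.

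It remains to prove $B = f(A)$, and the key point is that $f(\sqrt{A}E_n\sqrt{A}) \to f(A)$ in the strong-operator topology. Since $\sqrt{A}$ is bounded and $E_n \to I$ strongly, for each $x \in \mathscr{H}$ we have $(\sqrt{A}E_n\sqrt{A} - A)x = \sqrt{A}(E_n - I)\sqrt{A}x \to 0$, so $\sqrt{A}E_n\sqrt{A} \to A$ strongly with all terms of norm at most $\|A\|$. Given $\varepsilon > 0$, pick a real polynomial $p$ with $|f(t) - p(t)| \le \varepsilon$ on $[0,\|A\|]$; then $\|f(T) - p(T)\| \le \varepsilon$ for every positive $T \in \mathscr{R}$ with $\|T\| \le \|A\|$, in particular for $T = \sqrt{A}E_n\sqrt{A}$ and $T = A$. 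As multiplication is jointly strong-operator continuous on norm-bounded sets, $p(\sqrt{A}E_n\sqrt{A}) \to p(A)$ strongly, and combining the three estimates yields $\limsup_n \|f(\sqrt{A}E_n\sqrt{A})x - f(A)x\| \le 2\varepsilon\|x\|$ for every $x$; letting $\varepsilon \downarrow 0$ gives $f(\sqrt{A}E_n\sqrt{A}) \to f(A)$ strongly. Since an increasing bounded-above sequence of self-adjoint operators converges strongly to its supremum, uniqueness of strong limits forces $B = f(A)$, which is the assertion.

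The only genuine obstacle is the strong-operator continuity of $T \mapsto f(T)$ invoked in the second paragraph; it is harmless precisely because every operator involved has spectrum inside the \emph{single} compact interval $[0,\|A\|]$, which makes the uniform polynomial approximation legitimate. If one prefers to sidestep it, Lemma \ref{lem:trace-approx} gives an alternative: for any normal tracial state $\tau$ on $\mathscr{R}$ one has $\tau\big(f(\sqrt{A}E_n\sqrt{A})\big) = \tau\big(f(E_nAE_n)\big)$, and since $E_nAE_n \to A$ strongly and $\tau$ is normal one gets $\tau(B) = \lim_n \tau\big(f(E_nAE_n)\big) = \tau(f(A))$; as $f(A) - B \ge 0$ and the normal tracial states separate $\mathscr{R}$, again $B = f(A)$.
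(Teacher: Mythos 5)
Your proof is correct, but it takes a genuinely different route from the paper's. After the common first step (operator monotonicity gives that the sequence is increasing and dominated by $f(A)$, so Vigier's theorem produces a supremum $B \le f(A)$), you identify $B$ with $f(A)$ by proving directly that $f(\sqrt{A}E_n\sqrt{A}) \to f(A)$ in the strong-operator topology, using that all the operators involved have spectrum in the single compact interval $[0,\|A\|]$ and that the continuous function calculus is SOT-continuous on norm-bounded sets of positive operators (via uniform polynomial approximation plus joint SOT-continuity of multiplication on bounded sets). This is sound, arguably more elementary, and notably does not use traces or finiteness at all --- it would work in any von Neumann algebra. The paper instead argues trace-theoretically: it invokes Lemma \ref{lem:trace-approx} to get $\tau(f(\sqrt{A}E_n\sqrt{A})) = \tau(f(E_nAE_n))$, then uses operator \emph{concavity} of $f$ (via \cite[Theorem 2.5]{hansen-pedersen}) to obtain $E_nf(A)E_n \le f(E_nAE_n)$ and hence the sandwich $\tau(E_nf(A)E_n) \le \tau(f(\sqrt{A}E_n\sqrt{A})) \le \tau(f(A))$, concluding $\tau(f(A)-B)=0$ for every normal tracial state $\tau$ and then using that these states separate $\mathscr{R}$. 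So your argument trades the concavity input for a functional-calculus continuity fact; each is a standard tool, but yours is the more self-contained of the two here.

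One caveat on your closing ``alternative'': as stated it does not actually sidestep the continuity issue. The step $\lim_n \tau\big(f(E_nAE_n)\big) = \tau(f(A))$ does not follow from normality of $\tau$ alone, since $\{f(E_nAE_n)\}$ need not be monotone; you would either need SOT-convergence $f(E_nAE_n)\to f(A)$ on a bounded set (the very fact you were avoiding) or the paper's concavity inequality $E_nf(A)E_n \le f(E_nAE_n)$ to squeeze the traces. Since your main argument in the second paragraph is complete on its own, this only affects the optional remark.
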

\begin{proof}
Without loss of generality, we may assume that $f(0) = 0$ and $f$ is positive-valued. The least upper bound of $\{ \sqrt{A} E_n \sqrt{A} \}_{n \in \N}$ is given by $A$. For $m \ge n$, we have $\sqrt{A} E_n \sqrt{A} \le \sqrt{A}E_m \sqrt{A} \le A$. Since $f$ is operator monotone, we observe that $\{ f(\sqrt{A}E_n\sqrt{A}) \}_{n \in \N}$ is an increasing sequence of positive operators bounded above by $f(A)$. Let $B \in \mathscr{R}^{+}$ be the least upper bound of $\{ f(\sqrt{A} E_n \sqrt{A}) \}$. Let $\tau$ be a normal tracial state on $\mathscr{R}$. By Lemma \ref{lem:trace-approx}, we have $$\tau(f(\sqrt{A}E_n \sqrt{A})) = \tau(f(E_n A E_n)).$$ By \cite[Theorem 2.5]{hansen-pedersen}, $f$ is operator concave and thus $$E_nf(A)E_n \le f(E_n AE_n).$$ It follows that $$\tau(E_nf(A)E_n) \le \tau(f(E_nAE_n)) = \tau(f(\sqrt{A}E_n\sqrt{A})) \le \tau(f(A)).$$
Taking the limit as $n \rightarrow \infty$ in the strong operator-topology, we observe that $ \tau(f(A) - B) = 0$. Since this is true for all normal tracial states $\tau$ and $B \le f(A)$, we conclude that $B = f(A)$. In other words, $f(A)$ is the least upper bound of $\{ f(\sqrt{A} E_n \sqrt{A}) \}_{n \in \N} $.
\end{proof}

\begin{thm}
\label{thm:op-mon}
\textsl{
Let $\mathscr{R}$ be a finite von Neumann algebra. Let $f: [0, \infty) \to \R$ be an operator monotone function and $A, B \in \afr$ be positive operators such that $A \le B$. Then $f(A) \le f(B)$.
}
\end{thm}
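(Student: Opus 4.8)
The plan is to reduce the unbounded statement to the bounded case --- where operator monotonicity of $f$ holds by definition --- by means of spectral truncations, and then to push the resulting inequality through the $\mathfrak{m}$-topology. First I would reduce to the case that $A$ itself is bounded. Let $P_m$ be the spectral projection of $A$ for $[0,m]$; then $AP_m = P_m \afrprod A \afrprod P_m$ is a positive operator in $\mathscr{R}$ with $AP_m \le A \le B$, and writing $g_m(t) = t\,\mathbbm{1}_{[0,m]}(t)$ one has $AP_m = g_m(A)$, so the composition rule for the Borel function calculus (Corollary \ref{cor:comp_borel}) gives $f(AP_m) = (f \circ g_m)(A) = f(A)P_m + f(0)(I - P_m)$. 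Since $P_m \uparrow I$ strongly, we have $I - P_m \to 0$ in the $\mathfrak{m}$-topology (Corollary \ref{cor:meas_cont_of_sup_conv}), and as $\afr$ is a topological $*$-algebra (Theorem \ref{thm:cont_mvn}) this forces $f(AP_m) \to f(A)$ in measure; because $\afr^{+}$ is $\mathfrak{m}$-closed (Proposition \ref{prop:mtilde_pos}), it suffices to show $f(AP_m) \le f(B)$ for every $m$. So I would henceforth assume $A \in \mathscr{R}^{+}$.

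Next, with $A$ bounded and $B$ possibly unbounded, I would truncate $B$ as well: let $E_n$ be the spectral projection of $B$ for $[0,n]$, so that $E_n \uparrow I$ strongly (hence in measure), $E_n$ commutes with $B$, and $BE_n = E_n \afrprod B \afrprod E_n \in \mathscr{R}^{+}$ has spectrum in $[0,n]$; exactly as above, $f(BE_n) = f(B)E_n + f(0)(I - E_n) \to f(B)$ in measure. From $A \le B$ and Corollary \ref{cor:pos_conj} I obtain $E_n A E_n \le E_n B E_n = BE_n$, with both sides bounded positive operators, so operator monotonicity of $f$ in its defining (bounded) sense yields $f(E_n A E_n) \le f(BE_n)$ in $\mathscr{R}$. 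It then remains to prove that $f(E_n A E_n) \to f(A)$ in the $\mathfrak{m}$-topology: granting this, $f(BE_n) - f(E_n A E_n) \ge 0$ for all $n$ and converges in measure to $f(B) - f(A)$, so the $\mathfrak{m}$-closedness of $\afr^{+}$ gives $f(A) \le f(B)$, finishing the proof.

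For the remaining convergence I would argue at the level of bounded operators, where $f$ is the (strongly continuous) continuous functional calculus: the net $\{E_n A E_n\}$ is uniformly norm-bounded by $\|A\|$ and converges strongly to $A$, $f$ is continuous on $[0,\|A\|]$ (which contains every relevant spectrum), and a standard polynomial/Stone--Weierstrass approximation together with strong continuity of multiplication and addition on norm-bounded sets gives $f(E_n A E_n) \to f(A)$ strongly, still with a uniform norm bound. To convert this to $\mathfrak{m}$-convergence, for each normal tracial state $\tau$ on $\mathscr{R}$ I would verify $\tau\big((f(E_n A E_n) - f(A))^2\big) \to 0$ --- on norm-bounded sets strong convergence is $\sigma$-strong, a product of two uniformly bounded $\sigma$-strongly convergent nets converges $\sigma$-weakly, and $\tau$ is $\sigma$-weakly continuous (equivalently, one may feed this through Lemma \ref{lem:trace-approx} and the monotone approximation $f(\sqrt{A}E_n\sqrt{A}) \uparrow f(A)$ of the lemma immediately preceding the theorem) --- and then apply Lemma \ref{lem:close_to_zero} to the positive operators $(f(E_n A E_n) - f(A))^2$, followed by the obvious spectral-projection estimate, to place $f(E_n A E_n) - f(A)$ eventually inside each $\sO(\tau, \varepsilon, \delta)$; by Lemma \ref{lem:family_sep_states} this is precisely $\mathfrak{m}$-convergence.

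I expect this last step --- transferring the inequality across the limit --- to be the main obstacle, precisely because the Borel function calculus is not $\mathfrak{m}$-continuous in general; the point is to arrange the limiting step so that $f$ acts on uniformly bounded operators (where it is strongly continuous) and to use the trace estimate of Lemma \ref{lem:close_to_zero} to move from strong to $\mathfrak{m}$-convergence. I would also remark that one cannot replace the net $E_n A E_n$ by the monotone net $\sqrt{A}E_n\sqrt{A}$ (for which the preceding lemma furnishes $\mathfrak{m}$-convergence for free), because the crucial order comparison $E_n A E_n \le BE_n$ relies on $E_n$ commuting with $B$ and has no analogue for $\sqrt{A}E_n\sqrt{A}$.
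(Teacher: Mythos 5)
Your proof is correct, and its skeleton (reduce to bounded $A$ via its own spectral truncations, truncate $B$ by its spectral projections $E_n$, apply bounded operator monotonicity to $E_n\afrprod A\afrprod E_n\le B\afrprod E_n$, then pass to the limit using the $\mathfrak{m}$-closedness of the positive cone from Proposition \ref{prop:mtilde_pos}) matches the paper's; the divergence is at the key step with $A$ bounded. The paper never proves $f(E_n\afrprod A\afrprod E_n)\to f(A)$: after normalizing $f(0)=0$, $f\ge 0$, it invokes operator concavity of positive operator monotone functions (\cite[Theorem 2.5]{hansen-pedersen}) to get Hansen's inequality $E_n\afrprod f(A)\afrprod E_n\le f(E_n\afrprod A\afrprod E_n)\le f(B\afrprod E_n)\le f(B)$, so only the trivial $\mathfrak{m}$-limit $E_n\afrprod f(A)\afrprod E_n\to f(A)$ is needed. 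You instead prove the convergence $f(E_nAE_n)\to f(A)$ in measure directly, via SOT-continuity of the continuous function calculus on norm-bounded sets and the upgrade ``uniformly bounded SOT-convergence implies $\mathfrak{m}$-convergence'' (for each normal tracial state $\tau$, $\tau\big((f(E_nAE_n)-f(A))^2\big)\to 0$, then a Chebyshev/spectral estimate as in Lemma \ref{lem:close_to_zero}); this is sound, and it buys you independence from the Hansen--Pedersen concavity theorem at the cost of importing a standard fact not stated in the paper and an upgrade lemma that is close to, but not literally, Corollary \ref{cor:meas_cont_of_sup_conv} (which treats monotone nets). Two small caveats: your parenthetical alternative via Lemma \ref{lem:trace-approx} and the lemma preceding the theorem only yields $\tau(f(E_nAE_n))\to\tau(f(A))$, which absent an order comparison does not give $\mathfrak{m}$-convergence, so the $\sigma$-weak argument should remain the actual proof; and since you verify membership in $\sO(\tau,\varepsilon,\delta)$ for every normal tracial state $\tau$, this is the definition of the $\mathfrak{m}$-topology and Lemma \ref{lem:family_sep_states} is not needed there.
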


\begin{proof}
Without loss of generality, we may assume that $f(0) = 0$ and $f$ is positive-valued. To begin with, let us assume that $A$ is {\it bounded}, that is, $A \in \mathscr{R}$. For $n \in \N$, let $F_n$ be the spectral projection of $B$ corresponding to the interval $[0, n]$. Since $0 \le F_n \afrprod  A \afrprod F_n \le F_n \afrprod B \afrprod F_n $ and $F_n \afrprod  A \afrprod F_n$ and $F_n \afrprod  B \afrprod F_n$ are bounded positive operators, we have $$ F_n \afrprod  f(A) \afrprod  F_n \le f(F_n \afrprod A \afrprod F_n) \le f(F_n \afrprod B \afrprod F_n) = f(B) \afrprod F_n \le f(B).$$ For the first inequality, we used the fact that every positive-valued operator monotone function on $[0, \infty)$ is operator concave (see \cite[Theorem 2.5]{hansen-pedersen}). Taking the limit as $n \rightarrow \infty$, we conclude that $f(A) \le f(B)$.

Next we prove the general case where $A$ may be unbounded. For $n \in \N$, let $E_n$ be the spectral projection of $A$ corresponding to the interval $[0, n]$. Since $\sqrt{A} \afrprod E_n \afrprod \sqrt{A} \le A \le B$ and $\sqrt{A} \afrprod E_n \afrprod \sqrt{A}$ is bounded, by the previous result, we have $f(A) \afrprod E_n = f(\sqrt{A} \afrprod E_n \afrprod \sqrt{A}) \le f(B)$. Taking the limit as $n \rightarrow \infty$, we conclude that $f(A) \le f(B)$.
\end{proof}

\begin{cor}
\textsl{
Let $\mathscr{R}$ be a finite von Neumann algebra. Let $0 \le A \le B$ be positive operators in $\afr$. Then
\begin{itemize}
    \item[(i)] $\sqrt{A} \le \sqrt{B}$;
    \item[(ii)] $\log (I \afrsum A) \le \log (I \afrsum B)$.
\end{itemize}
}
\end{cor}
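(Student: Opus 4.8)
The plan is to recognize both inequalities as immediate instances of Theorem~\ref{thm:op-mon}; the only residual work is to confirm that the two functions involved are operator monotone on $[0,\infty)$, and both facts are classical (Löwner).

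For part~(i), I would invoke the classical fact that $f(t) := t^{1/2}$ is operator monotone on $[0,\infty)$ (see, e.g., \cite{hansen-pedersen} or any standard reference on operator monotone functions). Theorem~\ref{thm:op-mon} applied to $f$ and the positive operators $A \le B$ in $\afr$ then gives $f(A) \le f(B)$. Since $f(A)$, defined by the Borel function calculus of $A$ applied to $t \mapsto \sqrt{t}$, is precisely the positive square root $\sqrt{A}$ of Corollary~\ref{cor:unique_square_root} (both are the unique positive solution of $X^2 = A$), we obtain $\sqrt{A} \le \sqrt{B}$.

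For part~(ii), the first step is to record that $g(t) := \log(1+t)$ is operator monotone on $[0,\infty)$. This follows from the integral representation
\[
\log(1+t) \;=\; \int_0^{\infty} \Big( \frac{1}{1+s} - \frac{1}{1+t+s} \Big) \, ds \qquad (t \ge 0),
\]
together with the observations that $x \mapsto -x^{-1}$ is operator monotone on $(0,\infty)$, that precomposition with the increasing affine map $t \mapsto 1+t+s$ (and addition of a constant) preserves operator monotonicity, and that a locally uniform limit of nonnegative combinations of operator monotone functions is again operator monotone. Granting this, Theorem~\ref{thm:op-mon} applied to $g$ and $A \le B$ yields $g(A) \le g(B)$. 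It remains to identify $g(A)$ with $\log(I \afrsum A)$: writing $g = \log \circ h$ with $h(t) = 1+t$, the Borel function calculus of $A$ (a unital homomorphism sending $1 \mapsto I$ and $\iota \mapsto A$) carries $h$ to $I \afrsum A$, and Corollary~\ref{cor:comp_borel} then gives $g(A) = (\log \circ h)(A) = \log\big(h(A)\big) = \log(I \afrsum A)$, and likewise $g(B) = \log(I \afrsum B)$. Hence $\log(I \afrsum A) \le \log(I \afrsum B)$.

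There is no real obstacle here: all the content of the corollary resides in Theorem~\ref{thm:op-mon}, and the supporting facts—operator monotonicity of $\sqrt{\cdot}$ and of $\log(1+\cdot)$, and the composition rule for the Borel function calculus—are either classical or already established in the excerpt. The one point deserving a moment's attention is the identification $g(A) = \log(I \afrsum A)$ through Corollary~\ref{cor:comp_borel}, but this is routine bookkeeping.
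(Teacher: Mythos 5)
Your proposal is correct and takes essentially the same route as the paper, whose entire proof consists of observing that $t \mapsto \sqrt{t}$ and $t \mapsto \log(1+t)$ are operator monotone on $[0,\infty)$ and invoking Theorem~\ref{thm:op-mon}. The extra details you supply --- the integral representation establishing operator monotonicity of $\log(1+t)$ and the identification $g(A) = \log(I \afrsum A)$ via Corollary~\ref{cor:comp_borel} --- are correct and merely make explicit what the paper leaves implicit.
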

\begin{proof}
As the mappings $t \mapsto \sqrt{t}, t \mapsto \log(1+t) : [0, \infty) \to \R$ are operator monotone, the assertion follows.
\end{proof}

\begin{thm}
\label{thm:op-conv}
\textsl{
Let $\mathscr{R}$ be a finite von Neumann algebra. For an operator convex function $f: [0, \infty) \to \R$ with $f(0) = 0$, we have the following:
\begin{itemize}
    \item[(i)] $f(tA \afrsum (1-t)B) \le tf(A) \afrsum (1-t)f(B)$ for all positive operators $A, B$ in $\afr$;
    \item[(ii)] $f(V^* \afrprod A \afrprod V) \le V^* \afrprod f(A) \afrprod V$ for every positive contraction $V$ in $\mathscr{R}$ (that is, $\|V\| \le 1$) and every positive operator $A$ in $\afr$;
    \item[(iii)] $f(V^* \afrprod A \afrprod V \afrsum  W^* \afrprod B \afrprod W) \le V^* \afrprod f(A) \afrprod V \afrsum  W^* \afrprod f(B) \afrprod W$ for all $V, W$ in $\mathscr{R}$ with $V^*V + W^*W \le I$ and all positive operators $A, B$ in $\afr$;
    \item[(iv)] $f(E \afrprod A \afrprod E) \le E \afrprod f(A) \afrprod E$ for every projection $E$ in $\mathscr{R}$ and every positive operator $A$ in $\afr$.
\end{itemize} 
}
\end{thm}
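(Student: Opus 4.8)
The plan is to prove (i) first for an \emph{arbitrary} finite von Neumann algebra, and then to deduce (ii)--(iv) from it by $2\times 2$ matrix manipulations, exploiting the identification $M_2(\afr)\cong M_2(\mathscr{R})_{\textrm{aff}}$ of Theorem \ref{thm:matrix-iso} and the fact that conjugation by a unitary of $M_2(\mathscr{R})$ is a normal $*$-automorphism of $M_2(\afr)$, hence commutes with the Borel function calculus (by the corollary above that morphisms commute with the Borel function calculus). Granting (i) in general: for a contraction $V$ in $\mathscr{R}$, take the standard $2\times 2$ unitary dilation $\mathbf{U}\in M_2(\mathscr{R})$ of $V$ (with $(1,1)$-entry $V$, the other entries built from $(I-VV^*)^{1/2}$, $(I-V^*V)^{1/2}$, $-V^*$) and the diagonal unitary $\mathbf{S}=\mathrm{diag}(I,-I)$. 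Applying (i) with $t=\tfrac12$ to $\mathbf{T}:=\mathbf{U}^*\afrprod\mathrm{diag}(A,0)\afrprod\mathbf{U}\ (\ge 0)$ and $\mathbf{S}^*\afrprod\mathbf{T}\afrprod\mathbf{S}$: since $\tfrac12\mathbf{T}\afrsum\tfrac12\mathbf{S}^*\afrprod\mathbf{T}\afrprod\mathbf{S}=\mathrm{diag}(\mathbf{T}_{11},\mathbf{T}_{22})$, and unitary covariance of the Borel calculus rewrites the right-hand side as $\mathrm{diag}([f(\mathbf{T})]_{11},[f(\mathbf{T})]_{22})$, comparing the $(1,1)$-corners (using that $f$ of a block-diagonal element is block-diagonal, a consequence of Theorem \ref{thm:borel-fun-calc}(i)) gives $f(\mathbf{T}_{11})\le[f(\mathbf{T})]_{11}$; here $\mathbf{T}_{11}=V^*\afrprod A\afrprod V$, and, again by unitary covariance (for $\mathbf{U}$) together with $f(0)=0$, $[f(\mathbf{T})]_{11}=[\mathbf{U}^*\afrprod f(\mathrm{diag}(A,0))\afrprod\mathbf{U}]_{11}=V^*\afrprod f(A)\afrprod V$. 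Thus $f(V^*\afrprod A\afrprod V)\le V^*\afrprod f(A)\afrprod V$ for \emph{every} contraction $V$ in $\mathscr{R}$, which subsumes (ii). Feeding $\mathbf{A}:=\mathrm{diag}(A,B)\in M_2(\afr)^{+}$ and the contraction $\mathbf{C}:=\left(\begin{smallmatrix}V&0\\ W&0\end{smallmatrix}\right)$ (a contraction since $\mathbf{C}^*\mathbf{C}=\mathrm{diag}(V^*V+W^*W,0)\le\mathbf{I}_2$) into this strengthened form of (ii) and comparing $(1,1)$-corners of $f(\mathbf{C}^*\afrprod\mathbf{A}\afrprod\mathbf{C})\le\mathbf{C}^*\afrprod f(\mathbf{A})\afrprod\mathbf{C}$ yields (iii); and (iv) is the case $V=E$, $W=I-E$, $B=0$ of (iii).

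For (i) itself, the bounded case ($A,B\in\mathscr{R}^{+}$) is exactly the defining property of operator convexity of $f$ on $[0,\infty)$. For general positive $A,B\in\afr$, let $E^A_n$, $E^B_n$ be the spectral projections of $A$, $B$ for $[0,n]$, put $A_n:=A\afrprod E^A_n$, $B_n:=B\afrprod E^B_n\in\mathscr{R}^{+}$ (so $A_n\uparrow A$, $B_n\uparrow B$ in measure, Proposition \ref{prop:lub_mvna}), and apply the bounded case to get $f(tA_n\afrsum(1-t)B_n)\le tf(A_n)\afrsum(1-t)f(B_n)$ for each $n$. On the right, with $\phi_n(s)=s\,\mathbbm{1}_{[0,n]}(s)$ one has $f(A_n)=(f\circ\phi_n)(A)=f(A)\afrprod E^A_n$ (Corollary \ref{cor:comp_borel} and $f(0)=0$); since $\bigl(f(A)\afrdiff f(A)\afrprod E^A_n\bigr)\afrprod E^A_n=0$ while $\tau(I-E^A_n)\to 0$ for every normal tracial state $\tau$, we get $f(A_n)\to f(A)$ in measure, likewise $f(B_n)\to f(B)$, so the right-hand side converges in measure to $tf(A)\afrsum(1-t)f(B)$ (Theorem \ref{thm:cont_mvn}). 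As the positive cone of $\afr$ is $\mathfrak{m}$-closed (Proposition \ref{prop:mtilde_pos}), it remains only to show that $f(C_n)\to f(C)$ in measure, where $C_n:=tA_n\afrsum(1-t)B_n\uparrow C:=tA\afrsum(1-t)B$.

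This continuity of the Borel function calculus along an increasing net in $\afr^{+}$ is the crux, and I would isolate it as a lemma: \emph{if $C_n\uparrow C$ in $\afr^{+}$ and $f:[0,\infty)\to\R$ is operator convex with $f(0)=0$, then $f(C_n)\to f(C)$ in measure.} The approach is the Hansen--Pedersen factorization $f(t)=t\,g(t)$ with $g(t)=f(t)/t$ operator monotone on $(0,\infty)$, combined with the integral representation of $g$: each resolvent $(\lambda I\afrsum C_n)^{-1}$ is a decreasing net of bounded operators converging in measure to $(\lambda I\afrsum C)^{-1}$ (Corollary \ref{cor:meas_cont_of_sup_conv}), so $g(C_n)\to g(C)$ in measure once the limit is interchanged with the defining integral — justified by the $\sigma$-normality of the Borel function calculus (Theorem \ref{thm:borel-fun-calc}) and the estimate $g(C_n)\le g(C)$, which holds after a harmless regularization (replacing $C_n$ by $C_n+\varepsilon I$) at the endpoint $0$; the regularization is needed only when $f'(0^+)=-\infty$ (as for $f(t)=-\sqrt t$, where the prototype statement $\sqrt{C_n}\uparrow\sqrt C$ follows from Theorem \ref{thm:op-mon}, continuity of multiplication, and uniqueness of the positive square root, Corollary \ref{cor:unique_square_root}). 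Then $f(C_n)=C_n\afrprod g(C_n)\to C\afrprod g(C)=f(C)$ by Theorem \ref{thm:cont_mvn}.

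I expect this limiting lemma to be the only genuine obstacle: everything else is the definition of operator convexity, a formal $2\times 2$ computation, or routine appeals to the $\mathfrak{m}$-topological and order-theoretic machinery already in place. (The bounded cases of (ii)--(iv) used implicitly inside the matrix step, for operators in $M_2(\mathscr{R})$, are the classical Hansen--Pedersen--Jensen inequality, but the bootstrap above derives them from (i), so the classical statement need not be invoked.)
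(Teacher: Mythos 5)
Your plan is a genuinely different route from the paper's: you prove (i) directly by truncating $A$ and $B$ and passing to the limit in the $\mathfrak{m}$-topology, and then bootstrap (ii)--(iv) via the Hansen--Pedersen $2\times 2$ dilation tricks; the paper instead proves (iv) directly and invokes the same matrix techniques for the chain (iv) $\Rightarrow$ (i) $\Rightarrow$ (ii) $\Rightarrow$ (iii). Your matrix bootstrap is sound (and no more informal than the paper's own appeal to Hansen--Pedersen), the identification of the bounded case of (i) with the definition of operator convexity is correct, and the computation $f(A\afrprod E^A_n)=(f\circ\phi_n)(A)=f(A)\afrprod E^A_n\to f(A)$ in measure is fine. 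Everything therefore hinges on the lemma you yourself isolate: if $C_n\uparrow C$ in $\afr^{+}$ and $f$ is operator convex with $f(0)=0$, then $f(C_n)\to f(C)$ in measure.

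That lemma is plausible, but your sketch does not establish it, and the obstacles are concrete. First, $g=f/\iota$ is operator monotone only on the \emph{open} half-line $(0,\infty)$ and may have $g(0^{+})=-\infty$ (take $f(t)=-\sqrt{t}$, $g(t)=-1/\sqrt{t}$); since $C_n=tA\afrprod E^A_n\afrsum(1-t)B\afrprod E^B_n$ generically has $0$ in its point spectrum, $g(C_n)$ is not defined by the Borel function calculus of Theorem \ref{thm:borel-fun-calc} at all. Second, the integral representation of an operator monotone function on $(0,\infty)$ need not converge absolutely near the endpoint $\lambda=0$ (already for $g=\log$), so ``interchanging the limit with the defining integral'' is not a routine appeal to $\sigma$-normality; it requires grouping of terms and a monotone-convergence argument that you have not supplied. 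Third, the regularization $C_n\mapsto C_n\afrsum\varepsilon I$ changes the operator whose image under $f$ you are computing: to recover the inequality for $f(C_n)$ you would need $f(C_n\afrsum\varepsilon I)\to f(C_n)$ with control uniform in $n$, which is unavailable because $\|C_n\|\to\infty$ and $f$ need not be uniformly continuous on $[0,\infty)$. The paper's choice to prove (iv) first is precisely what sidesteps this: there the factorization $f(t)=tg(t)$, the already-established unbounded operator monotonicity (Theorem \ref{thm:op-mon}), Corollary \ref{cor:pos_conj}, and the intertwining identity $(\sqrt{A}\afrprod E\afrprod\sqrt{A})^n\afrprod(\sqrt{A}\afrprod E)=(\sqrt{A}\afrprod E)\afrprod(E\afrprod A\afrprod E)^n$ together with Corollary \ref{cor:real-poly-approx} yield $f(E\afrprod A\afrprod E)\le E\afrprod f(A)\afrprod E$ by pure algebra, with no limit interchange on the left-hand side. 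To salvage your route you would need to prove the limiting lemma in full; as written it is a genuine gap rather than a routine step.
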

\begin{proof}
With Theorem \ref{thm:matrix-iso} at hand, the matrix techniques used in the proof of  \cite[Theorem 2.1]{hansen-pedersen} can be directly adapted to the setting of Murray-von Neumann algebras to prove that (iv) $\Rightarrow$ (i) $\Rightarrow$ (ii) $\Rightarrow$ (iii). Thus we need only prove (iv).

Let $E$ be a projection in $\mathscr{R}$. By \cite[Theorem 2.4]{hansen-pedersen}, there is an operator monotone function $g$ on $[0, \infty)$ such that $f(t) = tg(t)$. Since $g$ is operator monotone and $\sqrt{A} \afrprod E \afrprod \sqrt{A} \le A$, by Theorem \ref{thm:op-mon} we see that $g(\sqrt{A} \afrprod E \afrprod \sqrt{A}) \le g(A)$. Thus by Corollary \ref{cor:pos_conj}, we have
\begin{equation}
\label{eqn:op-conv1}
\big(E\afrprod \sqrt{A} \big) \afrprod  g(\sqrt{A} \afrprod E \afrprod \sqrt{A}) \afrprod \big(\sqrt{A} \afrprod E \big) \le \big( E \afrprod \sqrt{A} \big) \afrprod g(A) \afrprod  \big(\sqrt{A} \afrprod E \big).    
\end{equation}
Note that for $n \in \N$, we have the following identity, $$(\sqrt{A} \afrprod E \afrprod \sqrt{A})^n \afrprod (\sqrt{A} \afrprod E) = (\sqrt{A} \afrprod E) \afrprod (E \afrprod A \afrprod E)^n.$$ Hence from Corollary \ref{cor:real-poly-approx}, we observe that
\begin{align*}
g(\sqrt{A} \afrprod E \afrprod \sqrt{A})\afrprod (\sqrt{A} \afrprod E) &= (\sqrt{A} \afrprod E) \afrprod g(E \afrprod A \afrprod E),\\
\sqrt{A} \afrprod g(A) \afrprod \sqrt{A} &= A \afrprod g(A).
\end{align*}
Plugging the above identities in (\ref{eqn:op-conv1}), we have $$(E\afrprod A \afrprod E) \afrprod g(E \afrprod A \afrprod E) \le  E \afrprod (A \afrprod g(A) ) \afrprod E.$$ 
Equivalently, $$f(E \afrprod A \afrprod E) \le E \afrprod f(A) \afrprod E.$$
\end{proof}

\bibliographystyle{plain}

\bibliography{reference}

\end{document}